\newtheorem{theo}{Theorem}[section]
\newtheorem{lemm}[theo]{Lemma}
\newtheorem{fact}{Fact}
\newtheorem{prop}[theo]{Proposition}
\newtheorem{coro}[theo]{Corollary}
\theoremstyle{definition}
\newtheorem{defi}[theo]{Definition}
\theoremstyle{remark}
\newtheorem{rem}[theo]{Remark}
\numberwithin{equation}{section}
\def\I{{\mathbb I}}
\def\N{{\mathbb N}}
\def\R{{\mathbb R}}
\def\DD{{\mathbb D}}
\def\EE{{\mathbb E}}
\def\FF{{\mathbb F}}
\def\II{{\mathbb I}}
\def\PP{{\mathbb P}}
\def\QQ{{\mathbb Q}}
\def\TT{{\mathbb T}}
\def\pp{\leq}
\def\pg{\geq}
\DeclareMathAlphabet\cat{OMS}{cmsy}{b}{n}
\begin{document}

\title[$K$-closedness results in noncommutative Lebesgue spaces with filtrations]{$K$-closedness results in noncommutative Lebesgue spaces with filtrations}

\author[Moyart Hugues]{Hugues Moyart}
\address{UNICAEN, CNRS, LMNO, 14000 Caen, France}
\email{hugues.moyart@unicaen.fr}
 

\subjclass{Primary: 46L51, 46L52, 46L53, 46B70.  Secondary: 46E30, 60G42, 60G48}
\keywords{Noncommutative martingales;  Noncommutative Lebesgue spaces; Noncommutative Hardy spaces; Real interpolation spaces}

\renewcommand{\baselinestretch}{1.2}\normalsize

\begin{abstract}
In this paper, we establish a new general
$K$-closedness result in the context of real interpolation of noncommutative Lebesgue spaces involving filtrations. As an application, we derive $K$-closedness results for various classes of noncommutative martingale Hardy spaces, addressing a problem raised by Randrianantoanina. The proof of this general result adapts Bourgain's approach to the real interpolation of classical Hardy spaces on the disk within the framework of noncommutative martingales.
\end{abstract} 

\maketitle

\renewcommand{\baselinestretch}{0.1}\normalsize

\tableofcontents

\renewcommand{\baselinestretch}{1.2}\normalsize

\section*{Introduction}

The paper is motivated by advances in the context of real interpolation of classical Hardy spaces on the disk following the work of Peter Jones. Let us review the results obtained in this context. Let $\TT$ be the unit circle, and let $H_p(\TT)\subset L_p(\TT)$ denote, for $1\pp p\pp\infty$, the class of classical Hardy spaces on the unit disk. Peter Jones established in \cite{JonesHardy2} that there is a universal constant $C>0$ such that for every $f\in H_1(\TT)+H_\infty(\TT)$, and $t>0$, we have $K(t,f,H_1(\TT),H_\infty(\TT))\pp CK(t,f,L_1(\TT),L_\infty(\TT))$, where $K$ refers to Peetre's $K$-functional in the context of real interpolation theory. According to the terminology introduced by Pisier in \cite{PisierHardy}, one can reformulate Jones' theorem by saying that the Banach couple $(H_1(\TT),H_\infty(\TT))$ is $K$-closed in the Banach couple $(L_1(\TT),L_\infty(\TT))$. This includes the fact that for every $0<\theta<1$, we have
\[(H_1(\TT),H_\infty(\TT))_{\theta,p}=H_p(\TT)\]
with equivalent norms, where $1/p=1-\theta$, and where the notation on the left-hand side refers to the real interpolation method. In contrast with the existing proofs of Jones' result in the extensive literature devoted to the real interpolation of classical Hardy spaces on the disk so far, Bourgain was able to substitute complex variable techniques by real variable methods. The approach of Bourgain to Jones' theorem is essentially based on the fact that the Riesz projection $L_2(\TT)\to H_2(\TT)$, (i.e., the orthogonal projection of $L_2(\TT)$ onto $H_2(\TT)$) is a Calderón-Zygmund singular integral operator. Using the Calderón-Zygmund decomposition, he established that the Banach couple $(H_1(\TT),H_p(\TT))$ is $K$-closed in the Banach couple $(L_1(\TT),L_p(\TT))$, for every $1<p<\infty$. Then, using an abstract duality lemma for $K$-closedness due to Pisier in \cite{PisierHardy}, he deduced that the Banach couple $(H_p(\TT),H_\infty(\TT))$ is $K$-closed in the Banach couple $(L_p(\TT),L_\infty(\TT))$, for every $1<p<\infty$. As highlighted by Kislyakov and Xu in \cite{Kislyakov}, who established an abstract Wolff-type interpolation result for $K$-closedness, one can deduce Jones' theorem from the two partial results of Bourgain above. 

In this paper, we adapt Bourgain's approach within the framework of noncommutative martingales. The key ingredient is Gundy’s decomposition theorem for noncommutative martingales proved by Parcet and Randrianantoanina in \cite{ParcetGundy}. Indeed, it can be regarded as a noncommutative counterpart of the Calderón-Zygmund decomposition of functions. To better explain our considerations, we now introduce the mathematical setting of the paper. We refer to the body of the paper for unexplained notations in the following. Let $\mathcal{M}$ be a von Neumann algebra equipped with a normal faithful semifinite trace. By extracting the arguments that allowed Bourgain to prove his fundamental lemma \cite{Bourgain}[Lemma 2.4], we introduce a class of projections $\PP:D\to D$ defined on a subspace $D\subset L_1(\mathcal{M})\cap L_2(\mathcal{M})$ which is dense in both $L_1(\mathcal{M})$ and $L_2(\mathcal{M})$, such that, if $\PP_1(\mathcal{M})$ and $\PP_2(\mathcal{M})$, denote the norm closure of $\PP(D)$ in $L_1(\mathcal{M})$ and $L_2(\mathcal{M})$, respectively, then the Banach couple $(\PP_1(\mathcal{M}),\PP_2(\mathcal{M}))$ is $K$-closed in the Banach couple $(L_1(\mathcal{M}),L_2(\mathcal{M}))$. The first main contribution of the paper consists of the proof that this class of projections contains many examples that naturally arise when $\mathcal{M}$ is equipped with one or several filtrations. The decisive step in our arguments is the use of Gundy's decomposition for noncommutative martingales. By using Pisier's duality lemma for $K$-closedness and the Wolff-type result for $K$-closedness, we finally obtain new $K$-closedness results in noncommutative Lebesgue spaces that can be applied in various situations involving filtrations. In particular, we obtain a $K$-closedness result which can be considered as an extension to the range $[1,\infty]$ of the boundedness properties of noncommutative martingale transforms as established by Randrianantoanina in \cite{NarcisseMartingaleTransforms}.

The paper details some applications in relation to noncommutative martingale Hardy spaces. We use the framework of row/column/mixed spaces defined by Pisier and Xu in \cite{PisierXuMartingales} and denoted $L_p(\mathcal{M},\ell_2)$ for $1\pp p\pp\infty$ in this paper. If $L_p^{\rm ad}(\mathcal{M},\ell_2)\subset L_p(\mathcal{M},\ell_2)$ denotes the norm-closed subspace of adapted sequences, we establish that the Banach couple $(L_p^{\rm ad}(\mathcal{M},\ell_2),L_q^{\rm ad}(\mathcal{M},\ell_2))$ is $K$-closed in the Banach couple $(L_p(\mathcal{M},\ell_2),L_q(\mathcal{M},\ell_2))$, for every $1\pp p,q\pp\infty$. This result was first proved by Randrianantoanina in \cite{Narcisse2023}, and can be interpreted as an extension of the noncommutative version of Stein's inequality in the context of noncommutative martingale inequalities to the range $[1,\infty]$ and was originally proved by Pisier and Xu in \cite{PisierXuMartingales}. In the setting of row or column spaces, this result is contained in \cite{Narcisse2023}, and even extended to the full range $(0,\infty]$, but the approach allows us to encompass the case of mixed spaces as well. Actually, the approach of this paper extends the alternative proof by Xu contained in \cite{XuBook} of the noncommutative version of Stein's inequality, and that relies on the boundedness properties of noncommutative martingale transforms. It seemed all the more important to us to include this part because we will find all the reasoning and calculations carried out there for the last main result of the paper. Namely, if $H_p(\mathcal{M})\subset L_p(\mathcal{M},\ell_2)$ denotes the norm-closed subspace of martingale difference sequences, with the same strategy we establish that the Banach couple $(H_p(\mathcal{M}),H_q(\mathcal{M}))$ is $K$-closed in the Banach couple $(L_p(\mathcal{M},\ell_2),L_q(\mathcal{M},\ell_2))$, for every $1\pp p,q\pp\infty$. This answers a problem raised by  Randrianantoanina in \cite{Narcisse2023}. As a by-product of our result, we find that for every $0<\theta<1$, we have
\[(H_1(\mathcal{M}),H_\infty(\mathcal{M}))_{\theta,p}=H_p(\mathcal{M})\]
with equivalent norms, where $1/p=1-\theta$. In the setting of column spaces or row spaces, this result has also recently been proved by Randrianantoanina in \cite{Narcisse2024} by taking a different approach. We refer to the last section of the paper for the relationships between the spaces $H_p(\mathcal{M})$ and the usual noncommutative martingale Hardy spaces in relation to the noncommutative Burkholder-Gundy inequality.

The paper is organised as follows. In Section 1, we gather the mathematical background that will be needed in the sequel. In Section 2 we adapt Bourgain's approach to the real interpolation of classical Hardy spaces on the disk within the framework of noncommutative martingales, which allows us to state and prove general $K$-closedness results in noncommutative Lebesgue spaces. In Section 3 we use the previous material to derive new $K$-closedness results for noncommutative adapted spaces and noncommutative martingale Hardy spaces.

\section{Preliminaries}

For the sake of completeness and for the convenience of the reader, we gather in this section the mathematical background that will be needed in the sequel. 

The section is organised as follows. The first paragraph is devoted to the machinery of real interpolation theory. In the second paragraph, we gather the properties of noncommutative Lebesgue spaces, and more generally noncommutative fully symmetric spaces, in relation to real interpolation methods. In the third paragraph, we recall the basics of noncommutative martingale theory.

\subsection{Interpolation theory} 

We refer to \cite{BerghInterpolation} for a background on interpolation theory.

\subsubsection{General interpolation spaces}

Let $(E_0,E_1)$ be a \textit{Banach couple}, which means that $E_0,E_1$ are Banach spaces together with continuous and linear inclusions in a common Hausdorff topological vector space. Then the intersection space $E_0\cap E_1$ and the sum space $E_0+E_1$ are well defined Banach spaces according to the intersection norm $\|\cdot\|_{E_0\cap E_1}$ and the sum norm $\|\cdot\|_{E_0+E_1}$ defined as follows, 
\[\|u\|_{E_0\cap E_1}:=\max\big\{\|u\|_{E_0},\ \|u\|_{E_1}\big\},\]
\[\|u\|_{E_0+E_1}:=\inf\big\{\|u_0\|_{E_0}+\|u_1\|_{E_1}\ \mid\  u=u_0+u_1,\ u_0\in E_0,u_1\in E_1\big\}.\]
An \textit{intermediate space} for $(E_0,E_1)$ is a Banach space $E_\theta$ together with continuous and linear inclusions $E_0\cap E_1\subset E_\theta\subset E_0+E_1$.

Let $(E_0,E_1)$, $(F_0,F_1)$ be Banach couples. By a \textit{bounded operator} $T:(E_0,E_1)\to(F_0,F_1)$ we mean an operator $T:E_0+E_1\to F_0+F_1$ such that $T(E_0)\subset F_0,T(E_1)\subset F_1$, and such that the induced operators $T:E_0\to F_0,T:E_1\to F_1$ are bounded, and in that case we set 
\[\|T\|_{(E_0,E_1)\to(F_0,F_1)}:=\max\big\{\|T\|_{E_0\to F_0},\|T\|_{E_1\to F_1}\big\}.\]
An \textit{interpolation pair} for a pair of Banach couples $(E_0,E_1)$, $(F_0,F_1)$ is a pair of intermediate spaces $(E_\theta,F_\theta)$ for $(E_0,E_1)$ and $(F_0,F_1)$ respectively, such that if $T:(E_0,E_1)\to(F_0,F_1)$ is a bounded operator, then $T(E_\theta)\subset F_\theta$ and the induced operator $T:E_\theta\to F_\theta$ is bounded, with 
\[\|T\|_{E_\theta\to F_\theta}\pp C\|T\|_{(E_0,E_1)\to(F_0,F_1)}.\]
where $C>0$ does not depend on $T$. If $C=1$, we say that the interpolation pair $(E_\theta,F_\theta)$ is \textit{exact}.

An \textit{(exact) interpolation space} for a Banach couple $(E_0,E_1)$ is an intermediate space $E_\theta$ for $(E_0,E_1)$ such that $(E_\theta,E_\theta)$ is an (exact) interpolation pair for $(E_0,E_1)$, $(E_0,E_1)$.

An \textit{(exact) interpolation functor} $\mathcal{F}$ associates to each Banach couple $(E_0,E_1)$ an intermediate space $\mathcal{F}(E_0,E_1)$, such that if $(E_0,E_1)$ and $(F_0,F_1)$ are two Banach couples, then $(\mathcal{F}(E_0,E_1),\mathcal{F}(F_0,F_1))$ is an (exact) interpolation pair for $(E_0,E_1)$, $(E_0,E_1)$.

For instance, the map that associates to each Banach couple $(E_0,E_1)$ the sum space $E_0+E_1$ (resp. The intersection space $E_0\cap E_1$, the left space $E_0$, the right space $E_1$) is an exact interpolation functor. 

Let $(B_0,B_1)$ be a \textit{subcouple} of a Banach couple $(A_0,A_1)$, which means that $B_0,B_1$ are Banach spaces together with continuous, linear, and with closed range inclusions $B_0\subset A_0, B_1\subset A_1$, that allow us to see $(B_0,B_1)$ as a Banach couple. Then we have a continuous and linear inclusion $\mathcal{F}(B_0,B_1)\subset\mathcal{F}(A_0,A_1)$, but not necessarily with closed range. We say that
$(B_0,B_1)$ is \textit{complemented} in $(A_0,A_1)$ if there is a bounded operator $P:(A_0,A_1)\to(B_0,B_1)$ that restricts to the identity on $B_0+B_1$. If $(B_0,B_1)$ is complemented in $(A_0,A_1)$, then for every interpolation functor $\mathcal{F}$, we have
\[\mathcal{F}(B_0,B_1)=(B_0+B_1)\cap\mathcal{F}(A_0,A_1),\]
and the norm of $\mathcal{F}(A_0,A_1)$ induces an equivalent norm in $\mathcal{F}(B_0,B_1)$ (in other words, $\mathcal{F}(B_0,B_1)$ is closed in $\mathcal{F}(A_0,A_1)$).

A Banach couple $(E_0,E_1)$ is \textit{regular} if $E_0\cap E_1$ is dense in both $E_0$ and $E_1$. If $(E_0,E_1)$ is a regular couple, then we have continuous linear inclusions $E_0^*\subset(E_0\cap E_1)^*$ and $E_1^*\subset(E_0\cap E_1)^*$, which allow us to see $(E_0^*,E_1^*)$ as a Banach couple. 
  
\subsubsection{K-method interpolation spaces} If $(E_0,E_1)$ is a Banach couple and $u\in E_0+E_1$, we define its \textit{$K$-functional} by setting for $t>0$, 
\[K_t(u)=K_t(u,E_0,E_1):=\inf\big\{\|u_0\|_{E_0}+t\|u_1\|_{E_1}\ \mid\ u_0\in E_0,\ u_1\in E_1,\ u=u_0+u_1\big\}.\]
Note that the function $t\mapsto K_t(u)$ is concave (and, in particular, Lebesgue measurable). Moreover, for every $t>0$, $K_t$ gives an equivalent norm on $E_0+E_1$. If $(E_0,E_1)$ and $(F_0,F_1)$ are two Banach couples and $T:(E_0,E_1)\to(F_0,F_1)$ a bounded operator, then for every $u\in E_0+E_1$ and $t>0$, we have $K_t(Tu,F_0,F_1)\pp\|T\|_{(E_0,E_1)\to(F_0,F_1)}K_t(u,E_0,E_1)$. In particular, if $(B_0,B_1)$ is a subcouple of a Banach couple $(A_0,A_1)$, then for every $b\in B_0+B_1$ and $t>0$, we have $K_t(b,A_0,A_1)\pp CK_t(b,B_0,B_1)$, where $C>0$ denotes the maximum between the norms of the two inclusions $B_0\to A_0, B_1\to A_1$.

The following concept on $K$-functionals was formally introduced by Pisier in \cite{PisierHardy} and will be essential in this paper. The terminology follows \cite{Narcisse2023}.

\begin{defi}
A subcouple $(B_0,B_1)$ of a Banach couple $(A_0,A_1)$ is \textit{$K$-closed} if there is a constant $C>0$ such that for every $b\in B_0+B_1$ and $t>0$, $K_t(b,B_0,B_1)\pp CK_t(b,A_0,A_1)$.
\end{defi}

$K$-closedness is intimately connected to the following property.

\begin{defi}
A subcouple $(B_0,B_1)$ of a Banach couple $(A_0,A_1)$ is \textit{$K$-complemented} if there is a constant $C>0$ such for every $b\in B_0+B_1$, whenever $b=a_0+a_1$ with $a_0\in A_0$, $a_1\in B_1$, then $b=b_0+b_1$ with $b_0\in B_0$, $b_1\in B_1$ and $\|b_0\|_{B_0}\pp C\|a_0\|_{A_0}$, $\|b_1\|_{B_1}\pp C\|a_1\|_{A_1}$.
\end{defi}

The definition of $K$-closedness used in \cite{Kislyakov} matches the above definition. The following proposition explains how the two definitions are related.

\begin{prop}
Let $(B_0,B_1)$ be a subcouple of a Banach couple $(A_0,A_1)$. The following statements are equivalent.
\begin{enumerate}
    \item[{\rm1)}] $(B_0,B_1)$ is $K$-complemented in $(A_0,A_1)$.
    \item[{\rm2)}] $(B_0,B_1)$ is $K$-closed in $(A_0,A_1)$ and $B_j=(B_0+B_1)\cap A_j$ for $j=0,1$.
\end{enumerate}
\end{prop}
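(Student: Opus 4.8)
The plan is to prove $(1)\Leftrightarrow(2)$ by two soft arguments, using throughout the following elementary remark. Since a subcouple is given by \emph{closed-range} continuous injective inclusions $B_j\hookrightarrow A_j$, the bounded inverse theorem shows that the norm of $B_j$ is equivalent to the norm it inherits from $A_j$; in particular, whenever the set equality $B_j=(B_0+B_1)\cap A_j$ holds it automatically carries uniformly equivalent norms, and I will use this without further comment.

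For $(1)\Rightarrow(2)$, I would first deduce the identities $B_j=(B_0+B_1)\cap A_j$. The inclusion $\subseteq$ is trivial. For $\supseteq$ with $j=0$, take $b\in(B_0+B_1)\cap A_0$ and feed the trivial decomposition $b=b+0$ (first summand in $A_0$, second $0$) into the definition of $K$-complementedness: the resulting $b=b_0+b_1$ has $\|b_1\|_{B_1}\pp C\|0\|=0$, so $b=b_0\in B_0$; the case $j=1$ is symmetric, using $b=0+b$. Next, for $K$-closedness, fix $b\in B_0+B_1$ and $t>0$; for \emph{any} decomposition $b=a_0+a_1$ with $a_j\in A_j$, $K$-complementedness gives $b=b_0+b_1$ with $b_j\in B_j$ and $\|b_j\|_{B_j}\pp C\|a_j\|_{A_j}$, whence
\[K_t(b,B_0,B_1)\pp\|b_0\|_{B_0}+t\|b_1\|_{B_1}\pp C\big(\|a_0\|_{A_0}+t\|a_1\|_{A_1}\big),\]
and taking the infimum over all such decompositions yields $K_t(b,B_0,B_1)\pp CK_t(b,A_0,A_1)$.

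For $(2)\Rightarrow(1)$, I would argue directly. Let $b\in B_0+B_1$ with $b=a_0+a_1$, $a_j\in A_j$, and set $s=\|a_0\|_{A_0}$, $r=\|a_1\|_{A_1}$. If $r=0$ then $b=a_0\in(B_0+B_1)\cap A_0=B_0$, so $b=b+0$ works: the norm equivalence gives $\|b\|_{B_0}\pp C'\|a_0\|_{A_0}$ and the $B_1$-part is $0$; the case $s=0$ is symmetric. If $r,s>0$, evaluate the $K$-functional at the \emph{balance point} $t=s/r$: the given decomposition gives $K_{s/r}(b,A_0,A_1)\pp s+(s/r)r=2s$, so $K$-closedness yields $K_{s/r}(b,B_0,B_1)\pp 2Cs$; choosing $b=b_0+b_1$ with $b_j\in B_j$ and $\|b_0\|_{B_0}+(s/r)\|b_1\|_{B_1}\pp 4Cs$ gives $\|b_0\|_{B_0}\pp 4C\|a_0\|_{A_0}$ and $\|b_1\|_{B_1}\pp 4C\|a_1\|_{A_1}$, i.e. $K$-complementedness.

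Everything here is fairly routine; the step I expect to need the most thought is the converse $(2)\Rightarrow(1)$, namely the idea of specializing the $K$-functional inequality to $t=s/r$ in order to split a single estimate between $K$-functionals into the two separate component estimates demanded by $K$-complementedness, together with the careful disposal of the degenerate cases $r=0$ and $s=0$ — which is exactly the place where the identities $B_j=(B_0+B_1)\cap A_j$, and the attendant equivalence of norms coming from the closed-range hypothesis, are genuinely used. No regularity hypothesis on $(B_0,B_1)$ is needed anywhere.
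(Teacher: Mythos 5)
Your proof is correct and follows essentially the same route as the paper: the forward implication is the same routine verification the paper dismisses as straightforward, and for the converse you use exactly the paper's device of evaluating the $K$-functional at the balance point $t=\|a_0\|_{A_0}/\|a_1\|_{A_1}$ and disposing of the degenerate cases via the identities $B_j=(B_0+B_1)\cap A_j$. The only (harmless) differences are bookkeeping: your constant is $4C$ where the paper writes $2C$, because you explicitly account for the near-optimal choice of decomposition in the $K$-functional for $(B_0,B_1)$.
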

\begin{proof}
The implication $1)\Rightarrow2)$ is straightforward. Assume that $2)$ holds with constant $C>0$. Let $b\in B_0+B_1$ such that $b=a_0+a_1$ with $a_0\in A_0$, $a_1\in A_1$. If $a_0=0$ or $a_1=0$, then $b\in(B_0+B_1)\cap A_0=B_0$ or $b\in(B_0+B_1)\cap A_1=B_1$, and the conclusion is trivial, so we can assume that $a_0\neq0,a_1\neq0$. Then we can use the $K$-closedness assumption with $t=\|a_0\|_{A_0}/\|a_1\|_{A_1}$, which gives $b_0\in B_0$, $b_1\in B_1$ such that $b=b_0+b_1$ and $\|b_0\|_{B_0}+t\|b_1\|_{B_1}\pp C(\|a_0\|_{A_0}+t\|a_1\|_{A_1})=2C\|a_0\|_{A_0}$. Hence $\|b_0\|_{B_0}\pp 2C\|a_0\|_{A_0}$ and $\|b_1\|_{B_1}\pp 2C\|a_0\|_{A_0}/t=2C\|a_1\|_{A_1}$.
\end{proof}

\begin{rem}
If $(B_0,B_1)$ is a complemented subcouple of a Banach couple $(A_0,A_1)$, then it is obviously $K$-complemented.     
\end{rem}

The following useful result is due to Pisier. It will be used several times in the sequel. A complete proof can be found in \cite[Theorem 6.1]{JansonInterpolation}.

\begin{theo}[Pisier]\label{PisierLemma}
Let $(A_0,A_1)$ be a regular Banach couple and $(B_0,B_1)$ be a regular subcouple of $(A_0,A_1)$. Then the following statements are equivalent.
\begin{enumerate}
    \item[{\rm1)}] $(B_0,B_1)$ is $K$-closed in $(A_0,A_1)$.
    \item[{\rm2)}] $(B_0^{\bot},B_1^{\bot})$ is $K$-closed in $(A_0^*,A_1^*)$.
\end{enumerate}
\end{theo}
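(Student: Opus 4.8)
The plan is to obtain $1)\Leftrightarrow2)$ by convex duality. For a fixed $t>0$, $K_t(\cdot\,,A_0,A_1)$ is a norm on $A_0+A_1$, equivalent to the sum norm, with closed unit ball $\overline{\conv}(B_{A_0}\cup t^{-1}B_{A_1})$. The engine of the proof is the bipolar formula: since $(A_0,A_1)$ is regular, $(A_0+A_1)^*=A_0^*\cap A_1^*$ isometrically, whence
\[
K_t(u,A_0,A_1)=\sup\bigl\{\,|\langle u,\phi\rangle| : \phi\in A_0^*\cap A_1^*,\ \|\phi\|_{A_0^*}\le1,\ \|\phi\|_{A_1^*}\le t\,\bigr\}\qquad(u\in A_0+A_1),
\]
i.e.\ the $K$-functional and the weighted-maximum norm $\phi\mapsto\max(\|\phi\|_{A_0^*},t^{-1}\|\phi\|_{A_1^*})$ on $A_0^*\cap A_1^*$ are mutually dual. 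The same formula holds with $A_j$ replaced by $B_j$ (here regularity of the subcouple is used), while its ``easy half'' $|\langle u,\phi\rangle|\le K_t(u,X_0,X_1)\max(\|\phi\|_{X_0^*},t^{-1}\|\phi\|_{X_1^*})$ and the companion identity $(X_0\cap X_1)^*=X_0^*+X_1^*$ hold for \emph{every} Banach couple. I will also use the Hahn--Banach identifications $B_j^*=A_j^*/B_j^{\bot}$ and $B_j^{\bot}=(A_j/B_j)^*$, and the elementary fact (contained in the Proposition above) that $K$-closedness of a subcouple lets one upgrade any decomposition of an element into one adapted to the subcouple, with control of the norms.

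Each implication then transfers decompositions across this duality. For $2)\Rightarrow1)$: fix $b\in B_0+B_1$; by the bipolar formula for $(B_0,B_1)$, the estimate $K_t(b,B_0,B_1)\lesssim K_t(b,A_0,A_1)$ follows as soon as every $\psi\in B_0^*\cap B_1^*$ with $\|\psi\|_{B_0^*}\le1$, $\|\psi\|_{B_1^*}\le t$ extends to some $\phi\in A_0^*\cap A_1^*$ with $\phi|_{B_0+B_1}=\psi$ and $\|\phi\|_{A_0^*}\lesssim1$, $\|\phi\|_{A_1^*}\lesssim t$ (then $|\langle b,\psi\rangle|=|\langle b,\phi\rangle|\lesssim K_t(b,A_0,A_1)$ by the easy half, and one takes the supremum over $\psi$). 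Such a $\phi$ is manufactured by extending $\psi$ off $B_0$ and off $B_1$ separately, by Hahn--Banach, to functionals $\phi_0\in A_0^*$, $\phi_1\in A_1^*$ of comparable norms; the difference $\phi_0-\phi_1$ annihilates $B_0\cap B_1$ and so is governed by the $K$-functional of $(A_0^*,A_1^*)$; statement~$2)$ then allows one to re-decompose it as $\xi_0+\xi_1$ with $\xi_j\in B_j^{\bot}$ of controlled norm, and $\phi:=\phi_0-\xi_0=\phi_1+\xi_1$ is the required extension. The implication $1)\Rightarrow2)$ is the mirror image: using the (unconditional) bipolar formula for the couple $(B_0^{\bot},B_1^{\bot})=((A_0/B_0)^*,(A_1/B_1)^*)$, the estimate $K_s(\eta,B_0^{\bot},B_1^{\bot})\lesssim K_s(\eta,A_0^*,A_1^*)$ reduces to lifting each $\dot w\in(A_0/B_0)\cap(A_1/B_1)$ to a $w\in A_0\cap A_1$ whose weighted-maximum norm is controlled by that of $\dot w$; one lifts $\dot w$ into $A_0$ and into $A_1$ separately, notes that the two lifts differ by an element of $\overline{B_0+B_1}$ of controlled $K$-functional, and corrects that difference via statement~$1)$. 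Matching the weights under $s=1/t$ and chasing the constants completes both directions.

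The main obstacle is the asymmetry of the duality: even for a regular $(A_0,A_1)$ the dual couple $(A_0^*,A_1^*)$ need not be regular, so the bipolar formula may be used as an \emph{equality} only on $(A_0,A_1)$ and $(B_0,B_1)$ — on the dual side one has only its unconditional easy half — and, hand in hand, $B_0^{\bot}+B_1^{\bot}$ need not be norm- or weak-$*$-closed in $A_0^*+A_1^*$, so that the element $\phi_0-\phi_1$ above lands a priori only in the larger annihilator $(B_0\cap B_1)^{\bot}$; the re-decomposition steps must therefore be carried out with an $\varepsilon$ of slack and a passage to the limit. Arranging matters so that each form of the bipolar formula is invoked only where it is valid, keeping track of the pre-annihilators, and propagating the constants through the chain is the entire technical content; apart from regularity (which is used precisely to make the bipolar formula an equality on the couples where it is needed) and the standard duality identities $(X_0+X_1)^*=X_0^*\cap X_1^*$, $(X_0\cap X_1)^*=X_0^*+X_1^*$, $B_j^{\bot}=(A_j/B_j)^*$, no further idea enters.
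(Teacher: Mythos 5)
The paper does not actually prove this theorem; it cites \cite[Theorem 6.1]{JansonInterpolation} for a complete proof, so your argument can only be judged on its own terms. Your overall plan --- dualizing the $K$-functional via the bipolar theorem and correcting Hahn--Banach extensions (resp.\ liftings) by means of the hypothesis on the other couple --- is indeed the standard strategy. But the obstacle you flag at the end is not a technicality that ``an $\varepsilon$ of slack and a passage to the limit'' can absorb; it is the crux, and as written your argument for $2)\Rightarrow1)$ does not close. Having extended $\psi|_{B_0}$ to $\phi_0\in A_0^*$ and $\psi|_{B_1}$ to $\phi_1\in A_1^*$, you must apply the $K$-closedness of $(B_0^{\bot},B_1^{\bot})$ to $\phi_0-\phi_1$, which presupposes $\phi_0-\phi_1\in B_0^{\bot}+B_1^{\bot}$. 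Yet $\phi_0-\phi_1=\xi_0+\xi_1$ with $\xi_j\in B_j^{\bot}$ holds if and only if $\phi_0-\xi_0=\phi_1+\xi_1$ is a common extension of $\psi$ in $A_0^*\cap A_1^*$: the membership you need is \emph{equivalent} to the existence of the extension you are trying to produce. A priori $\phi_0-\phi_1$ only annihilates $B_0\cap B_1$, i.e.\ lies in $(B_0\cap B_1)^{\bot}$, which is the weak-* closure of $B_0^{\bot}+B_1^{\bot}$ in $(A_0\cap A_1)^*$ and may be strictly larger (hypothesis $2)$ makes $B_0^{\bot}+B_1^{\bot}$ norm-closed, not weak-*-closed; compare $c_0\subset\ell_\infty$). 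Approximating $\phi_0-\phi_1$ weak-* by elements of $B_0^{\bot}+B_1^{\bot}$ gives no control of their $K$-functionals, so no limiting argument of the kind you gesture at applies. The mirror difficulty occurs in $1)\Rightarrow2)$, where the two lifts of $\dot w$ differ a priori only by an element of the closure of $B_0+B_1$.

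The repair is to reverse the orientation of the Hahn--Banach step so that the functional is globally defined from the outset. For $2)\Rightarrow1)$ one proves the inclusion of convex sets $(U_{A_0}+tU_{A_1})\cap(B_0+B_1)\subset C\,\overline{U_{B_0}+tU_{B_1}}$ (with $U_X$ the closed unit ball) by separation: take $\phi\in(A_0+A_1)^*=A_0^*\cap A_1^*$ in the polar of $U_{B_0}+tU_{B_1}$, i.e.\ $\|\phi|_{B_0}\|\le1$ and $\|\phi|_{B_1}\|\le t^{-1}$, and extend these restrictions to $\phi_0\in A_0^*$, $\phi_1\in A_1^*$ with the same bounds. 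Now $\phi-\phi_0\in B_0^{\bot}$ and $\phi-\phi_1\in B_1^{\bot}$ \emph{by construction}, so $\phi_1-\phi_0=(\phi-\phi_0)-(\phi-\phi_1)$ genuinely lies in $B_0^{\bot}+B_1^{\bot}$ with $K_t(\phi_1-\phi_0,A_0^*,A_1^*)\le2$; hypothesis $2)$ then produces $\mu\in B_0^{\bot}\cap B_1^{\bot}=(B_0+B_1)^{\bot}$ with $\|\phi-\mu\|_{A_0^*}\lesssim1$ and $\|\phi-\mu\|_{A_1^*}\lesssim t^{-1}$, which yields the separation bound since $\mu$ vanishes on $B_0+B_1$. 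The closure on $\overline{U_{B_0}+tU_{B_1}}$ is then removed by the usual geometric-series iteration inside the complete space $B_0+B_1$, and $1)\Rightarrow2)$ is handled by the symmetric argument. Your write-up contains all the raw materials for this, but at the decisive point the extension-of-$\psi$ formulation begs the question.
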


Now we turn to the construction of a family of exact interpolation functors arising from $K$-functionals. A \textit{$K$-method parameter} is a Banach space $\Phi$ of Lebesgue measurable functions on $\R_+^*$ that contains the function $t\mapsto 1\wedge t$ and such that if $f,g\in\Phi$ with $|g|\pp|f|$ then $\|g\|_\Phi\pp\|f\|_\Phi$. If $\Phi$ is a $K$-method parameter, then for every Banach couple $(E_0,E_1)$, the set
\[K_\Phi(E_0,E_1):=\big\{u\in E_0+E_1\ \mid\ t\mapsto K_t(u)\in\Phi\big\}\]
is a subspace of $E_0+E_1$ and a Banach space under the norm $\|\cdot\|_{K_\Phi(E_0,E_1)}$ defined as follows,
\[\|u\|_{K_\Phi(E_0,E_1)}:=\|t\mapsto K_t(u)\|_{\Phi}.\]
This yields an exact interpolation functor $K_\Phi$ called the \textit{$K$-method} with parameter $\Phi$. 

\begin{prop}\label{Kclosedness}
Let $(B_0,B_1)$ be a $K$-closed subcouple of a Banach couple $(A_0,A_1)$ with constant $C>0$. Then for every $K$-method parameter $\Phi$, we have
\[K_\Phi(B_0,B_1)=(B_0+B_1)\cap K_\Phi(A_0,A_1),\]
and the norm of $K_\Phi(A_0,A_1)$ induces an equivalent norm on $K_\Phi(B_0,B_1)$ (in other words, $K_\Phi(B_0,B_1)$ is closed in $K_\Phi(A_0,A_1)$).
\end{prop}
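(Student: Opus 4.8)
The plan is to deduce this from the definition of $K$-closedness together with the fact that $K_\Phi$ is built directly out of the $K$-functional. First I would unwind what needs to be shown: that the two spaces coincide as sets and that their norms are equivalent, which by the open mapping theorem (or directly) amounts to two-sided norm estimates. The inclusion $K_\Phi(B_0,B_1)\subset(B_0+B_1)\cap K_\Phi(A_0,A_1)$ together with the inequality $\|u\|_{K_\Phi(A_0,A_1)}\le\|u\|_{K_\Phi(B_0,B_1)}$ (possibly up to the constant coming from the inclusions $B_j\to A_j$) is the ``easy'' direction: it is a general fact about subcouples, coming from $K_t(b,A_0,A_1)\le C' K_t(b,B_0,B_1)$ for all $t$, which was already recorded in the excerpt, combined with the monotonicity axiom $|g|\le|f|\Rightarrow\|g\|_\Phi\le\|f\|_\Phi$ of a $K$-method parameter. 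Since $t\mapsto K_t(b,A_0,A_1)$ is then pointwise dominated by $C'$ times $t\mapsto K_t(b,B_0,B_1)\in\Phi$, we get $t\mapsto K_t(b,A_0,A_1)\in\Phi$ and the norm bound.

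The substantive direction uses $K$-closedness. Let $b\in(B_0+B_1)\cap K_\Phi(A_0,A_1)$; then $t\mapsto K_t(b,A_0,A_1)\in\Phi$, and by $K$-closedness with constant $C$ we have the pointwise inequality $K_t(b,B_0,B_1)\le C\,K_t(b,A_0,A_1)$ for every $t>0$. Applying again the monotonicity property of the $K$-method parameter $\Phi$, we conclude that $t\mapsto K_t(b,B_0,B_1)$ lies in $\Phi$ with $\|t\mapsto K_t(b,B_0,B_1)\|_\Phi\le C\|t\mapsto K_t(b,A_0,A_1)\|_\Phi$, i.e.\ $b\in K_\Phi(B_0,B_1)$ with $\|b\|_{K_\Phi(B_0,B_1)}\le C\|b\|_{K_\Phi(A_0,A_1)}$. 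Combining the two directions gives the set equality and the equivalence of norms, and in particular the image of $K_\Phi(B_0,B_1)$ is closed in $K_\Phi(A_0,A_1)$.

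One small point I would be careful about is measurability: the function $t\mapsto K_t(b,B_0,B_1)$ is concave hence Lebesgue measurable (as already noted in the excerpt for $K$-functionals), so it is a legitimate candidate element of $\Phi$, and the domination argument is then purely about the lattice/monotonicity structure of $\Phi$. I would also note, for the parenthetical ``in other words'' claim, that once the norms are equivalent the natural inclusion $K_\Phi(B_0,B_1)\hookrightarrow K_\Phi(A_0,A_1)$ is a bounded injection with bounded inverse on its range, hence has closed range. Overall this proof is short and the only ``obstacle'' is bookkeeping the constants; there is no real analytic difficulty, since all the content is already in the hypothesis that $(B_0,B_1)$ is $K$-closed in $(A_0,A_1)$ and in the defining monotonicity axiom of a $K$-method parameter.
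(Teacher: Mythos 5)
Your proof is correct and is exactly the argument the paper intends: the paper states this proposition without proof, treating it as an immediate consequence of the pointwise comparison of $K$-functionals (in both directions, using the subcouple inequality one way and $K$-closedness the other) together with the monotonicity axiom of a $K$-method parameter. Your attention to measurability and to the constant from the inclusions $B_j\to A_j$ is appropriate bookkeeping and nothing is missing.
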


\subsubsection{Real interpolation spaces} Let $(E_0,E_1)$ be a Banach couple. If $0<\theta<1$ and $1\pp p\pp\infty$, the \textit{real interpolation space} $(E_0,E_1)_{\theta,p}$ is the interpolation space $\Phi_{\theta,p}(E_0,E_1)$, where $\Phi_{\theta,p}$ denotes the $K$-method parameter such that for every Lebesgue-measurable function $f$ on $\R_+^*$, we have $f\in\Phi_{\theta,p}$ if and only if the function $t\mapsto t^{-\theta}f(t)$ belongs to the usual Lebesgue space $L_p(\R_+^*)$ where $\R_+^*$ is equipped with its multiplicative Haar measure $t^{-1}dt$, in which case we have $\|f\|_{\Phi_{\theta,p}}=\|t\mapsto t^{-\theta}f(t)\|_{L_p(\R_+^*)}$. We recall the reiteration theorem for the real interpolation method due to Lions and Peetre. A complete proof can be found in \cite[Theorem 3.1]{Holmstedt}.

\begin{theo}\label{Peetre}
Let $(E_0,E_1)$ be a Banach couple. We set
\[E_{\theta_0}:=(E_0,E_1)_{\theta_0,p_0}\ \ \ \ \ \text{and}\ \ \ \ \ E_{\theta_1}:=(E_0,E_1)_{\theta_1,p_1},\]
where $0\pp\theta_0<\theta_1\pp1$ and $1\pp p_0,p_1\pp\infty$ (with the convention $E_{\theta_0}=E_0$ if $\theta_0=0$ and $E_{\theta_1}=E_1$ if $\theta_1=1$). Let $0<\lambda<1$ and $1\pp p\pp\infty$. Then,
\[(E_{\theta_0},E_{\theta_1})_{\lambda,p}=(E_0,E_1)_{\theta_\lambda,p}\]
with equivalent norms, and where $\theta_\lambda:=(1-\lambda)\theta_0+\lambda\theta_1$.
\end{theo}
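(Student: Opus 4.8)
The strategy is to reduce the statement to \emph{Holmstedt's formula}, which expresses the $K$-functional of the couple $(E_{\theta_0},E_{\theta_1})$ in terms of that of the underlying couple $(E_0,E_1)$, and then to invoke weighted Hardy inequalities on the multiplicative group $(\R_+^*,\tfrac{ds}{s})$. Throughout set $K_s:=K_s(\,\cdot\,,E_0,E_1)$ and $\alpha:=\theta_1-\theta_0>0$. The key intermediate step is to establish that for $u\in E_{\theta_0}+E_{\theta_1}$ and $t>0$,
\[K_t(u,E_{\theta_0},E_{\theta_1})\ \asymp\ \Big(\int_0^{t^{1/\alpha}}\!\big(s^{-\theta_0}K_s(u)\big)^{p_0}\,\tfrac{ds}{s}\Big)^{1/p_0}\ +\ t\,\Big(\int_{t^{1/\alpha}}^{\infty}\!\big(s^{-\theta_1}K_s(u)\big)^{p_1}\,\tfrac{ds}{s}\Big)^{1/p_1},\]
with constants depending only on $\theta_0,\theta_1,p_0,p_1$ and the usual convention for the $L_\infty$-norm when $p_0$ or $p_1$ equals $\infty$. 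In the boundary cases $\theta_0=0$ (so $E_{\theta_0}=E_0$) and $\theta_1=1$ (so $E_{\theta_1}=E_1$) one of the two terms degenerates and is replaced by the corresponding expression in which $K_s(u)$ is evaluated at a rescaled parameter; the argument below then applies verbatim, so I focus on $0<\theta_0<\theta_1<1$.

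For the lower bound in Holmstedt's formula I would take an arbitrary decomposition $u=v_0+v_1$ with $v_0\in E_{\theta_0}$, $v_1\in E_{\theta_1}$, and bound the right-hand side by $\|v_0\|_{E_{\theta_0}}+t\|v_1\|_{E_{\theta_1}}$. The ingredients are: subadditivity of $s\mapsto K_s$; the pointwise estimate $K_s(v)\pp C_{\vartheta,q}\,s^{\vartheta}\|v\|_{(E_0,E_1)_{\vartheta,q}}$ (valid for $0<\vartheta<1$, $1\pp q\pp\infty$, obtained by restricting the integral defining $\|v\|_{(E_0,E_1)_{\vartheta,q}}$ to a dilation interval $[\sigma,2\sigma]$ and using that $s\mapsto K_s(v)$ is nondecreasing); and the convergence, guaranteed by $\alpha>0$, of $\int_0^{\sigma}s^{\alpha q}\tfrac{ds}{s}$ and $\int_{\sigma}^{\infty}s^{-\alpha q}\tfrac{ds}{s}$, which is exactly what lets the cross-terms (with $\sigma=t^{1/\alpha}$) be absorbed. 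For the upper bound I may assume the right-hand side finite; then, with $\sigma:=t^{1/\alpha}$, I would pick a near-optimal decomposition $u=a_0+a_1$ of $u$ in $(E_0,E_1)$ at scale $\sigma$ (so $\|a_0\|_{E_0}+\sigma\|a_1\|_{E_1}\pp 2K_\sigma(u)$), check---using $\theta_0>0$, $\theta_1<1$, and the finiteness of the right-hand side---that in fact $a_0\in E_{\theta_0}$ and $a_1\in E_{\theta_1}$, and then estimate $\|a_0\|_{E_{\theta_0}}$ and $\|a_1\|_{E_{\theta_1}}$ by splitting the defining integrals at $\sigma$ and using $K_s(a_j)\pp 3K_s(u)$ on the appropriate side of $\sigma$ together with $K_s(a_0)\pp 2K_\sigma(u)$ and $K_s(a_1)\pp 2(s/\sigma)K_\sigma(u)$ on the other; this reproduces exactly the two terms on the right-hand side.

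Granting Holmstedt's formula, I would expand
\[\|u\|_{(E_{\theta_0},E_{\theta_1})_{\lambda,p}}=\big\|\,t\mapsto t^{-\lambda}K_t(u,E_{\theta_0},E_{\theta_1})\,\big\|_{L_p(\R_+^*)},\]
substitute the formula, and change variables $t=r^{\alpha}$; writing $\theta_\lambda:=\theta_0+\lambda\alpha=(1-\lambda)\theta_0+\lambda\theta_1$, the desired equivalence $\|u\|_{(E_{\theta_0},E_{\theta_1})_{\lambda,p}}\asymp\|u\|_{(E_0,E_1)_{\theta_\lambda,p}}$ then reduces to
\[\Big\|\,r\mapsto r^{-\lambda\alpha}\Big(\int_0^{r}\!(s^{-\theta_0}K_s(u))^{p_0}\,\tfrac{ds}{s}\Big)^{1/p_0}\,\Big\|_{L_p(\R_+^*)}\ \asymp\ \big\|\,s\mapsto s^{-\theta_\lambda}K_s(u)\,\big\|_{L_p(\R_+^*)}\]
and the symmetric estimate for the tail term $r\mapsto r^{(1-\lambda)\alpha}\big(\int_{r}^{\infty}(s^{-\theta_1}K_s(u))^{p_1}\tfrac{ds}{s}\big)^{1/p_1}$. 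In each case ``$\gtrsim$'' is immediate on restricting the inner integral to a dilation interval adjacent to $r$ and using the monotonicity of $s\mapsto K_s(u)$. For ``$\lesssim$'' I would use that $s\mapsto K_s(u)$ is \emph{doubling}, and in fact comparable at any two points within a bounded dilation ratio (a consequence of concavity and monotonicity, via $K_s(u)\pp K_{2s}(u)\pp 2K_s(u)$); hence each inner integral is comparable to the sum over the dyadic dilates $2^{-k}r$ (resp.\ $2^{k}r$), $k\pg 0$, of the integrand, and a dyadic reorganisation---using the translation invariance of $\tfrac{ds}{s}$ and the triangle inequality in $L_{p/p_j}(\R_+^*)$ (or its quasi-norm version when $p_j>p$)---bounds the left-hand side by $\|s\mapsto s^{-\theta_\lambda}K_s(u)\|_{L_p(\R_+^*)}$ times the sum of a geometric series, which converges because $\lambda\alpha>0$ and $(1-\lambda)\alpha>0$. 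This works uniformly in $p_0,p_1\in[1,\infty]$, and the boundary cases $\theta_0=0$, $\theta_1=1$ follow the same way from the degenerate form of Holmstedt's formula.

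The main obstacle is twofold: proving Holmstedt's formula itself, where one must track carefully the interplay between the three parameters $0<\theta_0<\theta_1<1$ and the several one-sided weighted integrals, as well as the pointwise size of $K_s$ on the real interpolation spaces; and running the Hardy step when the inner exponents $p_0,p_1$ exceed the outer exponent $p$ (in particular at the $\infty$ endpoints), where a purely measure-theoretic Hardy inequality fails and one genuinely needs the doubling property of the $K$-functional rather than a soft argument.
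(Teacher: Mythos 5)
Your proposal is correct and follows exactly the route the paper defers to: the paper gives no proof of Theorem \ref{Peetre} but cites Holmstedt's Theorem 3.1, whose argument is precisely the two-step scheme you describe (Holmstedt's formula for $K_t(\cdot,E_{\theta_0},E_{\theta_1})$ via a near-optimal decomposition at scale $t^{1/\alpha}$, followed by the weighted Hardy estimates, with the dyadic-doubling argument correctly handling the regime $p_j>p$ and the endpoints). Your treatment of the cross-terms, the $3K_s(u)$ bounds, and the degenerate boundary cases $\theta_0=0$, $\theta_1=1$ all match the standard proof, so there is nothing to contrast.
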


In the sequel, by a nontrivial interval, we will mean an interval which is not empty, nor a singleton. From the above theorem we easily deduce the following corollary.

\begin{coro}\label{CoroPeetre}
Let $I_0=[p_0,q_0]\subset[1,\infty]$ be a nontrivial closed interval and $(E_p)_{p\in I_0}$ be a Banach family. We assume that for every $0<\theta<1$, we have $(E_{p_0},E_{q_0})_{\theta,r}=E_r$ with equivalent norms, where $1/r=(1-\theta)/p_0+\theta/q_0$. Then, for every $p,q\in I_0$, $p\neq q$ and $0<\theta<1$, we have $(E_p,E_q)_{\theta,r}=E_r$ with equivalent norms, where $1/r=(1-\theta)/p+\theta/q$.  
\end{coro}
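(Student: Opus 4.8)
\emph{Proof proposal.} The plan is to deduce the statement from a single application of the reiteration theorem (Theorem~\ref{Peetre}), using $(E_{p_0},E_{q_0})$ as the base couple. First, since the real interpolation method is symmetric, $(E_p,E_q)_{\theta,r}=(E_q,E_p)_{1-\theta,r}$ with equal norms, and the relation $1/r=(1-\theta)/p+\theta/q$ is unchanged when $(p,\theta)$ is swapped with $(q,1-\theta)$; so I may and do assume $p<q$.

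Since $I_0$ is a nontrivial interval we have $p_0<q_0$, and the affine map $s\mapsto(1-s)/p_0+s/q_0$ is a strictly decreasing bijection from $[0,1]$ onto $[1/q_0,1/p_0]$ (with the convention $1/q_0=0$ when $q_0=\infty$). I would define $\theta_0,\theta_1\in[0,1]$ by
\[\frac1p=\frac{1-\theta_0}{p_0}+\frac{\theta_0}{q_0},\qquad\frac1q=\frac{1-\theta_1}{p_0}+\frac{\theta_1}{q_0},\]
so that $p_0\pp p<q\pp q_0$ forces $0\pp\theta_0<\theta_1\pp1$; in particular $\theta_0<1$ and $\theta_1>0$. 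The next step is to identify $E_p$ with $(E_{p_0},E_{q_0})_{\theta_0,p}$ and $E_q$ with $(E_{p_0},E_{q_0})_{\theta_1,q}$, up to equivalent norms: when $\theta_0\in(0,1)$ (resp.\ $\theta_1\in(0,1)$) this is exactly the hypothesis applied with $r=p$ (resp.\ $r=q$), and when $p=p_0$, so $\theta_0=0$ (resp.\ $q=q_0$, so $\theta_1=1$), it holds by the convention in Theorem~\ref{Peetre} that the $0$-endpoint of the real method returns the left space $E_{p_0}$ (resp.\ the $1$-endpoint returns the right space $E_{q_0}$).

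Now fix $0<\theta<1$ and set $1/r=(1-\theta)/p+\theta/q$. Applying Theorem~\ref{Peetre} to the base couple $(E_{p_0},E_{q_0})$ with parameters $\theta_0<\theta_1$ and secondary indices $p,q$, running parameter $\theta$ and final index $r$, and using that the real method is insensitive (up to equivalent norms) to an equivalent renorming of its two endpoint spaces, gives
\[(E_p,E_q)_{\theta,r}=(E_{p_0},E_{q_0})_{\mu,r}\]
with equivalent norms, where $\mu:=(1-\theta)\theta_0+\theta\theta_1$ lies in $(\theta_0,\theta_1)\subset(0,1)$ because $\theta_0<\theta_1$. Expanding $1-\mu=(1-\theta)(1-\theta_0)+\theta(1-\theta_1)$ and using the defining relations for $\theta_0$ and $\theta_1$ yields $1/r=(1-\mu)/p_0+\mu/q_0$; since $\mu\in(0,1)$ the hypothesis applies and gives $(E_{p_0},E_{q_0})_{\mu,r}=E_r$ with equivalent norms. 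Combining the last two displays finishes the proof.

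The argument is essentially bookkeeping, and the only points that need care are the following: checking that the endpoint cases $p=p_0$ and $q=q_0$, where the hypothesis is silent, are covered by the conventions of Theorem~\ref{Peetre}; and verifying that the interpolated parameter $\mu$ stays strictly inside $(0,1)$, so that the hypothesis can be invoked at the last step. Everything else is the affine arithmetic of the exponents, together with the standard stability of the $K$-functional under an equivalent renorming of the endpoint spaces.
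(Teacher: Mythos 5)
Your proposal is correct and is exactly the intended argument: the paper offers no written proof beyond ``we easily deduce'' from Theorem~\ref{Peetre}, and your reduction to $p<q$, the identification of $E_p$ and $E_q$ as $(E_{p_0},E_{q_0})_{\theta_0,p}$ and $(E_{p_0},E_{q_0})_{\theta_1,q}$ (with the endpoint conventions), and the final check that $\mu=(1-\theta)\theta_0+\theta\theta_1$ stays in $(0,1)$ and satisfies $1/r=(1-\mu)/p_0+\mu/q_0$ is precisely that deduction, carried out carefully.
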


We now recall Wolff's result on the real interpolation method, which can be interpreted as a converse to the reiteration theorem. A complete proof can be found in \cite{Wolff}.

\begin{theo}[Wolff]\label{Wolff}
Let $(E_0,E_{\theta_0},E_{\theta_1},E_1)$ be a Banach family. Assume that
\[E_{\theta_0}:=(E_0,E_{\theta_1})_{\eta_0,p_0}\ \ \ \ \ \text{and}\ \ \ \ \ E_{\theta_1}:=(E_{\theta_0},E_1)_{\eta_1,p_1},\]
with equivalent norms, where $0<\eta_0,\eta_1<1$ and $1\pp p_0,p_1\pp\infty$. Then,
\[E_{\theta_0}=(E_0,E_1)_{\theta_0,p_0}\ \ \ \ \ \text{and}\ \ \ \ \ E_{\theta_1}=(E_0,E_1)_{\theta_1,p_1}\]
with equivalent norms, where $\theta_0:=\frac{\eta_0\eta_1}{1-\eta_0+\eta_0\eta_1}$ and $\theta_1:=\frac{\eta_1}{1-\eta_0+\eta_0\eta_1}$ are determined by the relations $\theta_1=(1-\eta_1)\theta_0+\eta_1$ and $\theta_0=\eta_0\theta_1$.
\end{theo}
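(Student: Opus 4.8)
The plan is to prove the single identity $E_{\theta_0}=(E_0,E_1)_{\theta_0,p_0}$; the identity for $E_{\theta_1}$ then follows from Theorem~\ref{Peetre}, since applying reiteration to the couple $(E_0,E_1)$ with the intermediate space $E_{\theta_0}=(E_0,E_1)_{\theta_0,p_0}$ and the endpoint $E_1$ gives $(E_{\theta_0},E_1)_{\eta_1,p_1}=(E_0,E_1)_{(1-\eta_1)\theta_0+\eta_1,p_1}=(E_0,E_1)_{\theta_1,p_1}$, and the second hypothesis identifies the left-hand side with $E_{\theta_1}$ --- the relation $\theta_1=(1-\eta_1)\theta_0+\eta_1$ being exactly what makes the exponents agree. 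Write $X:=(E_0,E_1)_{\theta_0,p_0}$. From the two hypotheses I would first extract the elementary facts used below: the interpolation inequalities $\|w\|_{E_{\theta_0}}\lesssim\|w\|_{E_0}^{1-\eta_0}\|w\|_{E_{\theta_1}}^{\eta_0}$ and $\|w\|_{E_{\theta_1}}\lesssim\|w\|_{E_{\theta_0}}^{1-\eta_1}\|w\|_{E_1}^{\eta_1}$; the quasi-concavity bounds $K_t(w,E_0,E_{\theta_1})\lesssim t^{\eta_0}\|w\|_{E_{\theta_0}}$ for $w\in E_{\theta_0}$ and $K_t(w,E_{\theta_0},E_1)\lesssim t^{\eta_1}\|w\|_{E_{\theta_1}}$ for $w\in E_{\theta_1}$; and, on substituting one interpolation inequality into the other and solving, the composite bound $\|w\|_{E_{\theta_j}}\lesssim\|w\|_{E_0}^{1-\theta_j}\|w\|_{E_1}^{\theta_j}$ for $w\in E_0\cap E_1$ ($j=0,1$), where the exponents are reconciled precisely by $\theta_0=\eta_0\theta_1$ and $\theta_1=(1-\eta_1)\theta_0+\eta_1$.

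For the inclusion $X\hookrightarrow E_{\theta_0}$ I would pass to the $J$-method (equivalent to the $K$-method by the equivalence theorem). Given $x\in X$, pick a $J$-representation $x=\int_0^\infty\widetilde u(s)\,\frac{ds}{s}$ with $\widetilde u(s)\in E_0\cap E_1$ and $\big\|\,s^{-\theta_0}J_s(\widetilde u(s),E_0,E_1)\big\|_{L_{p_0}(ds/s)}\lesssim\|x\|_X$, where $J_s(w,E_0,E_1):=\max\{\|w\|_{E_0},\,s\|w\|_{E_1}\}$. The composite quasi-concavity bound gives $\|\widetilde u(s)\|_{E_{\theta_1}}\lesssim s^{-\theta_1}J_s(\widetilde u(s),E_0,E_1)$, hence $J_t(\widetilde u(s),E_0,E_{\theta_1})\lesssim J_s(\widetilde u(s),E_0,E_1)$ as soon as $t\asymp s^{\theta_1}$. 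Reparametrising via $s=t^{1/\theta_1}$ and setting $u(t):=\theta_1^{-1}\widetilde u(t^{1/\theta_1})$, one gets $x=\int_0^\infty u(t)\,\frac{dt}{t}$ with $u(t)\in E_0\cap E_{\theta_1}$, and changing variables (note $t^{-\eta_0}=s^{-\eta_0\theta_1}=s^{-\theta_0}$) yields $\big\|\,t^{-\eta_0}J_t(u(t),E_0,E_{\theta_1})\big\|_{L_{p_0}(dt/t)}\lesssim\|x\|_X$; by the $J$-description of $E_{\theta_0}=(E_0,E_{\theta_1})_{\eta_0,p_0}$ this bounds $\|x\|_{E_{\theta_0}}$.

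For the reverse inclusion $E_{\theta_0}\hookrightarrow X$, where the circularity of the hypotheses is felt, I would use an iterated two-fold $K$-decomposition together with the same reparametrisation. Fix $x\in E_{\theta_0}$ and $s>0$, put $r:=\alpha s^{\theta_1}$ and $u:=\beta s^{1-\theta_0}$ with $\beta$ fixed and $\alpha$ large, and iterate as follows: with $w_0:=x$, split $w_n=a_n+b_n$ near-optimally for $K_r(w_n,E_0,E_{\theta_1})$, then split the $E_{\theta_1}$-part $b_n=b_n'+b_n''$ near-optimally for $K_u(b_n,E_{\theta_0},E_1)$, and set $w_{n+1}:=b_n'\in E_{\theta_0}$. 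The quasi-concavity bounds give $\|w_{n+1}\|_{E_{\theta_0}}\lesssim(u^{\eta_1}r^{\eta_0-1})\|w_n\|_{E_{\theta_0}}$, where $u^{\eta_1}r^{\eta_0-1}$ equals a fixed constant --- the $s$-exponent $(1-\theta_0)\eta_1+(\eta_0-1)\theta_1$ vanishing by the two relations --- and is $<1$ for $\alpha$ large; hence $w_n\to0$ and, since $w_n=a_n+b_n''+w_{n+1}$, we have $x=\sum_k(a_k+b_k'')$. Tracking the norms through the $K$-functionals --- at stage $0$ directly via $K_r(x,E_0,E_{\theta_1})\asymp K_{s^{\theta_1}}(x,E_0,E_{\theta_1})$, at later stages via the geometric decay of $\|w_n\|_{E_{\theta_0}}$ together with the identities $r^{\eta_0}\asymp s^{\theta_0}$ and $u^{\eta_1}r^{\eta_0-1}=O(1)$ --- one finds $\xi_0:=\sum_k a_k\in E_0$ and $\xi_1:=\sum_k b_k''\in E_1$ with $\|\xi_0\|_{E_0}+s\|\xi_1\|_{E_1}\lesssim K_{s^{\theta_1}}(x,E_0,E_{\theta_1})$, so that $K_s(x,E_0,E_1)\lesssim K_{s^{\theta_1}}(x,E_0,E_{\theta_1})$ for every $s>0$. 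Reparametrising $s=t^{1/\theta_1}$ as before turns this into $\|x\|_X=\big\|\,s^{-\theta_0}K_s(x,E_0,E_1)\big\|_{L_{p_0}(ds/s)}\lesssim\big\|\,t^{-\eta_0}K_t(x,E_0,E_{\theta_1})\big\|_{L_{p_0}(dt/t)}\approx\|x\|_{E_{\theta_0}}$, completing the identification $E_{\theta_0}=X$.

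The main obstacle is the mutual dependence of the two hypotheses --- each of $E_{\theta_0},E_{\theta_1}$ is built from the other --- so that no single decomposition of an element controls both ends at once; the iteration in the third paragraph is the device that breaks this, and it works only because the two numerical relations between $\eta_0,\eta_1,\theta_0,\theta_1$ force every relevant power of the running scale to carry the exponent $0$ (for the per-stage contraction factor and the scale-matching terms) or $\theta_0$ (for $r^{\eta_0}$). A subtlety worth flagging is that one must keep the $K$-functionals of the intermediate couples all the way through the iteration rather than bounding them crudely by $\|\cdot\|_{E_{\theta_0}}$: the crude estimate would only land $E_{\theta_0}$ in the Marcinkiewicz space $(E_0,E_1)_{\theta_0,\infty}$, whereas the $K$-functional version, after reparametrisation, recovers the full $L_{p_0}$-norm. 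Finally there is the routine burden of first carrying the argument out on a dense class of elements --- on which the $J$-representations, the relevant intersections, and the composite quasi-concavity estimate are genuinely available --- and then extending by density, using the regularity implicit in the notion of a Banach family.
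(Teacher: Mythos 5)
The paper does not actually prove this theorem; it cites Wolff's original note for a complete proof, so there is no internal argument to compare against. Your architecture is nonetheless the standard one: one inclusion by the $J$-method, the other by an iterated two-scale $K$-decomposition, and the $E_{\theta_1}$-identity recovered from the $E_{\theta_0}$-identity by reiteration. The third paragraph is sound: the choices $r=\alpha s^{\theta_1}$, $u=\beta s^{1-\theta_0}$ do make the contraction factor $u^{\eta_1}r^{\eta_0-1}$ independent of $s$ and small for large $\alpha$, the telescoping sum converges in $E_0$ and $E_1$ separately, and the resulting bound $K_s(x,E_0,E_1)\lesssim K_{s^{\theta_1}}(x,E_0,E_{\theta_1})$ reparametrises correctly to $\|x\|_{(E_0,E_1)_{\theta_0,p_0}}\lesssim\|x\|_{E_{\theta_0}}$. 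The first paragraph's reduction via Theorem \ref{Peetre} is also correct.

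The gap is in the second paragraph, at the composite estimate $\|w\|_{E_{\theta_1}}\lesssim\|w\|_{E_0}^{1-\theta_1}\|w\|_{E_1}^{\theta_1}$ for $w\in E_0\cap E_1$. Substituting one interpolation inequality into the other and ``solving'' divides by a power of $\|w\|_{E_{\theta_1}}$, which is legitimate only if one already knows $w\in E_{\theta_0}\cap E_{\theta_1}$; and the two hypotheses only give $E_0\cap E_{\theta_1}\subseteq E_{\theta_0}$ and $E_{\theta_0}\cap E_1\subseteq E_{\theta_1}$, so the membership $E_0\cap E_1\subseteq E_{\theta_0}\cap E_{\theta_1}$ cannot be obtained by chasing these inclusions: each one presupposes the other. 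This is not a peripheral technicality: the assertion ``every $w\in E_0\cap E_1$ lies in $E_{\theta_0}$ with $\|w\|_{E_{\theta_0}}\lesssim\|w\|_{E_0}^{1-\theta_0}\|w\|_{E_1}^{\theta_0}$'' is exactly the statement that $E_{\theta_0}$ is of class $J(\theta_0)$ for $(E_0,E_1)$, i.e.\ it already contains the inclusion $(E_0,E_1)_{\theta_0,p_0}\hookrightarrow E_{\theta_0}$ you are trying to prove (up to the passage from $p_0=1$ to general $p_0$). Nor can the proposed repair work as stated: a ``Banach family'' carries no density or regularity hypotheses, the fundamental lemma only delivers pieces $\widetilde u(s)\in E_0\cap E_1$, and there is no identified dense subclass of $E_0\cap E_1$ on which membership in $E_{\theta_1}$ is known a priori, so there is nothing to extend by density. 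Breaking precisely this circularity is the content of the corresponding lemma in Wolff's paper, and your write-up needs a genuine argument here rather than the deferral to a ``routine burden.''
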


\begin{coro}\label{CoroWolff}
Let $I_0=[p_0,q_0]\subset[1,\infty]$ be a nontrivial closed interval and $(E_p)_{p\in I_0}$ be a Banach family. We make the following two assumptions:
\begin{enumerate}
    \item[{\rm(i)}] $I_0=I_1\cup\ldots\cup I_n$ where $I_1,\ldots,I_n\subset[1,\infty]$ are nontrivial closed intervals such that $I_k\cap I_{k+1}$ is nontrivial for every $k\in\{1,\ldots,n-1\}$. 
    \item[{\rm(ii)}] Let $k\in\{1,\ldots,n\}$ and let $p_k,q_k\in I_k$ be such that $I_k=[p_k,q_k]$. Then for every $0<\theta<1$, we have $(E_{p_k},E_{q_k})_{\theta,r}=E_r$ with equivalent norms and where $1/r=(1-\theta)/p_k+\theta/q_k$.
\end{enumerate}
Then for every $0<\theta<1$, we have $(E_{p_0},E_{q_0})_{\theta,p}=E_r$ with equivalent norms, where $1/r=(1-\theta)/p_0+\theta/q_0$. 
\end{coro}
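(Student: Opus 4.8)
The plan is to reduce Corollary \ref{CoroWolff} to the case $n=2$ by induction, and then to apply Wolff's theorem (Theorem \ref{Wolff}) together with the reiteration theorem (Theorem \ref{Peetre}) via Corollary \ref{CoroPeetre} to handle the overlap. First I would set up the induction on $n$. For $n=1$ there is nothing to prove. For the inductive step, write $I_0 = (I_1 \cup \ldots \cup I_{n-1}) \cup I_n$; the union $J := I_1 \cup \ldots \cup I_{n-1}$ is a closed nontrivial subinterval of $[1,\infty]$ satisfying hypothesis (i) with $n-1$ pieces, so by the inductive hypothesis, writing $J = [a,b]$, we have $(E_a, E_b)_{\theta,r} = E_r$ with equivalent norms for all $0<\theta<1$, where $1/r = (1-\theta)/a + \theta/b$. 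Thus the whole problem reduces to: two nontrivial closed intervals $J = [a,b]$ and $I_n = [c,d]$ with $J \cup I_n = I_0 = [p_0,q_0]$ and $J \cap I_n$ nontrivial, such that on each of $J$ and $I_n$ the real interpolation identity holds between the endpoints; conclude it holds between $p_0$ and $q_0$.

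Next I would normalize the two-interval configuration. Since $J \cup I_n$ is an interval and $J \cap I_n$ is nontrivial, after possibly relabeling we may assume $p_0 = a \le c \le b \le d = q_0$ with $c < b$ (so $[c,b] \subset J \cap I_n$ is the nontrivial overlap), i.e. $\{a,b\} = \{p_0, \cdot\}$ and we can choose interior points freely. Now pick a point $s$ strictly inside the overlap, $c < s < b$ — say the point with $1/s$ the midpoint of $1/b$ and $1/c$, or indeed any interior point. By Corollary \ref{CoroPeetre} applied to the interval $J$ (whose endpoint identity we have), the identity $(E_x, E_y)_{\theta,r} = E_r$ holds for \emph{any} $x,y \in J$, in particular for the pair $p_0 = a$ and $s$; and likewise by Corollary \ref{CoroPeetre} applied to $I_n$, it holds for the pair $s$ and $q_0 = d$. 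So we have, for suitable exponents $\eta_0, \eta_1 \in (0,1)$,
\[
E_s = (E_{p_0}, E_s)_{?, ?} \quad\text{and}\quad E_? = (E_s, E_{q_0})_{?,?},
\]
but this is not yet quite the shape of Wolff's hypothesis. The correct move is to introduce a \emph{second} interior point. Fix two points $u < v$ in the open overlap $(c,b)$ (possible since the overlap is nontrivial). Then $u \in [p_0, v] \subset $ ... — here I must be careful: I want $u$ expressible as an interpolation point between $p_0$ and $v$, and $v$ as an interpolation point between $u$ and $q_0$. Since $u \in J$ and $v \in J$ with $a = p_0 \le u < v$, Corollary \ref{CoroPeetre} on $J$ gives $(E_{p_0}, E_v)_{\eta_0, r_u} = E_{r_u}$ where $1/r_u = (1-\eta_0)/p_0 + \eta_0/v$ and, choosing $\eta_0$ so that $r_u = u$, we get $E_u = (E_{p_0}, E_v)_{\eta_0, u}$ with equivalent norms. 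Symmetrically, since $u, v \in I_n$ with $u < v \le q_0 = d$, Corollary \ref{CoroPeetre} on $I_n$ gives $E_v = (E_u, E_{q_0})_{\eta_1, v}$ with equivalent norms for an appropriate $\eta_1 \in (0,1)$. These are precisely the two hypotheses of Wolff's theorem with $E_0 = E_{p_0}$, $E_{\theta_0} = E_u$, $E_{\theta_1} = E_v$, $E_1 = E_{q_0}$, $p_0 = u$ (the Lebesgue exponent), $p_1 = v$. Wolff's theorem then yields $E_u = (E_{p_0}, E_{q_0})_{\theta_0, u}$ and $E_v = (E_{p_0}, E_{q_0})_{\theta_1, v}$ with equivalent norms, where $\theta_0, \theta_1$ are the explicit functions of $\eta_0, \eta_1$ stated there. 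Finally, one checks that $\theta_0$ satisfies $1/u = (1-\theta_0)/p_0 + \theta_0/q_0$ (and similarly for $v$) — this is a direct computation from the relations $\theta_1 = (1-\eta_1)\theta_0 + \eta_1$, $\theta_0 = \eta_0 \theta_1$ together with the definitions $1/u = (1-\eta_0)/p_0 + \eta_0/v$ and $1/v = (1-\eta_1)/u + \eta_1/q_0$. So $E_u = (E_{p_0}, E_{q_0})_{\theta_0, u}$ realizes the desired identity for the exponent $u$ (an interior point), and one last application of Corollary \ref{CoroPeetre} — now to the full interval $I_0 = [p_0, q_0]$, whose endpoint identity we have just established via the interior point $u$...

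Actually the cleanest closing is: having obtained the endpoint-interpolation identity $(E_{p_0}, E_{q_0})_{\theta_0, u} = E_u$ for at least one interior exponent $u$, I observe that Corollary \ref{CoroPeetre}'s hypothesis is exactly "the identity holds between the two endpoints", which I can now \emph{also} get for the genuine endpoints as follows: both $E_u$ and $E_v$ are real interpolation spaces of the couple $(E_{p_0}, E_{q_0})$, so by the reiteration theorem \ref{Peetre} applied to that couple, $(E_u, E_v)_{\lambda, r} = (E_{p_0}, E_{q_0})_{\theta_\lambda, r} = E_r$ for the appropriate $r$; and conversely for any $0 < \theta < 1$ the target exponent $r$ with $1/r = (1-\theta)/p_0 + \theta/q_0$ lies in $[p_0,q_0]$, and a short reiteration argument (interpolating between $E_{p_0}$, $E_u$ on one side when $r \le u$, or between $E_u$, $E_{q_0}$ when $r \ge u$, each of which is a genuine real-interpolation couple inside $(E_{p_0},E_{q_0})$) gives $(E_{p_0}, E_{q_0})_{\theta, r} = E_r$. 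This establishes the claim for all $\theta \in (0,1)$, completing the induction.

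The main obstacle is purely bookkeeping: arranging the two interior points $u, v$ and the exponents $\eta_0, \eta_1$ so that the two displayed identities match \emph{exactly} the algebraic form required by Wolff's theorem (the role of the second Lebesgue index, the fact that in Wolff one interpolates "$E_{\theta_0}$ between $E_0$ and $E_{\theta_1}$" and "$E_{\theta_1}$ between $E_{\theta_0}$ and $E_1$" — a crossed configuration), and then verifying that Wolff's output exponents $\theta_0 = \eta_0\eta_1/(1-\eta_0+\eta_0\eta_1)$ and $\theta_1 = \eta_1/(1-\eta_0+\eta_0\eta_1)$ indeed reproduce the expected relations $1/u = (1-\theta_0)/p_0 + \theta_0/q_0$ and $1/v = (1-\theta_1)/p_0 + \theta_1/q_0$. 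None of this is deep, but it requires care with reciprocals of exponents throughout, and with the degenerate cases where the overlap $J \cap I_n$ touches an endpoint $p_0$ or $q_0$ (handled by the remark that one may always shrink to a nontrivial sub-overlap strictly interior to $I_0$, or, if $I_0$ itself is one of the pieces, trivially).
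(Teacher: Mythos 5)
Your argument is correct and is exactly the intended derivation: the paper states this corollary without proof, immediately after Theorem \ref{Wolff}, and the route you take (induct on the number of intervals, reduce to two overlapping intervals, choose two points $u<v$ in the open overlap so that the two identities from Corollary \ref{CoroPeetre} match the crossed hypotheses of Wolff's theorem, then recover every intermediate exponent by reiterating between $E_{p_0},E_u$ and between $E_u,E_{q_0}$) is the standard one. Your exponent bookkeeping ($\theta_0=\eta_0\theta_1$, $\theta_1=(1-\eta_1)\theta_0+\eta_1$ matching $1/u$ and $1/v$ on the affine function $\theta\mapsto(1-\theta)/p_0+\theta/q_0$) and your treatment of the degenerate containment cases are both sound.
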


We have a reiteration theorem for $K$-functionals, which is a consequence of Holmstedt's formula (see \cite[Theorem 2.1]{Holmstedt}).

\begin{theo}\label{Holmstedt}
Let $(B_0,B_1)$ be a $K$-closed subcouple of a Banach couple $(A_0,A_1)$. We set
\[A_{\theta_0}:=(A_0,A_1)_{\theta_0,p_0}\ \ \ \ \ \text{and}\ \ \ \ \ A_{\theta_1}:=(A_0,A_1)_{\theta_1,p_1},\]
where $0\pp\theta_0<\theta_1\pp1$ and $1\pp p_0,p_1\pp\infty$ (with the convention $A_{\theta_0}=A_0$ if $\theta_0=0$ and $A_{\theta_1}=E_1$ if $\theta_1=1$) and adopt analogous notations for $(B_0,B_1)$. Then $(B_{\theta_0},B_{\theta_1})$ is $K$-closed in $(A_{\theta_0},A_{\theta_1})$. 
\end{theo}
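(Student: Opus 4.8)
The plan is to reduce everything to Holmstedt's formula, which expresses the $K$-functional for a reiterated couple $(A_{\theta_0},A_{\theta_1})$ in terms of (a truncated integral of) the $K$-functional for the original couple $(A_0,A_1)$. Concretely, for $u\in A_{\theta_0}+A_{\theta_1}$, Holmstedt gives
\[
K_s(u,A_{\theta_0},A_{\theta_1})\ \approx\ \Big(\int_0^{\rho(s)}\big(t^{-\theta_0}K_t(u,A_0,A_1)\big)^{p_0}\tfrac{dt}{t}\Big)^{1/p_0}+s\Big(\int_{\rho(s)}^{\infty}\big(t^{-\theta_1}K_t(u,A_0,A_1)\big)^{p_1}\tfrac{dt}{t}\Big)^{1/p_1},
\]
with a suitable change of variable $s\mapsto\rho(s)$ (the exact form, including the limiting cases $\theta_0=0$ or $\theta_1=1$, being recorded in \cite[Theorem 2.1]{Holmstedt}); the same formula holds verbatim with $B$ in place of $A$.

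First I would fix $b\in B_{\theta_0}+B_{\theta_1}$ and $s>0$, and apply Holmstedt's formula on the $B$-side to write $K_s(b,B_{\theta_0},B_{\theta_1})$ in terms of $t\mapsto K_t(b,B_0,B_1)$. Since $(B_0,B_1)$ is $K$-closed in $(A_0,A_1)$ with some constant $C>0$, we have the pointwise bound $K_t(b,B_0,B_1)\le C\,K_t(b,A_0,A_1)$ for every $t>0$. Plugging this pointwise inequality into the two Holmstedt integrals (both are monotone in the integrand) yields
\[
K_s(b,B_{\theta_0},B_{\theta_1})\ \le\ C'\,\Big[\Big(\int_0^{\rho(s)}\big(t^{-\theta_0}K_t(b,A_0,A_1)\big)^{p_0}\tfrac{dt}{t}\Big)^{1/p_0}+s\Big(\int_{\rho(s)}^{\infty}\big(t^{-\theta_1}K_t(b,A_0,A_1)\big)^{p_1}\tfrac{dt}{t}\Big)^{1/p_1}\Big],
\]
where $C'$ absorbs $C$ and the implied constants in Holmstedt's equivalence. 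Finally I would apply Holmstedt's formula again, this time on the $A$-side, to recognise the right-hand side as (a constant multiple of) $K_s(b,A_{\theta_0},A_{\theta_1})$. Taking the supremum over $s>0$ of the ratio gives the $K$-closedness of $(B_{\theta_0},B_{\theta_1})$ in $(A_{\theta_0},A_{\theta_1})$ with a constant depending only on $C$, $\theta_0$, $\theta_1$, $p_0$, $p_1$.

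A small point requiring care is that the argument must be run for every admissible choice of endpoints, including the limiting cases $\theta_0=0$ (so $A_{\theta_0}=A_0$, $B_{\theta_0}=B_0$) and $\theta_1=1$ (so $A_{\theta_1}=A_1$, $B_{\theta_1}=B_1$); in those cases one of the two Holmstedt integrals degenerates into $\sup_{t\le\rho(s)}K_t$ or $\sup_{t\ge\rho(s)}t^{-1}K_t$, but the monotonicity-in-the-integrand step still goes through unchanged, so the pointwise bound $K_t(b,B_0,B_1)\le C\,K_t(b,A_0,A_1)$ is all that is needed. The only genuine obstacle is bookkeeping: making sure the change of variable $\rho$ in Holmstedt's formula is the same on both sides (it depends only on $\theta_0,\theta_1,p_0,p_1$, not on the space) so that the two applications of the formula are compatible, and that one has not implicitly used density/regularity of the couples — Holmstedt's formula itself requires no such hypothesis, so no regularity assumption on $(A_0,A_1)$ or $(B_0,B_1)$ is needed here.
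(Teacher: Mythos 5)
Your argument is correct and is precisely the one the paper intends: it states Theorem \ref{Holmstedt} as "a consequence of Holmstedt's formula," and your proof — apply Holmstedt's formula to both couples with the same change of variable $\rho$, insert the pointwise bound $K_t(b,B_0,B_1)\pp C\,K_t(b,A_0,A_1)$, and use monotonicity of the resulting expressions in the integrand — is exactly that consequence, spelled out. (Only a cosmetic quibble: the $\sup$-type degenerations you describe occur when $p_0$ or $p_1$ is $\infty$ rather than when $\theta_0=0$ or $\theta_1=1$, where instead one of the two terms is simply absent; this does not affect the argument, since only monotonicity in $t\mapsto K_t(\cdot,A_0,A_1)$ is used.)
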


We easily deduce the following corollary.

\begin{coro}\label{CoroHolmstedt}
Let $I_0=[p_0,q_0]\subset[1,\infty]$ be a nontrivial closed interval and $(B_p)_{p\in I_0}$ be a subfamily of a Banach family $(A_p)_{p\in I_0}$. We make the following two assumptions:
\begin{enumerate}
    \item[{\rm(i)}] For every $0<\theta<1$, we have $(A_{p_0},A_{q_0})_{\theta,p}=A_r$ with equivalent norms, where $1/r=(1-\theta)/p_0+\theta/q_0$.
    \item[{\rm(ii)}] The subcouple $(B_{p_0},B_{q_0})$ is $K$-closed in $(A_{p_0},A_{q_0})$ and for every $0<\theta<1$, we have $(B_{p_0},B_{q_0})_{\theta,p}=B_r$ with equivalent norms, where $1/r=(1-\theta)/p_0+\theta/q_0$.
\end{enumerate}
Then, for every $p,q\in I_0$, the subcouple $(B_p,B_q)$ is $K$-closed in $(A_p,A_q)$. 
\end{coro}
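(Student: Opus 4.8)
The plan is to deduce the statement from Theorem \ref{Holmstedt} (the Holmstedt-type reiteration for $K$-closedness) together with the reiteration theorem for the real interpolation method (Theorem \ref{Peetre}), by first upgrading the hypotheses at the endpoints $p_0,q_0$ to hypotheses at an arbitrary pair $p,q\in I_0$. Fix $p,q\in I_0$ with $p\ne q$; without loss of generality $p<q$, so $p_0\pp p<q\pp q_0$. Choose $0\pp\theta_0<\theta_1\pp1$ such that $1/p=(1-\theta_0)/p_0+\theta_0/q_0$ and $1/q=(1-\theta_1)/p_0+\theta_1/q_0$ (with $\theta_0=0$ if $p=p_0$ and $\theta_1=1$ if $q=q_0$). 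By hypothesis (i), $(A_{p_0},A_{q_0})_{\theta_0,p}=A_p$ and $(A_{p_0},A_{q_0})_{\theta_1,q}=A_q$ with equivalent norms, so in the notation of Theorem \ref{Holmstedt} we may take $A_{\theta_0}=A_p$, $A_{\theta_1}=A_q$ (reading the convention in the degenerate cases $\theta_0=0$ or $\theta_1=1$). Similarly, hypothesis (ii) gives $(B_{p_0},B_{q_0})_{\theta_0,p}=B_p$ and $(B_{p_0},B_{q_0})_{\theta_1,q}=B_q$ with equivalent norms, so $B_{\theta_0}=B_p$ and $B_{\theta_1}=B_q$; and the $K$-closedness of $(B_{p_0},B_{q_0})$ in $(A_{p_0},A_{q_0})$ from (ii) is exactly the standing hypothesis of Theorem \ref{Holmstedt}. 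Applying that theorem yields that $(B_p,B_q)$ is $K$-closed in $(A_p,A_q)$, which is the desired conclusion.

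The one point requiring a little care, and the main (mild) obstacle, is the bookkeeping in the degenerate endpoint cases. When $p=p_0$ one must identify $A_p=A_{p_0}$ with the ``$\theta_0=0$'' space in Theorem \ref{Holmstedt}, and likewise $B_p=B_{p_0}$; when $q=q_0$ one identifies $A_q=A_{q_0}$ and $B_q=B_{q_0}$ with the ``$\theta_1=1$'' spaces. These are precisely the conventions built into the statement of Theorem \ref{Holmstedt}, so no extra argument is needed, but one should check that the real-interpolation identifications in (i) and (ii) are only invoked for the genuinely intermediate indices (i.e. when $p_0<p$ or $q<q_0$, where $0<\theta_0$ or $\theta_1<1$), so that one never needs a spurious identity such as $(A_{p_0},A_{q_0})_{0,p}=A_{p_0}$.

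Finally, the case $p=q$ is trivial since then $(B_p,B_q)=(B_p,B_p)$ is $K$-closed in $(A_p,A_p)$ with constant equal to the norm of the inclusion $B_p\hookrightarrow A_p$ (indeed for a one-sided couple the $K$-functional reduces to a multiple of the ambient norm, and for $p$ strictly between $p_0$ and $q_0$ this inclusion is bounded by interpolation, while for $p\in\{p_0,q_0\}$ it is bounded by assumption via $(B_{p_0},B_{q_0})$ being a subcouple of $(A_{p_0},A_{q_0})$). This completes the proof.
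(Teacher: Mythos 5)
Your proof is correct and follows the same route the paper intends: the corollary appears immediately after Theorem \ref{Holmstedt} as an easy consequence, obtained exactly as you do by identifying $A_p,A_q,B_p,B_q$ with the reiterated spaces via hypotheses (i) and (ii) (handling the endpoint conventions as you describe) and then invoking that theorem. One small nitpick in your trivial case $p=q$: the estimate $\|b\|_{B_p}\pp C\|b\|_{A_p}$ required for $K$-closedness comes from the closed-range property of the inclusion $B_p\subset A_p$ (via the open mapping theorem), not from the boundedness of the inclusion itself, though the case remains trivial either way.
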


We also have a Wolff-type result for $K$-complementation. A complete proof can be found in \cite[Theorem 2]{Kislyakov}.

\begin{theo}[Kislyakov, Xu]\label{WolffK}
Let $(B_0,B_{\theta_0},B_{\theta_1},B_1)$ be a subfamily of a Banach family $(A_0,A_{\theta_0},A_{\theta_1},A_1)$ such that $(B_0,B_{\theta_1})$ is $K$-complemented in $(A_0,A_{\theta_1})$ and $(B_{\theta_0},B_1)$ is $K$-complemented in $(A_{\theta_0},A_1)$. In addition, assume that
\[A_{\theta_0}:=(A_0,A_{\theta_1})_{\eta_0,p_0}\ \ \ \ \ \text{and}\ \ \ \ \ A_{\theta_1}:=(A_{\theta_0},A_1)_{\eta_1,p_1},\]
\[B_{\theta_0}:=(B_0,B_{\theta_1})_{\eta_0,p_0}\ \ \ \ \ \text{and}\ \ \ \ \ B_{\theta_1}:=(B_{\theta_0},B_1)_{\eta_1,p_1}\]
with equivalent norms, where $0<\eta_0,\eta_1<1$ and $1\pp p_0,p_1\pp\infty$. Then $(B_0,B_1)$ is $K$-complemented in $(A_0,A_1)$.
\end{theo}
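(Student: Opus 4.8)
The plan is to deduce the Wolff-type result for $K$-complementation (Theorem~\ref{WolffK}) from the Wolff-type result for $K$-closedness together with a persistence-of-surjectivity argument, rather than reproving a Brudnyi--Krugljak-style construction from scratch. By Proposition~\ref{...} (the equivalence of $K$-complementation and $K$-closedness plus the range identity $B_j=(B_0+B_1)\cap A_j$), the hypotheses tell us two things at each of the four ``corner'' couples: first, the relevant subcouples are $K$-closed; second, the range identities $B_0=(B_0+B_{\theta_1})\cap A_0$, $B_{\theta_1}=(B_0+B_{\theta_1})\cap A_{\theta_1}$, $B_{\theta_0}=(B_{\theta_0}+B_1)\cap A_{\theta_0}$, $B_1=(B_{\theta_0}+B_1)\cap A_1$ hold. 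So the proof splits into the $K$-closedness half and the range-identity half.

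First I would handle the $K$-closedness conclusion. From the $K$-complementation hypotheses we extract that $(B_0,B_{\theta_1})$ is $K$-closed in $(A_0,A_{\theta_1})$ and $(B_{\theta_0},B_1)$ is $K$-closed in $(A_{\theta_0},A_1)$. I want to invoke a purely $K$-closedness Wolff theorem to conclude $(B_0,B_1)$ is $K$-closed in $(A_0,A_1)$. The cleanest route is to apply Corollary~\ref{CoroHolmstedt} (or a two-piece application of Corollary~\ref{CoroWolff} at the level of $K$-closed couples): the interpolation identities $A_{\theta_0}=(A_0,A_{\theta_1})_{\eta_0,p_0}$ and $A_{\theta_1}=(A_{\theta_0},A_1)_{\eta_1,p_1}$, fed into Wolff's theorem (Theorem~\ref{Wolff}), give that $A_{\theta_0}$ and $A_{\theta_1}$ are genuine real-interpolation spaces of the couple $(A_0,A_1)$, say $A_{\theta_j}=(A_0,A_1)_{\theta_j,p_j}$; likewise for the $B$'s. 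Thus the four intervals $[p_0,p_{\theta_0}],[p_{\theta_0},p_{\theta_1}],\ldots$ assemble, via the index computations $\theta_1=(1-\eta_1)\theta_0+\eta_1$, $\theta_0=\eta_0\theta_1$, into a chain of overlapping subintervals of $[1,\infty]$ covering $[p_0,p_1]$, on each of which $K$-closedness has been verified; Corollary~\ref{CoroWolff}/\ref{CoroHolmstedt} then yields $K$-closedness of $(B_0,B_1)$ in $(A_0,A_1)$. Care is needed here about regularity hypotheses, since Pisier's duality lemma and the Holmstedt reiteration were stated for regular couples; I would either assume regularity throughout (harmless in the intended applications) or check it is implied by the interpolation identities, since a nondegenerate real-interpolation space $(E_0,E_1)_{\theta,p}$ with $p<\infty$ already forces the needed density.

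Next I would establish the range identities for the outer couple, namely $B_0=(B_0+B_1)\cap A_0$ and $B_1=(B_0+B_1)\cap A_1$. The inclusions $\subseteq$ are immediate from the definition of a subcouple. For the reverse, take $a\in(B_0+B_1)\cap A_0$; using that $B_1=(B_{\theta_0}+B_1)\cap A_1$ and $B_{\theta_0}=(B_{\theta_0}+B_1)\cap A_{\theta_0}$ together with the interpolation description $A_{\theta_0}=(A_0,A_1)_{\eta_0,p_0}$, one decomposes $a=b_0+b_1$ with $b_0\in B_0$, $b_1\in B_1$, writes the same element's $A_0$-membership, and plays off the two decompositions at a well-chosen value of $t$ in the $K$-functional exactly as in the proof of Proposition~\ref{...} ($2)\Rightarrow1)$); the $K$-complementation hypothesis on the corner couple $(B_0,B_{\theta_1})$ lets one absorb the $A_{\theta_1}$-component into $B_{\theta_1}$, and an analogous argument at $(B_{\theta_0},B_1)$ handles the other end, so that iterating gives $a\in B_0$. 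The symmetric argument gives the $A_1$ identity.

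Finally, combining the two halves: $(B_0,B_1)$ is $K$-closed in $(A_0,A_1)$ and satisfies $B_j=(B_0+B_1)\cap A_j$ for $j=0,1$, which is precisely condition $2)$ of Proposition~\ref{...}, hence $(B_0,B_1)$ is $K$-complemented in $(A_0,A_1)$, as desired. The main obstacle I anticipate is the bookkeeping in the range-identity step: unlike plain $K$-closedness, $K$-complementation is not obviously stable under the Wolff reiteration, and one must genuinely chase an element through the two middle couples while keeping the constants uniform; getting the order of the decompositions right (first split in the finer couple where the range identity is known, then correct using $K$-closedness of the coarser couple) is the delicate point, and is presumably why \cite{Kislyakov} is cited for the full proof rather than a one-line reduction.
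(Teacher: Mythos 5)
The paper does not prove this theorem; it defers entirely to \cite[Theorem 2]{Kislyakov}, so your proposal has to stand on its own. Your framing via the Proposition --- reduce $K$-complementation of $(B_0,B_1)$ to ($K$-closedness) $+$ (range identities $B_j=(B_0+B_1)\cap A_j$) --- is legitimate, and the range-identity half does follow from the hypotheses, in fact more easily than you suggest: if $a\in(B_0+B_1)\cap A_0$ with $a=b_0+b_1$, then $b_1=a-b_0\in A_0\cap A_1\subset A_{\theta_0}$ (using Wolff's theorem for the $A$-family), so the identity $B_{\theta_0}=(B_{\theta_0}+B_1)\cap A_{\theta_0}$ gives $b_1\in B_{\theta_0}\subset B_0+B_{\theta_1}$, and then $B_0=(B_0+B_{\theta_1})\cap A_0$ gives $b_1\in B_0$, hence $a\in B_0$. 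No $K$-functional juggling or iteration is needed for that half.

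The genuine gap is in the half you treat as routine: the $K$-closedness of $(B_0,B_1)$ in $(A_0,A_1)$ with a uniform constant. Neither tool you invoke does what you need. Corollary \ref{CoroHolmstedt} (and Theorem \ref{Holmstedt}) runs in the reiteration direction --- from $K$-closedness of the \emph{outer} couple to that of inner couples --- whereas you need the Wolff direction, from the two overlapping inner couples to the outer one; and Corollary \ref{CoroWolff} concerns interpolation identities of a single Banach family, not $K$-closedness of a subfamily. No ``Wolff theorem for plain $K$-closedness'' is stated in the paper, and it cannot be applied as a black box anyway: for $b\in B_0+B_1$ with a competitor decomposition $b=a_0+a_1\in A_0+A_1$, one generally has $b\notin B_0+B_{\theta_1}$ and $a_1\notin A_0+A_{\theta_1}$ (already for $L_p$-scales on an infinite measure space, $L_\infty\not\subset L_1+L_{p_1}$ and $L_1\not\subset L_{p_0}+L_\infty$), so the corner $K$-closedness inequalities cannot even be evaluated on $b$. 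The actual argument must first manufacture intermediate decompositions through $A_{\theta_0}$ and $A_{\theta_1}$ (via Holmstedt's formula and the hypotheses $A_{\theta_j}=(\cdot,\cdot)_{\eta_j,p_j}$), apply the two $K$-complementation hypotheses alternately, and run an iteration with geometric decay to control the error coming from the arbitrary initial $B_0+B_1$-splitting of $b$. That construction is the entire content of the Kislyakov--Xu theorem, and your proposal acknowledges but does not supply it; as written, the quantitative heart of the statement remains unproved.
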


By combining Theorem \ref{Wolff}, Theorem \ref{CoroHolmstedt} and Theorem \ref{WolffK} we easily deduce the following corollary, which will be used several times in this paper.

\begin{coro}\label{CoroWolffK}
Let $I_0=[p_0,q_0]\subset[1,\infty]$ be a nontrivial closed interval and $(B_p)_{p\in I_0}$ be a subfamily of a Banach family $(A_p)_{p\in I_0}$. We make the following four assumptions:
\begin{enumerate}
    \item[{\rm(i)}] For every $0<\theta<1$, we have $(A_{p_0},A_{q_0})_{\theta,p}=A_r$ with equivalent norms, where $1/r=(1-\theta)/p_0+\theta/q_0$. \item[{\rm(ii)}] For every $p,q\in I$, $B_p=(B_p+B_q)\cap A_p$.
    \item[{\rm(iii)}] $I_0=I_1\cup\ldots\cup I_n$ where $I_1,\ldots,I_n\subset[1,\infty]$ are closed nontrivial intervals such that $I_k\cap I_{k+1}$ is nontrivial for every $k\in\{1,\ldots,n-1\}$. 
    \item[{\rm(iv)}] Let $k\in\{1,\ldots,n\}$ and let $p_k,q_k\in I_k$ be such that $I_k=[p_k,q_k]$. Then the subcouple $(B_{p_k},B_{q_k})$ is $K$-closed in $(A_{p_k},A_{q_k})$ and for all $0<\theta<1$, we have $B_r=(B_{p_k},B_{q_k})_{\theta,r}$ with equivalent norms, where $1/r=(1-\theta)/p_k+\theta/q_k$. 
\end{enumerate}
Then, for every $p,q\in I_0$, the subcouple $(B_p,B_q)$ is $K$-complemented in $(A_p,A_q)$.  
\end{coro}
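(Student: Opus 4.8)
The corollary combines three tools already set up in the excerpt: the Wolff-type reiteration for real interpolation (Corollary \ref{CoroWolff}), the $K$-closedness reiteration derived from Holmstedt's formula (Corollary \ref{CoroHolmstedt}), and the Wolff-type result for $K$-complementation (Theorem \ref{WolffK}). The first thing I would do is fix an endpoint pair $p,q\in I_0$ with $p\neq q$ and, after relabelling, assume $p\leq q$, so $[p,q]\subset I_0$. The target is that $(B_p,B_q)$ is $K$-complemented in $(A_p,A_q)$; by Proposition \ref{Kclosedness}'s neighbour (the proposition characterising $K$-complementation), this amounts to $K$-closedness together with the identity $B_r=(B_p+B_q)\cap A_r$ for the two values $r=p,q$, the latter half being exactly hypothesis (ii). So the real content is to produce the $K$-closedness of $(B_p,B_q)$ in $(A_p,A_q)$ and the interpolation identities $(B_p,B_q)_{\theta,r}=B_r$, and then feed everything into the $K$-complementation machinery.

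First I would establish the two \emph{global} real-interpolation identities on the ambient family and on the subfamily across the whole interval $I_0$. For $(A_p)$ this is Corollary \ref{CoroWolff} applied with the cover $I_0=I_1\cup\dots\cup I_n$ of hypothesis (iii): assumption (i) of that corollary is our hypothesis (i) here, restricted to each $I_k$ via Corollary \ref{CoroPeetre} — actually we need the per-piece identities, which are the first halves of our (iv), while (i) gives us the identity on all of $I_0$; combining (i) and (iv) with Corollary \ref{CoroWolff} (or rather, already having (i) directly) yields $(A_{p_0},A_{q_0})_{\theta,r}=A_r$, and then Corollary \ref{CoroPeetre} upgrades this to $(A_p,A_q)_{\theta,r}=A_r$ for every $p,q\in I_0$. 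For the subfamily $(B_p)$ one cannot invoke (i) directly; instead one runs Corollary \ref{CoroWolff} with the $I_k$-cover, using the second halves of (iv) (the identities $B_r=(B_{p_k},B_{q_k})_{\theta,r}$) as the hypothesis, to get $(B_{p_0},B_{q_0})_{\theta,r}=B_r$, and then Corollary \ref{CoroPeetre} again to spread this to every pair $p,q\in I_0$.

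Next, with the interpolation identities for both families in hand, I would obtain the $K$-closedness of every pair $(B_p,B_q)$ in $(A_p,A_q)$. Run Corollary \ref{CoroHolmstedt}: its hypothesis (i) is now known (the global $A$-identity just proved, or already hypothesis (i)), and its hypothesis (ii) — $K$-closedness of $(B_{p_0},B_{q_0})$ in $(A_{p_0},A_{q_0})$ plus the $B$-interpolation identity — follows by first applying Corollary \ref{CoroHolmstedt} on a single piece $I_k$ (where (iv) gives the $K$-closedness of $(B_{p_k},B_{q_k})$ in $(A_{p_k},A_{q_k})$) to propagate $K$-closedness along overlapping intervals $I_k\cap I_{k+1}$: since these overlaps are nontrivial, one can chain the $K$-closedness from $I_1$ through $I_n$ — at each junction the pair at an interior point of $I_k\cap I_{k+1}$ is $K$-closed by Holmstedt reiteration, and matching it with the adjacent piece extends $K$-closedness to the union. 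This yields $K$-closedness of $(B_{p_0},B_{q_0})$ in $(A_{p_0},A_{q_0})$, hence of $(B_p,B_q)$ in $(A_p,A_q)$ for all $p,q$.

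Finally I would invoke the $K$-complementation Wolff theorem. Cover $[p,q]$ again by (the traces on $[p,q]$ of) the $I_k$; on each piece we have, by the previous paragraph together with hypothesis (ii), that $(B_{p_k},B_{q_k})$ is $K$-complemented in $(A_{p_k},A_{q_k})$ (Proposition characterising $K$-complementation: $K$-closed $+$ the identities $B_r=(B_r+B_s)\cap A_r$, which is (ii)); and we have the real-interpolation presentations $A_{\theta_0}=(A_0,A_{\theta_1})_{\eta_0,p_0}$ etc. from the global $A$-identity and Corollary \ref{CoroPeetre}, likewise for $B$ from the global $B$-identity. Applying Theorem \ref{WolffK} successively to merge $I_1\cup I_2$, then $(I_1\cup I_2)\cup I_3$, and so on — at each step the two $K$-complemented pieces share a nontrivial overlap, providing the middle space and the two Wolff presentations — we conclude that $(B_p,B_q)$ is $K$-complemented in $(A_p,A_q)$, as desired. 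The main obstacle, and the step deserving the most care, is the bookkeeping in this chaining argument: one must check that the overlap intervals are genuinely nontrivial at each stage of the induction (so that interior points exist and Theorem \ref{WolffK}'s hypotheses on the exponents $\eta_0,\eta_1\in(0,1)$ can be met), and that the constants accumulated through the finitely many merges stay finite — both are routine given $n$ is finite, but they are where a careless argument would break.
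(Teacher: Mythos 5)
Your proposal is correct and follows exactly the route the paper indicates (the paper offers no written proof, only the remark that the corollary is obtained by combining Theorem \ref{Wolff}, Corollary \ref{CoroHolmstedt} and Theorem \ref{WolffK}): global interpolation identities for $(A_p)$ and $(B_p)$ via Corollaries \ref{CoroPeetre} and \ref{CoroWolff}, per-piece $K$-complementation from (iv), (ii) and the proposition relating $K$-complementation to $K$-closedness, and then an induction that merges consecutive pieces with Theorem \ref{WolffK} and spreads to interior exponents with Corollary \ref{CoroHolmstedt}. The only thing to fix is an ordering issue: the assertion in your second step that $K$-closedness ``chains'' across overlapping intervals by matching is not a standalone fact --- that extension \emph{is} the content of Theorem \ref{WolffK} --- so your second and third steps must be run as a single induction (merge with Theorem \ref{WolffK}, then spread with Corollary \ref{CoroHolmstedt}), which is in substance what your third step already does.
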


\subsection{Noncommutative Lebesgue spaces} We refer to \cite{DoddsSukochevIntegration} for all the details on noncommutative Lebesgue spaces. Let $\mathcal{M}$ be a von Neumann algebra equipped with a normal semifinite faithful (n.s.f.) trace $\tau$. The unit of $\mathcal{M}$ is denoted by $1$, and the underlying Hilbert space on which $\mathcal{M}$ acts is denoted by $\mathcal{H}$. A closed and densely defined operator $x$ on $\mathcal{H}$ with polar decomposition $x=u|x|$ is \textit{affiliated} with $\mathcal{M}$ if the spectral projections $1_{(\lambda,\infty)}(|x|)$, $\lambda>0$, as well as $u$, belong to $\mathcal{M}$, and in that case its \textit{generalized singular value function} (or \textit{decreasing rearrangement function}) is the function on $\R_+^*$ denoted $x^\sharp$ such that
\[x^\sharp(s):=\inf\big\{\lambda>0\ \mid\ \tau(1_{(\lambda,\infty)}(|x|))\pp s\big\},\ \ \ \ \ \ \ \ \ \ s>0.\]
A closed and densely defined operator $x$ on $\mathcal{H}$ is \textit{$\tau$-measurable} if it is affiliated with $\mathcal{M}$ and if its decreasing rearrangement function takes finite values. The set of all $\tau$ measurable operators on $\mathcal{H}$ will be denoted by $L_0(\mathcal{M})$. Then $L_0(\mathcal{M})$ inherits a complete Hausdorff topological $*$-algebra structure, contains $\mathcal{M}$ as a dense $\ast$-subalgebra, and $\tau$ is canonically extended to the positive part of $L_0(\mathcal{M})$. For every $x\in L_0(M)$ and $1\pp p\pp\infty$, we set 
\[\|x\|_p:=\left\{\begin{array}{cl}
    \tau(|x|^p)^{1/p} & {\rm if}\ p<\infty \\
    \inf\{s>0\ \mid\ \tau(1_{(s,\infty)}(|x|))=0\} & {\rm if}\ p=\infty
\end{array}\right..\]
Then, for every $1\pp p\pp\infty$, the associated \textit{noncommutative Lebesgue space},
\[L_p(\mathcal{M}):=\big\{x\in L_0(\mathcal{M})\ \mid\ \|x\|_p<\infty\big\}\]
is a Banach space under $\|\cdot\|_p$ and the inclusion $L_p(\mathcal{M})\subset L_0(\mathcal{M})$ is continuous for $\|\cdot\|_p$. Moreover, the trace $\tau$ extends to a positive and contractive linear form on $L_1(\mathcal{M})$, and we have
\[L_\infty(\mathcal{M})=\mathcal{M}\]
with equal norms. An important feature of noncommutative Lebesgue spaces is the H\"older's inequality. That is, if $1\pp p,q\pp\infty$ are H\"older conjugate exponents, then for all $x,y\in L_0(\mathcal{M})$, we have
\[\|xy\|_1\pp\|x\|_{p}\|y\|_{q}.\]
Moreover, if $x\in L_0(\mathcal{M})$ then
\[\|x\|_p=\sup_{\|y\|_{q}\pp1}\|xy\|_1.\]
From the above results can be derived from trace duality for noncommutative Lebesgue spaces. That is, trace $\tau$ induces a canonical linear isometry $L_p(\mathcal{M})\to L_{p^*}(\mathcal{M})^*$ that is surjective if $p>1$ (or $q<\infty$) and where $1\pp p^*\pp\infty$ is the conjugate exponent of $p$, that is, $1=\frac{1}{p}+\frac{1}{p^*}$. More generally, for every $1\pp p,q\pp\infty$ with conjugate $1\pp p^*,q^*\pp\infty$, the trace $\tau$ induces a canonical linear isometry $(L_{p^*}\cap L_{q^*})(\mathcal{M})\to(L_p+L_q)(\mathcal{M})^*$ which is surjective whenever $p\neq q$, where
\[(L_p+L_q)(\mathcal{M}):=L_p(\mathcal{M})+L_q(\mathcal{M})\]
is equipped with the sum norm, and
\[(L_{p^*}\cap L_{q^*})(\mathcal{M}):=L_{p^*}(\mathcal{M})\cap L_{q^*}(\mathcal{M})\]
is equipped with the intersection norm.

Now we turn to investigating the properties of the Banach couple $(L_1(\mathcal{M}),L_\infty(\mathcal{M}))$. A particularly nice feature is that one has a concrete expression of the associated $K$-functionals.

\begin{prop}[Holmstedt's formula]
Let $x\in L_0(\mathcal{M})$. Then $x\in (L_1+L_\infty)(\mathcal{M})$ if and only if
for every $t>0$, we have
\[\int_0^{t}x^{\sharp}(s)ds<\infty\]
and in that case we have
\[K_t(x,L_1(\mathcal{M}),L_\infty(\mathcal{M}))=\int_0^{t}x^{\sharp}(s)ds\]
for every $t>0$.
\end{prop}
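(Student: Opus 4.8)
The plan is to prove the two inequalities $\int_0^t x^\sharp(s)\,ds \le K_t(x,L_1(\mathcal{M}),L_\infty(\mathcal{M}))$ and $K_t(x,L_1(\mathcal{M}),L_\infty(\mathcal{M})) \le \int_0^t x^\sharp(s)\,ds$ separately, reading off the characterization of $(L_1+L_\infty)(\mathcal{M})$ along the way. The tools are the standard properties of the generalized singular value function recorded in \cite{DoddsSukochevIntegration}: for a continuous non-decreasing $f$ on $\R_+$ with $f(0)=0$ one has $\big(f(|x|)\big)^\sharp = f\circ x^\sharp$; for $y\in L_0(\mathcal{M})$ with $y\ge 0$ one has $\tau(y)=\int_0^\infty y^\sharp(s)\,ds$; and the Hardy--Littlewood submajorization inequality $\int_0^t (x+y)^\sharp(s)\,ds \le \int_0^t x^\sharp(s)\,ds + \int_0^t y^\sharp(s)\,ds$, valid for all $x,y\in L_0(\mathcal{M})$ and all $t>0$.

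For the lower bound, fix $t>0$ and let $x=x_0+x_1$ be any decomposition with $x_0\in L_1(\mathcal{M})$ and $x_1\in L_\infty(\mathcal{M})$. Submajorization together with $\int_0^t x_0^\sharp(s)\,ds\le\|x_0\|_1$ and $x_1^\sharp(s)\le\|x_1\|_\infty$ for all $s>0$ gives $\int_0^t x^\sharp(s)\,ds\le\|x_0\|_1+t\|x_1\|_\infty$; taking the infimum over all such decompositions yields $\int_0^t x^\sharp(s)\,ds\le K_t(x,L_1(\mathcal{M}),L_\infty(\mathcal{M}))$. In particular, if $x\in(L_1+L_\infty)(\mathcal{M})$ then $K_t(x)<\infty$ for every $t$, hence $\int_0^t x^\sharp(s)\,ds<\infty$ for every $t>0$, which settles one implication of the stated equivalence.

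For the upper bound, assume $\int_0^t x^\sharp(s)\,ds<\infty$ and put $\lambda:=x^\sharp(t)$, which is then finite. With the polar decomposition $x=u|x|$, set $x_1:=u\,(|x|\wedge\lambda)\in\mathcal{M}$ and $x_0:=u\,(|x|-\lambda)_+$, so that $x_0+x_1=u|x|=x$. A short computation with the support projections shows $|x_1|=|x|\wedge\lambda$ and $|x_0|=(|x|-\lambda)_+$, whence $\|x_1\|_\infty\le\lambda$, and, applying $\big(f(|x|)\big)^\sharp=f\circ x^\sharp$ with $f(s)=(s-\lambda)_+$ followed by $\tau(\,\cdot\,)=\int_0^\infty(\,\cdot\,)^\sharp$,
\[\|x_0\|_1=\tau\big((|x|-\lambda)_+\big)=\int_0^\infty\big(x^\sharp(s)-\lambda\big)_+\,ds=\int_0^t\big(x^\sharp(s)-\lambda\big)\,ds=\int_0^t x^\sharp(s)\,ds-t\lambda,\]
where the third equality uses that $x^\sharp$ is non-increasing with $x^\sharp(t)=\lambda$, so $\big(x^\sharp(s)-\lambda\big)_+=x^\sharp(s)-\lambda$ for $s<t$ and vanishes for $s>t$. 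In particular $x_0\in L_1(\mathcal{M})$, which gives the converse implication of the equivalence, and $K_t(x)\le\|x_0\|_1+t\|x_1\|_\infty\le\int_0^t x^\sharp(s)\,ds$. Combining the two inequalities proves the formula.

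The only genuine subtlety I expect is that the pointwise triangle inequality $|x+y|\le|x|+|y|$ is false for operators, which is why the lower bound must go through submajorization (or, equivalently, through the Hardy--Littlewood variational description of $\int_0^t x^\sharp(s)\,ds$ combined with H\"older's inequality) rather than a naive rearrangement estimate; beyond that, the argument reduces entirely to the functional-calculus identities for $x^\sharp$, which in particular absorb any jump of $x^\sharp$ at the point $t$ without any extra case analysis.
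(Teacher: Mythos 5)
Your proof is correct. The paper itself gives no proof of this proposition --- it is stated as background and deferred to the cited literature on noncommutative integration --- and your argument (submajorization of sums for the lower bound, the cut at level $\lambda=x^{\sharp}(t)$ via $x=u(|x|-\lambda)_{+}+u(|x|\wedge\lambda)$ for the upper bound) is precisely the standard one, with the functional-calculus identities $\bigl(f(|x|)\bigr)^{\sharp}=f\circ x^{\sharp}$ and $\tau(y)=\int_0^\infty y^{\sharp}$ correctly invoked and the support-projection point handled.
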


It is easy to see that if $1\pp p\pp\infty$, the Banach space $L_p(\mathcal{M})$ is an intermediate space for $(L_1(\mathcal{M}),L_\infty(\mathcal{M}))$. Actually, using Holmstedt's formula, one can show the following crucial result, which implies that $L_p(\mathcal{M})$ is an interpolation space for $(L_1(\mathcal{M}),L_\infty(\mathcal{M}))$.

\begin{theo}\label{InterpLp}
For every $0<\theta<1$, we have
\[(L_1(\mathcal{M}),L_\infty(\mathcal{M}))_{\theta,p}=L_p(\mathcal{M})\]
with equivalent norms and constants independent of $\mathcal{M}$, where $1/p=1-\theta$.
\end{theo}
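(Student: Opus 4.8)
The plan is to reduce the whole statement to a scalar computation on the decreasing rearrangement $x^\sharp$, using the Holmstedt formula for $K_t(\,\cdot\,,L_1(\mathcal{M}),L_\infty(\mathcal{M}))$ recorded just above, and then to recognize the two resulting inequalities as an elementary monotonicity bound and the classical Hardy inequality on $\R_+^*$.

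First I would unwind the definitions. Fix $0<\theta<1$ and let $1<p<\infty$ be determined by $1/p=1-\theta$. For $x\in(L_1+L_\infty)(\mathcal{M})$, Holmstedt's formula gives $K_t(x)=\int_0^t x^\sharp(s)\,ds$ for all $t>0$, so by definition of the $K$-method parameter $\Phi_{\theta,p}$,
\[\|x\|_{(L_1(\mathcal{M}),L_\infty(\mathcal{M}))_{\theta,p}}=\left(\int_0^\infty\left(t^{-\theta}\int_0^t x^\sharp(s)\,ds\right)^p\frac{dt}{t}\right)^{1/p}=\left(\int_0^\infty t^{-p}\left(\int_0^t x^\sharp(s)\,ds\right)^p dt\right)^{1/p},\]
where the last equality uses $\theta p+1=p$. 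On the other hand $\|x\|_p=\big(\int_0^\infty x^\sharp(s)^p\,ds\big)^{1/p}$.

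For the lower estimate, since $x^\sharp$ is nonincreasing, $\frac1t\int_0^t x^\sharp(s)\,ds\pg x^\sharp(t)$ for every $t>0$; raising to the $p$th power and integrating in $t$ gives $\|x\|_p\pp\|x\|_{(L_1,L_\infty)_{\theta,p}}$. For the upper estimate I would invoke the classical Hardy inequality: for measurable $f\pg0$ on $\R_+^*$ and $1<p<\infty$, $\int_0^\infty\big(\frac1t\int_0^t f(s)\,ds\big)^p dt\pp\big(\frac{p}{p-1}\big)^p\int_0^\infty f(t)^p\,dt$; applied to $f=x^\sharp$ this reads $\|x\|_{(L_1,L_\infty)_{\theta,p}}\pp\frac{p}{p-1}\|x\|_p$. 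Both constants $1$ and $p/(p-1)$ depend only on $p$, not on $\mathcal{M}$. Since the two inequalities hold for every $x\in(L_1+L_\infty)(\mathcal{M})$ with both sides allowed to be $+\infty$, they also identify the underlying sets: if $x\in L_p(\mathcal{M})$ then (as $L_p(\mathcal{M})$ is an intermediate space) $x\in(L_1+L_\infty)(\mathcal{M})$ and its interpolation norm is finite, while conversely any $x$ in the interpolation space has $\|x\|_p<\infty$. This yields the claimed equality with equivalent norms and $\mathcal{M}$-independent constants.

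There is no genuine noncommutative obstruction here: once Holmstedt's formula is available, everything collapses to scalar estimates on $x^\sharp$, and the only named ingredient is Hardy's inequality for the upper bound. The one point to watch is that the hypothesis $0<\theta<1$ is precisely what forces $1<p<\infty$, so that Hardy's inequality holds with a finite constant. If one prefers a self-contained argument, the upper bound can also be obtained by writing $\int_0^t x^\sharp(s)\,ds=t\int_0^1 x^\sharp(ut)\,du$ and applying Minkowski's integral inequality in $L_p(\R_+^*,dt/t)$, after the change of variables $v=ut$; this reproduces the same constant $p/(p-1)$.
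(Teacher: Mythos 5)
Your argument is correct, and it follows precisely the route the paper indicates (the theorem is stated without proof, prefaced by ``using Holmstedt's formula, one can show\ldots''): reduce $K_t(x,L_1(\mathcal{M}),L_\infty(\mathcal{M}))$ to $\int_0^t x^\sharp(s)\,ds$, then compare with $\|x^\sharp\|_{L_p(\R_+^*)}=\|x\|_p$ via the monotonicity of $x^\sharp$ for the lower bound and Hardy's inequality for the upper bound, with constants depending only on $p$. Nothing to add; the hypothesis $0<\theta<1$ guaranteeing $1<p<\infty$ is indeed the only point requiring care.
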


An intermediate space $E(\mathcal{M})$ for $(L_1(\mathcal{M}),L_\infty(\mathcal{M}))$ is \textit{fully symmetric} if whenever $x\in E(\mathcal{M})$ and $y\in (L_1+L_\infty)(\mathcal{M})$ are such that
\[\int_{0}^{t}y^{\sharp}(s)ds\pp\int_{0}^{t}x^{\sharp}(s)ds\]
for all $t>0$, then $y\in E(\mathcal{M})$ with $\|y\|_{E(\mathcal{M})}\pp\|y\|_{E(\mathcal{M})}$. For example, if $1\pp p\pp\infty$ then $L_p(\mathcal{M})$ is fully symmetric. The following deep theorem, proved in \cite{DoddsSukochevIntegration}, implies that an intermediate space $E(\mathcal{M})$ for $(L_1(\mathcal{M}),L_\infty(\mathcal{M}))$ is an exact interpolation space if and only if it is fully symmetric. In particular, $L_p(\mathcal{M})$ is actually an exact interpolation space for $(L_1(\mathcal{M}),L_\infty(\mathcal{M}))$ for every $1\pp p\pp\infty$.

\begin{theo}\label{Dodds}
Let $\mathcal{M},\mathcal{N}$ be a pair of von Neumann algebras equipped with a n.s.f. trace. A pair $E(\mathcal{M})$, $E(\mathcal{N})$ of intermediate spaces for $(L_1(\mathcal{M}),L_\infty(\mathcal{M}))$, $(L_1(\mathcal{N}),L_\infty(\mathcal{N}))$, is an exact interpolation pair if and only if whenever $x\in E(\mathcal{M})$ and $y\in (L_1+L_\infty)(\mathcal{N})$ are such that
\[\int_{0}^{t}y^{\sharp}(s)ds\pp\int_{0}^{t}x^{\sharp}(s)ds\]
for all $t>0$, then $y\in E(\mathcal{N})$ with $\|y\|_{E(\mathcal{N})}\pp\|x\|_{E(\mathcal{M})}$. 
\end{theo}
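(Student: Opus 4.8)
The statement is an equivalence whose two implications are of quite different difficulty; the plan is to dispose of the soft one first and then reduce the hard one to the Calderón couple property of $(L_1,L_\infty)$.

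\textbf{From the submajorization condition to an exact interpolation pair.} Here I would take a bounded operator $T\colon(L_1(\mathcal{M}),L_\infty(\mathcal{M}))\to(L_1(\mathcal{N}),L_\infty(\mathcal{N}))$, rescale so that $\|T\|_{(L_1,L_\infty)\to(L_1,L_\infty)}\pp1$, and fix $x\in E(\mathcal{M})\subset(L_1+L_\infty)(\mathcal{M})$, so that $Tx\in(L_1+L_\infty)(\mathcal{N})$. Since morphisms of Banach couples contract $K$-functionals, one has $K_t(Tx,L_1(\mathcal{N}),L_\infty(\mathcal{N}))\pp K_t(x,L_1(\mathcal{M}),L_\infty(\mathcal{M}))$ for every $t>0$, and Holmstedt's formula rewrites this as $\int_0^t(Tx)^{\sharp}(s)\,ds\pp\int_0^t x^{\sharp}(s)\,ds$ for every $t>0$. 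The hypothesis then forces $Tx\in E(\mathcal{N})$ with $\|Tx\|_{E(\mathcal{N})}\pp\|x\|_{E(\mathcal{M})}$, and undoing the rescaling shows $(E(\mathcal{M}),E(\mathcal{N}))$ is an exact interpolation pair.

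\textbf{From an exact interpolation pair to the submajorization condition.} For this, the key point I would isolate is that $(L_1,L_\infty)$ is a \emph{relative Calderón couple with constant one}: whenever $x\in(L_1+L_\infty)(\mathcal{M})$ and $y\in(L_1+L_\infty)(\mathcal{N})$ satisfy $\int_0^t y^{\sharp}\pp\int_0^t x^{\sharp}$ for all $t>0$, there exists $T\colon(L_1(\mathcal{M}),L_\infty(\mathcal{M}))\to(L_1(\mathcal{N}),L_\infty(\mathcal{N}))$ with $\|T\|_{(L_1,L_\infty)\to(L_1,L_\infty)}\pp1$ and $Tx=y$. Granting this, if $(E(\mathcal{M}),E(\mathcal{N}))$ is an exact interpolation pair and $x\in E(\mathcal{M})$, then $y=Tx\in E(\mathcal{N})$ with $\|y\|_{E(\mathcal{N})}\pp\|x\|_{E(\mathcal{M})}$, which is exactly the claim. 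To construct $T$ I would proceed in three steps. First, a reduction: cutting $x$ and $y$ with spectral projections of $|x|$ and $|y|$ and exhausting the semifinite traces, reduce to the case where $x^{\sharp}$ and $y^{\sharp}$ are bounded decreasing functions with compact support of equal length, checking that the bound on $\|T\|$ passes to the limit. Second, on the function side invoke the classical Calderón-Mityagin theorem: the relation $\int_0^t y^{\sharp}\pp\int_0^t x^{\sharp}$ produces a substochastic operator $S$ over $\R_+^*$, that is, a positive operator with $\|S\|_{L_1\to L_1}\pp1$ and $\|S\|_{L_\infty\to L_\infty}\pp1$, satisfying $Sx^{\sharp}=y^{\sharp}$. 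Third, transfer $S$ to the noncommutative side: writing the polar decompositions $x=u|x|$ and $y=v|y|$, identifying $|x|$ and $|y|$ via spectral theory with multiplication by $x^{\sharp}$ and $y^{\sharp}$ on $L_2$-spaces over intervals, and taking $T$ to be $S$ conjugated by these identifications and composed on the left and right with $v$, $u^{*}$ and the appropriate support projections; conjugation by partial isometries together with the substochasticity of $S$ keeps both the $L_1$- and the $L_\infty$-norm contracted, while $Tx=y$ holds by construction.

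The step I expect to be the main obstacle is this transfer, run together with the reduction: one has to promote an operator manufactured on the abelian von Neumann algebras generated by $|x|$ and $|y|$ to a genuine morphism $(L_1(\mathcal{M}),L_\infty(\mathcal{M}))\to(L_1(\mathcal{N}),L_\infty(\mathcal{N}))$ of Banach couples defined on the whole sum space, controlling two norms at once and accommodating the merely semifinite trace. It is precisely the demand for constant one, rather than some $C>1$, that rules out the softer route through $K$-divisibility and makes the sharp Calderón construction unavoidable; the full argument is the one carried out in \cite{DoddsSukochevIntegration}.
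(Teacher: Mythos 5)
The paper does not prove this theorem: it is quoted as a known deep result from \cite{DoddsSukochevIntegration}, so there is no internal proof to compare against. Your two-directional argument is exactly the strategy of that reference: the forward implication is the soft combination of $K$-functional contractivity for couple morphisms with the formula $K_t(\cdot,L_1,L_\infty)=\int_0^t(\cdot)^{\sharp}(s)\,ds$, and the converse rests on the noncommutative Calder\'on--Mityagin property with constant one, namely that $\int_0^t y^{\sharp}\pp\int_0^t x^{\sharp}$ for all $t>0$ yields an admissible contraction $T$ of the couples with $Tx=y$, after which exactness of the interpolation pair gives the conclusion immediately. Your outline of the construction of $T$ (truncation, the classical substochastic operator intertwining $x^{\sharp}$ and $y^{\sharp}$, transfer through polar decompositions and spectral identifications) is the correct skeleton; the only place where substantial work is hidden is the one you flag yourself --- making the spectral identification of $|x|$ with multiplication by $x^{\sharp}$ rigorous when the distribution of $|x|$ has atoms, and extending the resulting operator to all of $(L_1+L_\infty)(\mathcal{M})$ with both norms controlled --- and that is precisely what is carried out in detail in \cite{DoddsSukochevIntegration}.
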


In the sequel, we shall consider the half line $\R_+^*$ equipped with the Lebesgue measure, and we will denote $L_p:=L_p(\R_+^*)$ for $1\pp p\pp\infty$ the associated Lebesgue spaces. If $E$ is a fully symmetric intermediate space (or an exact interpolation space) for $(L_1,L_\infty)$, then the set
\[E(\mathcal{M}):=\Big\{x\in L_1(\mathcal{M})+L_\infty(\mathcal{M})\ \mid\ x^{\sharp}\in E\Big\}\]
becomes a Banach space when equipped with the norm $\|\cdot\|_{E(\mathcal{M})}$ defined as follows,
\[\|x\|_{E(\mathcal{M})}:=\|x^{\sharp}\|_{E}.\]
Moreover, $E(\mathcal{M})$ is clearly a fully symmetric intermediate space (and thus an exact interpolation space) for $(L_1(\mathcal{M}),L_\infty(\mathcal{M}))$. 

More generally, Theorem \ref{Dodds} yields the following result. 

\begin{prop}\label{Dodds2}
Let $\mathcal{M},\mathcal{N}$ be von Neumann algebras equipped with n.s.f. traces. Then for every fully symmetric intermediate space $E$ for $(L_1,L_\infty)$, the pair $(E(\mathcal{M}),E(\mathcal{N}))$ is an exact interpolation pair for $(L_1(\mathcal{M}),L_\infty(\mathcal{M}))$, $(L_1(\mathcal{N}),L_\infty(\mathcal{N}))$. 
\end{prop}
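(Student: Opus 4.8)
The plan is to verify directly the criterion of Theorem~\ref{Dodds}. By the discussion preceding the statement, $E(\mathcal{M})$ and $E(\mathcal{N})$ are already known to be (fully symmetric) intermediate spaces for $(L_1(\mathcal{M}),L_\infty(\mathcal{M}))$ and $(L_1(\mathcal{N}),L_\infty(\mathcal{N}))$ respectively, so all that remains is the majorization condition. Concretely, I would fix $x\in E(\mathcal{M})$ and $y\in(L_1+L_\infty)(\mathcal{N})$ with $\int_0^t y^\sharp(s)\,ds\pp\int_0^t x^\sharp(s)\,ds$ for all $t>0$, and show that $y\in E(\mathcal{N})$ with $\|y\|_{E(\mathcal{N})}\pp\|x\|_{E(\mathcal{M})}$; Theorem~\ref{Dodds} then yields that $(E(\mathcal{M}),E(\mathcal{N}))$ is an exact interpolation pair.

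The key point is that the whole statement reduces to the full symmetry of the function space $E$ on $\R_+^*$. Set $f:=x^\sharp$ and $g:=y^\sharp$; these are non-increasing functions on $\R_+^*$, hence equal to their own decreasing rearrangements, that is, $f^\sharp=f$ and $g^\sharp=g$. By definition of $E(\mathcal{M})$, the hypothesis $x\in E(\mathcal{M})$ says exactly that $f\in E$, with $\|f\|_E=\|x\|_{E(\mathcal{M})}$. By Holmstedt's formula (the proposition preceding Theorem~\ref{InterpLp}), the hypothesis $y\in(L_1+L_\infty)(\mathcal{N})$ is equivalent to $\int_0^t y^\sharp(s)\,ds<\infty$ for all $t>0$, which is precisely the statement that $g\in(L_1+L_\infty)(\R_+^*)$. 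Finally the majorization hypothesis becomes $\int_0^t g^\sharp(s)\,ds\pp\int_0^t f^\sharp(s)\,ds$ for all $t>0$.

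Now I would invoke the defining property of a fully symmetric space, applied to $E$ itself: from $f\in E$, $g\in(L_1+L_\infty)(\R_+^*)$ and $\int_0^t g^\sharp\pp\int_0^t f^\sharp$ for all $t>0$, it follows that $g\in E$ with $\|g\|_E\pp\|f\|_E$. Unwinding the definitions, $y^\sharp=g\in E$ gives $y\in E(\mathcal{N})$, and $\|y\|_{E(\mathcal{N})}=\|y^\sharp\|_E=\|g\|_E\pp\|f\|_E=\|x^\sharp\|_E=\|x\|_{E(\mathcal{M})}$. This is exactly the criterion of Theorem~\ref{Dodds}, and the proof is complete.

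I do not expect a genuine obstacle here: the argument is bookkeeping once one observes that the majorization condition of Theorem~\ref{Dodds} is phrased entirely in terms of the decreasing rearrangement functions $x^\sharp,y^\sharp$, so that passing through the map $x\mapsto x^\sharp$ transports the criterion of that theorem into the full symmetry of $E$ on $\R_+^*$. The only step deserving a line of justification is the identification of $(L_1+L_\infty)(\mathcal{N})$ with $\{y\in L_0(\mathcal{N})\ \mid\ y^\sharp\in(L_1+L_\infty)(\R_+^*)\}$, which is the content of Holmstedt's formula quoted above.
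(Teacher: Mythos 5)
Your proposal is correct and takes essentially the same route as the paper, which simply asserts that Theorem~\ref{Dodds} ``yields'' Proposition~\ref{Dodds2} without writing out the verification. Your bookkeeping --- passing through $f:=x^\sharp$, $g:=y^\sharp$, using that a decreasing rearrangement is its own rearrangement, and invoking the full symmetry of $E$ as a function space on $\R_+^*$ to transfer the majorization --- is exactly the intended argument.
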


Notice that with the choice $E:=L_p$ with $1\pp p\pp\infty$, we have $E(\mathcal{M})=L_p(\mathcal{M})$ with equal norms. The following important result, which provides a generalisation of Theorem \ref{InterpLp}, can also be found in \cite{DoddsSukochevIntegration}.

\begin{theo}\label{FunctorLp}
Let $\mathcal{F}$ be an exact interpolation functor and consider the exact interpolation space $E:=\mathcal{F}(L_1,L_\infty)$. Then
\[E(\mathcal{M})=\mathcal{F}(L_1(\mathcal{M}),L_\infty(\mathcal{M}))\]
with the same norms.
\end{theo}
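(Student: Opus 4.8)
The plan is to verify the two inclusions $E(\mathcal{M}) \subset \mathcal{F}(L_1(\mathcal{M}), L_\infty(\mathcal{M}))$ and $\mathcal{F}(L_1(\mathcal{M}), L_\infty(\mathcal{M})) \subset E(\mathcal{M})$ with norm control in both directions, using Theorem \ref{Dodds} (equivalently Proposition \ref{Dodds2}) as the essential transfer mechanism between the commutative model and the von Neumann algebra. The key observation to exploit is that $\mathcal{F}$, being an exact interpolation functor, produces an exact interpolation space $E = \mathcal{F}(L_1, L_\infty)$ for $(L_1, L_\infty)$, which by the Calderón--Mityagin description of exact interpolation spaces for $(L_1, L_\infty)$ is precisely a fully symmetric space; thus $E(\mathcal{M})$ as defined via $\|x\|_{E(\mathcal{M})} = \|x^\sharp\|_E$ makes sense and is itself an exact interpolation space for $(L_1(\mathcal{M}), L_\infty(\mathcal{M}))$.

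First I would treat the commutative algebra $\mathcal{N} = L_\infty(\R_+^*)$ as a reference point. Here the decreasing rearrangement $x \mapsto x^\sharp$ is the classical one, and one checks that $E(L_\infty(\R_+^*)) = E$ with equal norms (a function and its decreasing rearrangement have the same norm in a fully symmetric space). Then I would apply Proposition \ref{Dodds2} with this $\mathcal{N}$ and a general $\mathcal{M}$: since $(E(\mathcal{M}), E(\mathcal{N})) = (E(\mathcal{M}), E)$ is an exact interpolation pair for the couples $(L_1(\mathcal{M}), L_\infty(\mathcal{M}))$ and $(L_1, L_\infty)$, the defining property of the interpolation functor $\mathcal{F}$ gives us a bounded operator side of the argument only after we produce suitable operators between the couples. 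The cleaner route is this: fix $x \in (L_1 + L_\infty)(\mathcal{M})$. One direction is that for any exact interpolation space, and in particular for $\mathcal{F}(L_1(\mathcal{M}), L_\infty(\mathcal{M}))$, the norm of $x$ is controlled by the $K$-functional profile $t \mapsto K_t(x)$ through a universal recipe depending only on $\mathcal{F}$; by Holmstedt's formula (the Proposition preceding Theorem \ref{InterpLp}) this profile equals $t \mapsto \int_0^t x^\sharp(s)\,ds$, which is exactly the $K$-functional profile in $(L_1, L_\infty)$ of the function $x^\sharp$ itself. So $\|x\|_{\mathcal{F}(L_1(\mathcal{M}), L_\infty(\mathcal{M}))}$ and $\|x^\sharp\|_{\mathcal{F}(L_1, L_\infty)} = \|x^\sharp\|_E = \|x\|_{E(\mathcal{M})}$ are computed from the same $K$-data.

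To turn that coincidence of $K$-profiles into an equality of norms, I would argue as follows. For the inclusion $E(\mathcal{M}) \subset \mathcal{F}(L_1(\mathcal{M}), L_\infty(\mathcal{M}))$: given $x \in E(\mathcal{M})$, the function $x^\sharp$ lies in $E = \mathcal{F}(L_1, L_\infty)$, and since $K_t(x, L_1(\mathcal{M}), L_\infty(\mathcal{M})) = K_t(x^\sharp, L_1, L_\infty)$ for all $t$, the pair consisting of $E$ and $\mathcal{F}(L_1(\mathcal{M}), L_\infty(\mathcal{M}))$ being an exact interpolation pair (apply $\mathcal{F}$ to the couples $(L_1, L_\infty)$ and $(L_1(\mathcal{M}), L_\infty(\mathcal{M}))$) means any bounded operator between the couples contracts $\mathcal{F}$-norms; I must exhibit such an operator carrying $x^\sharp$ to (something with the same $K$-profile as) $x$. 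This is the standard Calderón-type construction: one builds, for the given $x$, an operator $T$ from $(L_1, L_\infty)$ to $(L_1(\mathcal{M}), L_\infty(\mathcal{M}))$ with $\|T\| \le 1$ and $Tx^\sharp$ having $K$-profile dominated by that of $x$, using the spectral resolution of $|x|$ to spread mass. Running the symmetric construction the other way (an operator from $(L_1(\mathcal{M}), L_\infty(\mathcal{M}))$ to $(L_1, L_\infty)$ sending $x$ to $x^\sharp$, e.g. via a conditional expectation / rearrangement map) gives the reverse inequality, and the two together yield $\|x\|_{E(\mathcal{M})} = \|x\|_{\mathcal{F}(L_1(\mathcal{M}), L_\infty(\mathcal{M}))}$, i.e. the two spaces coincide with equal norms.

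The main obstacle I anticipate is the construction of the two contractive operators between the Banach couples $(L_1, L_\infty)$ and $(L_1(\mathcal{M}), L_\infty(\mathcal{M}))$ that realize the transfer of $x$ to $x^\sharp$ and back while respecting (or dominating) the $K$-functional profiles simultaneously at both endpoints $p = 1$ and $p = \infty$. This is exactly the technical heart of the Calderón--Mityagin circle of ideas in the noncommutative setting; the cleanest implementation is to note that Theorem \ref{Dodds} already packages precisely this, so the real work is to phrase the statement "$K_t(x) = K_t(x^\sharp)$ for all $t$ implies $\|x\|_{\mathcal{F}(\text{couple})} = \|x^\sharp\|_E$" as a direct consequence of $\mathcal{F}$ being an exact interpolation functor together with Theorem \ref{Dodds}, without re-deriving the orbit/co-orbit operators by hand. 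Once that reduction is in place the remaining verifications — that $E(\mathcal{M})$ is a well-defined Banach space (already noted in the excerpt) and that $E(L_\infty(\R_+^*)) = E$ isometrically — are routine.
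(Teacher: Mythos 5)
Your proposal is essentially correct, but note that the paper does not prove this theorem at all: it is quoted from \cite{DoddsSukochevIntegration}, so there is no in-paper argument to match. Your derivation from the stated preliminaries is the right one, and its clean form is exactly the reduction you arrive at in your last paragraph: for $x\in(L_1+L_\infty)(\mathcal{M})$ the profiles $t\mapsto\int_0^t x^\sharp(s)\,ds$ compute simultaneously $K_t(x,L_1(\mathcal{M}),L_\infty(\mathcal{M}))$ and $K_t(x^\sharp,L_1,L_\infty)$; since $\mathcal{F}$ is exact, the pair $\bigl(\mathcal{F}(L_1,L_\infty),\mathcal{F}(L_1(\mathcal{M}),L_\infty(\mathcal{M}))\bigr)$ is an exact interpolation pair, so the ``only if'' direction of Theorem \ref{Dodds} applied to the mutual majorization of $x$ and $x^\sharp$ (in both orders, taking $\mathcal{N}=L_\infty(\R_+^*)$ with the Lebesgue trace) gives $\|x\|_{\mathcal{F}(L_1(\mathcal{M}),L_\infty(\mathcal{M}))}\pp\|x^\sharp\|_E$ and $\|x^\sharp\|_E\pp\|x\|_{\mathcal{F}(L_1(\mathcal{M}),L_\infty(\mathcal{M}))}$, hence equality of the two spaces with equal norms. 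Two minor caveats: the middle portion of your write-up, where you propose to construct Calder\'on-type orbit operators carrying $x^\sharp$ to $x$ and back, is an unnecessary detour once Theorem \ref{Dodds} is invoked as above (those operators are the technical content of the Dodds--Dodds--Sukochev proof, not something you need to rebuild); and the sentence claiming that the norm in an exact interpolation space is ``controlled by the $K$-functional profile through a universal recipe'' is not true for general couples --- it holds here only because Theorem \ref{Dodds} identifies exact interpolation spaces of $(L_1,L_\infty)$-couples with fully symmetric ones, so you should phrase it as a consequence of that theorem rather than as a general principle.
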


\subsection{Noncommutative martingales} We refer to \cite{PisierXuMartingales} for a background on noncommutative martingales theory. Let $\mathcal{M}\subset\mathcal{B}(\mathcal{H})$ be a von Neumann algebra endowed with a n.s.f. trace $\tau$ and a \textit{filtration}, i.e., an increasing sequence $(\mathcal{M}_n)_{n\pg1}$ of von Neumann subalgebras of $\mathcal{M}$ whose union $\cup_{n\pg1}\mathcal{M}_n$ is weak*-dense in $\mathcal{M}$. We further require that we have a $\tau$-preserving weakly*-continuous conditional expectation $\EE_n$ from $\mathcal{M}$ to $\mathcal{M}_n$. Then $(\EE_n)_{n\pg1}$ is an increasing sequence of commuting projections. For every $n\pg1$, we set
\[\DD_n:=\EE_n-\EE_{n-1}\]
(with the convention $\EE_0=0$). Then $(\DD_n)_{n\pg1}$ is a family of mutually orthogonal projections that commutes with $(\EE_n)_{n\pg1}$. We will refer to them as the \textit{difference projections} associated with the filtration. We recall that $\EE_n$ extends to a contractive projection from $L_p(\mathcal{M})$ onto $L_p(\mathcal{M}_n)$ for every $1\pp p\pp\infty$, and, in particular, it extends to a weakly continuous projection from $(L_1+L_\infty)(\mathcal{M})$ onto $(L_1+L_\infty)(\mathcal{M}_n)$, for every $n\pg1$. A sequence $(x_n)_{n\pg1}$ of $(L_1+L_\infty)(\mathcal{M})$ is \textit{adapted} if $\EE_n(x_n)=x_n$ for all $n\pg1$. 

\begin{defi}
A \textit{martingale} is a sequence $(x_n)_{n\pg1}$ of $(L_1+L_\infty)(\mathcal{M})$ which is adapted and such that $\EE_{n-1}(x_n)=x_{n-1}$ for all $n\pg2$. A martingale $(x_n)_{n\pg1}$ is \textit{finite} if it sits in $(L_1+L_\infty)(\mathcal{M}_n)$ for a certain $n\pg1$. A sequence $(x_n)_{n\pg1}$ of $(L_1+L_\infty)(\mathcal{M})$ is a \textit{martingale difference} if it is adapted and $\EE_{n-1}(x_n)=0$ for all $n\pg2$.
\end{defi}

For every $x\in(L_1+L_\infty)(\mathcal{M})$, the sequence $(\EE_n(x))_{n\pg1}$ is a martingale, and the sequence $(\DD_n(x))_{n\pg1}$ is a martingale difference, and we say they are \textit{induced} by $x$. Note that we have $x\in\cup_{n\pg 1}(L_1+L_\infty)(\mathcal{M}_n)$ if and only if the martingale $(\EE_n(x))_{n\pg1}$ is finite, and also if and only if the martingale difference sequence $(\DD_n(x))_{n\pg1}$ is eventually-zero.

\begin{prop}
Let $1\pp p\pp\infty$. If $x\in L_p(\mathcal{M})$, then the sequence $(\EE_n(x))_{n\pg1}$ converges to $x$ (with respect to the $L_p$-norm if $p<\infty$, and to the weak* topology if $p=\infty$), and $\|x\|_p=\sup_{n\pg1}\|\EE_n(x)\|_p$. 
\end{prop}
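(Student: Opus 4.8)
The plan is to reduce the whole proposition to the single density statement
\[(\star)\qquad \mathcal D:=\bigcup_{n\pg1}L_1(\mathcal M_n)\ \text{ is norm-dense in }L_1(\mathcal M),\]
and then obtain everything else by soft arguments. Granting $(\star)$: for $p=1$, if $x\in L_1(\mathcal M_m)\subset\mathcal D$ then $\EE_n(x)=x$ for all $n\pg m$, so a $3\varepsilon$-estimate using the $L_1$-contractivity of $\EE_n$ gives $\EE_n(x)\to x$ in $L_1(\mathcal M)$ for every $x\in L_1(\mathcal M)$. For $1<p<\infty$, the subspace $L_1(\mathcal M)\cap L_\infty(\mathcal M)$ is dense in $L_p(\mathcal M)$ and $\EE_n$ is an $L_p$-contraction, so it suffices to treat $x\in L_1(\mathcal M)\cap L_\infty(\mathcal M)$; there H\"older's inequality gives
\[\|\EE_n(x)-x\|_p\pp\|\EE_n(x)-x\|_1^{1/p}\,\|\EE_n(x)-x\|_\infty^{1-1/p}\pp\big(2\|x\|_\infty\big)^{1-1/p}\,\|\EE_n(x)-x\|_1^{1/p},\]
which tends to $0$ by the case $p=1$. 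For $p=\infty$, the sequence $(\EE_n(x))_n$ is bounded by $\|x\|_\infty$ in $\mathcal M=L_1(\mathcal M)^*$, so by uniform boundedness it is enough to test the weak*-convergence against the functionals $\tau(h\,\cdot)$ with $h\in\bigcup_m L_1(\mathcal M_m)$, which are dense in $L_1(\mathcal M)$ by $(\star)$; and for $n\pg m$ the trace-invariance and the $\mathcal M_n$-module property of $\EE_n$ yield $\tau(\EE_n(x)h)=\tau(\EE_n(xh))=\tau(xh)$, so that pairing is eventually constant. Finally, $\EE_n=\EE_n\EE_{n+1}$ together with contractivity makes $n\mapsto\|\EE_n(x)\|_p$ nondecreasing and $\pp\|x\|_p$, while the convergence just established (for $p=\infty$ one also uses the weak*-lower semicontinuity of $\|\cdot\|_\infty$) upgrades this to $\sup_n\|\EE_n(x)\|_p=\|x\|_p$.

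So everything is concentrated in $(\star)$. I would first reduce it to the Hilbert-space assertion that $\bigcup_n L_2(\mathcal M_n)$ is norm-dense in $L_2(\mathcal M)$: a positive $h\in L_1(\mathcal M)$ writes as $h=k^2$ with $k\in L_2(\mathcal M)_+$, and choosing $k_j\in L_2(\mathcal M_{n_j})$ with $k_j\to k$ in $L_2(\mathcal M)$ one has $\|k_j^*k_j-k^2\|_1\pp(\|k_j\|_2+\|k\|_2)\|k_j-k\|_2\to 0$ with $k_j^*k_j\in L_1(\mathcal M_{n_j})$; a general element of $L_1(\mathcal M)$ is a linear combination of four positive ones. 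For the $L_2$-assertion, let $e_n$ be the orthogonal projection of $L_2(\mathcal M)$ onto $L_2(\mathcal M_n)$, i.e.\ the $L_2$-realisation of $\EE_n$; it is self-adjoint because $\tau\circ\EE_n=\tau$, and $e_ne_m=e_{\min(n,m)}$, so $(e_n)_n$ increases strongly to the orthogonal projection $e_\infty$ onto $\overline{\bigcup_n L_2(\mathcal M_n)}$, and I must prove $e_\infty=1$. The module identity $\EE_n(\xi b)=\EE_n(\xi)b$ for $b\in\mathcal M_m$ and $n\pg m$ says that $e_n$ commutes with right multiplication by $b$; letting $n\to\infty$, $e_\infty$ commutes with right multiplication by every element of $\bigcup_m\mathcal M_m$, hence — right multiplication being weak*-continuous and $\bigcup_m\mathcal M_m$ being weak*-dense in $\mathcal M$ — with right multiplication by every element of $\mathcal M$. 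Since the right multiplications are exactly the commutant of the left multiplications on $L_2(\mathcal M)$, $e_\infty$ is itself left multiplication by some projection $z\in\mathcal M$. But $e_\infty\pg e_1$, so $e_\infty$ fixes $L_2(\mathcal M_1)$ pointwise; applied to an increasing net of finite-trace projections $p_\alpha\in\mathcal M_1$ with $\sup_\alpha p_\alpha=1$ (such a net exists because $\tau|_{\mathcal M_1}$ is n.s.f., the conditional expectations being trace-preserving) this gives $p_\alpha=e_\infty p_\alpha=zp_\alpha$, so $p_\alpha\pp z$ and thus $z=1$; hence $e_\infty=1$.

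The hard part is squarely the last paragraph: converting the bare weak*-density of $\bigcup_n\mathcal M_n$ in $\mathcal M$ into the $L_2$-density of $\bigcup_n L_2(\mathcal M_n)$ in $L_2(\mathcal M)$ (equivalently, $e_\infty=1$). Everything else — the $3\varepsilon$-approximations, the H\"older interpolation inequality for $1<p<\infty$, the $p=\infty$ duality computation, the reduction of $(\star)$ to its $L_2$-version, and the norm identity — is routine manipulation of the properties of the $\EE_n$ recalled above.
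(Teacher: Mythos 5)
The paper does not actually prove this proposition: it is quoted as standard background on noncommutative martingales (with \cite{PisierXuMartingales} as the blanket reference for the subsection), so there is no in-text argument to compare yours against. On its own terms your proof is correct and complete. The soft reductions (the $3\varepsilon$ argument for $p=1$, the interpolation inequality $\|y\|_p\pp\|y\|_1^{1/p}\|y\|_\infty^{1-1/p}$ for $1<p<\infty$, the bounded-net-plus-dense-predual test for $p=\infty$, the monotonicity of $n\mapsto\|\EE_n(x)\|_p$ via $\EE_n=\EE_n\EE_{n+1}$, and the polar-decomposition reduction of $L_1$-density to $L_2$-density) are all sound, and you correctly identify the genuine content as the statement $e_\infty=1$. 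Your argument there is the right one: the set of $b\in\mathcal M$ whose right multiplication commutes with $e_\infty$ is weak*-closed (normality of the right regular representation), hence contains all of $\mathcal M$ by weak*-density of $\cup_m\mathcal M_m$; the commutation theorem for the standard form then forces $e_\infty$ to be left multiplication by a projection $z\in\mathcal M$, and testing against an increasing net of finite-trace projections of $\mathcal M_1$ gives $z=1$. The only point worth making explicit is the semifiniteness of $\tau|_{\mathcal M_1}$, which you invoke to produce that net: it does follow from the hypotheses, since for $0\neq x\in\mathcal M_1^+$ semifiniteness of $\tau$ on $\mathcal M$ gives $y\in\mathcal M^+$ with $y\pp x$ and $0<\tau(y)<\infty$, and then $\EE_1(y)\in\mathcal M_1^+$ satisfies $\EE_1(y)\pp x$ and $\tau(\EE_1(y))=\tau(y)$. (This semifiniteness is in any case implicitly assumed throughout the paper, since the spaces $L_p(\mathcal M_n)$ must make sense.) With that remark added, the proof stands.
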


\begin{rem}
As a consequence, if $x\in(L_1+L_\infty)(\mathcal{M})$, then the sequence $(\EE_n(x))_{n\pg1}$ converges weakly to $x$ in the Banach space $(L_1+L_\infty)(\mathcal{M})$.
\end{rem}

Finally, we state a noncommutative version of Gundy's decomposition, which will be an important tool to prove the main results of the paper. The following theorem is adapted from \cite{ParcetGundy}[Corollary 2.10].

\begin{theo}\label{ParcetRandria}
Let $y\in\cup_{n\pg1}L_1(\mathcal{M}_n)$ be a positive operator and $\lambda>0$. Then there exist positive operators $\alpha,\beta,\gamma\in\cup_{n\pg1}L_1(\mathcal{M}_n)$, an adapted sequence $(q_n)_{n\pg1}$ of projections of $\mathcal{M}$ and a universal constant $C>0$ with the following properties,
\begin{enumerate}
    \item[{\rm(i)}] $y=\alpha+\beta+\gamma$,
    \item[{\rm(ii)}] the element $\alpha$ satisfies
\[\|\alpha\|_1\pp C\|y\|_1,\ \ \ \ \ \ \ \ \ \ \ \ \ \ \ \|\alpha\|_2^2\pp C^2\lambda\|y\|_1,\]
    \item[{\rm(iii)}] the element $\beta$ satisfies 
\[\sum_{n\pg1}\|\DD_n(\beta)\|_1\pp C\|y\|_1,\]
    \item[{\rm(iv)}] the element $\gamma$ satisfies
\[\DD_{n+1}(\gamma)=\DD_{n+1}(y)-q_n\DD_{n+1}(y)q_n\]
for every $n\pg1$, 
    \item[{\rm(v)}] the projection $p:=1-\wedge_{n\pg1}q_n$ satisfies
\[\tau(p)\pp C\frac{\|y\|_1}{\lambda}.\]
\end{enumerate}
\end{theo}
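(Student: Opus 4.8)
The plan is to reduce the statement to the version of Gundy's decomposition established by Parcet and Randrianantoanina in \cite{ParcetGundy}[Corollary 2.10], which applies to a positive $L_1$-martingale, and to package the conclusion in the form stated here. First I would fix $y\in L_1(\mathcal{M}_N)$ for some $N\pg1$ (the union runs over the filtration, so such an $N$ exists), set $x=(\EE_n(y))_{n\pg1}$ for the induced martingale, which is finite, and apply the cited corollary at level $\lambda$. That produces three martingales --- a ``diagonal'' piece, a ``bad'' piece of bounded $\ell_1$-variation of differences, and an ``off-diagonal'' piece --- together with an adapted sequence of projections $(q_n)_{n\pg1}$ controlling a projection $p$ with $\tau(p)\pp C\|y\|_1/\lambda$. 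The elements $\alpha,\beta,\gamma$ are then defined as the limits (in the appropriate topology; these martingales are in fact finite because $x$ is, so the limits are genuine elements of $\cup_{n\pg1}L_1(\mathcal{M}_n)$) of the three martingales, and (i) is immediate from additivity of the decomposition at the level of martingales. Positivity of $\alpha,\beta,\gamma$ and the defining identity (iv) $\DD_{n+1}(\gamma)=\DD_{n+1}(y)-q_n\DD_{n+1}(y)q_n$ are exactly how the off-diagonal and diagonal pieces are constructed in \cite{ParcetGundy}, so these are read off directly.

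Next I would extract the quantitative bounds. The estimate (ii) on $\alpha$, namely $\|\alpha\|_1\pp C\|y\|_1$ together with the $L_2$-bound $\|\alpha\|_2^2\pp C^2\lambda\|y\|_1$, is the heart of the Calderón--Zygmund-type control in Gundy's decomposition and is stated in the cited corollary; I would only need to absorb the various absolute constants there into a single universal $C$. Similarly (iii), the bound $\sum_{n\pg1}\|\DD_n(\beta)\|_1\pp C\|y\|_1$ on the total variation of the martingale differences of the ``bad'' part, and (v), $\tau(p)\pp C\|y\|_1/\lambda$ with $p=1-\wedge_{n\pg1}q_n$, are recorded there; the only care needed is that $\wedge_{n\pg1}q_n$ makes sense (it does, as a decreasing net of projections in the von Neumann algebra $\mathcal{M}$, or because the sequence stabilises in the finite case) and that the estimate on $\tau(p)$ passes to the infimum, which follows from normality of $\tau$ and monotone convergence of $\tau(1-q_n)$.

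The one genuine point requiring attention --- rather than the main obstacle, which is really just bookkeeping --- is to make sure the formulation over $\cup_{n\pg1}L_1(\mathcal{M}_n)$ (finite martingales) matches the hypotheses under which \cite{ParcetGundy}[Corollary 2.10] is stated, and to verify that all three pieces and the projection $p$ can indeed be taken inside $\cup_{n\pg1}L_1(\mathcal{M}_n)$ resp.\ inside $\mathcal{M}$: since $y$ lives at a finite stage $N$, every difference $\DD_{n+1}(y)$ vanishes for $n\pg N$, so $\gamma$ is supported at stages $\pp N$, and the construction of $\alpha,\beta$ in the cited reference likewise keeps them at finite stages; hence no limiting/density argument beyond what is already in \cite{ParcetGundy} is needed. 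Once this translation is in place the theorem follows, so I would present it as a direct corollary of \cite{ParcetGundy}[Corollary 2.10] with the constants renamed.
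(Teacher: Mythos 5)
Your proposal matches the paper exactly: the paper offers no proof of this theorem beyond stating that it is adapted from \cite{ParcetGundy}[Corollary 2.10], and your reduction --- applying that corollary to the finite martingale induced by $y$, reading off the three pieces, the adapted projections $(q_n)_{n\pg1}$ and the bounds, and checking that everything stays at a finite stage of the filtration --- is precisely the intended translation. The bookkeeping you supply (finiteness of the martingales, normality of $\tau$ for the estimate on $\tau(1-\wedge_{n\pg1}q_n)$) is correct and in fact more detail than the paper gives.
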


The following version of noncommutative Gundy's decomposition then easily follows.

\begin{coro}[Gundy's decomposition]
Let $y\in\cup_{n\pg1}L_1(\mathcal{M}_n)$ and $\lambda>0$. Then there exist $\alpha,\beta,\gamma\in\cup_{n\pg1}L_1(\mathcal{M}_n)$ and a universal constant $C>0$ with the following properties,
\begin{enumerate}
    \item[{\rm(i)}] $y=\alpha+\beta+\gamma$,
    \item[{\rm(ii)}] the element $\alpha$ satisfies
\[\|\alpha\|_1\pp C\|y\|_1,\ \ \ \ \ \ \ \ \ \ \ \ \ \ \ \|\alpha\|_2^2\pp C^2\lambda\|y\|_1,\]
    \item[{\rm(iii)}] the element $\beta$ satisfies 
\[\sum_{n\pg1}\|\DD_n(\beta)\|_1\pp C\|y\|_1,\]
    \item[{\rm(iv)}] the element $\gamma$ satisfies
\[(1-p)\DD_n(\gamma)(1-p)=0\]
for every $n\pg1$ where $p\in\mathcal{M}$ is a projection such that
\[\tau(p)\pp C\frac{\|y\|_1}{\lambda}.\]
\end{enumerate}
\end{coro}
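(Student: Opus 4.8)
The plan is to reduce the general statement to the positive case already established in Theorem~\ref{ParcetRandria}, applied four times. Since $y\in L_1(\mathcal{M}_m)$ for some $m\pg1$, and $L_1(\mathcal{M}_m)$ is $\ast$-closed, the elements $\re y=\tfrac12(y+y^*)$ and $\im y=\tfrac{1}{2i}(y-y^*)$ are self-adjoint in $L_1(\mathcal{M}_m)$ with $\|\re y\|_1,\|\im y\|_1\pp\|y\|_1$; taking positive and negative parts yields positive operators $a_1,\dots,a_4\in L_1(\mathcal{M}_m)\subset\cup_{n\pg1}L_1(\mathcal{M}_n)$ with $\|a_k\|_1\pp\|y\|_1$ and $y=a_1-a_2+ia_3-ia_4$. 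Applying Theorem~\ref{ParcetRandria} to each $a_k$ with the same $\lambda$ produces $a_k=\alpha_k+\beta_k+\gamma_k$, adapted sequences of projections $(q_n^{(k)})_{n\pg1}$, and projections $p_k:=1-\wedge_{n\pg1}q_n^{(k)}$ with $\tau(p_k)\pp C\|y\|_1/\lambda$, together with the bounds $\|\alpha_k\|_1\pp C\|y\|_1$, $\|\alpha_k\|_2^2\pp C^2\lambda\|y\|_1$ and $\sum_n\|\DD_n(\beta_k)\|_1\pp C\|y\|_1$.

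The key point for item~(iv) is that $1-p_k=\wedge_n q_n^{(k)}\pp q_n^{(k)}$ for every $n$, so conjugating the identity $\DD_{n+1}(\gamma_k)=\DD_{n+1}(a_k)-q_n^{(k)}\DD_{n+1}(a_k)q_n^{(k)}$ by $1-p_k$ turns the second term into $(1-p_k)\DD_{n+1}(a_k)(1-p_k)$, which cancels the first; hence $(1-p_k)\DD_{n+1}(\gamma_k)(1-p_k)=0$ for all $n\pg1$. The difference $\DD_1(\gamma_k)=\EE_1(\gamma_k)$ is not described by Theorem~\ref{ParcetRandria}(iv), so I would transfer it into the second term: set $\widetilde\beta_k:=\beta_k+\EE_1(\gamma_k)$ and $\widetilde\gamma_k:=\gamma_k-\EE_1(\gamma_k)$. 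Then $\DD_1(\widetilde\gamma_k)=0$ and $\DD_n(\widetilde\gamma_k)=\DD_n(\gamma_k)$ for $n\pg2$, so $(1-p_k)\DD_n(\widetilde\gamma_k)(1-p_k)=0$ for \emph{all} $n\pg1$; and since $\beta_k$ lies in some $L_1(\mathcal{M}_N)$ and equals $\sum_{n\pp N}\DD_n(\beta_k)$, one gets $\|\beta_k\|_1\pp\sum_n\|\DD_n(\beta_k)\|_1\pp C\|y\|_1$, whence $\|\EE_1(\gamma_k)\|_1\pp\|\gamma_k\|_1\pp(1+2C)\|y\|_1$ and $\sum_n\|\DD_n(\widetilde\beta_k)\|_1\pp(1+3C)\|y\|_1$.

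Finally I would reassemble with the original coefficients: $\alpha:=\alpha_1-\alpha_2+i\alpha_3-i\alpha_4$, and analogously $\beta$ from the $\widetilde\beta_k$ and $\gamma$ from the $\widetilde\gamma_k$, so that $y=\alpha+\beta+\gamma$; and $p:=p_1\vee p_2\vee p_3\vee p_4$. The triangle inequality in $L_1$ gives $\|\alpha\|_1\pp4C\|y\|_1$; in $L_2$ it gives $\|\alpha\|_2\pp4C\sqrt{\lambda\|y\|_1}$, hence $\|\alpha\|_2^2\pp16C^2\lambda\|y\|_1$; likewise $\sum_n\|\DD_n(\beta)\|_1\pp4(1+3C)\|y\|_1$ and, by subadditivity of $\tau$ on the join, $\tau(p)\pp\sum_k\tau(p_k)\pp4C\|y\|_1/\lambda$. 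For the orthogonality relation, $1-p=\wedge_k(1-p_k)\pp1-p_k$ for each $k$, so $(1-p)\DD_n(\widetilde\gamma_k)(1-p)=(1-p)(1-p_k)\DD_n(\widetilde\gamma_k)(1-p_k)(1-p)=0$, and summing the four terms gives $(1-p)\DD_n(\gamma)(1-p)=0$ for every $n\pg1$. Replacing $C$ by the maximum of the constants above yields the desired universal constant.

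The main obstacle is not deep but bookkeeping: one must (a) absorb the uncontrolled first difference $\DD_1(\gamma_k)$ into $\beta$ without destroying the $\ell_1$-summability of its difference sequence, which forces the preliminary estimate $\|\beta_k\|_1\pp\sum_n\|\DD_n(\beta_k)\|_1$, and (b) distill a \emph{single} exceptional projection $p$ out of the four projections $p_k$ while preserving the conjugation identity simultaneously for all four bad parts — this is exactly where the order relation $1-p\pp1-p_k$ and the subadditivity of the trace on $\vee_k p_k$ are used.
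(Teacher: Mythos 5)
Your proof is correct and follows exactly the route the paper intends: the paper gives no explicit argument beyond ``easily follows,'' but the same four-fold reduction $y=(y_1^{+}-y_1^{-})+i(y_2^{+}-y_2^{-})$ to the positive case of Theorem~\ref{ParcetRandria} is spelled out verbatim later, in the proof of Lemma~\ref{Example 3 - Lemma 3}. Your additional step of absorbing the uncontrolled first difference $\EE_1(\gamma_k)$ into $\beta_k$ (using $\|\beta_k\|_1\pp\sum_n\|\DD_n(\beta_k)\|_1$ to keep the estimate) is a careful touch the paper glosses over, and all the remaining estimates and the passage from the four projections $p_k$ to the single projection $p=\vee_k p_k$ check out.
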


\section{Main results}

This section contains statements and proofs of new general $K$-closedness results in noncommutative Lebesgue spaces when filtrations are involved.

The section is organised as follows. In the first paragraph, by extracting the arguments used by Bourgain in \cite{Bourgain} we introduce the notion of Gundy projections. We then investigate some of their properties. We state and prove a general $K$-closedness result that will be used in the final paragraph, and which provides a noncommutative analogue of \cite{Bourgain}[Lemma 2.4]. In the second paragraph, we use the material of Gundy projections to state and prove new $K$-closedness results in noncommutative Lebesgue spaces on von Neumann algebras equipped with a n.s.f. trace and one or several filtrations.  

\subsection{Gundy projections}

Let $\mathcal{M}$ be a von Neumann algebra equipped with a n.s.f. trace $\tau$. In this section, we will be mainly interested in the study of idempotent operators
\[\PP:D\to D\]
whose domain $D$ is a subspace of $(L_1+L_\infty)(\mathcal{M})$, that are \textit{Lebesgue compatible}, i.e such that for every $1\pp p\pp\infty$, $\PP$ induces a (not necessarily bounded) linear idempotent operator on $D\cap L_p(\mathcal{M})$, in the sense that $\PP(D\cap L_p(\mathcal{M}))\subset L_p(\mathcal{M})$. If $\PP:D\to D$ is a Lebesgue compatible linear idempotent operator, then for every $1\pp p\pp\infty$ we will denote by $\PP_p(\mathcal{M})$ the closure of $\PP(D\cap L_p(\mathcal{M}))$ in $L_p(\mathcal{M})$ (with respect to the norm topology if $p<\infty$ and the weak* topology if $p=\infty$).

\begin{defi}
A \textit{Gundy projection} on $\mathcal{M}$ with constant $C>0$ is a Lebesgue compatible linear idempotent operator $\PP:D\to D$, whose domain $D$ is a subspace of $(L_1+L_\infty)(\mathcal{M})$, such that $\PP$ is self-adjoint (or equivalently, contractive) on $D\cap L_2(\mathcal{M})$, and such that for every $y\in D\cap L_1(\mathcal{M})$ and $\lambda>0$, there exist $\alpha,\beta,\gamma\in\PP(D)$ with the following properties,
\begin{enumerate}
    \item[{\rm(i)}] $\PP(y)=\alpha+\beta+\gamma$,
    \item[{\rm(ii)}] the element $\alpha$ satisfies
\[\|\alpha\|_2^2\pp C^2\lambda\|y\|_1,\]
    \item[{\rm(iii)}] the element $\beta$ satisfies 
\[\|\beta\|_1\pp C\|y\|_1,\]
    \item[{\rm(iv)}] the element $\gamma$ satisfies
\[(1-p)\gamma(1-p)=0,\]
where $p\in\mathcal{M}$ is a projection such that
\[\tau(p)\pp C\frac{\|y\|_1}{\lambda}.\]
\end{enumerate}
\end{defi}

\begin{rem}
Notice that if $\PP$ is bounded for the $L_1$-norm, then clearly $\PP$ is a Gundy projection. For non trivial applications, we will need to consider the case where $\PP$ is not bounded for the $L_1$-norm (for instance, operators arising from martingale transforms, see the next paragraph). 
\end{rem}

\begin{prop}\label{Gundy - Proposition 1}
Let $\PP^{(0)}$ and $\PP^{(1)}$ be two Gundy projections on $\mathcal{M}$ with the same domain $D$, with constants $C_0,C_1>0$, respectively, and assume that $\PP^{(0)}$ and $\PP^{(1)}$ are orthogonal, i.e. $\PP^{(0)}\PP^{(1)}=\PP^{(1)}\PP^{(0)}=0$. Then, $\PP:=\PP^{(0)}+\PP^{(1)}$ is a Gundy projection with domain $D$ and constant $C:=C_0+C_1$. 
\end{prop}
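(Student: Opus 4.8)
The plan is to verify directly that $\PP := \PP^{(0)} + \PP^{(1)}$ satisfies the defining properties of a Gundy projection with domain $D$ and constant $C := C_0 + C_1$. First I would check the structural prerequisites: since $\PP^{(0)}$ and $\PP^{(1)}$ are linear idempotent operators on $D$ that are Lebesgue compatible, and they are orthogonal ($\PP^{(0)}\PP^{(1)} = \PP^{(1)}\PP^{(0)} = 0$), the sum $\PP = \PP^{(0)} + \PP^{(1)}$ satisfies $\PP^2 = (\PP^{(0)})^2 + (\PP^{(1)})^2 = \PP^{(0)} + \PP^{(1)} = \PP$, so $\PP$ is idempotent; it is clearly linear and Lebesgue compatible since each summand is. Moreover on $D \cap L_2(\mathcal{M})$ each $\PP^{(i)}$ is self-adjoint, so $\PP$ is self-adjoint there as well; and orthogonality makes $\PP$ a genuine orthogonal projection on $D \cap L_2(\mathcal{M})$ (its range is the orthogonal direct sum of the two ranges), hence contractive.

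Next, for the Gundy decomposition property, fix $y \in D \cap L_1(\mathcal{M})$ and $\lambda > 0$. I would apply the Gundy projection property of $\PP^{(0)}$ to $y$ at level $\lambda$ to obtain $\alpha_0, \beta_0, \gamma_0 \in \PP^{(0)}(D)$ with $\PP^{(0)}(y) = \alpha_0 + \beta_0 + \gamma_0$, $\|\alpha_0\|_2^2 \pp C_0^2 \lambda \|y\|_1$, $\|\beta_0\|_1 \pp C_0 \|y\|_1$, and a projection $p_0 \in \mathcal{M}$ with $(1-p_0)\gamma_0(1-p_0) = 0$ and $\tau(p_0) \pp C_0 \|y\|_1 / \lambda$. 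Similarly apply the property of $\PP^{(1)}$ to get $\alpha_1, \beta_1, \gamma_1 \in \PP^{(1)}(D)$ and a projection $p_1$ with the analogous estimates involving $C_1$. Then set $\alpha := \alpha_0 + \alpha_1$, $\beta := \beta_0 + \beta_1$, $\gamma := \gamma_0 + \gamma_1$, all of which lie in $\PP^{(0)}(D) + \PP^{(1)}(D) \subset \PP(D)$ (using that $\PP(D) = \PP^{(0)}(D) \oplus \PP^{(1)}(D)$, since $\PP^{(0)}(D)$ is fixed by $\PP^{(0)}$ and killed by $\PP^{(1)}$, hence fixed by $\PP$, and symmetrically). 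We have $\PP(y) = \PP^{(0)}(y) + \PP^{(1)}(y) = \alpha + \beta + \gamma$, giving (i).

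For the estimates: (iii) follows from the triangle inequality, $\|\beta\|_1 \pp \|\beta_0\|_1 + \|\beta_1\|_1 \pp (C_0 + C_1)\|y\|_1 = C\|y\|_1$. For (iv), take $p := p_0 \vee p_1$; then $\tau(p) \pp \tau(p_0) + \tau(p_1) \pp (C_0 + C_1)\|y\|_1/\lambda = C\|y\|_1/\lambda$, and since $1 - p \pp 1 - p_i$ for $i = 0,1$, we get $(1-p)\gamma_i(1-p) = (1-p)(1-p_i)\gamma_i(1-p_i)(1-p) = 0$ for each $i$, hence $(1-p)\gamma(1-p) = 0$. The one genuinely non-routine point is (ii): the $L_2$-estimate. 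Here orthogonality of $\PP^{(0)}$ and $\PP^{(1)}$ is essential — it forces $\alpha_0 \in \ima \PP^{(0)}$ and $\alpha_1 \in \ima \PP^{(1)}$ to be orthogonal in $L_2(\mathcal{M})$, so by the Pythagorean identity $\|\alpha\|_2^2 = \|\alpha_0\|_2^2 + \|\alpha_1\|_2^2 \pp (C_0^2 + C_1^2)\lambda\|y\|_1 \pp (C_0 + C_1)^2 \lambda \|y\|_1 = C^2 \lambda \|y\|_1$. I would make sure the orthogonality $\alpha_0 \perp \alpha_1$ in $L_2$ is spelled out carefully: $\alpha_i \in \PP^{(i)}(D \cap L_2(\mathcal{M}))$, whose $L_2$-closures are the ranges of the complementary orthogonal projections induced by $\PP^{(0)}$ and $\PP^{(1)}$ on $L_2(\mathcal{M})$ (this uses self-adjointness plus $\PP^{(0)}\PP^{(1)} = \PP^{(1)}\PP^{(0)} = 0$). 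This is the only step where one must use more than the triangle inequality, and it is exactly where the hypothesis of orthogonality — rather than mere compatibility of domains — does the work.
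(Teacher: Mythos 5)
Your proof is correct and follows essentially the same route as the paper: apply the Gundy decomposition of each $\PP^{(j)}$ separately to $y$ and $\lambda$, sum the three components, and take $p:=p_0\vee p_1$. The only divergence is in estimate (ii), where you invoke orthogonality of the ranges to get a Pythagorean identity; the paper obtains the same bound $C^2\lambda\|y\|_1$ from the triangle inequality alone, via $\|\alpha\|_2^2\pp(\|\alpha_0\|_2+\|\alpha_1\|_2)^2\pp(C_0+C_1)^2\lambda\|y\|_1$, so your remark that orthogonality is essential at that particular step is overstated --- it is really only needed for the idempotency and $L_2$-contractivity of $\PP$ and for nothing else.
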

\begin{proof}
It is trivial that $\PP$ is Lebesgue compatible and self-adjoint on $D\cap L_2(\mathcal{M})$. Let $y\in D$ and $\lambda>0$. Then, for $j\in\{0,1\}$, there exist $\alpha^{(j)},\beta^{(j)},\gamma^{(j)}\in\PP^{(j)}(D)$ with the following properties,
\begin{enumerate}
    \item[{\rm(i)}] $\PP^{(j)}(y)=\alpha^{(j)}+\beta^{(j)}+\gamma^{(j)}$,
    \item[{\rm(ii)}] the element $\alpha^{(j)}$ satisfies 
\[\|\alpha^{(j)}\|_2^2\pp C_j^2\lambda\|y\|_1,\]
    \item[{\rm(iii)}] the element $\beta$ satisfies 
\[\|\beta^{(j)}\|_1\pp C_j\|y\|_1,\]
    \item[{\rm(iv)}] the element $\gamma$ satisfies
\[(1-p^{(j)})\gamma^{(j)}(1-p^{(j)})=0,\]
where $p^{(j)}\in\mathcal{M}$ is a projection such that
\[\tau(p^{(j)})\pp C_j\frac{\|y\|_1}{\lambda}.\]
\end{enumerate}
We set 
\[\alpha:=\alpha^{(0)}+\alpha^{(1)},\ \ \ \ \ \beta:=\beta^{(0)}+\beta^{(1)},\ \ \ \ \ \gamma:=\gamma^{(0)}+\gamma^{(1)}.\]
Then we have $\alpha,\beta,\gamma\in \PP(D)$, and
\[\PP(y)=\PP^{(0)}(y)+\PP^{(1)}(y)=\alpha+\beta+\gamma.\]
The element $\alpha$ satisfies 
\[\|\alpha\|_2^2\pp(\|\alpha^{(0)}\|_2+\|\alpha^{(1)}\|_2)^2\pp(C_0\sqrt{\lambda\|y\|_1}+C_1\sqrt{\lambda\|y\|_1})^2=C^2\lambda\|y\|_1.\]
The element $\beta$ satisfies 
\[\|\beta\|_1=\|\beta^{(0)}+\beta^{(1)}\|_1\pp\|\beta^{(0)}\|_1+\|\beta^{(1)}\|_1\pp C_0\|y\|_1+C_1\|y\|_1=C\|y\|_1\]
Finally, we set 
\[p:=p^{(0)}\vee p^{(1)}.\]
Then 
\[\tau(p)\pp\tau(p^{(0)})+\tau(p^{(1)})\pp C_0\frac{\|y\|_1}{\lambda}+C_1\frac{\|y\|_1}{\lambda}=C\frac{\|y\|_1}{\lambda},\]
and
\[(1-p)\gamma(1-p)=(1-p)\gamma^{(0)}(1-p)+(1-p)\gamma^{(1)}(1-p)=0.\]
The proof is complete.
\end{proof}

Now we turn to the core result of the paper. It is essentially contained in the following crucial lemma, whose proof is directly inspired by the approach of Bourgain in \cite{Bourgain}[Lemma 2.4].

\begin{lemm}\label{Gundy - Lemma 1}
Let $\PP$ be a Gundy projection on $\mathcal{M}$ with domain $D$ and constant $C>0$. Let $x\in \PP(D)$ and $t>0$. If $y,z\in D$ are such that $x=y+z$ with $\|y\|_1\pp 2$, $\|z\|_2\pp\frac{2}{t}$, then there exist $y',z'\in \PP(D)$ such that $x=y'+z'$ and $\|y'\|_{1}\pp C'$, $\|z'\|_{2}\pp\frac{C'}{t}$ where $C'>0$ is a constant which depends only on $C$.
\end{lemm}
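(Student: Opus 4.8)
The goal is to upgrade an arbitrary splitting $x = y+z$ (with $y$ controlled in $L_1$ and $z$ controlled in $L_2$) into a splitting $x = y'+z'$ with $y',z' \in \PP(D)$ and with comparable norm bounds. The natural first move is to apply the Gundy projection $\PP$ to the decomposition: since $x = \PP(x)$ (because $x \in \PP(D)$ and $\PP$ is idempotent), we get $x = \PP(y) + \PP(z)$. The term $\PP(z)$ is harmless: $\PP$ is contractive on $D \cap L_2(\mathcal{M})$, so $\|\PP(z)\|_2 \pp \|z\|_2 \pp 2/t$, and we may try to absorb it into $z'$. The problem is $\PP(y)$: we only know $y \in L_1$, and $\PP$ need not be $L_1$-bounded, so $\PP(y)$ has no a priori control. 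This is exactly where the Gundy projection axiom enters.

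The key step is to apply the Gundy decomposition to $y$ (or rather to a suitable scaling of it) at a well-chosen level $\lambda$. Since $\|y\|_1 \pp 2$, the definition of Gundy projection with parameter $\lambda := t^{-2}$ (the scale dictated by the $L_2$ bound $2/t$ we must produce) yields $\PP(y) = \alpha + \beta + \gamma$ with $\|\alpha\|_2^2 \pp C^2 \lambda \|y\|_1 \pp 2C^2/t^2$, with $\|\beta\|_1 \pp C\|y\|_1 \pp 2C$, and with $\gamma$ supported off a projection $p$ of trace $\tau(p) \pp C\|y\|_1/\lambda \pp 2Ct^2$, i.e.\ $(1-p)\gamma(1-p) = 0$. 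So $\alpha$ contributes to $z'$, $\beta$ contributes to $y'$, and $\gamma$ is the remaining obstruction. The plan is then to set
\[
y' := \beta + (\text{good part of }\gamma), \qquad z' := \PP(z) + \alpha + (\text{the part of }\gamma\text{ we put in }L_2),
\]
and the real work is controlling $\gamma$.

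The main obstacle — and the heart of Bourgain's argument — is estimating $\gamma$. We have no direct norm bound on $\gamma$; all we know is its support condition $\gamma = p\gamma + \gamma p - p\gamma p$ relative to a small projection $p$. The idea is to exploit this: write $x = \PP(x)$ again and use $\gamma = \PP(y) - \alpha - \beta$, so that $\gamma$ also equals $x - \PP(z) - \alpha - \beta$, hence lies in $L_1 + L_2 \subset (L_1+L_\infty)(\mathcal{M})$ with a bound on $\int_0^s \gamma^\sharp$. Because $\gamma$ is ``carried'' by the projection $p$ of trace $O(t^2)$, one can split $\gamma$ using $p$: the piece $p\gamma p$ (and the off-diagonal pieces $p\gamma(1-p)$, $(1-p)\gamma p$) live on a ``small'' von Neumann corner, and one estimates, via Hölder and the trace bound on $p$, that $\|p\gamma p\|_1 \lesssim \tau(p)^{1/2}\|\gamma\|_2 \lesssim t \cdot (1/t) = O(1)$ for the $L_1$-part, while the $L_2$-norm of the relevant pieces is controlled by $\|\gamma\|_2 \lesssim 1/t$ directly — provided we can first bound $\|\gamma\|_2$ or $\|\gamma\|_1$ in terms of $\|z\|_2$ and $\|\alpha\|_2$ and $\|\beta\|_1$. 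Concretely: $\gamma + \PP(z) + \alpha = x - \beta$, and $x - \beta \in \PP(D)$; taking a further decomposition or simply noting $\gamma = (x - \beta) - \PP(z) - \alpha$ with $\|x-\beta\|_{L_1+L_2}$ controlled and $\|\PP(z)+\alpha\|_2 \lesssim 1/t$, one gets an $L_1+L_2$ estimate on $\gamma$, and then the projection $p$ lets one redistribute: cutting $\gamma$ at its $p$-support converts the ``$L_1+L_2$ plus small support'' information into genuine $L_1$ and $L_2$ bounds. All the resulting constants depend only on $C$, so we may take $C'$ to be an absolute multiple of $\max(C, C^2)$, which completes the proof. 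I expect the bookkeeping of which sub-piece of $\gamma$ goes into $y'$ versus $z'$, and the repeated use of Hölder's inequality $\|ab\|_1 \pp \|a\|_2\|b\|_2$ together with $\|p\|_2 = \tau(p)^{1/2}$, to be the only genuinely delicate point.
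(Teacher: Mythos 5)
Your plan follows essentially the same route as the paper: apply $\PP$ to the given splitting, invoke the Gundy decomposition of $y$ at level $\lambda=t^{-2}$, send $\alpha$ and $\PP(z)$ to the $L_2$ piece, $\beta$ to the $L_1$ piece, and control $\gamma$ through its support projection $p$ together with H\"older in the form $\|pa\|_1\pp\tau(p)^{1/2}\|a\|_2$. Two points in your sketch need care, though both are fixable and the paper's proof shows how. First, you momentarily invoke $\|\gamma\|_2$, which is not available; the correct move is to use $\gamma=(y-\beta)+(z-\PP(z)-\alpha)$, i.e.\ a controlled $L_1+L_2$ splitting of $\gamma$ (equivalently, of $y'=\beta+\gamma=y+(z-z')$ as the paper does), and apply H\"older only to the $L_2$ summand when estimating the corners $p\gamma$, $(1-p)\gamma p$ — you do gesture at exactly this redistribution, so this is a matter of execution rather than a missing idea. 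Second, your displayed formula for $z'$ includes ``the part of $\gamma$ we put in $L_2$'': this would be a genuine error if carried out, because the corner pieces $p\gamma p$, $p\gamma(1-p)$, etc.\ need not belong to $\PP(D)$, and the lemma requires $y',z'\in\PP(D)$. The resolution (as in the paper) is to keep $\gamma$ whole inside $y':=\beta+\gamma\in\PP(D)$ and use the corner decomposition only to \emph{estimate} $\|y'\|_1$ by the triangle inequality; all corners land in $L_1$ with bounds depending only on $C$, so nothing from $\gamma$ ever needs to migrate to $z'$.
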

\begin{proof}
Let $y,z\in D$ such that $x=y+z$ with $\|y\|_1\pp 2$, $\|z\|_2\pp\frac{2}{t}$. By applying the definition of Gundy projection with $y$ and $\lambda:=t^{-2}$, there exist $\alpha,\beta,\gamma\in \PP(D)$ with the following properties,
\begin{enumerate}
    \item[{\rm(i)}] $\PP(y)=\alpha+\beta+\gamma$,
    \item[{\rm(ii)}] the element $\alpha$ satisfies 
\[\|\alpha\|_2^2\pp C^2t^{-2}\|y\|_1\pp 2C^2t^{-2},\]
    \item[{\rm(iii)}] the element $\beta$ satisfies 
\[\|\beta\|_1\pp C\|y\|_1\pp 2C,\]
    \item[{\rm(iv)}] the element $\gamma$ satisfies
\[(1-p)\gamma(1-p)=0,\]
where $p\in\mathcal{M}$ is a projection such that
\[\tau(p)\pp Ct^2\|y\|_1\pp 2Ct^2,\]
\end{enumerate}
where $C>0$ is a constant which does not depend on $y$ and $\lambda$. As $x=\PP(x)=\PP(y)+\PP(z)$, we have 
\[x=y'+z',\]
where we denoted
\[y':=\beta+\gamma\in \PP(D),\ \ \ \ \ \ \ \ \ \ \ \ z':=\alpha+\PP(z)\in \PP(D).\]
As $\PP$ is contractive on $D\cap L_2(\mathcal{M})$, we have the estimate
\[\|z'\|_2\pp\|\PP(z)\|_2+\|\alpha\|_2\pp\|z\|_2+\|\alpha\|_2\pp\frac{2}{t}+\frac{\sqrt{2}C}{t}=\frac{2+\sqrt{2}C}{t}.\]
To deal with $y'$, we decompose it in three terms as follows,
\[y'=y'_1+y'_2+y'_3,\]
with 
\[y'_1:=(1-p)y'(1-p),\ \ \ \ \ \ \ \ y_2':=py',\ \ \ \ \ \ \ \ y_3':=(1-p)y'p.\]
By using the fact that $(1-p)\gamma(1-p)=0$, we have 
\begin{equation}\label{Gundy - est1}
\|y'_1\|_1=\|(1-p)(\beta+\gamma)(1-p)\|_1=\|(1-p)\beta(1-p)\|_1\pp\|\beta\|_1\pp 2C.
\end{equation}
By writing $y'=y+(z-z')$, and then by using H\"older's inequality we have
\begin{align}
\|y_2'\|_1&=\|py'\|_1\nonumber\\
&\pp\|py\|_1+\|p(z-z')\|_1\nonumber\\
&\pp\|y\|_1+\tau(p)^{1/2}\|z-z'\|_2\nonumber\\
&\pp\|y\|_1+\tau(p)^{1/2}(\|z\|_2+\|z'\|_2)\nonumber\\
&\pp2+\tau(p)^{1/2}\big(\frac{2}{t}+\frac{2+\sqrt{2}C}{t}\big)\nonumber\\
&\pp2+\sqrt{2C}t\big(\frac{2}{t}+\frac{2+\sqrt{2}C}{t}\big)\nonumber\\
&=2+\sqrt{2C}(4+\sqrt{2}C)=:C'.\label{Gundy - est2}
\end{align}
By an analogous computation, we also have $\|y'p\|_1\pp C'$, and therefore
\begin{equation}\label{Gundy - est3}
\|y_3'\|_1=\|(1-p)y'p\|_1\pp\|y'p\|_1\pp C'.
\end{equation}
By combining the estimates \eqref{Gundy - est1}, \eqref{Gundy - est2} and \eqref{Gundy - est3} with the triangle inequality we obtain 
\[\|y'\|_1\pp 2C+2C'.\]
The proof is complete.
\end{proof}

The main result of this paragraph is the following.

\begin{theo}\label{Gundy - Theorem 1}
Let $\PP$ be a Gundy projection on $\mathcal{M}$ with constant $C>0$. Assume that there is a filtration $(\mathcal{M}_n)_{n\pg1}$ on $\mathcal{M}$ with associated conditional expectations denoted $(\EE_n)_{n\pg1}$ such that
\begin{enumerate}
    \item[{\rm a)}] $D=\cup_{n\pg1}(L_1+L_\infty)(\mathcal{M}_n)$,
    \item[{\rm b)}] $\EE_n(x)\in\PP(D)$ for every $1\pp p\pp\infty$, $x\in\PP_p(\mathcal{M})$ and $n\pg1$.
\end{enumerate}
Then $(\PP_1(\mathcal{M}),\PP_2(\mathcal{M}))$ is $K$-closed in $(L_1(\mathcal{M}),L_2(\mathcal{M}))$ with a constant which depends only on $C$.
\end{theo}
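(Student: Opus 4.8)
The plan is to show that $(\PP_1(\mathcal M),\PP_2(\mathcal M))$ is $K$-complemented in $(L_1(\mathcal M),L_2(\mathcal M))$, which by Proposition~\ref{Kclosedness}-style reasoning (and the identification of the sum spaces) will yield $K$-closedness with a constant depending only on $C$. Concretely, I would take $x\in\PP_1(\mathcal M)+\PP_2(\mathcal M)$ and $t>0$, and a decomposition $x=a_0+a_1$ with $a_0\in L_1(\mathcal M)$, $a_1\in L_2(\mathcal M)$ realising (up to a factor $2$) the $K$-functional $K_t(x,L_1(\mathcal M),L_2(\mathcal M))$; after rescaling I may assume $\|a_0\|_1\le 2$ and $\|a_1\|_2\le 2/t$. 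The goal is to produce $y'\in\PP_1(\mathcal M)$, $z'\in\PP_2(\mathcal M)$ with $x=y'+z'$ and $\|y'\|_1\le C'$, $\|z'\|_2\le C'/t$, where $C'$ depends only on $C$; this is exactly the conclusion of Lemma~\ref{Gundy - Lemma 1}, except that Lemma~\ref{Gundy - Lemma 1} requires $x\in\PP(D)$ and $a_0,a_1\in D$.

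\textbf{Reduction to the dense case.} First I would handle $x\in\PP(D)$ and an \emph{arbitrary} decomposition $x=a_0+a_1$ with $a_0\in L_1(\mathcal M)$, $a_1\in L_2(\mathcal M)$ (not necessarily in $D$). Using hypothesis a), $D=\cup_{n\ge1}(L_1+L_\infty)(\mathcal M_n)$, so $x\in(L_1+L_\infty)(\mathcal M_N)$ for some $N$. Apply the conditional expectation $\EE_N$: since $\EE_N x=x$ and $\EE_N$ is contractive on both $L_1(\mathcal M)$ and $L_2(\mathcal M)$, the decomposition $x=\EE_N a_0+\EE_N a_1$ has the same norm bounds, and now $\EE_N a_0,\EE_N a_1\in(L_1+L_\infty)(\mathcal M_N)\subset D$. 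So Lemma~\ref{Gundy - Lemma 1} applies and gives $y',z'\in\PP(D)$ with the desired bounds; in particular $y'\in\PP_1(\mathcal M)$ and $z'\in\PP_2(\mathcal M)$. This settles the case $x\in\PP(D)$. Next I would pass to general $x\in\PP_1(\mathcal M)+\PP_2(\mathcal M)$ by approximation: write $x=u+v$ with $u\in\PP_1(\mathcal M)$, $v\in\PP_2(\mathcal M)$, approximate $u$ in $L_1$ and $v$ in $L_2$ by elements of $\PP(D\cap L_1(\mathcal M))$ and $\PP(D\cap L_2(\mathcal M))$ respectively; alternatively, and more cleanly, use hypothesis b): for $x\in\PP_p(\mathcal M)+\PP_q(\mathcal M)$ the truncations $\EE_n x$ lie in $\PP(D)$, they converge to $x$ in the sum space, and the $K$-functional is continuous, so a limiting argument transfers the bound $K_t(x,\PP_1,\PP_2)\le C'K_t(x,L_1,L_2)$ from the $\EE_n x$ to $x$. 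One must check that the $y',z'$ produced for $\EE_n x$ can be recombined or passed to a weak limit lying in the closed subspaces $\PP_1(\mathcal M)$, $\PP_2(\mathcal M)$; weak compactness of balls in $L_1$ is delicate, so I would instead argue directly at the level of $K$-functionals using lower semicontinuity, which avoids extracting the pieces.

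\textbf{Main obstacle.} The genuinely hard part is entirely inside Lemma~\ref{Gundy - Lemma 1}, which we are granted, so at the level of this theorem the only real subtlety is the interplay between the dense domain $D$ and the closures $\PP_p(\mathcal M)$ — specifically making sure that (i) applying $\EE_N$ to pull $a_0,a_1$ into $D$ does not destroy the hypothesis $x=a_0+a_1$ (it does not, by b) applied with $p=1$ or $p=2$ one even gets $\EE_N a_j$ back into $\PP(D)$ when $a_j$ starts in $\PP_p(\mathcal M)$, but here $a_j$ is only in $L_p(\mathcal M)$, so one just lands in $D$, which is enough for the lemma), and (ii) the limiting step is carried out with the right topology so that the output pieces remain in the \emph{closed} subspaces and the constant does not blow up. I expect the write-up to be short: unwind the definition of $K$-closedness, invoke the reduction to $x\in\PP(D)$ via $\EE_N$, apply Lemma~\ref{Gundy - Lemma 1}, and finish with a routine density/semicontinuity argument using hypothesis b).
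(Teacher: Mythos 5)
Your proposal is correct and follows essentially the same route as the paper: reduce to $x\in\PP(D)$ by applying $\EE_N$ (with $\EE_N x=x$) to an almost-optimal decomposition so that both pieces land in $D$ with the same norm bounds, invoke Lemma~\ref{Gundy - Lemma 1}, and then pass to general $x\in\PP_1(\mathcal{M})+\PP_2(\mathcal{M})$ via the truncations $\EE_n(x)\in\PP(D)$ from hypothesis b), transferring the estimate at the level of $K$-functionals using contractivity of $\EE_n$ and convergence of $\EE_n(x)$ to $x$ in the sum norm. This matches the paper's two-step argument, including the decision to argue with $K$-functionals in the limit rather than extracting weak limits of the decomposition pieces.
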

\begin{proof}
Let $t>0$ and denote $k_t$, $K_t$ the associated $K$-functionals for $(\PP_1(\mathcal{M}),\PP_2(\mathcal{M}))$ and $(L_1(\mathcal{M}),L_2(\mathcal{M}))$, respectively. It suffices to show that for every $x\in \PP_1(\mathcal{M})+\PP_2(\mathcal{M})$ with $K_t(x)\pp1$, we have $k_t(x)\pp C'$ for a constant $C'>0$ which depends only on $C$. 

\noindent$\triangleright$ Let $x\in \PP(D)\cap(L_1+L_2)(\mathcal{M})$ such that $K_t(x)\pp1$. Then we can write $x=y+z$ with $y\in L_1(\mathcal{M})$, $z\in L_2(\mathcal{M})$ such that $\|y\|_1+t\|z\|_2\pp 2$. As $x\in D$, we have $\EE_n(x)=x$ for some integer $n\pg1$. Thus, we can decompose
\[x=y'+z',\]
where we denoted 
\[y':=\EE_n(y)\in D,\ \ \ \ \ \ \ \ \ z':=\EE_n(z)\in D.\]
As $\|y'\|_1=\|\EE_n(y)\|_1\pp\|y\|_1\pp 2$ and $\|z'\|_2=\|\EE_n(z)\|_2\pp\|z\|_2\pp\frac{2}{t}$, we can apply Lemma \ref{Gundy - Lemma 1} to the above decomposition. Thus, there exist $y'',z''\in\PP(D)$ such that $x=y''+z''$ and $\|y''\|_{1}\pp C'$, $\|z''\|_{2}\pp\frac{C'}{t}$ where $C'\pg$ is a constant which depends only on $C$. We deduce $k_t(x)\pp\|y''\|_1+t\|z''\|_2\pp 2C'$. 

\noindent$\triangleright$ Now, let $x\in \PP_1(\mathcal{M})+\PP_2(\mathcal{M})$ such that $K_t(x)\pp1$. If $n\pg1$, then by assumption b) we know that $\EE_n(x)$ belongs to $\PP(D)\cap(L_1+L_2)(\mathcal{M})$, and we also have $K_t(\EE_n(x))\pp K_t(x)\pp 1$ because $\EE_n$ is contractive on $L_1(\mathcal{M})$ and $L_2(\mathcal{M})$. By what we have just proved, we get $k_t(\EE_n(x))\pp 2C'$. As the sequence $(\EE_n(x))_{n\pg1}$ clearly converges to $x$ in $\PP_1(\mathcal{M})+\PP_2(\mathcal{M})$ for the sum norm, letting $n\to\infty$ in the last estimation yields $k_t(x)\pp 2C'$ as desired. 
\end{proof}

The following corollary will be used in the sequel.

\begin{coro}\label{Gundy - Corollary 1}
Let $\PP$ be a Gundy projection on $\mathcal{M}$ with constant $C>0$. Assume that there is a filtration $(\mathcal{M}_n)_{n\pg1}$ on $\mathcal{M}$ with associated conditional expectations denoted $(\EE_n)_{n\pg1}$ such that
\begin{enumerate}
    \item[{\rm a)}] $D=\cup_{n\pg1}(L_1+L_\infty)(\mathcal{M}_n)$,
    \item[{\rm b)}] $\PP$ is weakly continuous on $(L_1+L_\infty)(\mathcal{M}_n)$ for every $n\pg1$,
    \item[{\rm c)}] $\EE_n$ stabilizes $\PP(D)$ for every $n\pg1$, i.e $\EE_n(x)\in\PP(D)$ when $x\in\PP(D)$.
\end{enumerate}
Then, $\EE_n(x)\in\PP(D)$ for every $1\pp p\pp\infty$, $x\in\PP_p(\mathcal{M})$ and $n\pg1$. As an immediate consequence, $(\PP_1(\mathcal{M}),\PP_2(\mathcal{M}))$ is $K$-closed in $(L_1(\mathcal{M}),L_2(\mathcal{M}))$ with a constant that depends only on $C$.
\end{coro}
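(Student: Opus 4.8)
The plan is to verify hypothesis b) of Theorem \ref{Gundy - Theorem 1}, namely that $\EE_n(x)\in\PP(D)$ for every $1\pp p\pp\infty$, $x\in\PP_p(\mathcal{M})$, and $n\pg1$; once this is done, the $K$-closedness statement follows immediately from Theorem \ref{Gundy - Theorem 1} since hypothesis a) is assumed here verbatim. Fix $n\pg1$ and $p\in[1,\infty]$, and let $x\in\PP_p(\mathcal{M})$, so by definition $x$ is a limit (in the $L_p$-norm if $p<\infty$, weak* if $p=\infty$) of a net $(x_i)$ with $x_i\in\PP(D\cap L_p(\mathcal{M}))\subset\PP(D)$. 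The goal is to show $\EE_n(x)$ lands back in $\PP(D)$.

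First I would reduce to showing that $\EE_n(x)$ lies in $\PP(D)\cap(L_1+L_\infty)(\mathcal{M}_n)$. Since $\EE_n$ is a contractive (hence continuous) projection on each $L_p(\mathcal{M})$ and weak* continuous for $p=\infty$, and since $\EE_n(x_i)\in L_p(\mathcal{M}_n)\subset(L_1+L_\infty)(\mathcal{M}_n)\subset D$, we get $\EE_n(x_i)\to\EE_n(x)$ in $L_p$ (resp. weak*), and moreover $\EE_n(x)\in L_p(\mathcal{M}_n)$. By hypothesis c), $\EE_n(x_i)\in\PP(D)$ for each $i$. Now here is the key point where hypotheses b) and c) combine: because $\EE_n(x_i)\in\PP(D)$ and $\PP$ is idempotent on $D$, we have $\PP(\EE_n(x_i))=\EE_n(x_i)$. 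I would then pass to the limit inside $\PP$: since $\EE_n(x)\in(L_1+L_\infty)(\mathcal{M}_n)$, the net $\EE_n(x_i)$ converges to $\EE_n(x)$ in $(L_1+L_\infty)(\mathcal{M}_n)$ — for $p<\infty$ norm convergence in $L_p(\mathcal{M}_n)$ forces convergence in $(L_1+L_\infty)(\mathcal{M}_n)$ (as the inclusion $L_p(\mathcal{M}_n)\hookrightarrow(L_1+L_\infty)(\mathcal{M}_n)$ is continuous, using the finite-trace-restriction argument on each $\mathcal{M}_k$ hosting the $x_i$, or more simply since the relevant sequence eventually sits in a fixed $\mathcal{M}_m$ with $m\pg n$), and for $p=\infty$ weak* convergence in $\mathcal{M}_n=L_\infty(\mathcal{M}_n)$ implies weak convergence in $(L_1+L_\infty)(\mathcal{M}_n)$ by the Remark following the Proposition in \S1.3. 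Since $\PP$ is weakly continuous on $(L_1+L_\infty)(\mathcal{M}_n)$ by hypothesis b), it follows that $\PP(\EE_n(x_i))\to\PP(\EE_n(x))$ weakly. But $\PP(\EE_n(x_i))=\EE_n(x_i)\to\EE_n(x)$ weakly as well, so by uniqueness of weak limits $\PP(\EE_n(x))=\EE_n(x)$, which exhibits $\EE_n(x)\in\PP(D)$.

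The main obstacle I anticipate is the topological bookkeeping in the passage to the limit: one must be careful that the convergence $x_i\to x$ in the $L_p$-topology actually transfers, after applying the contraction $\EE_n$, to a topology on $(L_1+L_\infty)(\mathcal{M}_n)$ in which $\PP$ is known to be continuous. The cleanest route is to observe that since $x\in\PP_p(\mathcal{M})$, one may take the approximating net to be a sequence $(x_j)$ drawn from $\PP(D\cap L_p(\mathcal{M}))$, each $x_j$ sitting in some $(L_1+L_\infty)(\mathcal{M}_{m_j})$; after applying $\EE_n$ these all sit in $(L_1+L_\infty)(\mathcal{M}_n)$, and on this fixed von Neumann subalgebra the weak continuity hypothesis b) applies directly. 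If $p<\infty$ one could even bypass weak topologies: norm convergence $\EE_n(x_j)\to\EE_n(x)$ in $L_p(\mathcal{M}_n)$ gives weak convergence in $(L_1+L_\infty)(\mathcal{M}_n)$, which is all that b) needs. Finally, for the very last sentence of the corollary, I would simply invoke Theorem \ref{Gundy - Theorem 1}, whose hypotheses a) and b) are now both met (a) by assumption, b) just proved), to conclude that $(\PP_1(\mathcal{M}),\PP_2(\mathcal{M}))$ is $K$-closed in $(L_1(\mathcal{M}),L_2(\mathcal{M}))$ with a constant depending only on $C$.
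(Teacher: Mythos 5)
Your proposal is correct and follows essentially the same route as the paper's own proof: approximate $x$ by a net in $\PP(D\cap L_p(\mathcal{M}))$, apply $\EE_n$ to land in $(L_1+L_\infty)(\mathcal{M}_n)$, use hypothesis c) to see each term is fixed by $\PP$, and use the weak continuity from hypothesis b) to pass to the limit, concluding via Theorem \ref{Gundy - Theorem 1}. The extra topological bookkeeping you supply (converting $L_p$-convergence of $\EE_n(x_i)$ into weak convergence in $(L_1+L_\infty)(\mathcal{M}_n)$) is a correct elaboration of a step the paper leaves implicit.
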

\begin{proof}
Let $1\pp p\pp\infty$, $x\in\PP_p(\mathcal{M})$ and $n\pg1$. By definition, there is a net $(x_\alpha)$ of $\PP(D\cap L_p(\mathcal{M}))$ that converges to $x$ weakly in $(L_1+L_\infty)(\mathcal{M})$. Then, $(\EE_n(x_\alpha))$ converges to $\EE_n(x)$ weakly in $(L_1+L_\infty)(\mathcal{M}_n)$. By assumption b), we deduce that $(\PP\EE_n(x_\alpha))$ converges to $\PP\EE_n(x)$ weakly in $(L_1+L_\infty)(\mathcal{M}_n)$. But by assumption c) we have $\PP\EE_n(x_\alpha)=\EE_n(x_\alpha)$ for every $\alpha$. Thus, $\PP\EE_n(x)=\EE_n(x)$ as desired.
\end{proof}

\subsection{Three examples}  

\subsubsection{Example 1}\label{Example1} Let $\mathcal{M}$ be a Neumann algebra equipped with a n.s.f. trace $\tau$ and also equipped with a filtration $(\mathcal{M}_n)_{n\pg1}$ with associated conditional expectations denoted $(\EE_n)_{n\pg1}$ and associated difference projections denoted $(\DD_n)_{n\pg1}$. Let
\[D:=\cup_{n\pg1}(L_1+L_\infty)(\mathcal{M}_n)\]
be the subspace of $(L_1+L_\infty)(\mathcal{M})$ whose elements induce finite martingales with respect to the filtration $(\mathcal{M}_n)_{n\pg1}$. Finally, let $I$ be a set of positive integers and let $\PP:D\to D$ be the Lebesgue compatible linear idempotent operator given by the following expression,
\[\PP(x)=\sum_{i\in I}\DD_i(x),\ \ \ \ \ \ \ x\in D.\]
Note that by the very definition of $D$, the sum appearing in the above expression is always finite so that $\PP$ is well defined. All the following results are only interesting when $I$ is infinite (when $I$ is finite, we notice that $\PP$ is bounded for the $L_p$-norm for all $1\pp p\pp\infty$ with constant less than $|I|$). We recall that if $1\pp p\pp\infty$, then $\PP_p(\mathcal{M})$ denotes the closure of $\PP(D\cap L_p(\mathcal{M}))$ in $L_p(\mathcal{M})$ with respect to the norm topology if $p<\infty$, and to the weak* topology if $p=\infty$. Then we have the following proposition. 

\begin{prop}\label{Example 1 - Proposition 1}
Let $1\pp p\pp\infty$. Then 
\[\PP_p(\mathcal{M})=\big\{x\in L_p(\mathcal{M})\ \mid\ \DD_i(x)=0\ \forall i\notin I\big\}.\]
\end{prop}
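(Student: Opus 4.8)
The plan is to show both inclusions, the nontrivial one being that any $x\in L_p(\mathcal{M})$ with $\DD_i(x)=0$ for all $i\notin I$ belongs to $\PP_p(\mathcal{M})$. First I would handle the easy inclusion $\PP_p(\mathcal{M})\subset\{x\in L_p(\mathcal{M})\mid \DD_i(x)=0\ \forall i\notin I\}$: for $x\in D\cap L_p(\mathcal{M})$ and $i\notin I$ we have $\DD_i(\PP(x))=\DD_i\big(\sum_{j\in I}\DD_j(x)\big)=0$ since the difference projections are mutually orthogonal; then, since each $\DD_i$ is bounded on $L_p(\mathcal{M})$ for $p<\infty$ (and weak*-continuous for $p=\infty$, being a difference of two conditional expectations), the condition $\DD_i(\cdot)=0$ passes to the closure $\PP_p(\mathcal{M})$. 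So the right-hand side contains $\PP_p(\mathcal{M})$.

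For the reverse inclusion, let $x\in L_p(\mathcal{M})$ with $\DD_i(x)=0$ for all $i\notin I$. The key observation is that the partial sums $\EE_n(x)=\sum_{j=1}^n\DD_j(x)$ lie in $D\cap L_p(\mathcal{M})$ (they sit in $L_p(\mathcal{M}_n)$), and because $\DD_j(x)=0$ whenever $j\notin I$, we actually have $\EE_n(x)=\sum_{j\in I,\ j\pp n}\DD_j(x)=\PP(\EE_n(x))$. Hence $\EE_n(x)\in\PP(D\cap L_p(\mathcal{M}))$ for every $n$. Now invoke the proposition from the preliminaries stating that $(\EE_n(x))_{n\pg1}$ converges to $x$ in $L_p$-norm when $p<\infty$ and weak* when $p=\infty$. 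In either case $x$ lies in the appropriate closure of $\PP(D\cap L_p(\mathcal{M}))$, i.e.\ $x\in\PP_p(\mathcal{M})$.

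The main obstacle, such as it is, is bookkeeping rather than depth: one must be careful that $\PP$ as defined only acts on $D$, so the statement $\EE_n(x)=\PP(\EE_n(x))$ must be read with $\EE_n(x)$ genuinely in $D$, which it is since it belongs to $(L_1+L_\infty)(\mathcal{M}_n)\subset D$; and one must use the right topology for the closure in the two cases $p<\infty$ and $p=\infty$, matching the definition of $\PP_p(\mathcal{M})$ given just before the proposition. I would also note explicitly that $\PP$ is indeed Lebesgue compatible — each $\DD_i=\EE_i-\EE_{i-1}$ maps $D\cap L_p(\mathcal{M})$ into $L_p(\mathcal{M})$ — so that $\PP_p(\mathcal{M})$ is well-defined in the first place. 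No estimate or inequality is needed; the proof is a two-line density argument once the identity $\EE_n(x)=\PP(\EE_n(x))$ is in place.
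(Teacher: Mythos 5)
Your proposal is correct and follows essentially the same route as the paper: the easy inclusion via closedness of the right-hand side under the (norm or weak*) topology, and the converse via the identity $\EE_n(x)=\sum_{k\in I,\,k\pp n}\DD_k(x)\in\PP(D\cap L_p(\mathcal{M}))$ combined with martingale convergence of $(\EE_n(x))_{n\pg1}$ to $x$. The extra bookkeeping you supply (orthogonality of the $\DD_i$, Lebesgue compatibility of $\PP$, the two topologies) is just a spelled-out version of what the paper treats as immediate.
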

\begin{proof}
The space on the right-hand side clearly contains $\PP(D\cap L_p(\mathcal{M}))$ and is closed in $L_p(\mathcal{M})$ (with respect to the norm topology if $p<\infty$ and to the weak* topology if $p=\infty$), from which we deduce the first inclusion. For the converse inclusion, let $x\in L_p(\mathcal{M})$ such that $\DD_i(x)=0$ for all $i\notin I$. Then for $n\pg1$, we have
\[\EE_n(x)=\sum_{k=1}^{n}\DD_k(x)=\sum_{\protect\substack{k=1\\k\in I}}^{n}\DD_k(x).\]
This shows that $\EE_n(x)\in\PP(D\cap L_p(\mathcal{M}))$. As $(\EE_n(x))_{n\pg1}$ converges to $x$ (with respect to the norm topology if $p<\infty$ and to the weak* topology if $p=\infty$), by the definition of $\PP_p(\mathcal{M})$ we deduce $x\in\PP_p(\mathcal{M})$. The proof is complete.
\end{proof}

We are interested in the interpolation properties of the spaces $\PP_p(\mathcal{M})$, $1\pp p\pp\infty$ in terms of their $K$-functionals. The main result is the following. It can be considered as an extension to the range $[1,\infty]$ of the boundedness properties of noncommutative martingale transforms as established by Randrianantoanina in \cite{NarcisseMartingaleTransforms}.

\begin{theo}\label{Example 1 - Theorem 1}
Let $1\pp p,q\pp\infty$. Then the Banach couple $(\PP_p(\mathcal{M}),\PP_q(\mathcal{M}))$ is $K$-complemented in $(L_p(\mathcal{M}),L_q(\mathcal{M}))$ with a universal constant.
\end{theo}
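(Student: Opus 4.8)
The plan is to recognize $\PP=\sum_{i\in I}\DD_i$ as a \emph{Gundy projection} relative to the given filtration, obtain from Corollary~\ref{Gundy - Corollary 1} that $(\PP_1(\mathcal{M}),\PP_2(\mathcal{M}))$ is $K$-closed in $(L_1(\mathcal{M}),L_2(\mathcal{M}))$, and then propagate this to the whole range $[1,\infty]$ by combining duality, the $L_p$-boundedness of martingale transforms, and the Wolff-type Corollary~\ref{CoroWolffK}.

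\textbf{Step 1 (the Gundy projection).} Since the $\DD_i$ are mutually orthogonal, commute with every $\EE_n$, and each $\DD_i$ is the orthogonal projection of $L_2(\mathcal M)$ onto its range, $\PP$ is a Lebesgue compatible idempotent on $D=\cup_{n\ge1}(L_1+L_\infty)(\mathcal M_n)$ which is contractive on $D\cap L_2(\mathcal M)$. Given $y\in D\cap L_1(\mathcal M)$ and $\lambda>0$, apply the Gundy decomposition corollary following Theorem~\ref{ParcetRandria} to $y$, giving $y=\alpha_0+\beta_0+\gamma_0$ and a projection $p$; put $\alpha:=\PP(\alpha_0)$, $\beta:=\PP(\beta_0)$, $\gamma:=\PP(\gamma_0)\in\PP(D)$. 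Then $\PP(y)=\alpha+\beta+\gamma$, and $\|\alpha\|_2\le\|\alpha_0\|_2$ by $L_2$-contractivity, $\|\beta\|_1\le\sum_{i\in I}\|\DD_i(\beta_0)\|_1\le\sum_{n\ge1}\|\DD_n(\beta_0)\|_1$ (this is precisely where the possible $L_1$-unboundedness of $\PP$ does no harm), and $(1-p)\gamma(1-p)=\sum_{i\in I}(1-p)\DD_i(\gamma_0)(1-p)=0$ because $(1-p)\DD_n(\gamma_0)(1-p)=0$ for every $n$. Hence $\PP$ is a Gundy projection with a universal constant. Hypotheses a), b), c) of Corollary~\ref{Gundy - Corollary 1} are clear ($D$ has the required form; on each $(L_1+L_\infty)(\mathcal M_n)$ the map $\PP$ is the finite sum $\sum_{i\in I,\ i\le n}\DD_i$ of weakly continuous operators; and $\EE_n$ commutes with $\PP$), so $(\PP_1(\mathcal M),\PP_2(\mathcal M))$ is $K$-closed in $(L_1(\mathcal M),L_2(\mathcal M))$ with a universal constant. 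The identical reasoning applies to the complementary transform $\PP^c:=\sum_{i\notin I}\DD_i$ (which coincides with $\mathrm{id}-\PP$ on $D$), so $(\PP^c_1(\mathcal M),\PP^c_2(\mathcal M))$ is $K$-closed in $(L_1(\mathcal M),L_2(\mathcal M))$ as well.

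\textbf{Step 2 (the $\infty$-endpoint).} The couple $(L_1(\mathcal M),L_2(\mathcal M))$ is regular, and so is the subcouple $(\PP^c_1(\mathcal M),\PP^c_2(\mathcal M))$: for $x$ in, say, $\PP^c_1(\mathcal M)$ one has $\EE_n(x)\to x$ in $L_1$, and each martingale difference $\DD_i(x)$ ($i\notin I$) is approximated in $L_1$ by $\PP^c(\DD_i(x)f)$ with $f$ a spectral projection of $|\DD_i(x)|$ in $\mathcal M_i$, an element of $\PP^c(D)\cap L_1\cap L_2\subset\PP^c_1(\mathcal M)\cap\PP^c_2(\mathcal M)$ since $\DD_j$ is bounded on $L_1$ and on $L_2$. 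Apply Pisier's duality lemma (Theorem~\ref{PisierLemma}): $\big((\PP^c_1)^{\bot},(\PP^c_2)^{\bot}\big)$ is $K$-closed in $\big(L_1(\mathcal M)^*,L_2(\mathcal M)^*\big)=(L_\infty(\mathcal M),L_2(\mathcal M))$. Using trace duality, the self-adjointness of each $\DD_i$ for the trace pairing, and the explicit description of the ranges in Proposition~\ref{Example 1 - Proposition 1} (membership in $\PP_p(\mathcal M)$, resp. $\PP^c_p(\mathcal M)$, being equivalent to the vanishing of $\DD_i$ for $i\notin I$, resp. $i\in I$), one identifies $(\PP^c_1)^{\bot}=\PP_\infty(\mathcal M)$ and $(\PP^c_2)^{\bot}=\PP_2(\mathcal M)$. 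Thus $(\PP_2(\mathcal M),\PP_\infty(\mathcal M))$ is $K$-closed in $(L_2(\mathcal M),L_\infty(\mathcal M))$ with a universal constant.

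\textbf{Step 3 (interpolation identities and the Wolff step).} For $1<p<\infty$ the martingale transform $\PP$ extends to a bounded projection of $L_p(\mathcal M)$ onto $\PP_p(\mathcal M)$ by \cite{NarcisseMartingaleTransforms}, so for $p,q\in(1,\infty)$ the subcouple $(\PP_p(\mathcal M),\PP_q(\mathcal M))$ is complemented — hence $K$-complemented — in $(L_p(\mathcal M),L_q(\mathcal M))$. Combining Proposition~\ref{Example 1 - Proposition 1} with Theorem~\ref{InterpLp}, one gets for each endpoint pair $(a,b)\in\{(1,2),(\tfrac32,\tfrac52),(2,\infty)\}$ the interpolation identity $(\PP_a(\mathcal M),\PP_b(\mathcal M))_{\theta,r}=\PP_r(\mathcal M)$ (equivalent norms, $1/r=(1-\theta)/a+\theta/b$): by Proposition~\ref{Kclosedness} (or by complementation in the middle case) this real interpolation space equals $(\PP_a+\PP_b)(\mathcal M)\cap L_r(\mathcal M)$; the inclusion into $\PP_r(\mathcal M)$ is Proposition~\ref{Example 1 - Proposition 1}, and the reverse inclusion follows, in the middle case from the bounded projection $\PP$ on $L_r$, and in the endpoint cases from a truncation argument (split each $\DD_i(x)$, $i\in I$, into an $L_a$- and an $L_b$-piece via a spectral projection, re-apply $\DD_i$ to land in $\PP_a(\mathcal M)+\PP_b(\mathcal M)$, and use the $K$-closedness from Steps~1--2 to turn $L_a+L_b$-convergence of the truncations $\EE_n(x)$ into $\PP_a+\PP_b$-convergence). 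Also $\PP_p(\mathcal M)=(\PP_p+\PP_q)(\mathcal M)\cap L_p(\mathcal M)$ for all $p,q$ by Proposition~\ref{Example 1 - Proposition 1}. Now invoke Corollary~\ref{CoroWolffK} with $I_0=[1,\infty]$, $A_p=L_p(\mathcal M)$, $B_p=\PP_p(\mathcal M)$ and the covering $I_0=I_1\cup I_2\cup I_3$ with $I_1=[1,2]$, $I_2=[\tfrac32,\tfrac52]$, $I_3=[2,\infty]$ (so $I_1\cap I_2$ and $I_2\cap I_3$ are nontrivial): (i) is Theorem~\ref{InterpLp}, (iii) is clear, and (ii), (iv) are the facts just assembled. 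The conclusion is that $(\PP_p(\mathcal M),\PP_q(\mathcal M))$ is $K$-complemented in $(L_p(\mathcal M),L_q(\mathcal M))$ for all $1\le p,q\le\infty$, with a universal constant.

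\textbf{Main obstacle.} Step~1 is the conceptual heart, but once the notion of Gundy projection has been set up it is essentially a bookkeeping exercise over Gundy's decomposition. The delicate points are in Steps~2--3: the couple $(L_2(\mathcal M),L_\infty(\mathcal M))$ is not regular, so Pisier's lemma must be applied on the predual side $(L_1,L_2)$ and the annihilators identified by hand; and the interpolation identities at the endpoint couples are not formal — one genuinely needs the truncation argument together with $K$-closedness to check that $\PP_r(\mathcal M)$ sits inside $\PP_a(\mathcal M)+\PP_b(\mathcal M)$.
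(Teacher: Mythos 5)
Your proposal is correct and follows essentially the same route as the paper: Gundy's decomposition shows $\PP$ is a Gundy projection, giving $K$-closedness of $(\PP_1,\PP_2)$ in $(L_1,L_2)$ via Corollary~\ref{Gundy - Corollary 1}; Pisier's duality lemma applied to the complementary transform $\sum_{i\notin I}\DD_i$ handles the couple $(\PP_2,\PP_\infty)$; boundedness of martingale transforms gives complementation for $1<p,q<\infty$; and the interpolation identities together with Corollary~\ref{CoroWolffK} assemble the full range. The only cosmetic difference is that the paper establishes the reverse inclusion $\PP_r\subset\PP_a+\PP_b$ by noting that each $x\in\PP(D\cap L_r)$ is a finite sum $\sum_{k\le n,\,k\in I}\DD_k(x)$ and splitting an arbitrary decomposition $x=y+z$ termwise (its Lemma~\ref{Example 1 - Lemma 2}), whereas you use a spectral truncation; both work.
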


Using the real interpolation machinery introduced in the preliminary section, and, in particular, Corollary \ref{CoroWolffK}, the above Theorem follows from Facts 1-5 below.

\begin{fact}\label{Example 1 - Fact 1}
For every $1\pp p,q\pp\infty$, we have $(\PP_p(\mathcal{M})+\PP_q(\mathcal{M}))\cap L_p(\mathcal{M})=\PP_p(\mathcal{M})$.
\end{fact}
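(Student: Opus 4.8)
The plan is to prove the nontrivial inclusion $(\PP_p(\mathcal{M})+\PP_q(\mathcal{M}))\cap L_p(\mathcal{M})\subset\PP_p(\mathcal{M})$, the reverse inclusion being obvious since $\PP_p(\mathcal{M})\subset L_p(\mathcal{M})$ and $\PP_p(\mathcal{M})\subset\PP_p(\mathcal{M})+\PP_q(\mathcal{M})$. So I would take $x\in(\PP_p(\mathcal{M})+\PP_q(\mathcal{M}))\cap L_p(\mathcal{M})$ and aim to show $\DD_i(x)=0$ for every $i\notin I$, which by Proposition \ref{Example 1 - Proposition 1} characterizes membership in $\PP_p(\mathcal{M})$.

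First I would write $x=u+v$ with $u\in\PP_p(\mathcal{M})$ and $v\in\PP_q(\mathcal{M})$. The key point is that the difference projections $\DD_i$ are defined on all of $(L_1+L_\infty)(\mathcal{M})$ (they extend the maps $\EE_i-\EE_{i-1}$, each $\EE_i$ being weakly continuous on $(L_1+L_\infty)(\mathcal{M})$), so $\DD_i(x)$, $\DD_i(u)$, $\DD_i(v)$ all make sense as elements of $(L_1+L_\infty)(\mathcal{M})$ and $\DD_i(x)=\DD_i(u)+\DD_i(v)$. By Proposition \ref{Example 1 - Proposition 1} applied to $\PP_p(\mathcal{M})$ and to $\PP_q(\mathcal{M})$ respectively, for $i\notin I$ we have $\DD_i(u)=0$ and $\DD_i(v)=0$, hence $\DD_i(x)=0$. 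Since $x\in L_p(\mathcal{M})$ and $\DD_i(x)=0$ for all $i\notin I$, Proposition \ref{Example 1 - Proposition 1} gives $x\in\PP_p(\mathcal{M})$, which completes the argument.

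The one technical point worth spelling out is why $\DD_i$ is well defined and additive on the sum space: this follows because $\EE_i$ extends to a weakly continuous projection from $(L_1+L_\infty)(\mathcal{M})$ onto $(L_1+L_\infty)(\mathcal{M}_i)$ (recalled in the preliminaries), so $\DD_i=\EE_i-\EE_{i-1}$ is a well-defined linear map on $(L_1+L_\infty)(\mathcal{M})$, and its restriction to $L_p(\mathcal{M})$ agrees with the usual difference projection there. A second point is that Proposition \ref{Example 1 - Proposition 1}, as stated, describes $\PP_p(\mathcal{M})$ as a subset of $L_p(\mathcal{M})$ via the vanishing of $\DD_i$ for $i\notin I$; I am using exactly that characterization in both directions. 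There is no real obstacle here — the statement is essentially a bookkeeping consequence of Proposition \ref{Example 1 - Proposition 1} together with the linearity of the difference projections on the sum space — so the "hardest" part is merely making sure the difference projections are interpreted consistently across $L_p(\mathcal{M})$, $L_q(\mathcal{M})$, and $(L_1+L_\infty)(\mathcal{M})$.
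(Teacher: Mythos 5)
Your argument is correct and is exactly the one the paper intends: the paper's proof simply says the fact ``follows trivially from Proposition \ref{Example 1 - Proposition 1}'', and your write-up is just the careful expansion of that, using the characterization $\PP_r(\mathcal{M})=\{x\in L_r(\mathcal{M})\mid \DD_i(x)=0,\ \forall i\notin I\}$ in both directions together with the linearity of the $\DD_i$ on $(L_1+L_\infty)(\mathcal{M})$. No issues.
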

\begin{proof}
This follows trivially from Proposition \ref{Example 1 - Proposition 1}.
\end{proof}

\begin{fact}\label{Example 1 - Fact 2}
$(\PP_1(\mathcal{\mathcal{M}}),\PP_2(\mathcal{\mathcal{M}}))$ is $K$-closed $(L_1(\mathcal{\mathcal{M}}),L_2(\mathcal{\mathcal{M}}))$ with a universal constant.
\end{fact}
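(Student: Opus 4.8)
The plan is to recognize that Fact \ref{Example 1 - Fact 2} is an immediate application of the general machinery developed in the previous paragraph, namely Corollary \ref{Gundy - Corollary 1}. So the entire task reduces to verifying that the operator $\PP(x)=\sum_{i\in I}\DD_i(x)$ is a Gundy projection on $\mathcal{M}$ with a universal constant, and that it satisfies hypotheses a), b), c) of that corollary.

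First I would check that $\PP$ is a Gundy projection. Lebesgue compatibility and the fact that $\PP$ is self-adjoint (hence contractive) on $D\cap L_2(\mathcal{M})$ are clear, since the $\DD_i$ are mutually orthogonal self-adjoint projections commuting with the $\EE_n$, and on $D$ only finitely many terms contribute. The substantive point is the Gundy decomposition property. Given $y\in D\cap L_1(\mathcal{M})$ and $\lambda>0$, write $y=y_+ - y_-$ (or decompose into real and imaginary parts and then into positive/negative parts, absorbing a universal factor) and apply the noncommutative Gundy decomposition (the Corollary following Theorem \ref{ParcetRandria}) to each positive piece in $\cup_{n\pg1}L_1(\mathcal{M}_n)$, obtaining $y=\alpha_0+\beta_0+\gamma_0$ with $\|\alpha_0\|_2^2\pp C^2\lambda\|y\|_1$, $\sum_n\|\DD_n(\beta_0)\|_1\pp C\|y\|_1$, $(1-p)\DD_n(\gamma_0)(1-p)=0$ for all $n$, and $\tau(p)\pp C\|y\|_1/\lambda$. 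Now simply push everything through $\PP$: set $\alpha:=\PP(\alpha_0)$, $\beta:=\PP(\beta_0)=\sum_{i\in I}\DD_i(\beta_0)$, $\gamma:=\PP(\gamma_0)=\sum_{i\in I}\DD_i(\gamma_0)$. These lie in $\PP(D)$ and $\PP(y)=\alpha+\beta+\gamma$. Since $\PP$ is a contraction on $L_2$, $\|\alpha\|_2^2\pp\|\alpha_0\|_2^2\pp C^2\lambda\|y\|_1$, giving (ii). For (iii), $\|\beta\|_1\pp\sum_{i\in I}\|\DD_i(\beta_0)\|_1\pp\sum_{n\pg1}\|\DD_n(\beta_0)\|_1\pp C\|y\|_1$ --- this is exactly why the $\ell_1$-control on the difference sequence of $\beta_0$ in Gundy's decomposition is the right hypothesis (it is \emph{not} bounded in $L_1$-norm directly, precisely the situation the Remark after the definition of Gundy projection warns about). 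For (iv), $(1-p)\gamma(1-p)=\sum_{i\in I}(1-p)\DD_i(\gamma_0)(1-p)=0$, with the same projection $p$, $\tau(p)\pp C\|y\|_1/\lambda$. So $\PP$ is a Gundy projection with a universal constant (the $C$ from Theorem \ref{ParcetRandria} times a small numerical factor coming from the $\pm$, real/imaginary splitting).

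Next I would verify the three hypotheses of Corollary \ref{Gundy - Corollary 1}. Hypothesis a) $D=\cup_{n\pg1}(L_1+L_\infty)(\mathcal{M}_n)$ holds by the very definition of $D$ in Example 1. For b), $\PP$ restricted to $(L_1+L_\infty)(\mathcal{M}_n)$ is a finite sum $\sum_{i\in I,\,i\pp n}\DD_i$ of the weakly continuous projections $\DD_i=\EE_i-\EE_{i-1}$, hence weakly continuous on $(L_1+L_\infty)(\mathcal{M}_n)$. For c), $\EE_n$ commutes with every $\DD_i$, so $\EE_n\PP(x)=\PP\EE_n(x)$, and if $x\in\PP(D)$ then $\EE_n(x)$ still lies in $D$ and $\PP\EE_n(x)=\EE_n\PP(x)=\EE_n(x)$ shows $\EE_n(x)\in\PP(D)$; alternatively this is transparent from Proposition \ref{Example 1 - Proposition 1} since $\DD_i\EE_n(x)=\EE_n\DD_i(x)=0$ for $i\notin I$. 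Corollary \ref{Gundy - Corollary 1} then yields directly that $(\PP_1(\mathcal{M}),\PP_2(\mathcal{M}))$ is $K$-closed in $(L_1(\mathcal{M}),L_2(\mathcal{M}))$ with a constant depending only on the (universal) Gundy constant of $\PP$, i.e. with a universal constant, which is exactly Fact \ref{Example 1 - Fact 2}.

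The only mild obstacle is bookkeeping the reduction from a general $y\in D\cap L_1(\mathcal{M})$ to a positive element of $\cup_{n\pg1}L_1(\mathcal{M}_n)$ so that Gundy's decomposition (stated for positive operators) applies, and checking that all the constituents $\alpha_0,\beta_0,\gamma_0$ that the decomposition produces actually lie in $D=\cup_{n\pg1}L_1(\mathcal{M}_n)$ --- but this is guaranteed by the statement of the Corollary following Theorem \ref{ParcetRandria}, and the $\pm$/real-imaginary splitting only costs a universal constant. Everything else is a routine verification that the projection-type structure of $\PP$ commutes with the operations used in Gundy's decomposition.
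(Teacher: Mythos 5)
Your proposal is correct and follows essentially the same route as the paper: establish that $\PP$ is a Gundy projection by pushing the noncommutative Gundy decomposition of $y$ through $\PP$ (using the $L_2$-contractivity for $\alpha$, the $\ell_1$-control on the difference sequence for $\beta$, and the commutation of the $\DD_i$ with compressions by $1-p$ for $\gamma$), then verify the hypotheses of Corollary \ref{Gundy - Corollary 1}. The extra bookkeeping you mention about reducing to positive elements is already absorbed into the paper's statement of Gundy's decomposition for general $y\in\cup_{n\pg1}L_1(\mathcal{M}_n)$, so nothing is missing.
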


The proof of this fact needs the ingredients introduced in the previous paragraph.

\begin{lemm}\label{Example 1 - Lemma 1}
$\PP$ is a Gundy projection on $\mathcal{M}$ with a universal constant.
\end{lemm}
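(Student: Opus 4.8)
The plan is to verify the three defining conditions of a Gundy projection directly from the noncommutative Gundy decomposition (the Corollary to Theorem \ref{ParcetRandria}), using the fact that $\PP$ is a martingale transform associated with the subset $I$. First I would observe that $\PP$ is Lebesgue compatible essentially by construction — each $\DD_i$ maps $L_p(\mathcal{M})$ into itself contractively — and that $\PP$ is self-adjoint (hence contractive) on $D\cap L_2(\mathcal{M})$ because the $\DD_i$ are mutually orthogonal self-adjoint projections commuting with the trace, so $\PP$ is itself an orthogonal projection on $L_2$. These are the routine parts of the definition.

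The substance is producing the decomposition $\PP(y)=\alpha+\beta+\gamma$ for $y\in D\cap L_1(\mathcal{M})$ and $\lambda>0$. Here I would reduce to the positive case if needed (splitting $y$ into real and imaginary, positive and negative parts costs only a universal constant), then apply the Corollary to Theorem \ref{ParcetRandria} to $y$ and $\lambda$, obtaining $y=\alpha_0+\beta_0+\gamma_0$ with the stated bounds: $\|\alpha_0\|_1\pp C\|y\|_1$, $\|\alpha_0\|_2^2\pp C^2\lambda\|y\|_1$; $\sum_n\|\DD_n(\beta_0)\|_1\pp C\|y\|_1$; and $(1-p)\DD_n(\gamma_0)(1-p)=0$ for all $n$, where $\tau(p)\pp C\|y\|_1/\lambda$. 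Then I would simply set $\alpha:=\PP(\alpha_0)$, $\beta:=\PP(\beta_0)$, $\gamma:=\PP(\gamma_0)$; since $\alpha_0,\beta_0,\gamma_0\in\cup_n L_1(\mathcal{M}_n)\subset D$ and $\PP$ is idempotent with range in $\PP(D)$, linearity gives $\PP(y)=\alpha+\beta+\gamma$ with all three in $\PP(D)$. For (ii): $\PP$ is contractive on $L_2$, so $\|\alpha\|_2\pp\|\alpha_0\|_2$, giving $\|\alpha\|_2^2\pp C^2\lambda\|y\|_1$. For (iii): here I use that $\PP(\beta_0)=\sum_{i\in I}\DD_i(\beta_0)$ and each $\DD_i$ is $L_1$-contractive, so $\|\beta\|_1\pp\sum_{i\in I}\|\DD_i(\beta_0)\|_1\pp\sum_{n\pg1}\|\DD_n(\beta_0)\|_1\pp C\|y\|_1$; this is exactly why the $\ell_1$-control on the difference sequence of $\beta_0$ in Gundy's theorem is the right hypothesis, even though $\PP$ is unbounded on $L_1$. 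For (iv): since $\PP$ commutes with each $\DD_n$ (the difference projections mutually commute), $\DD_n(\gamma)=\DD_n(\PP(\gamma_0))=\PP(\DD_n(\gamma_0))$, and more directly $\gamma=\sum_{i\in I}\DD_i(\gamma_0)$, so $(1-p)\gamma(1-p)=\sum_{i\in I}(1-p)\DD_i(\gamma_0)(1-p)=0$ by property (iv) of the Gundy corollary, with the same projection $p$ satisfying $\tau(p)\pp C\|y\|_1/\lambda$.

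The main obstacle — which is really a bookkeeping point rather than a deep difficulty — is making sure the elements $\alpha_0,\beta_0,\gamma_0$ coming out of Gundy's decomposition lie in the domain $D$ on which $\PP$ acts, so that applying $\PP$ to them is legitimate, and that the sums defining $\PP$ of each piece are genuinely finite. This is guaranteed because the Gundy corollary produces $\alpha_0,\beta_0,\gamma_0\in\cup_{n\pg1}L_1(\mathcal{M}_n)\subset D$, so each sits in some $L_1(\mathcal{M}_N)$ and hence $\PP$ acts on it via a finite sum $\sum_{i\in I,\,i\pp N}\DD_i$. The one place to be slightly careful is the passage through the positive/self-adjoint reduction: Theorem \ref{ParcetRandria} is stated for positive $y$, so for general $y\in D\cap L_1(\mathcal{M})$ I would write $y=\sum_{k=1}^{4}\lambda_k y_k$ with $y_k\pg0$, $\|y_k\|_1\pp\|y\|_1$, $|\lambda_k|=1$ (or the standard four-term decomposition of an $L_1$ operator), apply the above to each $y_k$, and recombine, absorbing the factor $4$ into the universal constant. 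Once this is assembled, none of the constants depend on $\mathcal{M}$, $I$, $y$, or $\lambda$, so $\PP$ is a Gundy projection with a universal constant, which completes the proof.
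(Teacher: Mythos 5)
Your proposal is correct and follows essentially the same route as the paper: apply the (already sign-reduced) Corollary of Gundy's decomposition to $y$ and $\lambda$, push the three pieces through $\PP$, and use $L_2$-contractivity for $\alpha$, the $\ell_1$-control $\sum_n\|\DD_n(\beta)\|_1\pp C\|y\|_1$ for $\beta$, and termwise vanishing of $(1-p)\DD_i(\gamma)(1-p)$ for $\gamma$. The only cosmetic difference is that you re-derive the reduction to positive $y$, which the paper has already absorbed into the statement of the Corollary.
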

\begin{proof}[Proof of Lemma \ref{Example 1 - Lemma 1}]
It is clear that $\PP$ is self-adjoint on $L_2(\mathcal{M})$. Let $y\in D$ and $\lambda>0$. By applying Gundy's decomposition theorem to $y$ and $\lambda$ with respect to the filtration $(\mathcal{M}_n)_{n\pg1}$, we deduce that there exist $\alpha,\beta,\gamma\in D$ with the following properties,
\begin{enumerate}
    \item[{\rm(i)}] $y=\alpha+\beta+\gamma$,
    \item[{\rm(ii)}] the element $\alpha$ satisfies
\[\|\alpha\|_2^2\pp C^2\lambda\|y\|_1,\]
    \item[{\rm(iii)}] the element $\beta$ satisfies 
\[\sum_{n\pg1}\|\DD_n(\beta)\|_1\pp C\|y\|_1,\]
    \item[{\rm(iv)}] the element $\gamma$ satisfies
\[(1-p)\DD_n(\gamma)(1-p)=0\]
for every $n\pg1$ where $p\in\mathcal{M}$ is a projection such that
\[\tau(p)\pp C\frac{\|y\|_1}{\lambda},\]
\end{enumerate}
where $C>0$ is a universal constant. We set 
\[\alpha':=\PP(\alpha),\ \ \ \ \ \ \beta':=\PP(\beta),\ \ \ \ \ \ \ \gamma':=\PP(\gamma).\]
Clearly we have $\PP(y)=\alpha'+\beta'+\gamma'$. As $\PP$ is contractive on $D\cap L_2(\mathcal{M})$, the element $\alpha'$ satisfies
\[\|\alpha'\|_2^2\pp\|\alpha\|_2^2\pp C^2\lambda\|y\|_1.\]
The element $\beta'$ satisfies
\[\|\beta'\|_1=\Big\|\sum_{i\in I}\DD_i(\beta)\Big\|_1\pp\sum_{n\pg1}\|\DD_n(\beta)\|_1\pp C\|y\|_1.\]
Finally, the element $\gamma'$ satisfies
\[(1-p)\gamma'(1-p)=\sum_{i\in I}(1-p)\DD_i(\gamma)(1-p)=0.\]
The proof is complete.
\end{proof}

\begin{proof}[Proof of Fact \ref{Example 1 - Fact 2}]
It is clear from the definition that $\PP$ is weakly continuous on $(L_1+L_\infty)(\mathcal{M}_n)$ for every $n\pg1$. Moreover, $\PP$ clearly commutes with $\EE_n$, so $\EE_n$ stabilises $\PP(D)$ for every $n\pg1$. Knowing that $\PP$ is a Gundy projection, the fact is derived as a particular consequence of Corollary \ref{Gundy - Corollary 1}.
\end{proof}

\begin{fact}\label{Example 1 - Fact 3}
$(\PP_2(\mathcal{\mathcal{M}}),\PP_\infty(\mathcal{\mathcal{M}}))$ is $K$-closed in $(L_2(\mathcal{\mathcal{M}}),L_\infty(\mathcal{\mathcal{M}}))$ with a universal constant.
\end{fact}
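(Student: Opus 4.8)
The plan is to deduce Fact \ref{Example 1 - Fact 3} from Fact \ref{Example 1 - Fact 2} by an appeal to Pisier's duality lemma (Theorem \ref{PisierLemma}), exactly as Bourgain passed from the $(H_1,H_p)$ statement to the $(H_p,H_\infty)$ statement. First I would record that the couples in play are regular: $(L_2(\mathcal{M}),L_\infty(\mathcal{M}))$ is regular since $L_2(\mathcal{M})\cap L_\infty(\mathcal{M})$ contains the simple operators, which are dense in both $L_2(\mathcal{M})$ (for the norm topology) and in $L_\infty(\mathcal{M})$ (for the weak$^*$ topology, which is the relevant one here since $L_\infty$ is not norm-separable); one has to be slightly careful and use the convention that regularity/duality for couples involving $L_\infty$ is taken with respect to the predual side, i.e. one really works with $(L_1(\mathcal{M}),L_2(\mathcal{M}))$ and its dual $(L_\infty(\mathcal{M}),L_2(\mathcal{M}))$ under trace duality. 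The subcouple $(\PP_1(\mathcal{M}),\PP_2(\mathcal{M}))$ is regular inside it: $\PP_1(\mathcal{M})\cap\PP_2(\mathcal{M})\supseteq\PP(D\cap L_1\cap L_2)$, and the finite martingales in $\PP(D)$ are dense in $\PP_1(\mathcal{M})$ and in $\PP_2(\mathcal{M})$ essentially by Proposition \ref{Example 1 - Proposition 1} together with the martingale convergence of $\EE_n$.

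Next I would identify the annihilators. Under the trace pairing, $L_\infty(\mathcal{M})=L_1(\mathcal{M})^*$ and $L_2(\mathcal{M})=L_2(\mathcal{M})^*$, so the dual couple of $(L_1(\mathcal{M}),L_2(\mathcal{M}))$ is $(L_\infty(\mathcal{M}),L_2(\mathcal{M}))$. The key computation is that the annihilator of $\PP_1(\mathcal{M})$ in $L_\infty(\mathcal{M})$ is $\{\,\xi\in L_\infty(\mathcal{M})\ \mid\ \DD_i(\xi)=0\ \forall i\in I\,\}$ and the annihilator of $\PP_2(\mathcal{M})$ in $L_2(\mathcal{M})$ is $\{\,\xi\in L_2(\mathcal{M})\ \mid\ \DD_i(\xi)=0\ \forall i\in I\,\}$: this follows from Proposition \ref{Example 1 - Proposition 1} together with the self-adjointness of the $\DD_i$ for the trace pairing and the fact that $\tau(\DD_i(x)\,\xi)=\tau(x\,\DD_i(\xi))$. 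Consequently, if $\widetilde{\PP}$ denotes the Gundy projection associated with the complementary index set $\widetilde{I}:=\N^*\setminus I$ (which is again a Gundy projection with a universal constant, by Lemma \ref{Example 1 - Lemma 1} applied to $\widetilde{I}$, and orthogonal to $\PP$), then $\PP_1(\mathcal{M})^\bot=\widetilde{\PP}_\infty(\mathcal{M})$ and $\PP_2(\mathcal{M})^\bot=\widetilde{\PP}_2(\mathcal{M})$. Therefore the dual subcouple $(\PP_1(\mathcal{M})^\bot,\PP_2(\mathcal{M})^\bot)$ is exactly $(\widetilde{\PP}_\infty(\mathcal{M}),\widetilde{\PP}_2(\mathcal{M}))$, and its $K$-closedness in $(L_\infty(\mathcal{M}),L_2(\mathcal{M}))$ is, up to reversing the order of the couple (which multiplies $t$ by $t^{-1}$ and does not affect $K$-closedness), precisely the statement that $(\widetilde{\PP}_2(\mathcal{M}),\widetilde{\PP}_\infty(\mathcal{M}))$ is $K$-closed in $(L_2(\mathcal{M}),L_\infty(\mathcal{M}))$ — which is what we are trying to prove for $\PP$ itself.

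So the actual logical route is: Fact \ref{Example 1 - Fact 2} applied to $\widetilde{\PP}$ gives that $(\widetilde{\PP}_1(\mathcal{M}),\widetilde{\PP}_2(\mathcal{M}))$ is $K$-closed in $(L_1(\mathcal{M}),L_2(\mathcal{M}))$ with a universal constant; by Theorem \ref{PisierLemma} this is equivalent to $(\widetilde{\PP}_1(\mathcal{M})^\bot,\widetilde{\PP}_2(\mathcal{M})^\bot)$ being $K$-closed in $(L_\infty(\mathcal{M}),L_2(\mathcal{M}))$; and by the annihilator computation $\widetilde{\PP}_1(\mathcal{M})^\bot=\PP_\infty(\mathcal{M})$, $\widetilde{\PP}_2(\mathcal{M})^\bot=\PP_2(\mathcal{M})$, so after flipping the couple we conclude that $(\PP_2(\mathcal{M}),\PP_\infty(\mathcal{M}))$ is $K$-closed in $(L_2(\mathcal{M}),L_\infty(\mathcal{M}))$ with a universal constant. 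I expect the main obstacle to be the bookkeeping around $L_\infty$: one must be scrupulous about which weak$^*$-closures, which density statements, and which direction of the trace duality are being invoked, and in particular that the closure defining $\PP_\infty(\mathcal{M})$ (weak$^*$) is the right one to make it coincide with an annihilator of a norm-closed subspace of $L_1$. Everything else — regularity of the couples, the annihilator identifications, orthogonality and the universal-constant propagation of the Gundy property to $\widetilde{\PP}$ — is routine once Proposition \ref{Example 1 - Proposition 1}, Lemma \ref{Example 1 - Lemma 1}, and Theorem \ref{PisierLemma} are in hand.
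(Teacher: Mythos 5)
Your argument is correct and is essentially the paper's own proof: the paper likewise invokes Pisier's duality lemma after checking regularity, identifies $\PP_p(\mathcal{M})^{\bot}$ via Proposition \ref{Example 1 - Proposition 1} as the space annihilated by the $\DD_i$, $i\in I$, and then obtains the statement by swapping $I$ for $\N^*\setminus I$. The only difference is presentational — you apply Fact \ref{Example 1 - Fact 2} to the complementary projection $\widetilde{\PP}$ from the outset, whereas the paper dualizes $\PP$ and substitutes the complementary index set at the end — but the logical content is identical.
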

\begin{proof}
The point is to use Pisier's duality lemma (Theorem \ref{PisierLemma}). It can be easily checked that $(\PP_1(\mathcal{M}),\PP_2(\mathcal{M}))$ is a regular couple. In addition, an easy computation using Proposition \ref{Example 1 - Proposition 1} shows that if $1\pp p,q\pp\infty$ are H\"older conjugate exponents, then
\[\PP_p(\mathcal{M})^{\bot}=\big\{x\in L_q(\mathcal{M})\ \mid\ \DD_i(x)=0,\ \forall i\in I\big\}\]
where $\PP_p(\mathcal{M})^{\bot}$ denotes the orthogonal of $\PP_p(\mathcal{M})$ with respect to the canonical duality between $L_p(\mathcal{M})$ and $L_q(\mathcal{M})$. As $(\PP_1(\mathcal{M}),\PP_2(\mathcal{M}))$ is $K$-closed in $(L_1(\mathcal{M}),L_2(\mathcal{M}))$, by Pisier's result, we deduce that $(\PP_2(\mathcal{M})^{\bot},\PP_1^{\bot}(\mathcal{M}))$ is $K$-closed in $(L_2(\mathcal{M},L_\infty(\mathcal{M}))$. We get the desired result by considering $\N^*\backslash I$ instead of $I$.
\end{proof}

\begin{fact}\label{Example 1 - Fact 4}
Let $1<p,q<\infty$. Then $(\PP_p(\mathcal{M}),\PP_q(\mathcal{M}))$ is complemented and thus $K$-closed in $(L_p(\mathcal{M}),L_q(\mathcal{M}))$ with a constant that depends only on $p,q$.
\end{fact}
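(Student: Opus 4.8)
The plan is to realise $(\PP_p(\mathcal{M}),\PP_q(\mathcal{M}))$ as a \emph{complemented} subcouple of $(L_p(\mathcal{M}),L_q(\mathcal{M}))$; $K$-closedness is then automatic, since a complemented subcouple is $K$-complemented (as recorded in the remark following the proposition relating $K$-complementation and $K$-closedness) and $K$-complementation implies $K$-closedness. The decisive input is the boundedness of noncommutative martingale transforms proved by Randrianantoanina in \cite{NarcisseMartingaleTransforms}: the operator $\PP$ is precisely the martingale transform associated with the $\{0,1\}$-valued multiplier sequence $(1_{i\in I})_{i\pg1}$, so for every $1<p<\infty$ it extends from the dense subspace $D\cap L_p(\mathcal{M})$ to a bounded linear operator $P^{(p)}$ on $L_p(\mathcal{M})$ whose norm is dominated by a constant $c_p$ depending only on $p$. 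This is the one place where the hypothesis $1<p,q<\infty$ enters, and it plays here the role that the $L_p$-boundedness of the Riesz projection plays in the classical theory of Hardy spaces on the disk.

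First I would record the structural properties of $P^{(p)}$. Since $\PP$ is idempotent on $D\cap L_p(\mathcal{M})$, passing to the limit shows $P^{(p)}$ is idempotent on $L_p(\mathcal{M})$; in particular its range is closed and coincides with its fixed-point set. Because $\DD_i$ commutes with $\PP$ on $D$ and $\DD_i\PP=0$ for $i\notin I$, density of $D\cap L_p(\mathcal{M})$ gives $\DD_i P^{(p)}=0$ on $L_p(\mathcal{M})$ for every $i\notin I$, so by Proposition~\ref{Example 1 - Proposition 1} the range of $P^{(p)}$ is contained in $\PP_p(\mathcal{M})$; conversely this range contains $\PP(D\cap L_p(\mathcal{M}))$, which is dense in $\PP_p(\mathcal{M})$, and being closed it therefore equals $\PP_p(\mathcal{M})$. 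Being the fixed-point set, $P^{(p)}$ restricts to the identity on $\PP_p(\mathcal{M})$.

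Next I would check consistency of the family $(P^{(p)})_{1<p<\infty}$ in order to glue these operators into a single map on the couple. For $x\in L_p(\mathcal{M})\cap L_q(\mathcal{M})$ the martingale $(\EE_n(x))_{n\pg1}$ lies in $D\cap L_p(\mathcal{M})\cap L_q(\mathcal{M})$ and converges to $x$ both in $L_p(\mathcal{M})$ and in $L_q(\mathcal{M})$ (here $p,q<\infty$); since $P^{(p)}(\EE_n(x))=\PP(\EE_n(x))=P^{(q)}(\EE_n(x))$, continuity of the inclusions $L_p(\mathcal{M}),L_q(\mathcal{M})\hookrightarrow(L_1+L_\infty)(\mathcal{M})$ forces $P^{(p)}(x)=P^{(q)}(x)$. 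Hence one obtains a well-defined operator $P$ on $L_p(\mathcal{M})+L_q(\mathcal{M})$ by setting $P(x_0+x_1):=P^{(p)}(x_0)+P^{(q)}(x_1)$ for $x_0\in L_p(\mathcal{M})$, $x_1\in L_q(\mathcal{M})$, independence of the decomposition being exactly the consistency just established. By construction $P$ maps $L_p(\mathcal{M})$ into $\PP_p(\mathcal{M})$ and $L_q(\mathcal{M})$ into $\PP_q(\mathcal{M})$, boundedly with couple norm $\pp\max(c_p,c_q)$, and restricts to the identity on $\PP_p(\mathcal{M})+\PP_q(\mathcal{M})$; this is precisely the assertion that $(\PP_p(\mathcal{M}),\PP_q(\mathcal{M}))$ is complemented in $(L_p(\mathcal{M}),L_q(\mathcal{M}))$, which finishes the proof.

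The only substantial ingredient is the cited $L_p$-boundedness of $\PP$; everything else is bookkeeping, the sole delicate point being to keep track of the topology in which $(\EE_n(x))_{n\pg1}$ converges during the glueing step — which is exactly why the argument is confined to $1<p,q<\infty$, the endpoint cases $p$ or $q\in\{1,\infty\}$ being treated separately in Facts~\ref{Example 1 - Fact 2} and \ref{Example 1 - Fact 3} via Gundy projections and duality.
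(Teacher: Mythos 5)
Your proposal is correct and follows exactly the paper's route: the paper's proof of this fact consists of the single observation that, by the boundedness of noncommutative martingale transforms from \cite{NarcisseMartingaleTransforms}, $\PP$ is bounded on $L_r(\mathcal{M})$ for $1<r<\infty$, whence the couple is complemented and therefore $K$-closed. Your write-up merely fills in the (correct) bookkeeping that the paper leaves implicit, namely that the $L_p$- and $L_q$-extensions of $\PP$ are consistent, have range exactly $\PP_p(\mathcal{M})$ and $\PP_q(\mathcal{M})$, and glue to a bounded projection on the couple restricting to the identity on $\PP_p(\mathcal{M})+\PP_q(\mathcal{M})$.
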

\begin{proof}
From the boundedness properties of martingale transforms (see \cite{NarcisseMartingaleTransforms}), we know that $\PP$ is bounded for the $L_r$ norm with a constant that depends only on $r$, whenever $1<r<\infty$. This proves the point.
\end{proof}

\begin{fact}\label{Example 1 - Fact 5}
Let $1\pp p\neq q\pp\infty$, such that $(\PP_p(\mathcal{M}),\PP_q(\mathcal{M}))$ is $K$-closed in $(L_p(\mathcal{M}),L_q(\mathcal{M}))$. Then for every $0<\theta<1$, we have
\[(\PP_p(\mathcal{M}),\PP_q(\mathcal{M}))_{\theta,r}=\PP_r(\mathcal{M})\]
with equivalent norms, where $\frac{1}{r}=\frac{1-\theta}{p}+\frac{\theta}{q}$.
\end{fact}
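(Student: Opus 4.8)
The plan is to combine the $K$-closedness hypothesis with Theorem \ref{InterpLp} (interpolation of noncommutative $L_p$-spaces), Proposition \ref{Kclosedness} (behaviour of $K$-method functors under $K$-closedness), and Fact \ref{Example 1 - Fact 1}. The guiding identity is that the real interpolation space $(E_0,E_1)_{\theta,r}$ is the $K$-method functor $K_{\Phi_{\theta,r}}(E_0,E_1)$ for the parameter $\Phi_{\theta,r}$ recalled in the preliminaries; applying $K_\Phi$ to a $K$-closed subcouple commutes with the intersection operation. Concretely, I would first record that for the trivial couple we already know, by Theorem \ref{InterpLp} together with the reiteration corollary \ref{CoroPeetre}, that $(L_p(\mathcal{M}),L_q(\mathcal{M}))_{\theta,r}=L_r(\mathcal{M})$ with equivalent norms whenever $p\neq q$ and $1/r=(1-\theta)/p+\theta/q$; this is the ambient computation.

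Next I would apply Proposition \ref{Kclosedness} to the $K$-closed subcouple $(\PP_p(\mathcal{M}),\PP_q(\mathcal{M}))$ of $(L_p(\mathcal{M}),L_q(\mathcal{M}))$ with the $K$-method parameter $\Phi:=\Phi_{\theta,r}$. This gives
\[(\PP_p(\mathcal{M}),\PP_q(\mathcal{M}))_{\theta,r}=\big(\PP_p(\mathcal{M})+\PP_q(\mathcal{M})\big)\cap(L_p(\mathcal{M}),L_q(\mathcal{M}))_{\theta,r}\]
with equivalent norms, the norm on the left being the one induced from the ambient real interpolation space. Now substitute the ambient computation $(L_p(\mathcal{M}),L_q(\mathcal{M}))_{\theta,r}=L_r(\mathcal{M})$ to rewrite the right-hand side as $(\PP_p(\mathcal{M})+\PP_q(\mathcal{M}))\cap L_r(\mathcal{M})$. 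The remaining task is purely to identify this intersection with $\PP_r(\mathcal{M})$.

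For that last identification I would use Proposition \ref{Example 1 - Proposition 1}, which describes each $\PP_s(\mathcal{M})$ as the set of $x\in L_s(\mathcal{M})$ with $\DD_i(x)=0$ for all $i\notin I$. Since $r$ lies between $p$ and $q$, an element of $\PP_p(\mathcal{M})+\PP_q(\mathcal{M})$ that additionally lies in $L_r(\mathcal{M})$ automatically has $\DD_i(\cdot)=0$ for $i\notin I$ — the difference projections are weakly continuous on $(L_1+L_\infty)(\mathcal{M})$ and annihilate every summand — so $(\PP_p(\mathcal{M})+\PP_q(\mathcal{M}))\cap L_r(\mathcal{M})\subseteq\PP_r(\mathcal{M})$; the reverse inclusion is immediate since $\PP_r(\mathcal{M})=(\PP_r(\mathcal{M})+\PP_r(\mathcal{M}))\cap L_r(\mathcal{M})\subseteq(\PP_p(\mathcal{M})+\PP_q(\mathcal{M}))\cap L_r(\mathcal{M})$ once one checks $\PP_r(\mathcal{M})\subseteq\PP_p(\mathcal{M})+\PP_q(\mathcal{M})$, which again follows from the description in Proposition \ref{Example 1 - Proposition 1} by splitting $x\in\PP_r(\mathcal{M})$ via truncation of its martingale (or by using Fact \ref{Example 1 - Fact 1} directly, which already gives $(\PP_p(\mathcal{M})+\PP_q(\mathcal{M}))\cap L_p(\mathcal{M})=\PP_p(\mathcal{M})$ and an analogous statement for the intermediate index by the same proof). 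I expect the only delicate point to be bookkeeping the case distinctions $p<q$ versus $p>q$ and the endpoint values $1$ and $\infty$ (so that $L_r(\mathcal{M})$ is genuinely an interior point and Theorem \ref{InterpLp}/Corollary \ref{CoroPeetre} applies); the interpolation-functor manipulation and the intersection identification are otherwise routine.
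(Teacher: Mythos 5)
Your overall strategy is sound and close in spirit to the paper's: both reduce the problem, via Proposition \ref{Kclosedness} and the ambient identification $(L_p(\mathcal{M}),L_q(\mathcal{M}))_{\theta,r}=L_r(\mathcal{M})$, to comparing $(\PP_p(\mathcal{M})+\PP_q(\mathcal{M}))\cap L_r(\mathcal{M})$ with $\PP_r(\mathcal{M})$. The forward inclusion $(\PP_p(\mathcal{M})+\PP_q(\mathcal{M}))\cap L_r(\mathcal{M})\subseteq\PP_r(\mathcal{M})$ is fine as you argue it. The gap is in the reverse inclusion, i.e.\ in the claim $\PP_r(\mathcal{M})\subseteq\PP_p(\mathcal{M})+\PP_q(\mathcal{M})$, which is exactly where the content of the statement lies, and neither of the two justifications you offer closes it. The appeal to ``Fact \ref{Example 1 - Fact 1} by the same proof'' does not work: that proof produces the decomposition $x=x+0$, which is only available when the index in question is one of the two endpoints $p,q$; for the intermediate index $r$ there is no canonical way to split an element of $\PP_r(\mathcal{M})$ into a $\PP_p$-part and a $\PP_q$-part, since spectral truncations of $x$ destroy the condition $\DD_i(\cdot)=0$ for $i\notin I$. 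The appeal to ``truncation of its martingale'' only yields that each $\EE_n(x)$, being a finite martingale, lies in $\PP_p(\mathcal{M})+\PP_q(\mathcal{M})$ (this is Lemma \ref{Example 1 - Lemma 2}); but you then need the limit $x=\lim_n\EE_n(x)$ to remain in $\PP_p(\mathcal{M})+\PP_q(\mathcal{M})$, and a sum of two closed subspaces need not be closed in $L_p(\mathcal{M})+L_q(\mathcal{M})$ in general.

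The missing step can be repaired, but only by invoking the $K$-closedness hypothesis once more: $K$-closedness (at $t=1$, say) shows that the intrinsic sum norm of $\PP_p(\mathcal{M})+\PP_q(\mathcal{M})$ is equivalent to the norm induced from $L_p(\mathcal{M})+L_q(\mathcal{M})$, hence $\PP_p(\mathcal{M})+\PP_q(\mathcal{M})$ is complete for the induced norm and therefore closed in $L_p(\mathcal{M})+L_q(\mathcal{M})$; since $\EE_n(x)\to x$ in $L_r(\mathcal{M})$ and hence in $L_p(\mathcal{M})+L_q(\mathcal{M})$, the limit stays in the sum. The paper sidesteps this entirely: it observes that the inclusion $(\PP_p(\mathcal{M}),\PP_q(\mathcal{M}))_{\theta,r}\subset\PP_r(\mathcal{M})$ is continuous with \emph{closed} range (by $K$-closedness) and then only checks that the range contains the dense subset $\PP(D\cap L_r(\mathcal{M}))$ of $\PP_r(\mathcal{M})$, for which Lemma \ref{Example 1 - Lemma 2} suffices with no limiting argument. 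You should either adopt that ``dense range plus closed range'' formulation or insert the closedness-of-the-sum argument explicitly.
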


The proof of this fact needs the following easy lemma.

\begin{lemm}\label{Example 1 - Lemma 2}
Let $1\pp p,q\pp\infty$ and $x\in\PP(D)\cap (L_p(\mathcal{M})+L_q(\mathcal{M}))$. Then, we have $x\in\PP_p(\mathcal{M})+\PP_q(\mathcal{M})$.
\end{lemm}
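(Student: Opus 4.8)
The plan is to prove Lemma \ref{Example 1 - Lemma 2}, namely that if $x\in\PP(D)\cap(L_p(\mathcal{M})+L_q(\mathcal{M}))$ then $x$ decomposes as a sum of an element of $\PP_p(\mathcal{M})$ and an element of $\PP_q(\mathcal{M})$. Since $x\in\PP(D)$, we know $x$ is supported on the difference projections indexed by $I$, i.e. $\DD_i(x)=0$ for all $i\notin I$ (and $x$ sits in some $(L_1+L_\infty)(\mathcal{M}_n)$, so only finitely many $\DD_i(x)$ are nonzero, though we may not even need finiteness). The idea is to take any decomposition $x=u+v$ with $u\in L_p(\mathcal{M})$, $v\in L_q(\mathcal{M})$ coming from the hypothesis $x\in L_p(\mathcal{M})+L_q(\mathcal{M})$, and then \emph{apply $\PP$ to it}. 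Formally, first I would reduce to the case where $u,v$ themselves lie in $D$: since $x=\EE_n(x)$ for a suitable $n$, replace $u,v$ by $\EE_n(u),\EE_n(v)$; these still belong to $L_p(\mathcal{M})$ and $L_q(\mathcal{M})$ respectively (as $\EE_n$ is contractive on each $L_r$), they sum to $x$, and they now lie in $(L_1+L_\infty)(\mathcal{M}_n)\subset D$.

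Next I would set $y':=\PP(\EE_n(u))$ and $z':=\PP(\EE_n(v))$. Because $\PP$ is Lebesgue compatible, $\PP$ maps $D\cap L_p(\mathcal{M})$ into $L_p(\mathcal{M})$ and $D\cap L_q(\mathcal{M})$ into $L_q(\mathcal{M})$; hence $y'\in\PP(D)\cap L_p(\mathcal{M})\subset\PP_p(\mathcal{M})$ and $z'\in\PP(D)\cap L_q(\mathcal{M})\subset\PP_q(\mathcal{M})$. Moreover, since $\PP$ is linear and idempotent and $x\in\PP(D)$ already satisfies $\PP(x)=x$, we get
\[y'+z'=\PP(\EE_n(u)+\EE_n(v))=\PP(\EE_n(x))=\PP(x)=x,\]
using $\EE_n(x)=x$. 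Thus $x=y'+z'\in\PP_p(\mathcal{M})+\PP_q(\mathcal{M})$, which is exactly the claim.

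One subtlety I would be careful about: the inclusion $\PP(D)\cap L_p(\mathcal{M})\subset\PP_p(\mathcal{M})$ needs that an element of $\PP(D)$ which also happens to lie in $L_p(\mathcal{M})$ is in the norm-closure (or weak*-closure, if $p=\infty$) of $\PP(D\cap L_p(\mathcal{M}))$. For the elements at hand this is clear: $y'=\PP(\EE_n(u))$ with $\EE_n(u)\in L_p(\mathcal{M})\cap D$, so $y'\in\PP(D\cap L_p(\mathcal{M}))\subset\PP_p(\mathcal{M})$ directly, with no closure argument needed — similarly for $z'$. So the only genuine point is the elementary manipulation $\PP\EE_n u+\PP\EE_n v=\PP\EE_n x=x$; everything else is bookkeeping about which Lebesgue space each piece lives in. I do not anticipate a real obstacle here; the statement is "easy" as advertised, and the proof is essentially one line once the reduction to $D$ via $\EE_n$ is made.
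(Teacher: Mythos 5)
Your proof is correct and is essentially the paper's argument: the paper applies the finite martingale transform $\sum_{k\le n,\,k\in I}\DD_k$ to each piece of the decomposition, which is exactly your operator $\PP\circ\EE_n$. All the steps you flag (contractivity of $\EE_n$ on each $L_r$, Lebesgue compatibility of $\PP$, and $\PP\EE_n x=x$) check out.
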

\begin{proof}[Proof of Lemma \ref{Example 1 - Lemma 2}]
As $x\in D$, there is $n\pg1$ such that $x=\sum_{1\pp k\pp n}\DD_k(x)$. Then, we have
\[x=\PP(x)=\PP\sum_{k=1}^{n}\DD_k(x)=\sum_{\protect\substack{k=1\\k\in I}}^{n}\DD_k(x).\]
Now, let $y\in L_p(\mathcal{M})$ and $z\in L_q(\mathcal{M})$ such that $x=y+z$. Then $x=y'+z'$ with
\[y':=\sum_{\protect\substack{k=1\\k\in I}}^{n}\DD_k(y)\in\PP_p(\mathcal{M}),\ \ \ \ \ \ \ \ \ \ z':=\sum_{\protect\substack{k=1\\k\in I}}^{n}\DD_k(z)\in\PP_q(\mathcal{M}).\]
The proof is complete.
\end{proof}

\begin{proof}[Proof of Fact \ref{Example 1 - Fact 5}]
Let $0<\theta<1$. Using Theorem \ref{FunctorLp} and Proposition \ref{Example 1 - Proposition 1} we clearly have a continuous inclusion
\[(\PP_p(\mathcal{M}),\PP_q(\mathcal{M}))_{\theta,r}\subset\PP_r(\mathcal{M})\]
where $1/r=(1-\theta)/p+\theta/q$, and by $K$-closedness it has a closed range. Thus, it suffices to show that it also has a dense range. Let $x\in\PP(D\cap L_r(\mathcal{M}))$. Then, it is clear that we have $x\in\PP(D)\cap (L_p(\mathcal{M})+L_q(\mathcal{M}))$, so by the previous lemma we have $x\in(\PP_p(\mathcal{M})+\PP_q(\mathcal{M}))\cap L_r(\mathcal{M})$. But by again using Theorem \ref{FunctorLp} and Proposition \ref{Kclosedness}, we know that \[(\PP_p(\mathcal{M})+\PP_q(\mathcal{M}))\cap L_r(\mathcal{M})=(\PP_p(\mathcal{M}),\PP_q(\mathcal{M}))_{\theta,r}.\]
Thus, $x\in(\PP_p(\mathcal{M}),\PP_q(\mathcal{M}))_{\theta,r}$. This concludes the proof because $\PP(D\cap L_r(\mathcal{M}))$ is norm-dense in $\PP_r(\mathcal{M})$ by definition (note that $r<\infty$).
\end{proof}

\subsubsection{Example 2}\label{Example2} Let $\mathcal{M}$ be a Neumann algebra equipped with a n.s.f. trace $\tau$ and also with two filtrations $(\mathcal{M}_n^{(0)})_{n\pg1},(\mathcal{M}_n^{(1)})_{n\pg1}$ with associated conditional expectations denoted by $(\EE_n^{(0)})_{n\pg1},(\EE_n^{(1)})_{n\pg1}$ and difference projections denoted by $(\DD_n^{(0)})_{n\pg1},(\DD_n^{(1)})_{n\pg1}$ respectively.  We assume that the two filtrations \textit{commute}, in the sense that
\begin{equation}\label{Example 2 - Assumption 1}
\EE_m^{(0)}\EE_n^{(1)}=\EE_n^{(1)}\EE_m^{(0)},
\end{equation}
for all $m,n\pg1$. As a consequence, we have
\[\DD_m^{(0)}\DD_n^{(1)}=\DD_n^{(1)}\DD_m^{(0)},\]
for all $m,n\pg1$. We also make the following additionnal assumption 
\begin{equation}\label{Example 2 - Assumption 2}
\mathcal{M}_n^{(0)}\subset\mathcal{M}_{\phi(n)}^{(1)},\ \ \ \ \ \ \ \ \ \ \ \ \ \mathcal{M}_n^{(1)}\subset\mathcal{M}_{\psi(n)}^{(0)},
\end{equation}
for all $n\pg1$ and for some maps $\phi,\psi$. As a particular consequence, we have
\[D:=\cup_{n\pg1}(L_1+L_\infty)(\mathcal{M}_n^{(0)})=\cup_{n\pg1}(L_1+L_\infty)(\mathcal{M}_n^{(1)}).\]
Now let $I^{(0)},I^{(1)}$ be two sets of positive integers such that
\begin{equation}\label{Example 2 - Assumption 3}
\DD_i^{(0)}\DD_j^{(1)}=\DD_j^{(1)}\DD_i^{(0)}=0,\ \ \ \ \ \ \ \text{for every}\ i\in I^{(0)},j\in I^{(1)}.
\end{equation}
Finally, let $\QQ:D\to D$ be the Lebesgue compatible linear idempotent operator given by the following expression,
\[\QQ(x)=\sum_{i\notin I^{(1)},j\notin I^{(1)}}\DD_i^{(0)}\DD_j^{(1)}(x)=\sum_{i\notin I^{(1)},j\notin I^{(1)}}\DD_j^{(1)}\DD_i^{(0)}(x),\ \ \ \ \ \ \ x\in D.\]
Notice that when the two filtrations coincide and one of the two sets $I^{(0)},I^{(1)}$ is empty, we recover the situation of Example 1.

\begin{prop}\label{Example 2 - Proposition 1}
Let $1\pp p\pp\infty$. Then
\[\QQ_p(\mathcal{M})=\big\{x\in L_p(\mathcal{M})\ \mid\ \DD_i^{(0)}(x)=0,\ \DD_j^{(1)}=0,\ \forall i\in I^{(0)},\ \forall j\in I^{(1)}\big\}.\]
\end{prop}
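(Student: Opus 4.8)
\textbf{Proof proposal for Proposition \ref{Example 2 - Proposition 1}.}

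The plan is to mimic the proof of Proposition \ref{Example 1 - Proposition 1}, replacing the single martingale structure by the bifiltration. Write $N_p(\mathcal{M})$ for the space on the right-hand side, i.e. the set of $x\in L_p(\mathcal{M})$ with $\DD_i^{(0)}(x)=0$ for all $i\in I^{(0)}$ and $\DD_j^{(1)}(x)=0$ for all $j\in I^{(1)}$. The first, easy direction is the inclusion $\QQ_p(\mathcal{M})\subset N_p(\mathcal{M})$: each of the projections $\DD_i^{(0)}$ ($i\in I^{(0)}$) and $\DD_j^{(1)}$ ($j\in I^{(1)}$) is continuous on $L_p(\mathcal{M})$ (norm-continuous if $p<\infty$, weak*-continuous if $p=\infty$), and by assumption \eqref{Example 2 - Assumption 3} together with the commutation of the two filtrations one checks that $\DD_i^{(0)}\QQ=0$ and $\DD_j^{(1)}\QQ=0$ on $D$ for every $i\in I^{(0)}$, $j\in I^{(1)}$; hence $N_p(\mathcal{M})$ is a closed subspace of $L_p(\mathcal{M})$ containing $\QQ(D\cap L_p(\mathcal{M}))$, and the inclusion follows by taking closures. (One should be a little careful with the two indices appearing in the displayed formula for $\QQ$; I read the intended range of summation as $i\notin I^{(0)}$ and $j\notin I^{(1)}$, matching the $n=1$, Example 1 specialisation mentioned just before the statement.)

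For the reverse inclusion, let $x\in N_p(\mathcal{M})$. The key point is to express the ``joint'' conditional expectation of $x$ onto a finite stage of one of the filtrations as an element of $\QQ(D\cap L_p(\mathcal{M}))$, and then use convergence of conditional expectations from Proposition~1.3.?. Concretely, fix $n\pg1$ and consider $\EE_n^{(0)}(x)$. Using \eqref{Example 2 - Assumption 2}, $\EE_n^{(0)}(x)$ lies in $(L_1+L_\infty)(\mathcal{M}_{\phi(n)}^{(1)})\cap L_p(\mathcal{M})$, so it also has a finite martingale expansion with respect to the second filtration; combining the two finite expansions and using that the $\DD^{(0)}$'s and $\DD^{(1)}$'s commute, one writes $\EE_n^{(0)}(x)$ as a finite sum $\sum_{i\pp n}\sum_{j\pp \phi(n)}\DD_i^{(0)}\DD_j^{(1)}(x)$. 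Now invoke $x\in N_p(\mathcal{M})$: every term with $i\in I^{(0)}$ vanishes (since $\DD_i^{(0)}\DD_j^{(1)}(x)=\DD_j^{(1)}\DD_i^{(0)}(x)=\DD_j^{(1)}(0)=0$), and likewise every term with $j\in I^{(1)}$ vanishes; hence $\EE_n^{(0)}(x)=\sum_{i\pp n,\,i\notin I^{(0)}}\sum_{j\pp\phi(n),\,j\notin I^{(1)}}\DD_i^{(0)}\DD_j^{(1)}(x)=\QQ(\EE_n^{(0)}(x))$, so $\EE_n^{(0)}(x)\in\QQ(D\cap L_p(\mathcal{M}))$. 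Since $(\EE_n^{(0)}(x))_{n\pg1}$ converges to $x$ (in $L_p$-norm if $p<\infty$, weak* if $p=\infty$), the definition of $\QQ_p(\mathcal{M})$ gives $x\in\QQ_p(\mathcal{M})$, completing the proof.

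The only genuinely delicate step is the bookkeeping in the middle paragraph: one must make sure that the decomposition of $\EE_n^{(0)}(x)$ into a finite double sum of $\DD_i^{(0)}\DD_j^{(1)}(x)$ terms is legitimate and that applying $\QQ$ to it really kills exactly the terms indexed by $I^{(0)}$ in the first variable and $I^{(1)}$ in the second. This relies crucially on the commutation relation \eqref{Example 2 - Assumption 1} (so that $\EE_n^{(0)}$ and the $\DD_j^{(1)}$'s interact well), on \eqref{Example 2 - Assumption 2} (so that $\EE_n^{(0)}(x)$ is finite for the second filtration as well), and on the vanishing \eqref{Example 2 - Assumption 3} being used only to guarantee that $\QQ$ is a well-defined idempotent on $D$ — the computation of $N_p(\mathcal{M})$ itself does not need it. I expect no real obstacle beyond writing these relations out carefully; the structure is a direct two-variable analogue of Proposition \ref{Example 1 - Proposition 1}.
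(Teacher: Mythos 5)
Your proof is correct and follows exactly the route the paper intends: the paper's own proof is simply the remark that it ``follows the same lines as'' Proposition \ref{Example 1 - Proposition 1}, and your two-inclusion argument (closedness of the right-hand side under the continuous projections $\DD_i^{(0)},\DD_j^{(1)}$ for one direction; the finite double expansion $\EE_n^{(0)}(x)=\sum_{i\pp n}\sum_{j\pp\phi(n)}\DD_i^{(0)}\DD_j^{(1)}(x)$, the vanishing of the terms indexed by $I^{(0)}$ or $I^{(1)}$, and the convergence of $\EE_n^{(0)}(x)$ to $x$ for the other) is precisely that adaptation. You are also right that the summation in the displayed definition of $\QQ$ should read $i\notin I^{(0)}$, $j\notin I^{(1)}$ (and only the mutual orthogonality within each family of difference projections, together with \eqref{Example 2 - Assumption 1}, is needed for the easy inclusion, as you note at the end).
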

\begin{proof}
The proof follows the same lines as the proof of Proposition \ref{Example 1 - Proposition 1}. 
\end{proof}

Notice that $\QQ$ is the product of two commuting Gundy projections on $\mathcal{M}$ arising from Example 1. Namely, we have 
\[\QQ=\PP^{(0)}\PP^{(1)}=\PP^{(1)}\PP^{(0)}\] 
where for $j\in\{0,1\}$, the Gundy projection $\PP^{(j)}:D\to D$ is given by the following expression,
\[\PP^{(j)}(x)=\sum_{i\notin I^{(j)}}\DD_i^{(j)}(x),\ \ \ \ \ \ \ x\in D.\]
The fact that $\PP^{(0)}$ and $\PP^{(1)}$ commute follows directly from \eqref{Example 2 - Assumption 1}.
However, we will not be able to show that $\QQ$ is itself a Gundy projection.  This is why we have a restriction on the exponents in the following theorem.

\begin{theo}\label{Example 2 - Theorem 1}
Let $1<p,q\pp\infty$. Then the Banach couple $(\QQ_p(\mathcal{M}),\QQ_q(\mathcal{M}))$ is $K$-complemented in $(L_p(\mathcal{M}),L_q(\mathcal{M}))$ with a constant which depends only on $p,q$.
\end{theo}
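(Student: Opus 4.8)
The plan is to obtain the statement from Corollary~\ref{CoroWolffK}, applied over a closed interval $[p_0,q_0]\subset(1,\infty]$ to the Banach family $(\QQ_p(\mathcal{M}))_{p}$ inside $(L_p(\mathcal{M}))_p$, in close analogy with the way Theorem~\ref{Example 1 - Theorem 1} was deduced from Facts~\ref{Example 1 - Fact 1}--\ref{Example 1 - Fact 5}. The routine ingredients are: the intersection property $\QQ_p(\mathcal{M})=(\QQ_p(\mathcal{M})+\QQ_q(\mathcal{M}))\cap L_p(\mathcal{M})$, immediate from Proposition~\ref{Example 2 - Proposition 1}; for $1<p,q<\infty$, the fact that $(\QQ_p(\mathcal{M}),\QQ_q(\mathcal{M}))$ is complemented, hence $K$-complemented, in $(L_p(\mathcal{M}),L_q(\mathcal{M}))$, together with $(\QQ_p(\mathcal{M}),\QQ_q(\mathcal{M}))_{\theta,r}=\QQ_r(\mathcal{M})$ --- both because, by the boundedness of noncommutative martingale transforms on $L_r(\mathcal{M})$ for $1<r<\infty$ \cite{NarcisseMartingaleTransforms}, each $\PP^{(j)}$ and hence $\QQ=\PP^{(0)}\PP^{(1)}$ is bounded on $L_r(\mathcal{M})$, so the idempotent $\QQ$ extends to a bounded projection of $L_r(\mathcal{M})$ onto $\QQ_r(\mathcal{M})$ and one invokes the complemented-subcouple formula from Section~1; and an analogue of Fact~\ref{Example 1 - Fact 5} (using Theorem~\ref{FunctorLp}, Proposition~\ref{Kclosedness}, and an analogue of Lemma~\ref{Example 1 - Lemma 2}) deriving the interpolation identity from $K$-closedness. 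Granting the one nonroutine ingredient below --- that $(\QQ_2(\mathcal{M}),\QQ_\infty(\mathcal{M}))$ is $K$-closed in $(L_2(\mathcal{M}),L_\infty(\mathcal{M}))$ --- Corollary~\ref{CoroHolmstedt} upgrades it to $K$-closedness of $(\QQ_p(\mathcal{M}),\QQ_q(\mathcal{M}))$ in $(L_p(\mathcal{M}),L_q(\mathcal{M}))$ for all $2\pp p,q\pp\infty$, the Fact~\ref{Example 1 - Fact 5} analogue supplies the matching interpolation identities, and splitting $[p_0,q_0]$ at the point $2$ feeds everything into Corollary~\ref{CoroWolffK}.

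The whole difficulty is the endpoint $K$-closedness, precisely because $\QQ$ itself need not be a Gundy projection: after a single-filtration Gundy decomposition $y=\alpha+\beta+\gamma$ one cannot control $\|\PP^{(1)}\PP^{(0)}(\beta)\|_1$, since $\PP^{(1)}$ is unbounded on $L_1(\mathcal{M})$. The trick is to dualize and replace $\QQ$ by an operator that \emph{is} a Gundy projection. Set $\PP^{(j)}_{-}:=\mathrm{id}-\PP^{(j)}=\sum_{i\in I^{(j)}}\DD_i^{(j)}$ on $D$. Each $\PP^{(j)}_{-}$ is an operator of the type considered in Example~\ref{Example1} (for the filtration $(\mathcal{M}^{(j)}_n)_n$ and the index set $I^{(j)}$), hence a Gundy projection by Lemma~\ref{Example 1 - Lemma 1}. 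By \eqref{Example 2 - Assumption 1} the operators $\PP^{(0)}_{-},\PP^{(1)}_{-}$ commute; by \eqref{Example 2 - Assumption 2} they share the common domain $D=\cup_{n}(L_1+L_\infty)(\mathcal{M}^{(0)}_n)=\cup_{n}(L_1+L_\infty)(\mathcal{M}^{(1)}_n)$; and by \eqref{Example 2 - Assumption 3} they are orthogonal, $\PP^{(0)}_{-}\PP^{(1)}_{-}=\PP^{(1)}_{-}\PP^{(0)}_{-}=0$. Proposition~\ref{Gundy - Proposition 1} then shows that $\mathsf{R}:=\PP^{(0)}_{-}+\PP^{(1)}_{-}$ is a Gundy projection with a universal constant, and $\mathsf{R}$ satisfies hypotheses a)--c) of Corollary~\ref{Gundy - Corollary 1} relative to the filtration $(\mathcal{M}^{(0)}_n)_n$: weak continuity on each $(L_1+L_\infty)(\mathcal{M}^{(0)}_n)$ holds because only finitely many of the $\DD^{(0)}_i$ and $\DD^{(1)}_j$ act nontrivially there, and $\EE^{(0)}_n$ stabilizes $\mathsf{R}(D)$ because it commutes with every $\DD^{(0)}_i$ and, by \eqref{Example 2 - Assumption 1}, with every $\DD^{(1)}_j$. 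Corollary~\ref{Gundy - Corollary 1} therefore gives that $(\mathsf{R}_1(\mathcal{M}),\mathsf{R}_2(\mathcal{M}))$ is $K$-closed in $(L_1(\mathcal{M}),L_2(\mathcal{M}))$ with a universal constant.

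It remains to transfer this to $\QQ$ by duality. From $\QQ_r(\mathcal{M})=\PP^{(0)}_r(\mathcal{M})\cap\PP^{(1)}_r(\mathcal{M})$ (combine Proposition~\ref{Example 2 - Proposition 1} with Proposition~\ref{Example 1 - Proposition 1}), the description of the annihilators $\PP^{(j)}_p(\mathcal{M})^{\bot}$ obtained in the proof of Fact~\ref{Example 1 - Fact 3} (which identifies them with the $L$-spaces attached to $\PP^{(j)}_{-}$), and the fact that the (pre)annihilator of an intersection of subspaces is the closed sum of the (pre)annihilators --- together with the orthogonality $\PP^{(0)}_{-}\PP^{(1)}_{-}=0$, which makes these closed sums coincide with $\mathsf{R}_2(\mathcal{M})$ and $\mathsf{R}_1(\mathcal{M})$ respectively --- one gets $\mathsf{R}_2(\mathcal{M})^{\bot}=\QQ_2(\mathcal{M})$ in $L_2(\mathcal{M})$ and $\mathsf{R}_1(\mathcal{M})^{\bot}=\QQ_\infty(\mathcal{M})$ in $L_1(\mathcal{M})^{*}=L_\infty(\mathcal{M})$. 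The couple $(L_1(\mathcal{M}),L_2(\mathcal{M}))$ is regular and $(\mathsf{R}_1(\mathcal{M}),\mathsf{R}_2(\mathcal{M}))$ is a regular subcouple of it (both checked by approximating elements of $D$ by bounded ones in $L_1(\mathcal{M})$ lying in the same $\mathcal{M}^{(0)}_n$). Hence Pisier's duality lemma (Theorem~\ref{PisierLemma}), applied to $(L_1(\mathcal{M}),L_2(\mathcal{M}))$ whose dual couple is $(L_\infty(\mathcal{M}),L_2(\mathcal{M}))$ by trace duality, turns the $K$-closedness of $(\mathsf{R}_1(\mathcal{M}),\mathsf{R}_2(\mathcal{M}))$ in $(L_1(\mathcal{M}),L_2(\mathcal{M}))$ into that of $(\mathsf{R}_1(\mathcal{M})^{\bot},\mathsf{R}_2(\mathcal{M})^{\bot})=(\QQ_\infty(\mathcal{M}),\QQ_2(\mathcal{M}))$ in $(L_\infty(\mathcal{M}),L_2(\mathcal{M}))$; since $K$-closedness of a couple is insensitive to the order of its two members, this is exactly the $K$-closedness of $(\QQ_2(\mathcal{M}),\QQ_\infty(\mathcal{M}))$ in $(L_2(\mathcal{M}),L_\infty(\mathcal{M}))$, which was the missing ingredient.

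I expect the duality step just described --- replacing $\QQ$ by the Gundy projection $\mathsf{R}=(\mathrm{id}-\PP^{(0)})+(\mathrm{id}-\PP^{(1)})$ on the predual side --- to be the genuine obstacle of the proof, and it also explains the hypothesis $p,q>1$: there is no substitute for a ``Gundy decomposition of $\QQ$'' that would reach the $L_1$-endpoint, so the $L_1$ vertex is lost when one passes to $\mathsf{R}$, and only exponents strictly larger than $1$ survive after dualizing back.
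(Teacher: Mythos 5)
Your proposal is correct and follows essentially the same route as the paper: the paper's proof of the key Fact~\ref{Example 2 - Fact 2} likewise observes that $\II-\QQ=(\II-\PP^{(0)})+(\II-\PP^{(1)})$ is a sum of two orthogonal Gundy projections (your $\mathsf{R}$), applies Proposition~\ref{Gundy - Proposition 1} and Corollary~\ref{Gundy - Corollary 1} to get $K$-closedness of $((\II-\QQ)_1(\mathcal{M}),(\II-\QQ)_2(\mathcal{M}))$ in $(L_1(\mathcal{M}),L_2(\mathcal{M}))$, and then dualizes via Pisier's lemma using $(\II-\QQ)_p(\mathcal{M})^{\bot}=\QQ_q(\mathcal{M})$, before assembling the full range with the complementation for $1<p,q<\infty$ and the Wolff-type machinery of Corollary~\ref{CoroWolffK}. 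Your diagnosis of why $\QQ$ itself cannot be handled as a Gundy projection, and why the $L_1$ endpoint is consequently lost, also matches the paper's stated reason for the restriction $p,q>1$.
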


As in Example 1, Theorem \ref{Example 2 - Theorem 1} follows from Facts 6-8 below.

\begin{fact}\label{Example 2 - Fact 1}
For every $1\pp p,q\pp\infty$, we have $(\QQ_p(\mathcal{M})+\QQ_q(\mathcal{M}))\cap L_p(\mathcal{M})=\QQ_p(\mathcal{M})$.
\end{fact}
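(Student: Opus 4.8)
The plan is to mimic the proof of Fact~\ref{Example 1 - Fact 1}: the statement should follow directly from the explicit description of $\QQ_p(\mathcal{M})$ provided by Proposition~\ref{Example 2 - Proposition 1}. First I would note the trivial inclusion $\QQ_p(\mathcal{M})\subset(\QQ_p(\mathcal{M})+\QQ_q(\mathcal{M}))\cap L_p(\mathcal{M})$, which holds because $\QQ_p(\mathcal{M})\subset L_p(\mathcal{M})$ and $\QQ_p(\mathcal{M})\subset\QQ_p(\mathcal{M})+\QQ_q(\mathcal{M})$ by definition of the sum space.

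For the reverse inclusion, let $x\in(\QQ_p(\mathcal{M})+\QQ_q(\mathcal{M}))\cap L_p(\mathcal{M})$. Writing $x=u+v$ with $u\in\QQ_p(\mathcal{M})$ and $v\in\QQ_q(\mathcal{M})$, both summands satisfy the difference-projection conditions: by Proposition~\ref{Example 2 - Proposition 1} we have $\DD_i^{(0)}(u)=\DD_i^{(0)}(v)=0$ for all $i\in I^{(0)}$ and $\DD_j^{(1)}(u)=\DD_j^{(1)}(v)=0$ for all $j\in I^{(1)}$, since these identities hold in $L_p(\mathcal{M})$ and $L_q(\mathcal{M})$ respectively, hence in the ambient space $(L_1+L_\infty)(\mathcal{M})$. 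By linearity of $\DD_i^{(0)}$ and $\DD_j^{(1)}$ on $(L_1+L_\infty)(\mathcal{M})$, it follows that $\DD_i^{(0)}(x)=0$ for all $i\in I^{(0)}$ and $\DD_j^{(1)}(x)=0$ for all $j\in I^{(1)}$. Since in addition $x\in L_p(\mathcal{M})$, another application of Proposition~\ref{Example 2 - Proposition 1} gives $x\in\QQ_p(\mathcal{M})$, completing the argument.

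I do not expect any genuine obstacle here; the only minor point requiring care is that the difference projections $\DD_i^{(0)},\DD_j^{(1)}$ must be interpreted as the weakly continuous extensions to $(L_1+L_\infty)(\mathcal{M})$ (as recalled in the preliminaries for $\EE_n$, and hence for $\DD_n=\EE_n-\EE_{n-1}$), so that applying them to the decomposition $x=u+v$ — a priori only valid in the sum space — is legitimate. Once this is observed, the proof is a verbatim adaptation of the proof of Fact~\ref{Example 1 - Fact 1} and can simply be stated as following trivially from Proposition~\ref{Example 2 - Proposition 1}.
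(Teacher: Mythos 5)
Your argument is correct and is exactly what the paper intends: its proof of this fact consists of the single remark that it ``follows trivially from the expression given in Proposition~\ref{Example 2 - Proposition 1}'', and your write-up simply makes that trivial verification explicit (including the correct observation that the difference projections act weakly continuously on $(L_1+L_\infty)(\mathcal{M})$, so they may be applied to the decomposition $x=u+v$). No discrepancy to report.
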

\begin{proof}
Again, this follows trivially from the expression given in Proposition \ref{Example 2 - Proposition 1}.
\end{proof}

\begin{fact}\label{Example 2 - Fact 2}
$(\QQ_2(\mathcal{M}),\QQ_\infty(\mathcal{M}))$ is $K$-closed in $(L_2(\mathcal{M}),L_\infty(\mathcal{M}))$ with a universal constant.
\end{fact}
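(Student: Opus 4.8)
The strategy is the exact dual of the argument used for Example 1, Fact 3: I will prove the companion statement that $(\QQ_1(\mathcal{M}),\QQ_2(\mathcal{M}))$ is $K$-closed in $(L_1(\mathcal{M}),L_2(\mathcal{M}))$ with a universal constant, and then invoke Pisier's duality lemma (Theorem \ref{PisierLemma}) together with the orthogonality computation for $\QQ_p(\mathcal{M})^{\bot}$ analogous to the one in Fact \ref{Example 1 - Fact 3}. The subtlety compared to Example 1 is that $\QQ$ is \emph{not} itself known to be a Gundy projection, so I cannot apply Corollary \ref{Gundy - Corollary 1} directly. Instead I exploit the factorisation $\QQ=\PP^{(0)}\PP^{(1)}=\PP^{(1)}\PP^{(0)}$ into two commuting Gundy projections and combine the $K$-closedness results already established for each factor.

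First I would record that, by Lemma \ref{Example 1 - Lemma 1} applied to each of the two filtrations, both $\PP^{(0)}$ and $\PP^{(1)}$ are Gundy projections with universal constants, and that Fact \ref{Example 1 - Fact 2} (via Corollary \ref{Gundy - Corollary 1}, whose hypotheses a), b), c) hold here because each $\PP^{(j)}$ is a martingale-transform-type operator commuting with its own conditional expectations, and because \eqref{Example 2 - Assumption 2} guarantees $D$ is the common finite-martingale domain) gives that $(\PP^{(j)}_1(\mathcal{M}),\PP^{(j)}_2(\mathcal{M}))$ is $K$-closed in $(L_1(\mathcal{M}),L_2(\mathcal{M}))$ for $j=0,1$, with universal constant. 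Next, since the two filtrations commute by \eqref{Example 2 - Assumption 1}, the operator $\PP^{(1)}$ restricts to a map $\PP^{(0)}_p(\mathcal{M})\to\PP^{(0)}_p(\mathcal{M})$ for each $p$, and its closure-range there is exactly $\QQ_p(\mathcal{M})$; moreover $\PP^{(1)}$ is still a Gundy projection when viewed as an operator on the smaller domain $D\cap\PP^{(0)}_p(\mathcal{M})$, because Gundy's decomposition applied to $y\in D$ automatically produces pieces annihilated by the $\DD^{(0)}_i$, $i\in I^{(0)}$, after applying $\PP^{(0)}$ (using \eqref{Example 2 - Assumption 3} and the commutation of difference projections). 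Hence the \emph{relative} $K$-closedness of $(\QQ_1(\mathcal{M}),\QQ_2(\mathcal{M}))$ inside $(\PP^{(0)}_1(\mathcal{M}),\PP^{(0)}_2(\mathcal{M}))$ follows from the proof scheme of Theorem \ref{Gundy - Theorem 1} applied within the subcouple $(\PP^{(0)}_1(\mathcal{M}),\PP^{(0)}_2(\mathcal{M}))$.

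The conclusion is then a transitivity argument for $K$-closedness: $(\QQ_1(\mathcal{M}),\QQ_2(\mathcal{M}))$ is $K$-closed in $(\PP^{(0)}_1(\mathcal{M}),\PP^{(0)}_2(\mathcal{M}))$, which in turn is $K$-closed in $(L_1(\mathcal{M}),L_2(\mathcal{M}))$; composing the two $K$-functional comparison inequalities gives $K$-closedness of $(\QQ_1(\mathcal{M}),\QQ_2(\mathcal{M}))$ in $(L_1(\mathcal{M}),L_2(\mathcal{M}))$ with the product of the two universal constants. Finally I apply Pisier's duality lemma: one checks as in Fact \ref{Example 1 - Fact 3} that $(\QQ_1(\mathcal{M}),\QQ_2(\mathcal{M}))$ is regular and that, for H\"older conjugate exponents $p,q$,
\[
\QQ_p(\mathcal{M})^{\bot}=\big\{x\in L_q(\mathcal{M})\ \mid\ \DD^{(0)}_i(x)=0\ \text{or}\ \DD^{(1)}_j(x)=0\ \text{appropriately, }\forall i\notin I^{(0)},\,\forall j\notin I^{(1)}\big\},
\]
so that the orthogonal couple is again of the form arising in Example 2 but with the roles of the selected and discarded index sets swapped; $K$-closedness of $(\QQ_2(\mathcal{M})^{\bot},\QQ_1(\mathcal{M})^{\bot})$ in $(L_2(\mathcal{M}),L_\infty(\mathcal{M}))$, i.e. the statement of Fact \ref{Example 2 - Fact 2}, then follows by relabeling.

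\emph{Main obstacle.} The delicate point is justifying that $\PP^{(1)}$ remains a Gundy projection \emph{relative to the subcouple} $(\PP^{(0)}_1(\mathcal{M}),\PP^{(0)}_2(\mathcal{M}))$, and more precisely that the Gundy decomposition pieces $\alpha,\beta,\gamma$ produced for $\QQ(y)=\PP^{(1)}\PP^{(0)}(y)$ can be taken inside $\QQ(D)$ with the required estimates — this is where the commutation hypotheses \eqref{Example 2 - Assumption 1}–\eqref{Example 2 - Assumption 3} and the fact that conditional expectations for one filtration preserve the adapted/difference structure of the other must all be used simultaneously. Everything else (regularity of the couple, the duality computation, the transitivity of $K$-functional inequalities) is routine given the machinery already assembled in the preliminaries and in Example 1.
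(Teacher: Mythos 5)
Your proposal takes a genuinely different route from the paper, and it has two gaps, either of which is fatal. The first is the step you yourself flag as the ``main obstacle'': your plan rests on establishing that $(\QQ_1(\mathcal{M}),\QQ_2(\mathcal{M}))$ is $K$-closed in $(L_1(\mathcal{M}),L_2(\mathcal{M}))$, which is precisely the statement the paper says it \emph{cannot} prove at the level of generality of Example 2 --- it is the reason Theorem \ref{Example 2 - Theorem 1} excludes $p=1$, and it is only recovered in Example 3 under the extra tensor-product structure. Concretely, if you run Gundy's decomposition of $y$ with respect to the filtration $(\mathcal{M}^{(1)}_n)_{n\pg1}$, the bad-part identity for $\gamma':=\PP^{(1)}(\gamma)$ is obtained termwise from $(1-p)\DD^{(1)}_n(\gamma)(1-p)=0$; applying the martingale transform $\PP^{(0)}$ of the \emph{other} filtration destroys this, because $\DD^{(0)}_i$ does not commute with left and right multiplication by $1-p$, so nothing forces $(1-p)\QQ(\gamma)(1-p)=0$. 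Similarly, the $L_1$-bound on $\QQ(\beta)$ would require control of $\sum_{i,j}\|\DD^{(0)}_i\DD^{(1)}_j(\beta)\|_1$, which does not follow from $\sum_j\|\DD^{(1)}_j(\beta)\|_1\pp C\|y\|_1$ since $\PP^{(0)}$ is unbounded on $L_1$. The hypotheses \eqref{Example 2 - Assumption 1}--\eqref{Example 2 - Assumption 3} do not repair either failure; Example 3 needs the representation $\QQ=\sum_n\TT_n\DD_{n+1}$ with $\TT_n$ a $\mathcal{T}_n$-bimodule map and $q_n\in\mathcal{T}_n$ precisely so that $q_n$ can be pushed inside $\TT_n$.

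The second gap is that, even granting the first step, the dualization lands on the wrong couple. Since $\QQ$ is self-adjoint, $\QQ_p(\mathcal{M})^{\bot}$ is the closed range of $\II-\QQ$ in $L_q(\mathcal{M})$, and $\II-\QQ=(\II-\PP^{(0)})+(\II-\PP^{(1)})$ is a \emph{sum} of two orthogonal martingale transforms attached to two different filtrations; its range is not of the intersection form $\{x:\ \DD^{(0)}_i(x)=0,\ \DD^{(1)}_j(x)=0\}$ defining the $\QQ$-spaces of Example 2 for any choice of index sets, so no relabeling returns you to the statement of the Fact. Pisier's lemma applied to $(\QQ_1,\QQ_2)$ would yield $K$-closedness of $((\II-\QQ)_2,(\II-\QQ)_\infty)$ in $(L_2,L_\infty)$, not of $(\QQ_2,\QQ_\infty)$. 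The paper runs the duality in the opposite direction: it sets $\PP:=\II-\QQ$, uses the orthogonality of $\II-\PP^{(0)}$ and $\II-\PP^{(1)}$ coming from \eqref{Example 2 - Assumption 3} to write $\PP=(\II-\PP^{(0)})+(\II-\PP^{(1)})$, concludes from Proposition \ref{Gundy - Proposition 1} that $\PP$ \emph{is} a Gundy projection, obtains $(\PP_1(\mathcal{M}),\PP_2(\mathcal{M}))$ $K$-closed in $(L_1(\mathcal{M}),L_2(\mathcal{M}))$ from Corollary \ref{Gundy - Corollary 1}, and then dualizes using $\PP_p(\mathcal{M})^{\bot}=\QQ_q(\mathcal{M})$.
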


Now, we turn to the proof of Fact \ref{Example 2 - Fact 2}. Let $\I$ denote the identity operator on $D$ and let
\[\PP:=\II-\QQ\]
denote the complement of $\QQ$. Then, for $j\in\{0,1\}$ we have
\[(\II-\PP^{(j)})(x)=\sum_{i\in I^{(j)}}\DD_i^{(j)}(x),\ \ \ \ \ \ \ x\in D.\]
We know that $\II-\PP^{(j)}$ is a Gundy projection as a particular case of the previous example.
In addition, from \eqref{Example 2 - Assumption 3}, we deduce that the projections $\II-\PP^{(0)}$ and $\II-\PP^{(1)}$ are orthogonal. Using \ref{Gundy - Proposition 1}, we deduce that $(\II-\PP^{(0)})+(\II-\PP^{(1)})$ is a Gundy projection. But little algebra yields the identity
\[\PP=(\II-\PP^{(0)})+(\II-\PP^{(1)}).\]
Hence, $\PP$ is a Gundy projection. Besides,
from \eqref{Example 2 - Assumption 1}, it follows that $\PP$ commutes with $\EE_n^{(0)}$, for every $n\pg1$. Moreover, from \eqref{Example 2 - Assumption 2}, it follows that
$\PP$ is weakly continuous on $(L_1+L_\infty)(\mathcal{M}_n^{(0)})$, for every $n\pg1$. Knowing that $\PP$ is a Gundy projection, using Corollary \ref{Gundy - Corollary 1}, we deduce the following lemma.

\begin{lemm}\label{Example 2 - Lemma 3}
$(\PP_1(\mathcal{M}),\PP_2(\mathcal{M}))$ is $K$-closed in the couple $(L_1(\mathcal{M}),L_2(\mathcal{M}))$ with a universal constant.
\end{lemm}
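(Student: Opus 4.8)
The plan is to deduce Lemma \ref{Example 2 - Lemma 3} as a direct application of Corollary \ref{Gundy - Corollary 1} to the Gundy projection $\PP = \II - \QQ$, whose domain is $D = \cup_{n\pg1}(L_1+L_\infty)(\mathcal{M}_n^{(0)})$. Everything needed has essentially been assembled in the paragraph preceding the statement: $\PP$ has been identified as $(\II-\PP^{(0)})+(\II-\PP^{(1)})$, the two summands are Gundy projections by Example 1 (Lemma \ref{Example 1 - Lemma 1}), and they are orthogonal thanks to the orthogonality relation \eqref{Example 2 - Assumption 3}, so Proposition \ref{Gundy - Proposition 1} gives that $\PP$ is a Gundy projection with a universal constant. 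Hence it remains only to verify hypotheses a), b), c) of Corollary \ref{Gundy - Corollary 1} relative to the filtration $(\mathcal{M}_n^{(0)})_{n\pg1}$.

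First I would record that hypothesis a) is exactly the definition of $D$ as taken from \eqref{Example 2 - Assumption 2}, namely $D = \cup_{n\pg1}(L_1+L_\infty)(\mathcal{M}_n^{(0)})$. For hypothesis c), I would observe that $\EE_n^{(0)}$ commutes with each difference projection $\DD_i^{(0)}$ trivially, and commutes with each $\DD_j^{(1)}$ by the commutation relation \eqref{Example 2 - Assumption 1}; therefore $\EE_n^{(0)}$ commutes with $\PP^{(0)}$, with $\PP^{(1)}$, and hence with $\QQ = \PP^{(0)}\PP^{(1)}$ and with $\PP = \II - \QQ$. In particular $\EE_n^{(0)}$ stabilizes $\PP(D)$. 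For hypothesis b), weak continuity of $\PP$ on each $(L_1+L_\infty)(\mathcal{M}_n^{(0)})$ follows because, by \eqref{Example 2 - Assumption 2}, on $(L_1+L_\infty)(\mathcal{M}_n^{(0)})$ the operator $\PP$ acts as a \emph{finite} linear combination of difference projections $\DD_i^{(0)}\DD_j^{(1)}$ (only finitely many indices $i,j$ contribute, since $\mathcal{M}_n^{(0)}\subset\mathcal{M}_{\phi(n)}^{(1)}$ and the martingale stabilizes), and each such difference projection, being the difference of conditional expectations, is weakly continuous on $(L_1+L_\infty)(\mathcal{M})$ and maps $(L_1+L_\infty)(\mathcal{M}_n^{(0)})$ into itself. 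With a), b), c) in hand, Corollary \ref{Gundy - Corollary 1} yields that $(\PP_1(\mathcal{M}),\PP_2(\mathcal{M}))$ is $K$-closed in $(L_1(\mathcal{M}),L_2(\mathcal{M}))$ with constant depending only on the (universal) Gundy constant of $\PP$, which is what is claimed.

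The only mild subtlety I expect — and the one I would treat carefully — is checking that the bookkeeping of the two filtrations is consistent: the reduction relies on using $(\mathcal{M}_n^{(0)})_{n\pg1}$ throughout Corollary \ref{Gundy - Corollary 1} while $\PP$ mixes difference projections of both filtrations, so one must make sure that $\PP$ genuinely preserves the subspaces $(L_1+L_\infty)(\mathcal{M}_n^{(0)})$ and is weakly continuous there. This is exactly where \eqref{Example 2 - Assumption 2} is essential: the containment $\mathcal{M}_n^{(0)}\subset\mathcal{M}_{\phi(n)}^{(1)}$ forces $\DD_j^{(1)}$ to vanish on $(L_1+L_\infty)(\mathcal{M}_n^{(0)})$ for all $j > \phi(n)$, so the infinite sum defining $\PP$ collapses to a finite one on each piece of the exhaustion, and finiteness makes both the stabilization and the weak continuity automatic. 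Beyond that, the argument is a formal verification, and no new estimate is required since the Gundy constant has already been controlled.
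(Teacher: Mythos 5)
Your proposal is correct and follows essentially the same route as the paper: identify $\PP=\II-\QQ=(\II-\PP^{(0)})+(\II-\PP^{(1)})$ as a sum of two orthogonal Gundy projections via Proposition \ref{Gundy - Proposition 1}, check commutation with $\EE_n^{(0)}$ from \eqref{Example 2 - Assumption 1} and weak continuity on $(L_1+L_\infty)(\mathcal{M}_n^{(0)})$ from \eqref{Example 2 - Assumption 2}, and conclude by Corollary \ref{Gundy - Corollary 1}. You simply spell out the verification of hypotheses a), b), c) in slightly more detail than the paper does.
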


\begin{proof}[Proof of Fact \ref{Example 2 - Fact 2}]
Again, the point is to use Pisier's duality lemma. First, we easily check that $(\PP_1(\mathcal{M}),\PP_2(\mathcal{M}))$ is a regular couple. In addition, an easy computation shows that if $1\pp p,q\pp\infty$ are H\"older conjugate exponents, then
\[\PP_p(\mathcal{M})^{\bot}=\QQ_q(\mathcal{M})\]
where $\PP_p(\mathcal{M})^{\bot}$ denotes the orthogonal of $\PP_p(\mathcal{M})$ with respect to the canonical duality between $L_p(\mathcal{M})$ and $L_q(\mathcal{M})$. As $(\PP_1(\mathcal{M}),\PP_2(\mathcal{M}))$ is $K$-closed in $(L_1(\mathcal{M}),L_2(\mathcal{M}))$, by Pisier's result we deduce that $(\QQ_2(\mathcal{M}),\QQ_\infty(\mathcal{M}))$ is $K$-closed in $(L_2(\mathcal{M}),L_\infty(\mathcal{M}))$. We omit the details.
\end{proof}

\begin{fact}\label{Example 2 - Fact 3}
Let $1<p,q<\infty$. Then $(\QQ_p(\mathcal{M}),\QQ_q(\mathcal{M}))$ is complemented and thus $K$-closed in $(L_p(\mathcal{M}),L_q(\mathcal{M}))$ with a constant that depends only on $p,q$.
\end{fact}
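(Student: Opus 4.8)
The plan is to reduce the statement to the boundedness of $\QQ$ on $L_r(\mathcal{M})$ for $1<r<\infty$, exactly as in the proof of Fact \ref{Example 1 - Fact 3}. Recall that by the preliminary algebra $\QQ=\PP^{(0)}\PP^{(1)}=\PP^{(1)}\PP^{(0)}$, where each $\PP^{(j)}(x)=\sum_{i\notin I^{(j)}}\DD_i^{(j)}(x)$ is a martingale transform operator with respect to the filtration $(\mathcal{M}_n^{(j)})_{n\pg1}$, with coefficients in $\{0,1\}$. By Randrianantoanina's boundedness theorem for noncommutative martingale transforms \cite{NarcisseMartingaleTransforms}, each $\PP^{(j)}$ extends to a bounded operator on $L_r(\mathcal{M})$ with a constant depending only on $r$, for every $1<r<\infty$. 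Hence their composition $\QQ$ is bounded on $L_r(\mathcal{M})$ with a constant depending only on $r$, for every $1<r<\infty$.

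Now fix $1<p,q<\infty$. By what precedes, $\QQ$ defines a bounded projection $L_p(\mathcal{M})\to L_p(\mathcal{M})$ and $L_q(\mathcal{M})\to L_q(\mathcal{M})$, hence a bounded operator $\QQ:(L_p(\mathcal{M}),L_q(\mathcal{M}))\to(L_p(\mathcal{M}),L_q(\mathcal{M}))$ in the sense of the preliminary section. By Proposition \ref{Example 2 - Proposition 1}, its range in $L_r(\mathcal{M})$ is exactly $\QQ_r(\mathcal{M})$ for $r\in\{p,q\}$ (the space described in that proposition is norm-closed and contains $\QQ(D\cap L_r(\mathcal{M}))$; conversely if $\DD_i^{(0)}(x)=0$ and $\DD_j^{(1)}(x)=0$ for all $i\in I^{(0)},j\in I^{(1)}$ then $\QQ(x)=x$ using that the partial sums $\EE_n^{(0)}(x)$ lie in $\QQ(D\cap L_r(\mathcal{M}))$ and converge to $x$). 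Thus $\QQ$ restricts to the identity on $\QQ_p(\mathcal{M})+\QQ_q(\mathcal{M})$, which exhibits $(\QQ_p(\mathcal{M}),\QQ_q(\mathcal{M}))$ as a complemented subcouple of $(L_p(\mathcal{M}),L_q(\mathcal{M}))$ with constant depending only on $p,q$. By the Remark following the definition of $K$-complementation, a complemented subcouple is $K$-complemented, hence in particular $K$-closed, with the same quantitative control.

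There is essentially no obstacle here beyond correctly invoking \cite{NarcisseMartingaleTransforms}; the only point requiring a little care is that $\QQ$ is the composition of two martingale transforms attached to \emph{different} filtrations, so one must apply the martingale transform boundedness theorem twice, once for each filtration, and use that the class of operators bounded on $L_r(\mathcal{M})$ is closed under composition. The commutation hypothesis \eqref{Example 2 - Assumption 1} guarantees that $\PP^{(0)}$ and $\PP^{(1)}$ commute, so the order of composition is irrelevant, but it is not otherwise needed for this fact.
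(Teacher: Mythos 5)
Your proposal is correct and follows essentially the same route as the paper: the paper's proof likewise invokes the boundedness of noncommutative martingale transforms from \cite{NarcisseMartingaleTransforms} to get that $\PP^{(0)}$ and $\PP^{(1)}$, and hence their composition $\QQ$, are bounded on $L_r(\mathcal{M})$ for $1<r<\infty$, and concludes complementation from there. Your additional care in identifying the range of the extended projection with $\QQ_r(\mathcal{M})$ via Proposition \ref{Example 2 - Proposition 1} is a detail the paper leaves implicit, but the argument is the same.
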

\begin{proof}
From the boundedness properties of martingale transforms, we know that $\PP^{(0)}$ and $\PP^{(1)}$, and thus, also $\QQ=\PP^{(0)}\PP^{(1)}=\PP^{(1)}\PP^{(0)}$, are bounded for the $L_r$-norm with a constant that depends only on $r$, whenever $1<r<\infty$. This proves the point.
\end{proof}

\begin{fact}\label{Example 2 - Fact 4}
Let $1<p<q\pp\infty$, such that $(\QQ_p(\mathcal{M}),\QQ_q(\mathcal{M}))$ is $K$-closed in the couple $(L_p(\mathcal{M}),L_q(\mathcal{M}))$. Then for every $0<\theta<1$, we have
\[(\QQ_p(\mathcal{M}),\QQ_q(\mathcal{M}))_{\theta,r}=\QQ_r(\mathcal{M})\]
with equivalent norms, where $\frac{1}{r}=\frac{1-\theta}{p}+\frac{\theta}{q}$.
\end{fact}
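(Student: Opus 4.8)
The plan is to follow, almost verbatim, the scheme used to prove Fact \ref{Example 1 - Fact 5}. The only new ingredient needed is the analogue for $\QQ$ of Lemma \ref{Example 1 - Lemma 2}, after which the argument is formally identical. So the first step I would carry out is to prove the following auxiliary fact: \emph{for every $1\pp p,q\pp\infty$ and every $x\in\QQ(D)\cap\big(L_p(\mathcal{M})+L_q(\mathcal{M})\big)$, one has $x\in\QQ_p(\mathcal{M})+\QQ_q(\mathcal{M})$.}

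To establish it, note that since $x\in D$ and $x=\QQ(x)$, assumption \eqref{Example 2 - Assumption 2} allows us to expand $x$ as a \emph{finite} sum along both filtrations at once: writing $x=\sum_{k\pp n}\DD_k^{(0)}(x)$ because $x\in(L_1+L_\infty)(\mathcal{M}_n^{(0)})$, and observing that each $\DD_k^{(0)}(x)\in(L_1+L_\infty)(\mathcal{M}_{\phi(k)}^{(1)})$, we get $x=\sum_{k\pp n,\,l\pp m}\DD_k^{(0)}\DD_l^{(1)}(x)$ for a suitable $m$. Applying $\QQ$ and using that the $\DD_\bullet^{(j)}$ are mutually orthogonal projections which commute across the two filtrations, $x=\QQ(x)=\sum_{k\pp n,\,l\pp m,\,k\notin I^{(0)},\,l\notin I^{(1)}}\DD_k^{(0)}\DD_l^{(1)}(x)$. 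Given a splitting $x=y+z$ with $y\in L_p(\mathcal{M})$, $z\in L_q(\mathcal{M})$, the same finite combination of difference operators (each bounded on every $L_p(\mathcal{M})$ since conditional expectations are contractive there) applied to $y$ and to $z$ produces $y'\in L_p(\mathcal{M})$, $z'\in L_q(\mathcal{M})$ with $x=y'+z'$; a one-line computation with the orthogonality relations shows $\DD_i^{(0)}(y')=0$ for $i\in I^{(0)}$ and $\DD_j^{(1)}(y')=0$ for $j\in I^{(1)}$, and likewise for $z'$, so by Proposition \ref{Example 2 - Proposition 1} we conclude $y'\in\QQ_p(\mathcal{M})$ and $z'\in\QQ_q(\mathcal{M})$.

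Now fix $0<\theta<1$ and put $1/r=(1-\theta)/p+\theta/q$; since $p<\infty$ and $\theta<1$ we have $r<\infty$. By Theorem \ref{FunctorLp}, $(L_p(\mathcal{M}),L_q(\mathcal{M}))_{\theta,r}=L_r(\mathcal{M})$ with equivalent norms. An element of $(\QQ_p(\mathcal{M}),\QQ_q(\mathcal{M}))_{\theta,r}$ lies both in $\QQ_p(\mathcal{M})+\QQ_q(\mathcal{M})$ and, via the monotone inclusion of interpolation functors, in $(L_p(\mathcal{M}),L_q(\mathcal{M}))_{\theta,r}=L_r(\mathcal{M})$; since by Proposition \ref{Example 2 - Proposition 1} the projections $\DD_i^{(0)}$ $(i\in I^{(0)})$ and $\DD_j^{(1)}$ $(j\in I^{(1)})$ annihilate every element of $\QQ_p(\mathcal{M})+\QQ_q(\mathcal{M})$, that element lies in $\QQ_r(\mathcal{M})$. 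This yields a continuous inclusion $(\QQ_p(\mathcal{M}),\QQ_q(\mathcal{M}))_{\theta,r}\subset\QQ_r(\mathcal{M})$. Applying Proposition \ref{Kclosedness} to the $K$-method parameter $\Phi_{\theta,r}$ and the hypothesis that $(\QQ_p(\mathcal{M}),\QQ_q(\mathcal{M}))$ is $K$-closed in $(L_p(\mathcal{M}),L_q(\mathcal{M}))$, we obtain $(\QQ_p(\mathcal{M}),\QQ_q(\mathcal{M}))_{\theta,r}=(\QQ_p(\mathcal{M})+\QQ_q(\mathcal{M}))\cap L_r(\mathcal{M})$ with equivalent norms; in particular the inclusion above has closed range.

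It remains to show this inclusion has dense range. For $x\in\QQ(D\cap L_r(\mathcal{M}))$, we have $L_r(\mathcal{M})\subset L_p(\mathcal{M})+L_q(\mathcal{M})$ (as $p\pp r\pp q$), hence $x\in\QQ(D)\cap\big(L_p(\mathcal{M})+L_q(\mathcal{M})\big)$, so by the auxiliary fact $x\in\QQ_p(\mathcal{M})+\QQ_q(\mathcal{M})$, and therefore $x\in(\QQ_p(\mathcal{M})+\QQ_q(\mathcal{M}))\cap L_r(\mathcal{M})=(\QQ_p(\mathcal{M}),\QQ_q(\mathcal{M}))_{\theta,r}$. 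Since $\QQ(D\cap L_r(\mathcal{M}))$ is norm-dense in $\QQ_r(\mathcal{M})$ by definition (here we use $r<\infty$), the inclusion is dense, and together with the closed range from the previous step this gives $(\QQ_p(\mathcal{M}),\QQ_q(\mathcal{M}))_{\theta,r}=\QQ_r(\mathcal{M})$ with equivalent norms. The only point requiring genuine care is the auxiliary fact: there one must use \eqref{Example 2 - Assumption 2} to represent elements of $D$ as finite sums simultaneously along both filtrations, and exploit the orthogonality of each family of difference projections together with their cross-commutation \eqref{Example 2 - Assumption 1}; once that is in hand there is no real obstacle, the rest being a routine transcription of the proof of Fact \ref{Example 1 - Fact 5}.
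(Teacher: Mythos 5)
Your proposal is correct and follows exactly the route the paper intends: the paper's proof of this fact is literally ``the same lines as the proof of Fact~\ref{Example 1 - Fact 5}'', and you have carried that out, correctly identifying and supplying the one genuinely new ingredient, namely the analogue of Lemma~\ref{Example 1 - Lemma 2} for $\QQ$ via the finite double expansion along both filtrations using \eqref{Example 2 - Assumption 2} and the orthogonality and cross-commutation of the difference projections.
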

\begin{proof}
The proof follows the same lines as the proof of Fact \ref{Example 1 - Fact 5}. 
\end{proof}

\subsubsection{Example 3}\label{Example3}
Let $\mathcal{M},\mathcal{N}$ be two von Neumann algebras each equipped respectively with a n.s.f. trace $\tau$, $\sigma$ and a filtration $(\mathcal{M}_n)_{n\pg1},(\mathcal{N}_n)_{n\pg1}$, and with associated conditional expectations respectively denoted by $(\EE_n)_{n\pg1},(\FF_n)_{n\pg1}$. We consider the von Neumann tensor product algebra
\[\mathcal{T}:=\mathcal{M}\bar{\otimes}\mathcal{N}\]
equipped with the tensor product trace. For every $n\pg1$, we set
\[\mathcal{T}_{2n-1}^{(0)}:=\mathcal{M}_n\bar{\otimes}\mathcal{N}_n,\ \ \ \ \ \ \ \ \ \ \ \ \ \mathcal{T}_{2n}^{(0)}:=\mathcal{M}_{n+1}\bar{\otimes}\mathcal{N}_n,\]
and
\[\mathcal{T}_{2n-1}^{(1)}:=\mathcal{M}_n\bar{\otimes}\mathcal{N}_{n+1},\ \ \ \ \ \ \ \ \ \ \ \ \ \mathcal{T}_{2n}^{(1)}:=\mathcal{M}_{n+1}\bar{\otimes}\mathcal{N}_{n+1}.\]
Then $(\mathcal{T}_n^{(0)})_{n\pg1}$, $(\mathcal{T}_n^{(1)})_{n\pg1}$ are two filtrations on $\mathcal{T}$. 
Let $(\EE_n^{(0)})_{n\pg1},(\EE_n^{(1)})_{n\pg1}$ denote their associated conditional expectations and let $(\DD_n^{(0)})_{n\pg1}$, $(\DD_n^{(1)})_{n\pg1}$ denote their associated difference projections. 

\begin{lemm}
We have
\begin{enumerate}
    \item for all $m,n\pg1$,
\[\EE_m^{(0)}\EE_n^{(1)}=\EE_n^{(1)}\EE_m^{(0)},\]
    \item for all $n\pg1$,
    \[\mathcal{T}_n^{(0)}\subset\mathcal{T}_{n}^{(1)},\ \ \ \ \ \ \ \ \ \ \ \ \ \mathcal{T}_n^{(1)}\subset\mathcal{T}_{n+2}^{(0)},\]
    \item and for all $m,n\pg1$,
\[\DD_{2m}^{(0)}\DD_{2n-1}^{(1)}=\DD_{2n-1}^{(1)}\DD_{2m}^{(0)}=0.\]
\end{enumerate}
\end{lemm}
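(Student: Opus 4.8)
The plan is to verify each of the three assertions directly from the definitions of the two filtrations $(\mathcal{T}_n^{(0)})_{n\pg1}$ and $(\mathcal{T}_n^{(1)})_{n\pg1}$, using the basic fact that on a tensor product $\mathcal{M}\bar\otimes\mathcal{N}$ the conditional expectation onto $\mathcal{M}_k\bar\otimes\mathcal{N}_\ell$ factors as $\EE_k\otimes\FF_\ell$, together with the commutation $\EE_k\otimes\FF_\ell\circ\EE_{k'}\otimes\FF_{\ell'}=\EE_{\min(k,k')}\otimes\FF_{\min(\ell,\ell')}$ which comes from the tower property in each leg.

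For (2), I would simply chase the indices. Writing out $\mathcal{T}_{2n-1}^{(0)}=\mathcal{M}_n\bar\otimes\mathcal{N}_n$, $\mathcal{T}_{2n}^{(0)}=\mathcal{M}_{n+1}\bar\otimes\mathcal{N}_n$, $\mathcal{T}_{2n-1}^{(1)}=\mathcal{M}_n\bar\otimes\mathcal{N}_{n+1}$ and $\mathcal{T}_{2n}^{(1)}=\mathcal{M}_{n+1}\bar\otimes\mathcal{N}_{n+1}$, the inclusion $\mathcal{T}_n^{(0)}\subset\mathcal{T}_n^{(1)}$ is checked separately for $n$ odd (both legs coincide in $\mathcal{M}$, and $\mathcal{N}_n\subset\mathcal{N}_{n+1}$) and $n$ even (both legs coincide). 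The inclusion $\mathcal{T}_n^{(1)}\subset\mathcal{T}_{n+2}^{(0)}$: for $n=2k-1$ one has $\mathcal{T}_{2k-1}^{(1)}=\mathcal{M}_k\bar\otimes\mathcal{N}_{k+1}$ and $\mathcal{T}_{2k+1}^{(0)}=\mathcal{M}_{k+1}\bar\otimes\mathcal{N}_{k+1}$, so both legs increase; for $n=2k$ one has $\mathcal{T}_{2k}^{(1)}=\mathcal{M}_{k+1}\bar\otimes\mathcal{N}_{k+1}$ and $\mathcal{T}_{2k+2}^{(0)}=\mathcal{M}_{k+2}\bar\otimes\mathcal{N}_{k+1}$, so again both legs increase. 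This also confirms that each sequence $(\mathcal{T}_n^{(j)})_{n}$ is genuinely increasing, so that these are filtrations as claimed earlier in the excerpt.

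For (1), since $\EE_m^{(0)}$ is a conditional expectation onto a tensor product subalgebra, it is of the form $\EE_{a(m)}\otimes\FF_{b(m)}$ where $(a(m),b(m))$ reads $(k,k)$ for $m=2k-1$ and $(k+1,k)$ for $m=2k$; similarly $\EE_n^{(1)}=\EE_{c(n)}\otimes\FF_{d(n)}$ with $(c(n),d(n))$ equal to $(k,k+1)$ for $n=2k-1$ and $(k+1,k+1)$ for $n=2k$. Composing in either order gives $\EE_{\min(a(m),c(n))}\otimes\FF_{\min(b(m),d(n))}$, which is manifestly symmetric in $m$ and $n$; hence $\EE_m^{(0)}\EE_n^{(1)}=\EE_n^{(1)}\EE_m^{(0)}$. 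For (3), write $\DD_{2m}^{(0)}=\EE_{2m}^{(0)}-\EE_{2m-1}^{(0)}=(\EE_{m+1}-\EE_m)\otimes\FF_m=\DD_{m+1}^{\mathcal{M}}\otimes\FF_m$ and $\DD_{2n-1}^{(1)}=\EE_{2n-1}^{(1)}-\EE_{2n-2}^{(1)}=\EE_n\otimes(\FF_{n+1}-\FF_n)=\EE_n\otimes\DD_{n+1}^{\mathcal{N}}$ (with the convention $\EE_0^{(1)}=0$ absorbing the $n=1$ case as one checks separately). Taking the product in either order yields $(\DD_{m+1}^{\mathcal{M}}\EE_n)\otimes(\FF_m\DD_{n+1}^{\mathcal{N}})$. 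The first leg is a product of an increasing projection $\EE_n$ with a difference projection $\DD_{m+1}^{\mathcal{M}}$, which vanishes once $m+1\le n$; the second leg is $\FF_m\DD_{n+1}^{\mathcal{N}}$, which vanishes once $m\ge n+1$, i.e.\ $m+1>n$. In every case $m,n\pg1$ at least one of the two conditions $m+1\le n$ or $m+1>n$ holds, so the product is $0$; the same computation with the factors swapped gives $\DD_{2n-1}^{(1)}\DD_{2m}^{(0)}=0$.

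The only point requiring any care is the tensorial factorisation of the conditional expectations and the identity $(\EE_k\otimes\FF_\ell)(\EE_{k'}\otimes\FF_{\ell'})=\EE_{\min(k,k')}\otimes\FF_{\min(\ell,\ell')}$; once this is in place everything reduces to elementary index bookkeeping, so I do not expect a genuine obstacle, merely the nuisance of treating the even and odd cases of $n$ (and the boundary case $n=1$ for the difference projections) separately.
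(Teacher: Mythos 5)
Your overall strategy---factoring everything through the two tensor legs and reducing to the semigroup identity $\EE_a\EE_b=\EE_{\min(a,b)}$---is exactly the paper's, and parts (1) and (2) are fine. In part (3), however, the two vanishing conditions are stated backwards, and this is not cosmetic: as written the argument does not cover the diagonal $m=n$. Concretely, $\DD_{m+1}^{\mathcal{M}}\EE_n=\EE_{\min(m+1,n)}-\EE_{\min(m,n)}$ vanishes precisely when $n\pp m$; when $m+1\pp n$ it equals $\EE_{m+1}-\EE_m=\DD_{m+1}^{\mathcal{M}}\neq0$ in general, so your claim that the first leg ``vanishes once $m+1\le n$'' is the exact opposite of the truth. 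Symmetrically, $\FF_m\DD_{n+1}^{\mathcal{N}}$ vanishes precisely when $m\pp n$, not when $m\pg n+1$. With the conditions as you state them ($m<n$ for one leg, $m>n$ for the other) the case $m=n$ is covered by neither, and the ``tautology'' you invoke is manufactured by the incorrect rewriting of $m\pg n+1$ as $m+1>n$ (these are not equivalent for integers). The lemma survives because the \emph{corrected} conditions---first leg dies when $n\pp m$, second leg dies when $m\pp n$---do exhaust all pairs, and this is precisely the dichotomy the paper's proof uses; your write-up just needs the two conditions swapped.

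A smaller point: the case $n=1$ genuinely falls outside your formula, since $\DD_1^{(1)}=\EE_1^{(1)}=\EE_1\otimes\FF_2$ is not of the form $\EE_1\otimes(\FF_2-\FF_1)$, so the convention $\EE_0^{(1)}=0$ does not ``absorb'' it. It must be checked directly, as the paper does: $\DD_{2m}^{(0)}\DD_1^{(1)}=(\EE_{m+1}-\EE_m)\EE_1\otimes\FF_m\FF_2=0$ because $(\EE_{m+1}-\EE_m)\EE_1=\EE_1-\EE_1=0$ for every $m\pg1$.
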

\begin{proof}
The first two assertions are obvious. For the last one, we fix $m\pg1$. If $n\pg2$, then by easy computation we obtain
\[\DD_{2m}^{(0)}\DD_{2n-1}^{(1)}=\EE_n(\EE_{m+1}-\EE_m)\otimes\FF_m(\FF_{n+1}-\FF_{n})\]
If $m\pp n$, then $\FF_m(\FF_{n+1}-\FF_{n})=0$ and if $m\pg n$, then $\EE_n(\EE_{m+1}-\EE_m)=0$. Thus, in all cases, the above is zero. If $n=1$, then $\DD_{2m}^{(0)}\DD_{1}^{(1)}=\EE_1(\EE_{m+1}-\EE_m)\otimes\FF_m=0$.
\end{proof}

The above lemma shows that with the choice
\[I^{(0)}:=\{2m\mid n\pg1\},\ \ \ \ \ \ I^{(1)}:=\{2n-1\mid n\pg1\},\]
we are exactly in the situation of Example 2. Let $\QQ:D\to D$ be the associated Lebesgue compatible linear idempotent operator as defined there. Recall that
\[D:=\cup_{n\pg1}(L_1+L_\infty)(\mathcal{T}_n^{(0)})=\cup_{n\pg1}(L_1+L_\infty)(\mathcal{T}_n^{(1)}).\]
and
\[\QQ(x)=\sum_{m,n\pg1}\DD_{2m-1}^{(0)}\DD_{2n}^{(1)}(x)=\sum_{m,n\pg1}\DD_{2n}^{(1)}\DD_{2m-1}^{(0)}(x),\ \ \ \ \ \ \ x\in D.\]
Moreover, for every $1\pp p\pp\infty$, we have
\[\QQ_p(\mathcal{T}):=\big\{y\in L_p(\mathcal{T})\ \mid\ \forall n\pg1,\ \DD_{2n}^{(0)}(y)=\DD_{2n-1}^{(1)}(y)=0\big\}.\]
The results of the previous example directly produce the following four facts.

\begin{fact}\label{Example 3 - Fact 1}
For every $1\pp p,q\pp\infty$, we have $(\QQ_p(\mathcal{T})+\QQ_q(\mathcal{T}))\cap L_p(\mathcal{T})=\QQ_p(\mathcal{T})$.
\end{fact}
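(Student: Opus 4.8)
The plan is to invoke Proposition \ref{Example 2 - Proposition 1} (in its incarnation for the couple $(\mathcal{T}_n^{(0)})_{n\pg1}$, $(\mathcal{T}_n^{(1)})_{n\pg1}$ on $\mathcal{T}$), which gives the explicit description
\[\QQ_p(\mathcal{T})=\big\{x\in L_p(\mathcal{T})\ \mid\ \DD_{2n}^{(0)}(x)=\DD_{2n-1}^{(1)}(x)=0\ \forall n\pg1\big\}\]
for every $1\pp p\pp\infty$, as already recorded just before the statement. From here the argument is the same elementary one used for Fact \ref{Example 1 - Fact 1} (and invoked again for Fact \ref{Example 2 - Fact 1}): the inclusion $\QQ_p(\mathcal{T})\subset(\QQ_p(\mathcal{T})+\QQ_q(\mathcal{T}))\cap L_p(\mathcal{T})$ is immediate since $\QQ_p(\mathcal{T})\subset L_p(\mathcal{T})$ and $\QQ_p(\mathcal{T})\subset\QQ_p(\mathcal{T})+\QQ_q(\mathcal{T})$.

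For the reverse inclusion, take $x\in(\QQ_p(\mathcal{T})+\QQ_q(\mathcal{T}))\cap L_p(\mathcal{T})$ and write $x=u+v$ with $u\in\QQ_p(\mathcal{T})$, $v\in\QQ_q(\mathcal{T})$. By the description above, $\DD_{2n}^{(0)}(u)=\DD_{2n}^{(0)}(v)=0$ and $\DD_{2n-1}^{(1)}(u)=\DD_{2n-1}^{(1)}(v)=0$ for all $n\pg1$, where the difference projections are understood as acting on $(L_1+L_\infty)(\mathcal{T})$ so that they make sense simultaneously on $L_p$ and $L_q$. Since $x=u+v$ in $(L_1+L_\infty)(\mathcal{T})$ and the $\DD$'s are linear and continuous there, we get $\DD_{2n}^{(0)}(x)=0$ and $\DD_{2n-1}^{(1)}(x)=0$ for all $n\pg1$. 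Combined with $x\in L_p(\mathcal{T})$, the description of $\QQ_p(\mathcal{T})$ gives $x\in\QQ_p(\mathcal{T})$, which closes the argument.

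There is no real obstacle here; the only point requiring a word of care is that the difference projections $\DD_{2n}^{(0)},\DD_{2n-1}^{(1)}$ used in the three occurrences of the identity $\DD(x)=0$ (on $x$, on $u\in L_p$, on $v\in L_q$) are all restrictions of the same operators on $(L_1+L_\infty)(\mathcal{T})$, so that adding the relations for $u$ and $v$ legitimately yields the relation for $x$. This is exactly the mechanism behind Facts \ref{Example 1 - Fact 1} and \ref{Example 2 - Fact 1}, so I would simply state that the proof follows the same lines, referring to Proposition \ref{Example 2 - Proposition 1}.
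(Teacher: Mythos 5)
Your proposal is correct and matches the paper's route exactly: the paper derives Fact~\ref{Example 3 - Fact 1} by specializing Fact~\ref{Example 2 - Fact 1}, which in turn ``follows trivially'' from the explicit description of $\QQ_p$ in Proposition~\ref{Example 2 - Proposition 1}, which is precisely the mechanism you spell out. Your added remark that the difference projections act consistently on $(L_1+L_\infty)(\mathcal{T})$, so the vanishing relations for $u$ and $v$ add up to give the relation for $x$, is the right (and only) point of care.
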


\begin{fact}\label{Example 3 - Fact 2}
Let $1<p<q\pp\infty$, such that $(\QQ_p(\mathcal{T}),\QQ_q(\mathcal{T}))$ is $K$-closed in $(L_p(\mathcal{T}),L_q(\mathcal{T}))$. Then for every $0<\theta<1$, we have
\[(\QQ_p(\mathcal{T}),\QQ_q(\mathcal{T}))_{\theta,r}=\QQ_r(\mathcal{M})\]
with equivalent norms, where $\frac{1}{r}=\frac{1-\theta}{p}+\frac{\theta}{q}$.
\end{fact}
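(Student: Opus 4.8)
The plan is to recognise that the construction of Example 3 places us, via the preceding lemma and the prescribed choices of $I^{(0)}$ and $I^{(1)}$, literally inside the framework of Example 2, now applied to $\mathcal{T}=\mathcal{M}\bar{\otimes}\mathcal{N}$ with its two commuting filtrations $(\mathcal{T}_n^{(0)})_{n\pg1}$ and $(\mathcal{T}_n^{(1)})_{n\pg1}$ and with the same operator $\QQ$. Consequently Fact \ref{Example 2 - Fact 4} applies verbatim, which is exactly the assertion. So in principle the proof is one line; but since the paper wants the argument reproduced, I would repeat the proof of Fact \ref{Example 1 - Fact 5} with $\QQ$ and $\mathcal{T}$ in place of $\PP$ and $\mathcal{M}$.

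Concretely: fix $0<\theta<1$ and set $1/r=(1-\theta)/p+\theta/q$, noting that $r<\infty$. Combining Theorem \ref{FunctorLp} with Proposition \ref{Example 2 - Proposition 1} (read in $\mathcal{T}$) gives a continuous inclusion $(\QQ_p(\mathcal{T}),\QQ_q(\mathcal{T}))_{\theta,r}\subset\QQ_r(\mathcal{T})$, and by the $K$-closedness hypothesis together with Proposition \ref{Kclosedness} this inclusion has closed range. It therefore suffices to show that it has dense range, and for this one checks that the norm-dense subspace $\QQ(D\cap L_r(\mathcal{T}))$ of $\QQ_r(\mathcal{T})$ is contained in $(\QQ_p(\mathcal{T})+\QQ_q(\mathcal{T}))\cap L_r(\mathcal{T})$, which by Theorem \ref{FunctorLp} and Proposition \ref{Kclosedness} coincides with $(\QQ_p(\mathcal{T}),\QQ_q(\mathcal{T}))_{\theta,r}$.

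The only point requiring a short argument is the inclusion just mentioned, i.e.\ the analogue of Lemma \ref{Example 1 - Lemma 2} for $\QQ$ in the two-filtration setting: an element $x\in\QQ(D)\cap(L_p(\mathcal{T})+L_q(\mathcal{T}))$ induces a finite martingale, hence is a \emph{finite} sum of terms $\DD_i^{(0)}\DD_j^{(1)}(x)$ with $i\notin I^{(0)}$ and $j\notin I^{(1)}$; applying this same finite family of mutually orthogonal truncations to any decomposition $x=y+z$ with $y\in L_p(\mathcal{T})$ and $z\in L_q(\mathcal{T})$ produces $x=y'+z'$ with $y'\in\QQ_p(\mathcal{T})$ and $z'\in\QQ_q(\mathcal{T})$. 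I do not anticipate any genuine obstacle here: everything is a transcription of the proofs already carried out in Examples 1 and 2, the substantive work (the Gundy-projection machinery, Pisier duality, the Wolff-type lemma) having been done upstream.
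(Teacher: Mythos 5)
Your proposal is correct and follows essentially the same route as the paper: the paper notes that the choice of $I^{(0)},I^{(1)}$ places Example 3 verbatim in the setting of Example 2, so that Fact \ref{Example 2 - Fact 4} applies directly, and that fact is in turn proved by transcribing the argument of Fact \ref{Example 1 - Fact 5} (closed range via $K$-closedness, dense range via the analogue of Lemma \ref{Example 1 - Lemma 2}, using that elements of $\QQ(D)$ are finite sums of the commuting, mutually orthogonal truncations $\DD_i^{(0)}\DD_j^{(1)}$). Your fleshed-out version of that transcription, including the two-filtration analogue of Lemma \ref{Example 1 - Lemma 2}, is exactly what the paper intends (note the target space in the statement should read $\QQ_r(\mathcal{T})$, as you implicitly assume).
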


\begin{fact}\label{Example 3 - Fact 3}
$(\QQ_2(\mathcal{T}),\QQ_\infty(\mathcal{T}))$ is $K$-closed in $(L_2(\mathcal{T}),L_\infty(\mathcal{T}))$ with a universal constant.
\end{fact}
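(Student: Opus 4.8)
The plan is to observe that Fact~\ref{Example 3 - Fact 3} is simply an instance of Fact~\ref{Example 2 - Fact 2}: once one knows that the data attached to the tensor product $\mathcal{T}=\mathcal{M}\bar{\otimes}\mathcal{N}$ --- the two filtrations $(\mathcal{T}_n^{(0)})_{n\pg1}$, $(\mathcal{T}_n^{(1)})_{n\pg1}$ together with the index sets $I^{(0)}=\{2m\mid m\pg1\}$ and $I^{(1)}=\{2n-1\mid n\pg1\}$ --- satisfies the standing hypotheses of Example~2, the conclusion follows with no further work. So the first (and only substantive) step is that verification.

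Assumption~\eqref{Example 2 - Assumption 1}, the commutation $\EE_m^{(0)}\EE_n^{(1)}=\EE_n^{(1)}\EE_m^{(0)}$, and Assumption~\eqref{Example 2 - Assumption 2}, the inclusions $\mathcal{T}_n^{(0)}\subset\mathcal{T}_n^{(1)}$ and $\mathcal{T}_n^{(1)}\subset\mathcal{T}_{n+2}^{(0)}$ (playing the role of the containments of Example~2 with $\phi(n)=n$ and $\psi(n)=n+2$), are exactly the first two assertions of the Lemma preceding this Fact. Assumption~\eqref{Example 2 - Assumption 3}, namely $\DD_i^{(0)}\DD_j^{(1)}=\DD_j^{(1)}\DD_i^{(0)}=0$ for $i\in I^{(0)}$, $j\in I^{(1)}$, is its third assertion, since $I^{(0)}$ consists of the even integers and $I^{(1)}$ of the odd integers. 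One also records the ambient structure: the tensor product of two n.s.f. traces is again a n.s.f. trace, $\cup_n\mathcal{T}_n^{(0)}$ is weak*-dense in $\mathcal{T}$ because $\cup_n\mathcal{M}_n$ and $\cup_n\mathcal{N}_n$ are weak*-dense, and the maps $\EE_n\otimes\FF_n$, $\EE_{n+1}\otimes\FF_n$, etc.\ provide the required trace-preserving weak*-continuous conditional expectations. Finally, the operator $\QQ$ introduced in Example~3 is precisely the operator $\QQ$ produced in Example~2 from this data, because the conditions $i\notin I^{(0)}$, $j\notin I^{(1)}$ translate to ``$i$ odd, $j$ even''.

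With this in hand, Fact~\ref{Example 2 - Fact 2} applies verbatim with $\mathcal{M}:=\mathcal{T}$ and yields $K$-closedness of $(\QQ_2(\mathcal{T}),\QQ_\infty(\mathcal{T}))$ in $(L_2(\mathcal{T}),L_\infty(\mathcal{T}))$. The constant is universal because, unwinding the proof of Fact~\ref{Example 2 - Fact 2}, it is built from Proposition~\ref{Gundy - Proposition 1} (summing the two orthogonal Gundy projections $\II-\PP^{(0)}$ and $\II-\PP^{(1)}$), from Corollary~\ref{Gundy - Corollary 1} applied to $\PP=(\II-\PP^{(0)})+(\II-\PP^{(1)})$ (whose Gundy constant ultimately comes from the universal constant in the noncommutative Gundy decomposition), and from Pisier's duality lemma (Theorem~\ref{PisierLemma}); none of these involves $\mathcal{M}$, $\mathcal{N}$, or the filtrations. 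I do not expect a genuine obstacle here. The one point requiring a touch of care is the dualization in the Pisier step --- one must identify $\PP_p(\mathcal{T})^{\bot}=\QQ_q(\mathcal{T})$ for H\"older conjugates $p,q$ via Proposition~\ref{Example 2 - Proposition 1}, which is the same difference-projection bookkeeping as in Fact~\ref{Example 2 - Fact 2}, now carried out with the pair $(\mathcal{T}_n^{(0)})$, $(\mathcal{T}_n^{(1)})$. (One could instead reprove everything from scratch along the lines of Fact~\ref{Example 2 - Fact 2}, but invoking it directly is cleaner.)
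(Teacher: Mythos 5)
Your proposal is correct and takes essentially the same route as the paper: the paper likewise observes that the lemma preceding this fact verifies assumptions \eqref{Example 2 - Assumption 1}--\eqref{Example 2 - Assumption 3} for the two filtrations $(\mathcal{T}_n^{(0)})_{n\pg1}$, $(\mathcal{T}_n^{(1)})_{n\pg1}$ with $I^{(0)}$ the even and $I^{(1)}$ the odd integers, so that the conclusion is a direct instance of Fact \ref{Example 2 - Fact 2}, with a universal constant for exactly the reasons you trace through.
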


\begin{fact}\label{Example 3 - Fact 4}
Let $1<p,q<\infty$. Then, $(\QQ_p(\mathcal{T}),\QQ_q(\mathcal{T}))$ is complemented and thus $K$-closed in $(L_p(\mathcal{T}),L_q(\mathcal{T}))$ with a constant that depends only on $p,q$.
\end{fact}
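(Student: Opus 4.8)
The plan is to observe that Fact~\ref{Example 3 - Fact 4} is an immediate instance of Fact~\ref{Example 2 - Fact 3}, applied to $\mathcal{T}=\mathcal{M}\bar{\otimes}\mathcal{N}$ in place of $\mathcal{M}$. The first step is to verify that the data of Example~3 really fall under the framework of Example~2. By the lemma just proved, the algebra $\mathcal{T}$ equipped with the two filtrations $(\mathcal{T}_n^{(0)})_{n\pg1}$, $(\mathcal{T}_n^{(1)})_{n\pg1}$ and with the index sets $I^{(0)}=\{2m\mid m\pg1\}$, $I^{(1)}=\{2n-1\mid n\pg1\}$ satisfies assumption \eqref{Example 2 - Assumption 1} (this is part (1) of the lemma), assumption \eqref{Example 2 - Assumption 2} (part (2)), and assumption \eqref{Example 2 - Assumption 3} (part (3)). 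One also checks that the operator $\QQ$ written down in Example~3 coincides with the operator $\QQ$ of Example~2 for this choice of index sets: since $i\notin I^{(0)}$ means $i$ odd and $j\notin I^{(1)}$ means $j$ even, one has $\sum_{i\notin I^{(0)},\,j\notin I^{(1)}}\DD_i^{(0)}\DD_j^{(1)}(x)=\sum_{m,n\pg1}\DD_{2m-1}^{(0)}\DD_{2n}^{(1)}(x)$ for $x\in D$. Hence all the conclusions of Example~2 about $\QQ$ apply here verbatim.

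The second step is to record, for completeness, why Fact~\ref{Example 2 - Fact 3} gives the result. For $1<r<\infty$, the operators $\PP^{(0)}$ and $\PP^{(1)}$ associated with $(\mathcal{T}_n^{(0)})$ and $(\mathcal{T}_n^{(1)})$ are bounded on $L_r(\mathcal{T})$ with a constant depending only on $r$, by Randrianantoanina's boundedness theorem for noncommutative martingale transforms \cite{NarcisseMartingaleTransforms}; consequently $\QQ=\PP^{(0)}\PP^{(1)}=\PP^{(1)}\PP^{(0)}$ is bounded on $L_r(\mathcal{T})$ with a constant depending only on $r$. Taking $r=p$ and $r=q$ (both in $(1,\infty)$), the densely defined idempotent $\QQ$ extends to a bounded operator on $L_p(\mathcal{T})+L_q(\mathcal{T})$ which maps $L_p(\mathcal{T})$ into $L_p(\mathcal{T})$ and $L_q(\mathcal{T})$ into $L_q(\mathcal{T})$ boundedly; by Proposition~\ref{Example 2 - Proposition 1} and continuity of $\QQ$ its ranges in these spaces are precisely $\QQ_p(\mathcal{T})$ and $\QQ_q(\mathcal{T})$, and $\QQ$ restricts to the identity on $\QQ_p(\mathcal{T})+\QQ_q(\mathcal{T})$ because it is idempotent. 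Thus $(\QQ_p(\mathcal{T}),\QQ_q(\mathcal{T}))$ is a complemented subcouple of $(L_p(\mathcal{T}),L_q(\mathcal{T}))$ with a projection of norm depending only on $p,q$; and since a complemented subcouple is $K$-complemented, hence $K$-closed (as recorded in Section~1), this finishes the proof.

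I do not expect any real obstacle. The only points requiring a little care are the combinatorics of the index sets (checking that the complement of the even integers consists of the odd ones and vice versa, so that the $\QQ$ of Example~3 agrees with the $\QQ$ of Example~2), and the uniformity of the martingale-transform bounds from \cite{NarcisseMartingaleTransforms} in the underlying von Neumann algebra and filtration, which guarantees that the resulting constant for $\QQ$ on $L_r(\mathcal{T})$ --- a product of two such bounds --- depends only on $r$. Everything else is a transcription of the argument for Fact~\ref{Example 2 - Fact 3}, and could alternatively be stated in one line as ``apply Fact~\ref{Example 2 - Fact 3} to $\mathcal{T}$ with the two filtrations and index sets supplied by the lemma above.''
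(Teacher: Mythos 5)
Your proof is correct and follows exactly the paper's route: the paper derives Fact~\ref{Example 3 - Fact 4} by observing that Example~3 is an instance of Example~2 (via the lemma verifying assumptions \eqref{Example 2 - Assumption 1}--\eqref{Example 2 - Assumption 3}) and then invoking Fact~\ref{Example 2 - Fact 3}, whose proof is precisely the boundedness of the martingale transforms $\PP^{(0)},\PP^{(1)}$, hence of $\QQ=\PP^{(0)}\PP^{(1)}$, on $L_r(\mathcal{T})$ for $1<r<\infty$ with constants depending only on $r$. Your extra care with the index-set combinatorics (correcting the paper's apparent typo $i\notin I^{(1)}$ to $i\notin I^{(0)}$ in the definition of $\QQ$ in Example~2) and with the uniformity of the martingale-transform constants is welcome but does not change the argument.
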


To the above list of facts, we add the following new one.

\begin{fact}\label{Example 3 - Fact 5}
$(\QQ_1(\mathcal{T}),\QQ_2(\mathcal{T}))$ is $K$-closed in $(L_1(\mathcal{T}),L_2(\mathcal{T}))$ with a universal constant.
\end{fact}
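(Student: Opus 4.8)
The strategy is to use the tensor structure of $\mathcal{T}=\mathcal{M}\bar{\otimes}\mathcal{N}$ to prove that $\QQ$ is itself a Gundy projection on $\mathcal{T}$ — which was precisely what was unavailable in Example 2 — and then to conclude by Corollary \ref{Gundy - Corollary 1}.

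First I would unwind $\QQ$. Write $d_k^{\mathcal{M}}:=\EE_k-\EE_{k-1}$ and $d_k^{\mathcal{N}}:=\FF_k-\FF_{k-1}$ for the difference projections of $\mathcal{M}$ and $\mathcal{N}$. Expressing $\DD_{2m-1}^{(0)}$ and $\DD_{2n}^{(1)}$ as tensors of conditional expectations and multiplying them (by a computation analogous to the one in the lemma above), one finds that $\DD_{2m-1}^{(0)}\DD_{2n}^{(1)}$ vanishes unless $m=n+1$, in which case it equals $d_{n+1}^{\mathcal{M}}\otimes d_{n+1}^{\mathcal{N}}$; hence
\[\QQ=\sum_{k\pg2}d_k^{\mathcal{M}}\otimes d_k^{\mathcal{N}}.\]
I would then work with the \emph{diagonal filtration} $\mathcal{D}_k:=\mathcal{M}_k\bar{\otimes}\mathcal{N}_k$ of $\mathcal{T}$, with conditional expectations $\mathbb{G}_k:=\EE_k\otimes\FF_k$ and difference projections $\Delta_k:=\mathbb{G}_k-\mathbb{G}_{k-1}$; since $\mathcal{D}_k=\mathcal{T}^{(0)}_{2k-1}$ and $\mathcal{T}^{(0)}_{2k}\subset\mathcal{D}_{k+1}$, the chain $(\mathcal{D}_k)$ is cofinal in $(\mathcal{T}^{(0)}_k)$, so it is a filtration of $\mathcal{T}$ and $D=\cup_k(L_1+L_\infty)(\mathcal{D}_k)$. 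Two observations will be used repeatedly: (a) each $d_k^{\mathcal{M}}$ is an $\mathcal{M}_{k-1}$-bimodule map and each $d_k^{\mathcal{N}}$ an $\mathcal{N}_{k-1}$-bimodule map, by the module property of conditional expectations, so $d_k^{\mathcal{M}}\otimes d_k^{\mathcal{N}}$ is a $\mathcal{D}_{k-1}$-bimodule map; (b) $(d_k^{\mathcal{M}}\otimes d_k^{\mathcal{N}})\Delta_j=\delta_{jk}\,d_k^{\mathcal{M}}\otimes d_k^{\mathcal{N}}$, so $(d_k^{\mathcal{M}}\otimes d_k^{\mathcal{N}})(x)=(d_k^{\mathcal{M}}\otimes d_k^{\mathcal{N}})(\Delta_kx)$ for $x\in D$. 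Finally, one checks routinely that $\QQ$ is self-adjoint on $L_2(\mathcal{T})$, commutes with every $\mathbb{G}_k$, and is weakly continuous on each $(L_1+L_\infty)(\mathcal{D}_k)$.

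The core step is to verify that $\QQ$ is a Gundy projection. Given $y\in D\cap L_1(\mathcal{T})$ and $\lambda>0$, decompose $y=\sum_{j=1}^{4}\epsilon_jy_j$ with $\epsilon_j\in\{1,-1,i,-i\}$ and $y_j\in D$ positive, $\|y_j\|_1\pp\|y\|_1$, and apply the noncommutative Gundy decomposition (Theorem \ref{ParcetRandria}) to each $y_j$ at level $\lambda$ relative to $(\mathcal{D}_k)$: this produces $y_j=a_j+b_j+c_j$ in $\cup_kL_1(\mathcal{D}_k)\subset D$, an adapted family of projections $q_k^{(j)}\in\mathcal{D}_k$, and $p^{(j)}:=1-\wedge_kq_k^{(j)}$ with $\tau(p^{(j)})\pp C\|y\|_1/\lambda$, such that $\|a_j\|_2^2\pp C^2\lambda\|y\|_1$, $\sum_k\|\Delta_kb_j\|_1\pp C\|y\|_1$, and $\Delta_kc_j=\Delta_ky_j-q_{k-1}^{(j)}\,\Delta_ky_j\,q_{k-1}^{(j)}$ for $k\pg2$. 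I would then set $\alpha:=\QQ(\sum_j\epsilon_ja_j)$, $\beta:=\QQ(\sum_j\epsilon_jb_j)$, $\gamma:=\QQ(\sum_j\epsilon_jc_j)$, all in $\QQ(D)$ and summing to $\QQ(y)$. The bound on $\alpha$ is immediate from the $L_2$-contractivity of $\QQ$ and the triangle inequality. For $\beta$, (b) together with the factorization $d_k^{\mathcal{M}}\otimes d_k^{\mathcal{N}}=(d_k^{\mathcal{M}}\otimes\mathrm{id})(\mathrm{id}\otimes d_k^{\mathcal{N}})$ — a product of two maps of $L_1$-norm at most $2$, being differences of conditional expectations — reduce the estimate to $\sum_k\|\Delta_kb_j\|_1$, giving $\|\beta\|_1\pp 16C\|y\|_1$. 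The decisive computation is for $\gamma$: combining (a), (b) and the formula for $\Delta_kc_j$ gives, for all $j$ and all $k\pg2$,
\[d_k^{\mathcal{M}}\otimes d_k^{\mathcal{N}}(c_j)=w_k^{(j)}-q_{k-1}^{(j)}\,w_k^{(j)}\,q_{k-1}^{(j)},\qquad w_k^{(j)}:=d_k^{\mathcal{M}}\otimes d_k^{\mathcal{N}}(y_j),\]
using that $q_{k-1}^{(j)}\in\mathcal{D}_{k-1}$ and $d_k^{\mathcal{M}}\otimes d_k^{\mathcal{N}}$ is a $\mathcal{D}_{k-1}$-bimodule map. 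With $p:=\vee_jp^{(j)}$ one has $\tau(p)\pp 4C\|y\|_1/\lambda$ and $1-p\pp q_{k-1}^{(j)}$ for all $j,k$, hence $(1-p)\bigl(w_k^{(j)}-q_{k-1}^{(j)}w_k^{(j)}q_{k-1}^{(j)}\bigr)(1-p)=0$ for all $j,k$, and therefore $(1-p)\gamma(1-p)=0$. This shows $\QQ$ is a Gundy projection with a universal constant.

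It then only remains to apply Corollary \ref{Gundy - Corollary 1} to $\QQ$ with the filtration $(\mathcal{D}_k)$: the required hypotheses ($D=\cup_k(L_1+L_\infty)(\mathcal{D}_k)$; weak continuity of $\QQ$ on each $(L_1+L_\infty)(\mathcal{D}_k)$; stabilization of $\QQ(D)$ by each $\mathbb{G}_k$, which holds since $\QQ$ commutes with $\mathbb{G}_k$) have all been arranged, so $(\QQ_1(\mathcal{T}),\QQ_2(\mathcal{T}))$ is $K$-closed in $(L_1(\mathcal{T}),L_2(\mathcal{T}))$ with a universal constant. The only delicate point, and the one place where the tensor product is genuinely used, is the treatment of $\gamma$: it hinges on the bimodule identity (a), whose content is that $\mathcal{D}_{k-1}=\mathcal{M}_{k-1}\bar{\otimes}\mathcal{N}_{k-1}$ lies inside the ranges of all the conditional expectations entering $d_k^{\mathcal{M}}\otimes d_k^{\mathcal{N}}$, so that the corner projection coming out of Gundy's decomposition can be carried through $\QQ$ — exactly the step that is unavailable for the general product projection of Example 2.
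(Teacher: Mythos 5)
Your proposal is correct and follows essentially the same route as the paper: you rewrite $\QQ$ as the diagonal sum $\sum_{k\pg2}d_k^{\mathcal{M}}\otimes d_k^{\mathcal{N}}$ (the paper's $\sum_n\TT_n$ with $\TT_n\DD_m=\delta_{m,n+1}\TT_n$), apply the Parcet--Randrianantoanina Gundy decomposition with respect to the diagonal filtration $\mathcal{M}_k\bar{\otimes}\mathcal{N}_k$, and use the $\mathcal{D}_{k-1}$-bimodularity of $d_k^{\mathcal{M}}\otimes d_k^{\mathcal{N}}$ to carry the corner projections $q_{k-1}$ through $\QQ$, exactly as in the paper's Lemmas \ref{Example 3 - Lemma 1}--\ref{Example 3 - Lemma 3}, before invoking Corollary \ref{Gundy - Corollary 1}. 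Your explicit handling of the reduction to positive operators (taking $p:=\vee_jp^{(j)}$) fills in a detail the paper leaves implicit, but the argument is the same.
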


By combining Facts 10-14 above with the usual interpolation machinery, we obtain the following theorem.

\begin{theo}\label{Example 3 - Theorem 1}
Let $1\pp p,q\pp\infty$. Then the Banach couple $(\QQ_p(\mathcal{T}),\QQ_q(\mathcal{T}))$ is $K$-complemented in $(L_p(\mathcal{T}),L_q(\mathcal{T}))$ with a universal constant.
\end{theo}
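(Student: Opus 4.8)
The plan is to deduce the theorem from Corollary \ref{CoroWolffK}, applied with the Banach family $A_p:=L_p(\mathcal{T})$, the subfamily $B_p:=\QQ_p(\mathcal{T})$, and the interval $I_0:=[1,\infty]$. Hypothesis (i) of that corollary is precisely Theorem \ref{InterpLp} (with $p_0=1$, $q_0=\infty$), and hypothesis (ii), namely $\QQ_p(\mathcal{T})=(\QQ_p(\mathcal{T})+\QQ_q(\mathcal{T}))\cap L_p(\mathcal{T})$ for all $1\pp p,q\pp\infty$, is Fact \ref{Example 3 - Fact 1}. So everything reduces to producing a good subdivision of $[1,\infty]$ and verifying hypothesis (iv) on each piece.

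For hypotheses (iii)--(iv) I would use the three closed intervals
\[I_1:=[1,2],\qquad I_2:=[3/2,3],\qquad I_3:=[2,\infty],\]
whose union is $[1,\infty]$ and for which $I_1\cap I_2=[3/2,2]$ and $I_2\cap I_3=[2,3]$ are nontrivial, so (iii) holds. On $I_3$, the couple $(\QQ_2(\mathcal{T}),\QQ_\infty(\mathcal{T}))$ is $K$-closed in $(L_2(\mathcal{T}),L_\infty(\mathcal{T}))$ by Fact \ref{Example 3 - Fact 3}, hence Fact \ref{Example 3 - Fact 2} applies and yields $(\QQ_2(\mathcal{T}),\QQ_\infty(\mathcal{T}))_{\theta,r}=\QQ_r(\mathcal{T})$; this is (iv) for $k=3$. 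On $I_2$, the couple $(\QQ_{3/2}(\mathcal{T}),\QQ_3(\mathcal{T}))$ is complemented, hence $K$-closed, in $(L_{3/2}(\mathcal{T}),L_3(\mathcal{T}))$ by Fact \ref{Example 3 - Fact 4}, and Fact \ref{Example 3 - Fact 2} again supplies the reiteration identity $(\QQ_{3/2}(\mathcal{T}),\QQ_3(\mathcal{T}))_{\theta,r}=\QQ_r(\mathcal{T})$; this is (iv) for $k=2$. On $I_1$, the couple $(\QQ_1(\mathcal{T}),\QQ_2(\mathcal{T}))$ is $K$-closed in $(L_1(\mathcal{T}),L_2(\mathcal{T}))$ by Fact \ref{Example 3 - Fact 5}; the accompanying identity $(\QQ_1(\mathcal{T}),\QQ_2(\mathcal{T}))_{\theta,r}=\QQ_r(\mathcal{T})$ is obtained by running the argument of Fact \ref{Example 1 - Fact 5} --- which is insensitive to the lower exponent being $1$ --- combining Proposition \ref{Kclosedness} (applied to the $K$-method parameter $\Phi_{\theta,r}$), the reiteration identity $(L_1(\mathcal{T}),L_2(\mathcal{T}))_{\theta,r}=L_r(\mathcal{T})$ for noncommutative $L_p$-spaces, and the description of $\QQ_r(\mathcal{T})$ through difference projections (the Example~3 analogue of Proposition \ref{Example 2 - Proposition 1}) together with Fact \ref{Example 3 - Fact 1}. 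This gives (iv) for $k=1$.

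Once all four hypotheses are in place, Corollary \ref{CoroWolffK} immediately gives that $(\QQ_p(\mathcal{T}),\QQ_q(\mathcal{T}))$ is $K$-complemented in $(L_p(\mathcal{T}),L_q(\mathcal{T}))$ for every $p,q\in[1,\infty]$. The resulting constant is universal: the constants furnished by Fact \ref{Example 3 - Fact 3}, Fact \ref{Example 3 - Fact 5} and Theorem \ref{InterpLp} are universal, while the only exponent-dependent ingredient, Fact \ref{Example 3 - Fact 4}, enters only for the fixed pair $(3/2,3)$ and so contributes a fixed constant.

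I do not expect any genuine obstacle in this last step: all the substance has already been extracted, above all the $K$-closedness of $(\QQ_1(\mathcal{T}),\QQ_2(\mathcal{T}))$ asserted in Fact \ref{Example 3 - Fact 5}, which rests on the Gundy-projection machinery of Section~2 and on the commuting two-filtration (tensor-product) structure of $\mathcal{T}$. The only mildly delicate point worth spelling out in the write-up is the reiteration identity for the couple $(\QQ_1(\mathcal{T}),\QQ_2(\mathcal{T}))$ on the first interval: Fact \ref{Example 3 - Fact 2} is stated with a lower exponent strictly larger than $1$, so one should note explicitly that its proof covers the endpoint case $p=1$ as well.
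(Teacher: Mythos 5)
Your proposal is correct and follows essentially the same route as the paper: the paper obtains the theorem by feeding Facts \ref{Example 3 - Fact 1}--\ref{Example 3 - Fact 5} into Corollary \ref{CoroWolffK} exactly as in the proof of Theorem \ref{Example 1 - Theorem 1}, with an overlapping subdivision of $[1,\infty]$ of the kind you choose. Your observation that the reiteration identity on $[1,2]$ is not literally covered by Fact \ref{Example 3 - Fact 2} (stated for $1<p$) but follows by rerunning the argument of Fact \ref{Example 1 - Fact 5} is accurate and fills in a detail the paper leaves implicit.
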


Now, it remains to justify Fact \ref{Example 3 - Fact 5}. 
The key point is to introduce another filtration on $\mathcal{T}$ and then to use carefully Gundy's decomposition theorem with respect to this filtration to show that $\QQ$ is a Gundy projection. That is, we consider the filtration $(\mathcal{T}_n)_{n\pg1}$ such that 
\[\mathcal{T}_n:=\mathcal{M}_n\otimes\mathcal{N}_n,\ \ \ \ \ \ n\pg1.\] 
We denote by $(\DD_n)_{n\pg1}$ the associated difference projections. Thus, we have
\[\DD_m=\EE_m\otimes\FF_m-\EE_{m-1}\otimes\FF_{m-1}\ \ \ \ \ \ \text{for every}\ m\pg1.\]
(with the convention $\EE_0=\FF_0=0$.)

\begin{lemm}\label{Example 3 - Lemma 1}
We have
\[\QQ(x)=\sum_{n\pg1}\TT_n(x),\ \ \ \ \ \ \ \text{for}\ x\in D,\]
where we set $\TT_n:=(\EE_{n+1}-\EE_n)\otimes(\FF_{n+1}-\FF_n)$ for every $n\pg1$.
\end{lemm}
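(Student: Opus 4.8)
The plan is to reduce the identity to a direct computation with the difference projections of the two interlaced filtrations $(\mathcal{T}_n^{(0)})_{n\geq1}$ and $(\mathcal{T}_n^{(1)})_{n\geq1}$. Recall that here $I^{(0)}$ consists of the even positive integers and $I^{(1)}$ of the odd positive integers, so that the operator $\QQ$ of Example~2 reads
\[\QQ(x)=\sum_{i\notin I^{(0)},\,j\notin I^{(1)}}\DD_i^{(0)}\DD_j^{(1)}(x)=\sum_{m,n\geq1}\DD_{2m-1}^{(0)}\DD_{2n}^{(1)}(x),\qquad x\in D,\]
the sum being finite for each $x\in D$. First I would rewrite each difference projection appearing here as an elementary tensor. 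Using $\mathcal{T}_{2m-1}^{(0)}=\mathcal{M}_m\bar{\otimes}\mathcal{N}_m$ and $\mathcal{T}_{2m-2}^{(0)}=\mathcal{M}_m\bar{\otimes}\mathcal{N}_{m-1}$ (with the convention $\FF_0=0$), one obtains
\[\DD_{2m-1}^{(0)}=\EE_m\otimes(\FF_m-\FF_{m-1}),\]
and similarly, from $\mathcal{T}_{2n}^{(1)}=\mathcal{M}_{n+1}\bar{\otimes}\mathcal{N}_{n+1}$ and $\mathcal{T}_{2n-1}^{(1)}=\mathcal{M}_n\bar{\otimes}\mathcal{N}_{n+1}$,
\[\DD_{2n}^{(1)}=(\EE_{n+1}-\EE_n)\otimes\FF_{n+1}.\]

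Next I would multiply these two elementary tensors. Since the left legs act on $L_p(\mathcal{M})$ and the right legs on $L_p(\mathcal{N})$, the product factors as
\[\DD_{2m-1}^{(0)}\DD_{2n}^{(1)}=\bigl[\EE_m(\EE_{n+1}-\EE_n)\bigr]\otimes\bigl[(\FF_m-\FF_{m-1})\FF_{n+1}\bigr].\]
Now I would use that the $\EE_k$ (resp.\ the $\FF_k$) are commuting projections with $\EE_j\EE_k=\EE_{\min(j,k)}$: the first bracket equals $\EE_{\min(m,n+1)}-\EE_{\min(m,n)}$, which vanishes when $m\leq n$ and equals $\EE_{n+1}-\EE_n$ when $m\geq n+1$; the second bracket equals $\FF_{\min(m,n+1)}-\FF_{\min(m-1,n+1)}$, which vanishes when $m\geq n+2$ and equals $\FF_m-\FF_{m-1}$ when $m\leq n+1$. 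Hence $\DD_{2m-1}^{(0)}\DD_{2n}^{(1)}=0$ unless $m=n+1$, and for $m=n+1$ it equals $(\EE_{n+1}-\EE_n)\otimes(\FF_{n+1}-\FF_n)=\TT_n$.

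Substituting this into the displayed formula for $\QQ$ collapses the double sum to its diagonal $m=n+1$, yielding $\QQ(x)=\sum_{n\geq1}\TT_n(x)$ for $x\in D$, as claimed. I do not expect a genuine obstacle here; the only point requiring care is the index bookkeeping between the two staircase filtrations --- in particular checking that the formulas for $\DD_{2m-1}^{(0)}$ and $\DD_{2n}^{(1)}$ remain correct at $m=1$ (where $\FF_0=0$) and at $n=1$ --- and the content of the argument is simply that the offset-by-one arrangement of the two filtrations forces all off-diagonal products to cancel.
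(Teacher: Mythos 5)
Your proof is correct and follows essentially the same route as the paper: compute $\DD_{2m-1}^{(0)}\DD_{2n}^{(1)}=\EE_m(\EE_{n+1}-\EE_n)\otimes(\FF_m-\FF_{m-1})\FF_{n+1}$, observe that this vanishes for $m\pp n$ (left leg) and for $m\pg n+2$ (right leg), and collapse the double sum onto the diagonal $m=n+1$ where the product equals $\TT_n$. Your extra bookkeeping with $\EE_{\min(j,k)}$ and the boundary cases $m=1$, $n=1$ is just a more explicit version of the paper's ``easy computation.''
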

\begin{proof}
If $n,m\pg1$, an easy computation yields
\[\DD^{(0)}_{2m-1}\DD^{(1)}_{2n}=\EE_m(\EE_{n+1}-\EE_n)\otimes(\FF_m-\FF_{m-1})\FF_{n+1}\]
But if $m\pp n$, then $\EE_m(\EE_{n+1}-\EE_n)=0$, and if $m\pg n+2$, then $(\FF_m-\FF_{m-1})\FF_{n+1}=0$. Thus, $\DD^{(0)}_{2m-1}\DD^{(1)}_{2n}$ is always $0$ except in the case $m=n+1$, in which case it is equal to $\TT_n$. This completes the proof of the lemma.
\end{proof}

\begin{lemm}\label{Example 3 - Lemma 2}
We have
\[\QQ(x)=\sum_{n\pg1}\TT_n(\DD_{n+1}(x)),\ \ \ \ \ \ \text{for}\ x\in D.\]
\end{lemm}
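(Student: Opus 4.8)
The plan is to deduce the statement from Lemma~\ref{Example 3 - Lemma 1} together with the operator identity $\TT_n\circ\DD_{n+1}=\TT_n$ on $D$, valid for every $n\pg1$. Once this identity is in hand, for $x\in D$ one applies $\DD_{n+1}$ before $\TT_n$ termwise in the (finite) sum furnished by Lemma~\ref{Example 3 - Lemma 1} to obtain
\[\QQ(x)=\sum_{n\pg1}\TT_n(x)=\sum_{n\pg1}\TT_n\big(\DD_{n+1}(x)\big),\]
which is exactly the assertion. That the sums are finite is immediate: since $x\in D$ lies in $(L_1+L_\infty)(\mathcal{T}_N^{(0)})$ for some $N\pg1$, and this von Neumann algebra is of the form $\mathcal{M}_m\bar\otimes\mathcal{N}_{m'}$, one has $\TT_n(x)=\DD_{n+1}(x)=0$ for all $n\pg\max(m,m')$.

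It remains to verify $\TT_n\circ\DD_{n+1}=\TT_n$, and I would do this by a direct computation at the level of projections on $L_0(\mathcal{T})$, using only that the conditional expectations of a filtration satisfy $\EE_j\EE_k=\EE_{\min(j,k)}$ and likewise for $(\FF_k)_{k\pg1}$. Writing $\DD_{n+1}=\EE_{n+1}\otimes\FF_{n+1}-\EE_n\otimes\FF_n$ and composing on the left with $\TT_n=(\EE_{n+1}-\EE_n)\otimes(\FF_{n+1}-\FF_n)$, the $\EE_{n+1}\otimes\FF_{n+1}$ part contributes $(\EE_{n+1}-\EE_n)\EE_{n+1}\otimes(\FF_{n+1}-\FF_n)\FF_{n+1}=(\EE_{n+1}-\EE_n)\otimes(\FF_{n+1}-\FF_n)=\TT_n$, while the $\EE_n\otimes\FF_n$ part contributes $(\EE_{n+1}-\EE_n)\EE_n\otimes(\FF_{n+1}-\FF_n)\FF_n=0$, since $(\EE_{n+1}-\EE_n)\EE_n=\EE_n-\EE_n=0$. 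Adding the two contributions yields $\TT_n\DD_{n+1}=\TT_n$.

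I do not expect any genuine obstacle here; the entire content is the one-line cancellation $(\EE_{n+1}-\EE_n)\EE_n=0$. The only points deserving a sentence are that $\TT_n$ and $\DD_{n+1}$ do map $D$ into $D$ (all ranges involved sit inside some $\mathcal{T}_m$), and the trivial termwise manipulation of the finite sum. The role of this lemma is to prepare Fact~\ref{Example 3 - Fact 5}: expressing $\QQ$ through the difference projections $\DD_{n+1}$ of the diagonal filtration $(\mathcal{T}_n)_{n\pg1}$ is precisely what will allow Gundy's decomposition for that filtration --- whose output $\beta$ is controlled in the form $\sum_{n\pg1}\|\DD_n(\beta)\|_1$ --- to be fed into the Gundy-projection machinery and thereby establish that $\QQ$ is a Gundy projection.
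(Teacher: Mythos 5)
Your proposal is correct and follows essentially the same route as the paper: both arguments reduce to computing the composition of $\TT_n$ with the difference projections of the diagonal filtration via $\EE_j\EE_k=\EE_{\min(j,k)}$, the paper showing $\TT_n\DD_m=0$ unless $m=n+1$ and collapsing the double sum $\sum_m\sum_n\TT_n(\DD_m(x))$, while you isolate the single identity $\TT_n\DD_{n+1}=\TT_n$ and insert $\DD_{n+1}$ termwise into the sum from Lemma \ref{Example 3 - Lemma 1}. The cancellation $(\EE_{n+1}-\EE_n)\EE_n=0$ is the whole content in either version, and your finiteness remark is accurate.
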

\begin{proof}
If $n,m\pg1$, we have
\begin{align*}
\TT_n\DD_{m}&=((\EE_{n+1}-\EE_n)\otimes(\FF_{n+1}-\FF_n))(\EE_m\otimes\FF_m-\EE_{m-1}\otimes\FF_{m-1})\\
&=\EE_m(\EE_{n+1}-\EE_n)\otimes\FF_m(\FF_{n+1}-\FF_n)-\EE_{m-1}(\EE_{n+1}-\EE_n)\otimes\FF_{m-1}(\FF_{n+1}-\FF_n)
\end{align*}
The above computation shows that $\TT_n\DD_m$ is always $0$ except in the case $m=n+1$. Thus, if $x\in D$, we have
\[\QQ(x)=\sum_{m\pg1}\QQ(\DD_m(x))=\sum_{m\pg1}\sum_{n\pg1}\TT_n(\DD_m(x))=\sum_{n\pg1}\TT_n(\DD_{n+1}(x)).\]
\end{proof}

\begin{lemm}\label{Example 3 - Lemma 3}
$\QQ$ is a Gundy projection on $\mathcal{T}$ with a universal constant.
\end{lemm}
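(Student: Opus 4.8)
The plan is to produce the three‑piece decomposition required of a Gundy projection by running the noncommutative Gundy decomposition (Theorem~\ref{ParcetRandria}) \emph{with respect to the diagonal filtration} $(\mathcal{T}_n)_{n\pg1}$, and then pushing the pieces through $\QQ$ using the two descriptions $\QQ=\sum_{n\pg1}\TT_n$ of Lemma~\ref{Example 3 - Lemma 1} and $\QQ(x)=\sum_{n\pg1}\TT_n(\DD_{n+1}(x))$ of Lemma~\ref{Example 3 - Lemma 2}. I record a few preliminary facts. First, $D=\cup_{n\pg1}(L_1+L_\infty)(\mathcal{T}_n)$ (immediate from the definitions, since $\mathcal{T}_n=\mathcal{T}^{(0)}_{2n-1}$ and $\mathcal{T}^{(0)}_n\subset\mathcal{T}_{n+1}$). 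Second, the operators $\TT_n=(\EE_{n+1}-\EE_n)\otimes(\FF_{n+1}-\FF_n)$ are mutually orthogonal self‑adjoint projections on $L_2(\mathcal{T})$ with sum $\QQ$, so $\QQ$ is self‑adjoint, hence contractive, on $D\cap L_2(\mathcal{T})$. Third, factoring $\TT_n=\big((\EE_{n+1}-\EE_n)\otimes\mathrm{id}\big)\circ\big(\mathrm{id}\otimes(\FF_{n+1}-\FF_n)\big)$ as a composition of differences of $L_1$‑contractive one‑sided conditional expectations gives $\|\TT_n\|_{L_1(\mathcal{T})\to L_1(\mathcal{T})}\pp4$. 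Finally, it suffices to treat positive $y\in D\cap L_1(\mathcal{T})$: a general $y$ writes as $y_1-y_2+i(y_3-y_4)$ with $y_k\pg0$, $y_k\in D\cap L_1(\mathcal{T})$ and $\|y_k\|_1\pp\|y\|_1$, and applying the positive case to each $y_k$, then adding the $\alpha$'s, $\beta$'s, $\gamma$'s in $\QQ(D)$ and replacing the four small projections by their supremum, costs only a universal factor.

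Now fix positive $y\in D\cap L_1(\mathcal{T})$ and $\lambda>0$; then $y\in\cup_nL_1(\mathcal{T}_n)$, and Theorem~\ref{ParcetRandria} applied to $y,\lambda$ relative to $(\mathcal{T}_n)_{n\pg1}$ yields $y=\alpha_0+\beta_0+\gamma_0$ in $D$, an adapted projection sequence $(q_n)_{n\pg1}$ (so $q_n\in\mathcal{T}_n$), and $p:=1-\wedge_{n\pg1}q_n$, with $\|\alpha_0\|_2^2\pp C^2\lambda\|y\|_1$, $\sum_{n\pg1}\|\DD_n(\beta_0)\|_1\pp C\|y\|_1$, $\DD_{n+1}(\gamma_0)=\DD_{n+1}(y)-q_n\DD_{n+1}(y)q_n$ for all $n\pg1$, and $\tau(p)\pp C\|y\|_1/\lambda$ ($C$ universal). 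Put $\alpha:=\QQ(\alpha_0)$, $\beta:=\QQ(\beta_0)$, $\gamma:=\QQ(\gamma_0)\in\QQ(D)$, so that $\QQ(y)=\alpha+\beta+\gamma$. Contractivity of $\QQ$ on $L_2$ gives $\|\alpha\|_2\pp\|\alpha_0\|_2$, which is property~(ii); and Lemma~\ref{Example 3 - Lemma 2} together with $\|\TT_n\|_{L_1\to L_1}\pp4$ gives
\[\|\beta\|_1\pp\sum_{n\pg1}\|\TT_n(\DD_{n+1}(\beta_0))\|_1\pp4\sum_{n\pg1}\|\DD_{n+1}(\beta_0)\|_1\pp4\sum_{n\pg1}\|\DD_n(\beta_0)\|_1,\]
which is property~(iii).

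The heart of the matter is property~(iv), for which the key device is a module‑type identity for $\TT_n$. Since $q_n\in\mathcal{T}_n=\mathcal{M}_n\bar{\otimes}\mathcal{N}_n$ belongs to each of the four subalgebras onto which the one‑sided conditional expectations $\EE_n\otimes\mathrm{id}$, $\EE_{n+1}\otimes\mathrm{id}$, $\mathrm{id}\otimes\FF_n$, $\mathrm{id}\otimes\FF_{n+1}$ project, iterating the bimodule property of conditional expectations yields $\TT_n(q_naq_n)=q_n\TT_n(a)q_n$ for every $a\in L_1(\mathcal{T})$. Hence, via Lemma~\ref{Example 3 - Lemma 2} and the formula for $\DD_{n+1}(\gamma_0)$,
\[\gamma=\sum_{n\pg1}\TT_n\big(\DD_{n+1}(\gamma_0)\big)=\sum_{n\pg1}\Big(\TT_n\big(\DD_{n+1}(y)\big)-q_n\,\TT_n\big(\DD_{n+1}(y)\big)\,q_n\Big),\]
a finite sum. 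Since $1-p=\wedge_{m\pg1}q_m\pp q_n$, one has $q_n(1-p)=(1-p)q_n=1-p$, so compressing the displayed sum on both sides by $1-p$ annihilates every summand; thus $(1-p)\gamma(1-p)=0$, which with $\tau(p)\pp C\|y\|_1/\lambda$ is property~(iv). Collecting the (universal) constants then shows that $\QQ$ is a Gundy projection with a universal constant.

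The step I expect to be the real obstacle is the module‑type identity $\TT_n(q_naq_n)=q_n\TT_n(a)q_n$, since this is exactly what links the support control provided by Gundy's decomposition \emph{for the diagonal filtration} to the operator $\QQ=\sum_n\TT_n$: by itself, the inner structure $(1-p)\DD_{n+1}(\gamma_0)(1-p)=0$ does not control $(1-p)\TT_n(\DD_{n+1}(\gamma_0))(1-p)$, because $\TT_n$ need not commute with compression by $1-p$. The identity holds only thanks to the fact that $\mathcal{T}_n=\mathcal{M}_n\bar{\otimes}\mathcal{N}_n$ is contained in all four of $\mathcal{M}_n\bar{\otimes}\mathcal{N}$, $\mathcal{M}_{n+1}\bar{\otimes}\mathcal{N}$, $\mathcal{M}\bar{\otimes}\mathcal{N}_n$, $\mathcal{M}\bar{\otimes}\mathcal{N}_{n+1}$, i.e.\ is adapted for every conditional expectation entering the factorization of $\TT_n$. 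Everything else is routine bookkeeping with the universal constant of Theorem~\ref{ParcetRandria}.
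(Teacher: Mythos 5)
Your proof is correct and takes essentially the same route as the paper's: apply Gundy's decomposition (Theorem~\ref{ParcetRandria}) with respect to the diagonal filtration $(\mathcal{T}_n)_{n\pg1}$, push the three pieces through $\QQ$ via Lemma~\ref{Example 3 - Lemma 2}, and handle the $\gamma$-term through the $\mathcal{T}_n$-bimodularity of $\TT_n$ (your identity $\TT_n(q_naq_n)=q_n\TT_n(a)q_n$ is exactly the paper's observation that $\TT_n$ is $\mathcal{T}_n$-linear), before reducing the general case to positive $y$ by the same four-fold decomposition. The only differences are cosmetic: the value of the universal constant in the $L_1$-estimate for $\beta$, and whether one first shows $q_n\TT_n(\DD_{n+1}(\gamma))q_n=0$ and then compresses by $1-p\pp q_n$, or compresses the rewritten sum directly.
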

\begin{proof}
It is clear that $\QQ$ is self-adjoint on $L_2(\mathcal{T})$. Let $y\in D\cap L_1(\mathcal{T})$ and $\lambda>0$. First, we assume that $y$ is positive. Then, by applying Theorem \ref{ParcetRandria} to $y$ and $\lambda$ and with respect to the filtration $(\mathcal{T}_n)_{n\pg1}$, we deduce that there exist $\alpha,\beta,\gamma\in D$ and a sequence of projections $(q_n)_{n\pg1}$ of $\mathcal{T}$, with $q_n\in\mathcal{T}_n$ for $n\pg1$, such that
\begin{enumerate}
    \item[{\rm(i)}] $y=\alpha+\beta+\gamma$,
    \item[{\rm(ii)}] the element $\alpha$ satisfies
\[\|\alpha\|_2^2\pp C^2\lambda\|y\|_1,\]
    \item[{\rm(iii)}] the element $\beta$ satisfies 
\[\sum_{n\pg1}\|\DD_n(\beta)\|_1\pp C\|y\|_1,\]
    
    \item[{\rm(iv)}] the element $\gamma$ satisfies
\[\DD_{n+1}(\gamma)=\DD_{n+1}(y)-q_n\DD_{n+1}(y)q_n\]
for every $n\pg1$, 
    \item[{\rm(v)}] the projection $p:=1-\wedge_{n\pg1}q_n$ satisfies
\[\tau(p)\pp C\frac{\|y\|_1}{\lambda},\]
\end{enumerate} 
where $C>0$ is a universal constant. We set 
\[\alpha':=\QQ(\alpha),\ \ \ \ \ \ \beta':=\QQ(\beta),\ \ \ \ \ \ \ \gamma':=\QQ(\gamma).\]
Clearly we have $\QQ(y)=\alpha'+\beta'+\gamma'$. As $\QQ$ is contractive on $D\cap L_2(\mathcal{M})$, the element $\alpha'$ satisfies
\[\|\alpha'\|_2^2\pp\|\alpha\|_2^2\pp C^2\lambda\|y\|_1.\]
By using the expression of $\QQ$ given in Lemma \ref{Example 3 - Lemma 2}, the element $\beta'$ satisfies
\[\|\beta'\|_1=\|\QQ(\beta)\|_1=\Big\|\sum_{n\pg1}\TT_n(\DD_{n+1}(\beta))\Big\|_1\pp2\sum_{n\pg1}\|\DD_{n+1}(\beta)\|_1\pp2C\|y\|_1.\]
Now fix $n\pg1$. Since $\EE_{n+1}-\EE_n$ is $\mathcal{M}_n$-linear and $\FF_{n+1}-\FF_n$ is $\mathcal{N}_n$-linear, we deduce that $\TT_n=(\EE_{n+1}-\EE_n)\otimes(\FF_{n+1}-\FF_n)$ is 
$\mathcal{T}_n=\mathcal{M}_n\otimes\mathcal{N}_n
$-linear, and therefore, as $q_n\in\mathcal{T}_n$, we have
\begin{align*}
q_n\TT_n(\DD_{n+1}(\gamma))q_n&=\TT_n(q_n\DD_{n+1}(\gamma)q_n)\\
&=\TT_n(q_n(\DD_{n+1}(y)-q_n\DD_{n+1}(y)q_n)q_n)\\
&=\TT_n(q_n\DD_{n+1}(y)q_n-q_n\DD_{n+1}(y)q_n))\\
&=0,
\end{align*}
and it direcly follows that
\[(1-p)\TT_n(\DD_{n+1}(\gamma))(1-p)=0.\]
From the expression of $\QQ$ obtained in Lemma \ref{Example 3 - Lemma 2}, we deduce that the element $\gamma'$ satisfies
\[(1-p)\gamma'(1-p)=(1-p)\QQ(\gamma)(1-p)=\sum_{n\pg1}(1-p)\TT_n(\DD_{n+1}(\gamma))(1-p)=0,\]
as required. In the general case where $y$ is no longer assumed to be positive, one can decompose $y=(y_1^{+}-y_1^{-})+i(y_2^{+}-y_2^{-})$ where $y_j^{\pm}\in D\cap L_1(\mathcal{T})$ is positive and satisfies $\|y_j^{\pm}\|_1\pp\|y\|_1$. Then by applying what we just proved for each of the $y_j^{\pm}$, one gets the desired conclusion.
\end{proof}

To complete the proof of Fact \ref{Example 3 - Fact 5}, it suffices to observe that $\QQ$ satisfies the hypothesis of Corollary \ref{Gundy - Corollary 1} with respect to either of the three filtrations involved. We omit the details.

\section{Applications}

In this last section, we use the material from the previous one to derive new interpolation results for noncommutative adapted spaces and noncommutative Hardy spaces. 

The section is organised as follows. In the first paragraph, we provide some background on column/row/mixed spaces that will be needed subsequently. In the second paragraph, we establish a $K$-closedness result for noncommutative row/column/mixed adapted spaces by using the results contained in Example 1 of Section 2. In the third paragraph, we establish an analogous $K$-closedness result for noncommutative column/row/mixed Hardy spaces by using the results contained in Example 3 of Section 2. 

\subsection{Preliminaries: Column/Row/Mixed spaces} Column/Row/Mixed spaces are particular instances of Hilbert space-valued Lebesgue spaces. We refer to \cite{JungeSquareFunctions} for the subject. Let $\mathcal{M}$ be a von Neumann algebra equipped with a n.s.f. trace. 

For every $1\pp p\pp\infty$, the \textit{column space} $L_p(\mathcal{M},\ell_2^c)$ (resp. the \textit{row space} $L_p(\mathcal{M},\ell_2^r)$) is defined as the set of sequences $x=(x_n)_{n\pg1}$ of $L_p(\mathcal{M})$ such that the increasing sequence
\[\Big(\Big(\sum_{n=1}^{N}x_n^*x_n\Big)^{1/2}\Big)_{N\pg1}\ \ \ \text{resp.}\ \ \Big(\Big(\sum_{n=1}^{N}x_nx_n^*\Big)^{1/2}\Big)_{N\pg1}\]
is norm bounded in $L_p(\mathcal{M})$. Then $L_p(\mathcal{M},\ell_2^c)$ (resp. $L_p(\mathcal{M},\ell_2^r)$) is a Banach space for the norm $\|\cdot\|_{L_p(\mathcal{M},\ell_2^c)}$ (resp. $\|\cdot\|_{L_p(\mathcal{M},\ell_2^r)}$) defined as follows,
\[\|x\|_{L_p(\mathcal{M},\ell_2^c)}:=\sup_{N\pg1}\Big\|\Big(\sum_{n=1}^{N}x_n^*x_n\Big)^{1/2}\Big\|_{L_p(\mathcal{M})}\]
resp.
\[\|x\|_{L_p(\mathcal{M},\ell_2^r)}:=\sup_{N\pg1}\Big\|\Big(\sum_{n=1}^{N}x_nx_n^*\Big)^{1/2}\Big\|_{L_p(\mathcal{M})}.\]
For every $1\pp p\pp\infty$, the \textit{mixed space} $L_p(\mathcal{M},\ell_2^{rc})$ is defined as follows,
\[L_p(\mathcal{M},\ell_2^{rc}):=\left\{\begin{array}{cl}
    L_p(\mathcal{M},\ell_2^r)\cap L_p(\mathcal{M},\ell_2^c) & {\rm if}\ 2\pp p\pp\infty \\
    L_p(\mathcal{M},\ell_2^r)+L_p(\mathcal{M},\ell_2^c) & {\rm if}\ 1\pp p<2 
\end{array}\right..\]
Then $L_p(\mathcal{M},\ell_2^{rc})$ is a Banach space for the norm $\|\cdot\|_{L_p(\mathcal{M},\ell_2^{rc})}$ defined as follows,
\[\|x\|_{L_p(\mathcal{M},\ell_2^{rc})}:=\left\{\begin{array}{cl}
    \max\big\{\|x\|_{L_p(\mathcal{M},\ell_2^r)},\|x\|_{L_p(\mathcal{M},\ell_2^c)}\big\} & {\rm if}\ 2\pp p\pp\infty \\
    \displaystyle\inf_{x=a+b}\|a\|_{L_p(\mathcal{M},\ell_2^r)}+\|b\|_{L_p(\mathcal{M},\ell_2^c)}& {\rm if}\ 1\pp p<2 
\end{array}\right..\]

Many features of the class of noncommutative Lebesgue spaces, including H\"older's inequality and trace duality, have an analogous version for the three classes of column/row/mixed spaces. \textbf{In the sequel, when the symbols $r,c,rc$ are omitted, this means that the statement holds equally for the three settings of column/row/mixed spaces.}

First, notice that if $1\pp p,q\pp\infty$, the inclusion $L_p(\mathcal{M},\ell_2)\subset L_p(\mathcal{M})^{\N^*}$ is continuous with respect to the product topology, so that, if $1\pp p,q\pp\infty$, the sum space
\[(L_p+L_q)(\mathcal{M},\ell_2):=L_p(\mathcal{M},\ell_2)+L_q(\mathcal{M},\ell_2)\]
becomes a Banach space for the sum norm. An important fact is that the space of eventually-zero sequences of $(L_p+L_q)(\mathcal{M})$ is weakly dense in $(L_p+L_q)(\mathcal{M},\ell_2)$. More precisely, if $x=(x_n)_{n\pg1}\in(L_p+L_q)(\mathcal{M},\ell_2)$, and if we set $x^m:=(1_{n\pp m}x_n)_{n\pg1}\in(L_p+L_q)(\mathcal{M},\ell_2)$ for every $m\pg1$, then the sequence $(x^m)_{m\pg1}$ converges weakly to $x$. The most important feature of column/row/mixed spaces is that they nicely embed as closed subspaces of a noncommutative Lebesgue space with the same exponent on a suitable von Neumann algebra equipped with a n.s.f. trace. This yields the following crucial lemma, which will be used several times in the sequel. 

\begin{lemm}\label{AdaptedLemma0}
There is,
\begin{enumerate}
    \item[{\rm a)}] an auxiliary von Neumann algebra $\mathcal{N}$ endowed with a n.s.f. trace,
    \item[{\rm b)}] a weak*-dense *-subalgebra $\mathcal{A}$ of $\mathcal{N}$.
    \item[{\rm c)}] a linear base $X$ for $\mathcal{A}$,
    \item[{\rm d)}] and a sequence $(\xi_n)_{n\pg1}$ of $X$ that generates $\mathcal{A}$ as an *-subalgebra,
\end{enumerate}
such that, if $\mathcal{T}:=\mathcal{M}\bar{\otimes}\mathcal{N}$ denotes the tensor product von Neumann algebra equipped with the tensor product trace, 
\begin{enumerate} 
    \item[{\rm0)}] the $(\xi_n)_{n\pg1}$ belong to $(L_1\cap L_\infty)(\mathcal{N})$, are orthonormal in $L_2(\mathcal{N})$, and for every $n\pg1$ the trace of $\mathcal{N}$ is finite on the von Neumann subalgebra generated by $(\xi_n)_{1\pp k\pp n}$,
    \item[{\rm1)}] the mapping $T(x_n)_{n\pg1}=\sum_{n\pg1}x_n\otimes\xi_n$ well defined on the subspace of eventually-zero sequences of $(L_1+L_\infty)(\mathcal{M})$, extends to
    a weakly continuous injective operator
    \[T:(L_1+L_\infty)(\mathcal{M},\ell_2)\to(L_1+L_\infty)(\mathcal{T})\]
    such that whenever $x\in(L_1+L_\infty)(\mathcal{M},\ell_2)$ and $1\pp p\pp\infty$, we have $x\in L_p(\mathcal{T})$ if and only if $Tx\in L_p(\mathcal{T})$. In that case
\[\|x\|_{L_p(\mathcal{M},\ell_2)}\pp\|Tx\|_{L_p(\mathcal{T})}\pp 2\|x\|_{L_p(\mathcal{M},\ell_2)},\]
    \item[{\rm2)}] the mapping $P(\sum_{\xi\in X}x_\xi\otimes\xi)=\sum_{n\pg1}x_{\xi_n}\otimes\xi_n$ well defined on the algebraic tensor product $(L_1+L_\infty)(\mathcal{M})\otimes\mathcal{A}$ extends to a weakly continuous idempotent operator 
\[P:(L_1+L_\infty)(\mathcal{T})\to(L_1+L_\infty)(\mathcal{T})\]
such that whenever $1\pp p\pp\infty$ and $y\in L_p(\mathcal{T})$ we have $Py\in L_p(\mathcal{T})$ with 
\[\|Py\|_{L_p(\mathcal{T})}\pp2\|y\|_{L_p(\mathcal{T})},\]
\end{enumerate}
\end{lemm}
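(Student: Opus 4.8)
The plan is to dispose of the three cases $\ell_2^{c}$, $\ell_2^{r}$, $\ell_2^{rc}$ in turn (recall the blanket convention on the symbols $r,c,rc$), exhibiting in each case an explicit quadruple $(\mathcal{N},\mathcal{A},X,(\xi_n)_{n\pg1})$ satisfying a)--d) and then verifying 0)--2) by a suitable (non)commutative Khintchine-type computation followed by a routine passage from eventually-zero sequences to the full spaces. The column and row cases are matrix-unit bookkeeping and yield the estimates with the constant $1$; the mixed case carries the real content and is, as explained at the end, the only genuine obstacle.

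For $L_p(\mathcal{M},\ell_2^{c})$ I would take $\mathcal{N}=\mathcal{B}(\ell_2)$ with its canonical trace, $\mathcal{A}$ the finite-rank operators, $X=\{e_{ij}\}_{i,j\pg1}$ the matrix units, and $\xi_n:=e_{n,1}$ (for $L_p(\mathcal{M},\ell_2^{r})$, $\xi_n:=e_{1,n}$); the $\xi_n$ generate $\mathcal{A}$ as a $*$-algebra since $e_{n,1}e_{m,1}^{*}=e_{n,m}$. Property 0) is immediate: $e_{n,1}\in(L_1\cap L_2\cap L_\infty)(\mathcal{N})$, the $e_{n,1}$ are orthonormal in $L_2(\mathcal{N})$, and $e_{1,1},\dots,e_{n,1}$ generate the finite-trace corner $M_n\subset\mathcal{B}(\ell_2)$. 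For 1), the identity $\big(T(x_n)\big)^{*}T(x_n)=\big(\sum_n x_n^{*}x_n\big)\otimes e_{1,1}$ on eventually-zero sequences gives $\|T(x_n)\|_{L_p(\mathcal{T})}=\|(x_n)\|_{L_p(\mathcal{M},\ell_2^{c})}$ for every $p$, and injectivity follows from $(1\otimes e_{1,k})T(x_n)=x_k\otimes e_{1,1}$; the monotonicity of the partial sums $\sum_{n\pp N}x_n^{*}x_n$, together with the weak density of eventually-zero sequences and the Fatou property of $L_p$, then upgrades this to all of $(L_1+L_\infty)(\mathcal{M},\ell_2^{c})$, giving at once the equivalence $x\in L_p(\mathcal{M},\ell_2^{c})\iff Tx\in L_p(\mathcal{T})$ and the weak continuity of $T$. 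For 2), since $X$ is a linear basis of $\mathcal{A}$, one checks on elementary tensors that the prescribed $P$ is right multiplication by $1\otimes e_{1,1}$, i.e.\ $P(y)=y\,(1\otimes e_{1,1})$ (left multiplication in the row case), which is a weak*-continuous norm-one idempotent that maps each $L_p(\mathcal{T})$ into itself with norm $1$.

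The mixed space is the substantial case. Here I would let $\mathcal{N}$ be a finite von Neumann algebra carrying a \emph{free} family $(\xi_n)_{n\pg1}$ of Haar unitaries (one may instead take the self-adjoint generators of the hyperfinite $\mathrm{II}_1$ factor realized as a Clifford/CAR algebra), with $\mathcal{A}$ the weak*-dense $*$-algebra they generate and $X$ the associated linear basis of reduced words, which contains the $\xi_n$; property 0) is then standard. Properties 1) and 2) amount precisely to the \emph{free noncommutative Khintchine inequality}
\[\|(x_n)\|_{L_p(\mathcal{M},\ell_2^{rc})}\pp\Big\|\sum_n x_n\otimes\xi_n\Big\|_{L_p(\mathcal{T})}\pp 2\,\|(x_n)\|_{L_p(\mathcal{M},\ell_2^{rc})},\qquad 1\pp p\pp\infty,\]
the left inequality for $2\pp p\pp\infty$ being obtained directly from $\|\textstyle\sum_n x_n^{*}x_n\|_{p/2}=\|E_{\mathcal{M}}\!\big(T(x_n)^{*}T(x_n)\big)\|_{p/2}\pp\|T(x_n)\|_p^{2}$ and its row counterpart, where $E_{\mathcal{M}}$ is the trace-preserving conditional expectation onto $\mathcal{M}\otimes1$, the right inequality for $p=\infty$ being the classical bound of Haagerup--Pisier (equivalently, in the Fock-space picture, the triangle inequality applied to the creation and annihilation parts), and the remaining ranges following by complex interpolation against the isometric case $p=2$ and by duality. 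For 2), writing $\pi_n(y):=(\mathrm{id}\otimes\tau)\big((1\otimes\xi_n^{*})y\big)$, the prescribed $P$ is the free Fourier projection $P(y)=\sum_n\pi_n(y)\otimes\xi_n$ onto the first chaos; a Bessel-type inequality gives $\sum_n\pi_n(y)^{*}\pi_n(y)\pp E_{\mathcal{M}}(y^{*}y)$ and the symmetric $\sum_n\pi_n(y)\pi_n(y)^{*}\pp E_{\mathcal{M}}(yy^{*})$, whence the upper Khintchine bound yields $\|Py\|_\infty\pp2\|y\|_\infty$; since $P$ is self-adjoint for the trace pairing, duality gives the $L_1$ bound and interpolation the bounds for $1<p<\infty$, while idempotence and weak*-continuity are clear. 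As before, $T$ and $P$ are extended to the full spaces by density and the Fatou property.

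I expect the mixed case to be the true obstacle: getting 1)--2) for the \emph{entire} range $1\pp p\pp\infty$ simultaneously forces the use of a free system, since the classical Rademacher Khintchine inequality degenerates at the $L_\infty$ endpoint; the free Khintchine inequality with a uniform constant is a deep result, and it is precisely the material recalled in \cite{JungeSquareFunctions}, to which I would refer for the exact form of the constants. Everything else — the column and row computations, the identification of $P$ as a multiplication or Fourier operator, and the weak-density/Fatou arguments — is routine.
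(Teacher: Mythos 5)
Your proposal is correct and follows essentially the same route as the paper: matrix units in $\mathcal{B}(\ell_2)$ with $P$ given by one-sided multiplication by $1\otimes e_{11}$ for the row/column cases, and a free family of Haar unitaries (the generators of $L(\mathbb{F}_\infty)$, exactly the paper's choice) with the Haagerup--Pisier/free Khintchine inequality for the mixed case. You supply more of the verification than the paper, which simply cites the Khintchine references and omits the details.
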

\begin{proof}
We need to deal with row/column spaces and mixed spaces separately.

\textit{For row/column spaces.} Let $\ell_2(\N^*)$ denote the Hilbert space of square-summable complex sequences. Let $\mathcal{N}$ be the von Neumann algebra of bounded operators on $\ell_2(\N^*)$ endowed with its canonical trace. Let $\mathcal{A}$ be the set of bounded operators on $\ell_2(\N^*)$ with finite support. Then $\mathcal{A}$ is clearly a weak*-dense *-subalgebra of $\mathcal{N}$. Let $X:=\{e_{ij}\ \mid\ i,j\pg1\}$ be the set of canonical elementary operators on $\ell_2$. Then clearly $X$ is a linear base for $\mathcal{A}$. Finally, let $(\xi_n)_{n\pg1}$ denote the sequence of $X$ such that $\xi_n=e_{1n}$ in the setting of row spaces and $\xi=e_{n1}$ in the setting of column spaces, for every $n\pg1$. Then $X$ clearly generates $\mathcal{A}$ as a *-subalgebra. Assertions 0), 1) and 2) are easily checked. Notice that the idempotent operator $P:(L_1+L_\infty)(\mathcal{T})\to(L_1+L_\infty)(\mathcal{T})$ acts by multiplication by the projection $1\otimes e_{11}\in L_\infty(\mathcal{T})$, on the left in the row spaces setting and on the right in the column spaces setting.

\textit{For mixed spaces.} Let $\mathbb{F}_\infty$ denote the free group with generators $(g_n)_{n\pg1}$. Let $\mathcal{N}$ be the group von Neumann algebra of $\mathbb{F}_\infty$ endowed with its canonical trace. Let $\mathcal{A}$ be the group algebra of $\mathbb{F}_\infty$, viewed as a subset of $\mathcal{N}$. Then $\mathcal{A}$ is clearly a weak*-dense *-subalgebra of $\mathcal{N}$ included in $(L_1\cap L_\infty)(\mathcal{N})$. Let $X:=\{\lambda_g\ \mid\ g\in \mathbb{F}_\infty\}$ be the range of the regular representation of $\mathbb{F}_\infty$. Then of course $X$ is a linear base for $\mathcal{A}$. Finally, let $(\xi_n)_{n\pg1}$ denote the sequence of $X$ such that $\xi_n=\lambda_{g_n}$ for every $n\pg1$. Then $X$ clearly generates $\mathcal{A}$ as a *-subalgebra. Assertion 0) is easily checked, while assertions 1) and 2) follow from Khintchine's inequality (see \cite{HaagerupKhintchine, RicardKhintchine}). We omit the details.
\end{proof}

\begin{rem}
The injective operator $T:(L_1+L_\infty)(\mathcal{M},\ell_2)\to(L_1+L_\infty)(\mathcal{T})$ and the idempotent operator $P:(L_1+L_\infty)(\mathcal{T})\to(L_1+L_\infty)(\mathcal{T})$ have the same range, namely the closure in $(L_1+L_\infty)(\mathcal{T})$ of the set
\[\{x\otimes\xi_n\ \mid\ x\in(L_1+L_\infty)(\mathcal{M}),\ n\pg1\}.\]    
\end{rem}

We now investigate the properties of the Banach couple $(L_1(\mathcal{M},\ell_2),L_\infty(\mathcal{M},\ell_2))$. Using Theorem \ref{InterpLp}, one can easily deduce the following result.

\begin{theo}\label{InterpRC}
For every $0<\theta<1$, we have
\[(L_1(\mathcal{M},\ell_2),L_\infty(\mathcal{M},\ell_2))_{\theta,p}=L_p(\mathcal{M},\ell_2)\]
with equivalent norms, and constants independant of $\mathcal{M}$, where $1/p=1-\theta$. 
\end{theo}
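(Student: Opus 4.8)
The plan is to transfer the statement to an ordinary noncommutative $L_p$-space by means of Lemma \ref{AdaptedLemma0} and then to invoke Theorem \ref{InterpLp}. Fix the auxiliary algebra $\mathcal{N}$, the algebra $\mathcal{T}=\mathcal{M}\bar{\otimes}\mathcal{N}$, the embedding $T$ and the idempotent $P$ furnished by Lemma \ref{AdaptedLemma0}, and for $1\pp p\pp\infty$ put $R_p:=T\big(L_p(\mathcal{M},\ell_2)\big)\subset L_p(\mathcal{T})$. The point is that $(R_1,R_\infty)$ will be a complemented subcouple of $(L_1(\mathcal{T}),L_\infty(\mathcal{T}))$ that is couple-isomorphic to $(L_1(\mathcal{M},\ell_2),L_\infty(\mathcal{M},\ell_2))$, so the desired identity follows from the one for ordinary $L_p$.

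First I would record the structural facts, all of which come at once from the concrete description of $T$ and $P$ on the algebraic tensor product in Lemma \ref{AdaptedLemma0}. By assertion 1) of that lemma, $T$ restricts to an isomorphism $L_p(\mathcal{M},\ell_2)\to R_p$ with $\|T\|\pp2$ and $\|T^{-1}\|\pp1$; in particular each $R_p$ is closed in $L_p(\mathcal{T})$, and, giving $R_p$ its subspace norm, $T:(L_1(\mathcal{M},\ell_2),L_\infty(\mathcal{M},\ell_2))\to(R_1,R_\infty)$ is an isomorphism of Banach couples with constants not depending on $\mathcal{M}$. By assertions 1) and 2) (and the remark that $T$ and $P$ have the same range), $P$ is idempotent with $P\big(L_p(\mathcal{T})\big)=R_p$ for every $p$ and $\|Py\|_{L_p(\mathcal{T})}\pp2\|y\|_{L_p(\mathcal{T})}$; being idempotent it restricts to the identity on $R_1+R_\infty$. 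Hence $(R_1,R_\infty)$ is a complemented subcouple of $(L_1(\mathcal{T}),L_\infty(\mathcal{T}))$ with complementation constant $\pp2$.

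Then I would chain the interpolation identities. Applying the (exact interpolation) functor $(\cdot,\cdot)_{\theta,p}$ to the complemented subcouple $(R_1,R_\infty)$, using the stability of complemented subcouples under interpolation functors recalled in Section~1 (alternatively Proposition \ref{Kclosedness}, since complemented implies $K$-closed), one gets, with equivalent norms and a constant depending only on the complementation constant,
\[(R_1,R_\infty)_{\theta,p}=(R_1+R_\infty)\cap(L_1(\mathcal{T}),L_\infty(\mathcal{T}))_{\theta,p}.\]
By Theorem \ref{InterpLp}, $(L_1(\mathcal{T}),L_\infty(\mathcal{T}))_{\theta,p}=L_p(\mathcal{T})$ with universal constants, and the ``if and only if'' in assertion 1) of Lemma \ref{AdaptedLemma0} gives $(R_1+R_\infty)\cap L_p(\mathcal{T})=T\big((L_1+L_\infty)(\mathcal{M},\ell_2)\big)\cap L_p(\mathcal{T})=T\big(L_p(\mathcal{M},\ell_2)\big)=R_p$. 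Thus $(R_1,R_\infty)_{\theta,p}=R_p$ with universal equivalence constants. Finally, pulling back along the couple isomorphism $T$ identifies $(L_1(\mathcal{M},\ell_2),L_\infty(\mathcal{M},\ell_2))_{\theta,p}$ with $(R_1,R_\infty)_{\theta,p}=R_p$, and $T^{-1}$ identifies $R_p$ with $L_p(\mathcal{M},\ell_2)$ up to the factor $2$; composing and collecting the absolute constants coming from the two factors $2$ of Lemma \ref{AdaptedLemma0} and from Theorem \ref{InterpLp} yields the claimed norm equivalence with constants independent of $\mathcal{M}$.

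I do not expect a genuine obstacle here: once Lemma \ref{AdaptedLemma0} is available the argument is purely formal. The only points deserving a moment's care are that the $L_p$-level range of $P$ equals $R_p$ and that $R_p=(R_1+R_\infty)\cap L_p(\mathcal{T})$ — that is, that $(R_1,R_\infty)$ is \emph{complemented}, not merely $K$-closed, in $(L_1(\mathcal{T}),L_\infty(\mathcal{T}))$ — but both are immediate from the explicit formulas for $T$ and $P$ on the algebraic tensor product. The whole substance of the statement (in particular the use of Khintchine's inequality in the mixed-space case) has already been absorbed into Lemma \ref{AdaptedLemma0}, after which Theorem \ref{InterpRC} is a consequence of functoriality of real interpolation together with Theorem \ref{InterpLp}.
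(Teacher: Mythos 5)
Your argument is correct and is exactly the route the paper intends: Theorem \ref{InterpRC} is stated without proof ("Using Theorem \ref{InterpLp}, one can easily deduce..."), but Lemma \ref{AdaptedLemma0} is placed immediately before it precisely so that $(T(L_1(\mathcal{M},\ell_2)),T(L_\infty(\mathcal{M},\ell_2)))$ becomes a complemented subcouple of $(L_1(\mathcal{T}),L_\infty(\mathcal{T}))$ via $P$, after which the identity is pulled back from Theorem \ref{InterpLp} applied to $\mathcal{T}$. Your verification that $P(L_p(\mathcal{T}))=R_p$ and $R_p=(R_1+R_\infty)\cap L_p(\mathcal{T})$ via the "if and only if" in assertion 1) is the only content needed, and it is handled correctly.
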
   

If $E$ is a fully symmetric intermediate space (or an exact interpolation space) for $(L_1,L_\infty)$, with the notation of Lemma \ref{AdaptedLemma0}, then the set
\[E(\mathcal{M},\ell_2):=\left\{x\in (L_1+L_\infty)(\mathcal{M},\ell_2)\ \mid\ Tx\in E(\mathcal{T})\right\}\]
becomes a Banach space when equipped with the norm $\|\cdot\|_{E(\mathcal{M},\ell_2^{rc})}$ defined as follows,
\[\|x\|_{E(\mathcal{M},\ell_2^{rc})}:=\|Tx\|_{E(\mathcal{T})}.\]
Furthermore, $E(\mathcal{M},\ell_2)$ is clearly an interpolation space for $(L_1(\mathcal{M},\ell_2),L_\infty(\mathcal{M},\ell_2))$. More generally, from Proposition \ref{Dodds2}, we deduce the following result.

\begin{prop}\label{Dodds3}
Let $\mathcal{M},\mathcal{N}$ be von Neumann algebras equipped with n.s.f. traces. Then for every fully symmetric intermediate space $E$ for $(L_1,L_\infty)$, the pair $E(\mathcal{M},\ell_2)$, $E(\mathcal{N},\ell_2)$ is an interpolation pair for $(L_1(\mathcal{M},\ell_2),L_\infty(\mathcal{M},\ell_2))$, $(L_1(\mathcal{N},\ell_2),L_\infty(\mathcal{N},\ell_2))$. 
\end{prop}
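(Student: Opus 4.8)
The plan is to reduce the statement, via the embedding furnished by Lemma \ref{AdaptedLemma0}, to the already known interpolation-pair property of ordinary noncommutative Lebesgue spaces (Proposition \ref{Dodds2}). First I would apply Lemma \ref{AdaptedLemma0} to each of $\mathcal{M}$ and $\mathcal{N}$ in turn (the construction is insensitive to the row/column/mixed choice, which is the same on both sides), producing tensor product algebras $\mathcal{T}_\mathcal{M}$, $\mathcal{T}_\mathcal{N}$ together with operators $T_\mathcal{M},P_\mathcal{M}$ and $T_\mathcal{N},P_\mathcal{N}$, so that by definition $E(\mathcal{M},\ell_2)=\{x:T_\mathcal{M}x\in E(\mathcal{T}_\mathcal{M})\}$ with $\|x\|_{E(\mathcal{M},\ell_2)}=\|T_\mathcal{M}x\|_{E(\mathcal{T}_\mathcal{M})}$, and similarly for $\mathcal{N}$. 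The key observation is that $T_\mathcal{M}$ realises $(L_1(\mathcal{M},\ell_2),L_\infty(\mathcal{M},\ell_2))$ as a retract, in the Banach-couple sense, of $(L_1(\mathcal{T}_\mathcal{M}),L_\infty(\mathcal{T}_\mathcal{M}))$: by the Remark following Lemma \ref{AdaptedLemma0}, $P_\mathcal{M}$ and $T_\mathcal{M}$ have the same range, so since $T_\mathcal{M}$ is injective the formula $R_\mathcal{M}:=T_\mathcal{M}^{-1}\circ P_\mathcal{M}$ defines a well-defined operator on $(L_1+L_\infty)(\mathcal{T}_\mathcal{M})$ with $R_\mathcal{M}\circ T_\mathcal{M}=\mathrm{id}$; and assertions 1) and 2) of the lemma show that $R_\mathcal{M}$ restricts to a bounded operator $L_p(\mathcal{T}_\mathcal{M})\to L_p(\mathcal{M},\ell_2)$ of norm at most $2$ for $p\in\{1,\infty\}$, while $T_\mathcal{N}$ restricts to a bounded operator $L_p(\mathcal{N},\ell_2)\to L_p(\mathcal{T}_\mathcal{N})$ of norm at most $2$.

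Given a bounded operator $S\colon(L_1(\mathcal{M},\ell_2),L_\infty(\mathcal{M},\ell_2))\to(L_1(\mathcal{N},\ell_2),L_\infty(\mathcal{N},\ell_2))$, I would then consider
\[\widetilde S:=T_\mathcal{N}\circ S\circ R_\mathcal{M}\colon(L_1+L_\infty)(\mathcal{T}_\mathcal{M})\to(L_1+L_\infty)(\mathcal{T}_\mathcal{N}).\]
Chaining the three boundedness bounds above, $\widetilde S$ is a bounded operator between the couples $(L_1(\mathcal{T}_\mathcal{M}),L_\infty(\mathcal{T}_\mathcal{M}))$ and $(L_1(\mathcal{T}_\mathcal{N}),L_\infty(\mathcal{T}_\mathcal{N}))$ with norm at most $4\|S\|$. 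Applying Proposition \ref{Dodds2} to the pair of von Neumann algebras $\mathcal{T}_\mathcal{M},\mathcal{T}_\mathcal{N}$ and to the same fully symmetric space $E$, the pair $(E(\mathcal{T}_\mathcal{M}),E(\mathcal{T}_\mathcal{N}))$ is an exact interpolation pair for those two Lebesgue couples; hence $\widetilde S$ maps $E(\mathcal{T}_\mathcal{M})$ into $E(\mathcal{T}_\mathcal{N})$ with norm of the induced map at most $4\|S\|$.

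It then remains to transport this conclusion back. For $x\in E(\mathcal{M},\ell_2)$ one has $T_\mathcal{M}x\in E(\mathcal{T}_\mathcal{M})$ with $\|T_\mathcal{M}x\|_{E(\mathcal{T}_\mathcal{M})}=\|x\|_{E(\mathcal{M},\ell_2)}$, and since $R_\mathcal{M}\circ T_\mathcal{M}=\mathrm{id}$ we get $T_\mathcal{N}(Sx)=\widetilde S(T_\mathcal{M}x)\in E(\mathcal{T}_\mathcal{N})$; as $Sx\in(L_1+L_\infty)(\mathcal{N},\ell_2)$ because $S$ acts between the couples, the definition of $E(\mathcal{N},\ell_2)$ yields $Sx\in E(\mathcal{N},\ell_2)$ with
\[\|Sx\|_{E(\mathcal{N},\ell_2)}=\|T_\mathcal{N}(Sx)\|_{E(\mathcal{T}_\mathcal{N})}=\|\widetilde S(T_\mathcal{M}x)\|_{E(\mathcal{T}_\mathcal{N})}\le 4\|S\|\,\|x\|_{E(\mathcal{M},\ell_2)}.\]
This gives the desired interpolation-pair property, uniformly over the row/column/mixed settings. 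I do not anticipate a genuine obstacle here: the real content sits in Lemma \ref{AdaptedLemma0}, and the only subtleties are to notice that $P_\mathcal{M}$ and $T_\mathcal{M}$ share a range (so that $T_\mathcal{M}^{-1}\circ P_\mathcal{M}$ is an honest operator on $(L_1+L_\infty)(\mathcal{T}_\mathcal{M})$) and to keep track of the factor-$2$ norm distortions of $T_\mathcal{M}$ and $T_\mathcal{N}$, which is precisely why one only obtains an interpolation pair rather than an exact one.
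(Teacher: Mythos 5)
Your argument is correct and is precisely the deduction the paper intends (the paper only states that Proposition \ref{Dodds3} follows from Proposition \ref{Dodds2}): you transfer a bounded operator between the $\ell_2$-valued couples to the tensor-product Lebesgue couples via the retract $R=T^{-1}\circ P$ furnished by Lemma \ref{AdaptedLemma0}, apply Proposition \ref{Dodds2} there, and pull the conclusion back using the defining identity $\|x\|_{E(\mathcal{M},\ell_2)}=\|Tx\|_{E(\mathcal{T})}$. The bookkeeping of the factor-$2$ norm bounds and the observation that $P$ and $T$ share a range are exactly the right points to isolate.
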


By the above result, one can deduce that if $x=(x_n)_{n\pg1}$ is a sequence $(L_1+L_\infty)(\mathcal{M})$ that belongs to $E(\mathcal{M},\ell_2)$, then $x_n\in E(\mathcal{M})$ for every $n\pg1$. As a consequence, with the choice $E:=L_p$ with $1\pp p\pp\infty$, we have $E(\mathcal{M},\ell_2)=L_p(\mathcal{M},\ell_2)$ with equivalent norms, with universal constants. Another important fact is that every eventually-zero sequence of $E(\mathcal{M})$ belongs to $E(\mathcal{M},\ell_2)$. However, in general, eventually-zero sequences of $E(\mathcal{M})$ may not be dense in $E(\mathcal{M},\ell_2)$. For example, it is not the case when $E:=L_\infty$. We say that $E$ \textit{satisfies the separability condition} \hypertarget{separability}{(S)} if for every $x=(x_n)_{n\pg1}\in E(\mathcal{M},\ell_2)$, if we set $x^m:=(1_{n\pp m}x_n)_{n\pg1}\in E(\mathcal{M},\ell_2)$ for $m\pg1$, then the sequence $(x^m)_{m\pg1}$ converges to $x$ in $E(\mathcal{M},\ell_2)$. Note that for every $1\pp p,q<\infty$, $E:=L_p+L_q$ satisfies the separability condition.


The following result, which is easily deduced from Theorem \ref{FunctorLp}, provides a generalisation of Theorem \ref{InterpRC}.

\begin{theo}\label{FunctorRC}
Let $\mathcal{F}$ be an exact interpolation functor and consider the exact interpolation space $E:=\mathcal{F}(L_1,L_\infty)$. Then
\[E(\mathcal{M},\ell_2)=\mathcal{F}(L_1(\mathcal{M},\ell_2),L_\infty(\mathcal{M},\ell_2))\]
with equivalent norms and constants independent of $\mathcal{M}$.
\end{theo}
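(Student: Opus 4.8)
The plan is to transport everything to the tensor product algebra $\mathcal{T}=\mathcal{M}\bar{\otimes}\mathcal{N}$ furnished by Lemma \ref{AdaptedLemma0} and then invoke Theorem \ref{FunctorLp} there; this is precisely the mechanism by which Theorem \ref{InterpRC} is deduced from Theorem \ref{InterpLp}, carried out here for an arbitrary exact interpolation functor $\mathcal{F}$. Concretely, I would first realise the couple $(L_1(\mathcal{M},\ell_2),L_\infty(\mathcal{M},\ell_2))$ as a \emph{complemented} subcouple of $(L_1(\mathcal{T}),L_\infty(\mathcal{T}))$, with the operator $T$ as the embedding and the idempotent $P$ as the complementing projection, and then combine the general principle on complemented subcouples recalled in Section 1 (namely $\mathcal{F}(B_0,B_1)=(B_0+B_1)\cap\mathcal{F}(A_0,A_1)$ with equivalent norms) with the identification $\mathcal{F}(L_1(\mathcal{T}),L_\infty(\mathcal{T}))=E(\mathcal{T})$ coming from Theorem \ref{FunctorLp}.

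For $p\in\{1,\infty\}$ I would set $R_p:=T(L_p(\mathcal{M},\ell_2))$, equipped with the norm induced by $L_p(\mathcal{T})$. The estimate $\|x\|_{L_p(\mathcal{M},\ell_2)}\le\|Tx\|_{L_p(\mathcal{T})}\le 2\|x\|_{L_p(\mathcal{M},\ell_2)}$ from part 1) of Lemma \ref{AdaptedLemma0} says that the restriction of $T$ to $L_p(\mathcal{M},\ell_2)$ is an isomorphic embedding into $L_p(\mathcal{T})$; since $L_p(\mathcal{M},\ell_2)$ is complete, $R_p$ is a closed subspace of $L_p(\mathcal{T})$, and $(R_1,R_\infty)$ is a Banach subcouple of $(L_1(\mathcal{T}),L_\infty(\mathcal{T}))$, sitting inside $(L_1+L_\infty)(\mathcal{T})$ as the common range $R$ of $T$ and $P$ (see the remark after Lemma \ref{AdaptedLemma0}). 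The same two-sided estimate shows that $T:(L_1(\mathcal{M},\ell_2),L_\infty(\mathcal{M},\ell_2))\to(R_1,R_\infty)$ is a bounded operator of Banach couples with bounded inverse, hence an isomorphism of Banach couples; since $(\mathcal{F}(\cdot),\mathcal{F}(\cdot))$ is an interpolation pair, it induces a Banach space isomorphism $\mathcal{F}(L_1(\mathcal{M},\ell_2),L_\infty(\mathcal{M},\ell_2))\cong\mathcal{F}(R_1,R_\infty)$ with distortion at most $2$.

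Next I would verify that $(R_1,R_\infty)$ is complemented in $(L_1(\mathcal{T}),L_\infty(\mathcal{T}))$ via $P$. Part 2) of Lemma \ref{AdaptedLemma0} gives that $P$ is bounded of norm at most $2$ on each $L_p(\mathcal{T})$. Since $P$ is idempotent with range $R$, and since the equivalence in part 1) yields $R\cap L_p(\mathcal{T})=\{Tx:x\in L_p(\mathcal{M},\ell_2)\}=R_p$, it follows that $P(L_p(\mathcal{T}))=R_p$ and that $P$ restricts to the identity on $R_1+R_\infty=R$. Thus $P:(L_1(\mathcal{T}),L_\infty(\mathcal{T}))\to(R_1,R_\infty)$ is a projection of couples, and the general principle recalled above gives $\mathcal{F}(R_1,R_\infty)=(R_1+R_\infty)\cap\mathcal{F}(L_1(\mathcal{T}),L_\infty(\mathcal{T}))=R\cap\mathcal{F}(L_1(\mathcal{T}),L_\infty(\mathcal{T}))$, with norms equivalent up to constants controlled only by $\|P\|$ and the inclusion norms, hence universal. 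Applying Theorem \ref{FunctorLp} to the von Neumann algebra $\mathcal{T}$ then replaces $\mathcal{F}(L_1(\mathcal{T}),L_\infty(\mathcal{T}))$ by $E(\mathcal{T})$ with equal norms.

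It remains to assemble the chain. For $x\in(L_1+L_\infty)(\mathcal{M},\ell_2)$ one has $x\in\mathcal{F}(L_1(\mathcal{M},\ell_2),L_\infty(\mathcal{M},\ell_2))$ if and only if $Tx\in\mathcal{F}(R_1,R_\infty)=R\cap E(\mathcal{T})$; and since $Tx$ automatically lies in $R$, this is equivalent to $Tx\in E(\mathcal{T})$, i.e. to $x\in E(\mathcal{M},\ell_2)$. Tracking the norms along the isomorphisms, $\|x\|_{\mathcal{F}(L_1(\mathcal{M},\ell_2),L_\infty(\mathcal{M},\ell_2))}$ is comparable to $\|Tx\|_{E(\mathcal{T})}=\|x\|_{E(\mathcal{M},\ell_2)}$ with a constant depending only on the constant $2$ from Lemma \ref{AdaptedLemma0}, hence independent of $\mathcal{M}$, which is exactly the claim. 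The one genuinely delicate point in this argument is the bookkeeping that translates the "$(L_1+L_\infty)$-level" facts of Lemma \ref{AdaptedLemma0} (equality of the ranges of $T$ and $P$, the "if and only if" characterising $L_p$-membership) into the couple-level facts used here — closedness of $R_p$ in $L_p(\mathcal{T})$, $P(L_p(\mathcal{T}))=R_p$, and $P$ acting as the identity on $R$ — and in keeping all resulting constants independent of $\mathcal{M}$; once these are in place, the interpolation-functor formalism does the rest essentially automatically.
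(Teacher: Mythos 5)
Your argument is correct and is precisely the route the paper intends: the paper omits the proof, saying only that the result is "easily deduced from Theorem \ref{FunctorLp}", and your realisation of $(L_1(\mathcal{M},\ell_2),L_\infty(\mathcal{M},\ell_2))$ as a complemented subcouple of $(L_1(\mathcal{T}),L_\infty(\mathcal{T}))$ via $T$ and $P$, followed by the complemented-subcouple identity $\mathcal{F}(B_0,B_1)=(B_0+B_1)\cap\mathcal{F}(A_0,A_1)$ and Theorem \ref{FunctorLp} applied to $\mathcal{T}$, is exactly the mechanism Lemma \ref{AdaptedLemma0} was set up to provide. The bookkeeping you flag (closedness of $R_p$, $P(L_p(\mathcal{T}))=R_p$, $P=\mathrm{id}$ on $R$, constants bounded by $2$) is handled correctly.
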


\subsection{Warm-up: Noncommutative adapted spaces}

Let $\mathcal{M}$ be a von Neumann algebra endowed with a n.s.f. trace and a filtration denoted $(\mathcal{M}_n)_{n\pg1}$ with associated conditional expectations denoted $(\EE_n)_{n\pg1}$. Recall that a sequence $x=(x_n)_{n\pg1}$ of $(L_1+L_\infty)(\mathcal{M})$ is said to be adapted if $\EE_n(x_n)=x_n$ for every $n\pg1$. For every $1\pp p\pp\infty$, we consider the \textit{adapted column/row spaces},
\[L_p^{{\rm ad}}(\mathcal{M},\ell_2^c):=\big\{x\in L_p(\mathcal{M},\ell_2^c)\ \mid\ x{\rm\ is\ an\ adapted\ sequence}\big\},\]
\[L_p^{{\rm ad}}(\mathcal{M},\ell_2^r):=\big\{x\in L_p(\mathcal{M},\ell_2^r)\ \mid\ x{\rm\ is\ an\ adapted\ sequence}\big\},\]
and the \textit{adapted mixed space},
\[L_p^{{\rm ad}}(\mathcal{M},\ell_2^{rc}):=\big\{x\in L_p(\mathcal{M},\ell_2^{rc})\ \mid\ x{\rm\ is\ an\ adapted\ sequence}\big\}.\]
It is clear that these adapted row/column/mixed spaces are closed in the corresponding row/column/mixed spaces. The main result of this paragraph is the following one, and as said in the introduction, it can be considered as an extension of the noncommutative version of Stein's inequality.

\begin{theo}\label{AdaptedTheorem}
Let $1\pp p,q\pp\infty$. Then, $(L_p^{{\rm ad}}(\mathcal{M},\ell_2),L_q^{{\rm ad}}(\mathcal{M},\ell_2))$ is $K$-complemented in $(L_p(\mathcal{M},\ell_2),L_q(\mathcal{M},\ell_2))$ with a universal constant.
\end{theo}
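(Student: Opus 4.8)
\emph{Strategy.} The plan is to move the statement to a tensor product von Neumann algebra and there to recognise the adapted subspace as (the intersection with a complemented corner of) the range of a Gundy projection of the kind studied in Example~1, so that Theorem~\ref{Example 1 - Theorem 1} applies. Let $\mathcal{N}$, $\mathcal{A}$, $(\xi_n)_{n\ge1}$, $\mathcal{T}=\mathcal{M}\bar\otimes\mathcal{N}$, $T$ and $P$ be as in Lemma~\ref{AdaptedLemma0}. By assertion~1) of that lemma, for every $1\le p\le\infty$ the operator $T$ restricts to an isomorphism, with constants in $[1,2]$, of $L_p(\mathcal{M},\ell_2)$ onto $P(L_p(\mathcal{T}))=\mathrm{ran}(P)\cap L_p(\mathcal{T})$; thus $T$ is an isomorphism of the couple $(L_p(\mathcal{M},\ell_2),L_q(\mathcal{M},\ell_2))$ onto the subcouple $(P(L_p(\mathcal{T})),P(L_q(\mathcal{T})))$ of $(L_p(\mathcal{T}),L_q(\mathcal{T}))$. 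Since $K$-complementedness is stable under such couple-isomorphisms up to a universal factor, and since $K$-complementedness of a subcouple $(C_0,C_1)$ with $C_j\subseteq P(L_{p_j}(\mathcal{T}))$ inside $(L_p(\mathcal{T}),L_q(\mathcal{T}))$ automatically passes to the smaller ambient couple $(P(L_p(\mathcal{T})),P(L_q(\mathcal{T})))$ (replacing the ambient couple by a smaller one only makes the defining decomposition hypothesis more restrictive), it suffices to produce, on $\mathcal{T}$, a projection $\PP$ of the form considered in Example~1 --- i.e.\ a martingale transform $\PP=\sum_{i\in I}\DD_i$ for some filtration on $\mathcal{T}$ and some set $I$ --- which commutes with $P$, preserves its domain, and satisfies
\[\PP_p(\mathcal{T})\cap\mathrm{ran}(P)=T\big(L_p^{{\rm ad}}(\mathcal{M},\ell_2)\big)\qquad(1\le p\le\infty),\]
and then to invoke Theorem~\ref{Example 1 - Theorem 1}.

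\emph{The Gundy projection.} One obtains the filtration on $\mathcal{T}$ by interleaving the filtration $(\mathcal{M}_n)$ of $\mathcal{M}$ with the natural exhausting tower of subalgebras of $\mathcal{N}$ attached to $(\xi_n)$. This is most transparent for the mixed model: there $\mathcal{N}=L(\mathbb{F}_\infty)$ and $\xi_n=\lambda_{g_n}$, and with $\mathbb{F}^{(N)}$ the subgroup generated by $g_1,\dots,g_N$ one takes the filtration of $\mathcal{T}$ whose stages alternate between $\mathcal{M}_{n-1}\bar\otimes L(\mathbb{F}^{(n-1)})\subseteq\mathcal{M}_{n}\bar\otimes L(\mathbb{F}^{(n-1)})$ (raising the $\mathcal{M}$-level by one on all components at once) and $\mathcal{M}_{n}\bar\otimes L(\mathbb{F}^{(n-1)})\subseteq\mathcal{M}_{n}\bar\otimes L(\mathbb{F}^{(n)})$ (revealing the new generator $\lambda_{g_n}$), the conditional expectations being the tensor products $\EE_h\otimes\FF_N$. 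Let $I$ index the generator-revealing stages and put $\PP:=\sum_{i\in I}\DD_i$. Arguing as in Lemma~\ref{Example 1 - Lemma 1}, but now applying Gundy's decomposition relative to this interleaved filtration, $\PP$ is a Gundy projection on $\mathcal{T}$ with a universal constant and satisfies the hypotheses of Corollary~\ref{Gundy - Corollary 1}; it also plainly commutes with $P$ and preserves $P(L_p(\mathcal{T}))$, both $P$ and each $\EE_h\otimes\FF_N$ being diagonal with respect to the Fourier expansion over $\mathbb{F}_\infty$. The analogue of Proposition~\ref{Example 1 - Proposition 1} gives
\[\PP_p(\mathcal{T})=\Big\{\textstyle\sum_{w\in\mathbb{F}_\infty}z_w\otimes\lambda_w\in L_p(\mathcal{T})\ :\ z_w\in L_p(\mathcal{M}_{\ell(w)})\text{ for all }w\Big\},\]
where $\ell(w)$ denotes the largest index of a generator occurring in the reduced word $w$; intersecting with $\mathrm{ran}(P)$, whose elements have Fourier support in $\{g_n:n\ge1\}$ (and $\ell(g_n)=n$), returns exactly $T(L_p^{{\rm ad}}(\mathcal{M},\ell_2))$ by the very definition of an adapted sequence. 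The row and column models are handled along the same lines (and in any case the corresponding cases of the theorem are contained in \cite{Narcisse2023}).

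\emph{Conclusion.} By Theorem~\ref{Example 1 - Theorem 1} the couple $(\PP_p(\mathcal{T}),\PP_q(\mathcal{T}))$ is $K$-complemented in $(L_p(\mathcal{T}),L_q(\mathcal{T}))$ with a universal constant, for all $1\le p,q\le\infty$. Now take $c\in(\PP_p(\mathcal{T})\cap\mathrm{ran}P)+(\PP_q(\mathcal{T})\cap\mathrm{ran}P)$ and a decomposition $c=a_0+a_1$ with $a_j\in P(L_{p_j}(\mathcal{T}))$. Re-decomposing $c$ inside $(\PP_p(\mathcal{T}),\PP_q(\mathcal{T}))$ and then applying $P$ --- which is bounded on every $L_p(\mathcal{T})$, commutes with $\PP$, and maps $\PP_p(\mathcal{T})$ into $\PP_p(\mathcal{T})\cap\mathrm{ran}P$ --- produces a decomposition of $c$ inside $(\PP_p(\mathcal{T})\cap\mathrm{ran}P,\PP_q(\mathcal{T})\cap\mathrm{ran}P)$ with comparable norms. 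Hence this couple is $K$-complemented in $(P(L_p(\mathcal{T})),P(L_q(\mathcal{T})))$ with a universal constant, and transporting back through $T$ with the help of the identity above we conclude that $(L_p^{{\rm ad}}(\mathcal{M},\ell_2),L_q^{{\rm ad}}(\mathcal{M},\ell_2))$ is $K$-complemented in $(L_p(\mathcal{M},\ell_2),L_q(\mathcal{M},\ell_2))$ with a universal constant.

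\emph{Main obstacle.} The crux is the interleaved filtration and the verification that $\PP_p(\mathcal{T})\cap\mathrm{ran}(P)$ is precisely the image of the adapted space. A naive product filtration such as $(\mathcal{M}_n\bar\otimes\mathcal{N})$ will not do: it reveals the same level of $(\mathcal{M}_n)$ in all $\xi_n$-components simultaneously and so cannot encode the adapted condition, which couples the sequence index with the filtration level; one must genuinely stagger the revelations. One then has to confirm that the staggered algebras form a bona fide filtration, with trace-preserving normal conditional expectations and weak$^\ast$-dense union, and --- in the row and column models, where $\mathcal{N}=B(\ell_2)$ carries an infinite trace --- deal with the attendant technical subtleties.
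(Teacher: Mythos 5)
Your proposal is correct and follows essentially the same route as the paper: embedding into $\mathcal{T}=\mathcal{M}\bar{\otimes}\mathcal{N}$ via $T$ and $P$, introducing the staggered filtration $\mathcal{M}_n\bar{\otimes}\mathcal{N}_n\subset\mathcal{M}_{n+1}\bar{\otimes}\mathcal{N}_n\subset\cdots$, recognising the adapted space as $\mathrm{ran}(P)$ intersected with the range of the martingale transform $\sum_{n}\DD_{2n-1}$ from Example~1, and transporting the $K$-complemented decomposition back through $P$ and $T^{-1}$. The only differences are presentational (you argue in the concrete free-group model with Fourier expansions, while the paper works uniformly with the abstract $(\xi_n)$ and verifies the identification and the commutation of $P$ with the filtration in Lemmas~\ref{AdaptedLemma1} and~\ref{AdaptedLemma2}).
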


Although this result was not known in the context of mixed spaces, in the context of row/column spaces, it is contained in \cite{Narcisse2023}[Proposition 3.19]. 

Now we turn to its proof. We keep the notation from Lemma \ref{AdaptedLemma0}. 
For every integer $n\pg1$, let $\mathcal{N}_n$ denote the von Neumann subalgebra of $\mathcal{N}$ generated by $\{\xi_k\ \mid\ 1\pp k\pp n\}$. Then, Assertion 0) of Lemma \ref{AdaptedLemma0} ensures that $(\mathcal{N}_n)_{n\pg1}$ is a filtration on $\mathcal{N}$. Moreover, if $(\FF_n)_{n\pg1}$ are the associated conditional expectations, for every $n,k\pg1$ we have
\[\FF_n(\xi_k)=\left\{\begin{array}{cl}
    \xi_k & {\rm if}\ k\pp n\\
    0 & {\rm otherwise} 
\end{array}\right..\]
Following the approach of Xu in \cite{XuBook} to establish the noncommutative version of Stein's inequality, for every integer $n\pg1$, we set
\[\mathcal{T}_{2n-1}:=\mathcal{M}_n\bar{\otimes}\mathcal{N}_n,\ \ \ \ \ \ \ \ \ \ \ \ \ \mathcal{T}_{2n}:=\mathcal{M}_{n+1}\bar{\otimes}\mathcal{N}_n.\]
Then, $(\mathcal{T}_n)_{n\pg1}$ is a filtration on $\mathcal{T}$. Let $(\DD_n)_{n\pg1}$ denote the associated difference projections. 

For $1\pp p\pp\infty$, we consider the following norm-closed subspace of $L_p(\mathcal{T})$,
\[\PP_p(\mathcal{T}):=\big\{y\in L_p(\mathcal{T})\ \mid\ \forall n\pg1,\ \DD_{2n}(y)=0\big\}.\]
Then, by applying the results from Example \ref{Example1}, namely Proposition \ref{Example 1 - Proposition 1} and Theorem \ref{Example 1 - Theorem 1}, to the current setting, with $I:=\{2n-1\ \mid\ n\pg1\}$, we know that if $1\pp p,q\pp\infty$, $(\PP_p(\mathcal{T}),\PP_q(\mathcal{T}))$ is $K$-complemented in $(L_p(\mathcal{T}),L_q(\mathcal{T}))$ with a universal constant. The following lemma is of fundamental importance.  

\begin{lemm}\label{AdaptedLemma1}
Let $x\in(L_1+L_\infty)(\mathcal{M},\ell_2)$. Then, $x$ is adapted if and only if $\DD_{2n}(Tx)=0$ for every $n\pg1$. In particular, if $1\pp p\pp\infty$, we have $Tx\in\PP_p(\mathcal{T})$ if and only if $x\in L_p^{{\rm ad}}(\mathcal{M},\ell_2)$.
\end{lemm}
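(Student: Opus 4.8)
The plan is to compute $\DD_n(Tx)$ explicitly in terms of the components $x_k$ of $x$ and the conditional expectations $\EE_m$ on $\mathcal{M}$, and then to read off the equivalence from this formula. First I would reduce to eventually-zero sequences: since such sequences are weakly dense in $(L_1+L_\infty)(\mathcal{M},\ell_2)$, since $T$ is weakly continuous by Assertion 1) of Lemma \ref{AdaptedLemma0}, and since each $\DD_n$ is weakly continuous on $(L_1+L_\infty)(\mathcal{T})$, it suffices to establish the identity characterising $\DD_{2n}(Tx)$ on eventually-zero sequences and then pass to the limit. (One must be a little careful that the \emph{condition} ``$\DD_{2n}(Tx)=0$ for all $n$'' and the condition ``$x$ adapted'' are both weakly closed, so that the equivalence indeed transfers from the dense subspace; both are intersections of kernels of weakly continuous maps, so this is fine.)

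\textbf{The key computation.} For an eventually-zero sequence $x=(x_k)_{k\pg1}$ we have $Tx=\sum_{k\pg1}x_k\otimes\xi_k$, a finite sum. Using that $\EE_m^{\mathcal{T}_{2n-1}}=\EE_m\otimes\FF_m$ and $\EE^{\mathcal{T}_{2n}}=\EE_{m+1}\otimes\FF_m$ together with $\FF_m(\xi_k)=\xi_k$ if $k\pp m$ and $0$ otherwise, one computes
\[\EE^{\mathcal{T}_{2n-1}}(Tx)=\sum_{k=1}^{n}\EE_n(x_k)\otimes\xi_k,\ \ \ \ \ \EE^{\mathcal{T}_{2n}}(Tx)=\sum_{k=1}^{n}\EE_{n+1}(x_k)\otimes\xi_k,\]
so that
\[\DD_{2n}(Tx)=\EE^{\mathcal{T}_{2n}}(Tx)-\EE^{\mathcal{T}_{2n-1}}(Tx)=\sum_{k=1}^{n}\big(\EE_{n+1}(x_k)-\EE_n(x_k)\big)\otimes\xi_k.\]
Because the $\xi_k$ are linearly independent in $L_2(\mathcal{N})$ (indeed orthonormal), the vanishing $\DD_{2n}(Tx)=0$ is equivalent to $\EE_{n+1}(x_k)=\EE_n(x_k)$ for every $1\pp k\pp n$. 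Quantifying over all $n\pg1$, this says precisely that for each fixed $k$ the sequence $m\mapsto\EE_m(x_k)$ is constant for $m\pg k$, i.e. $\EE_k(x_k)=\EE_m(x_k)\to x_k$ as $m\to\infty$, which (since $\EE_k(x_k)\in L_p(\mathcal{M}_k)$ and $\EE_m(x_k)\to x_k$ in the appropriate topology) is equivalent to $\EE_k(x_k)=x_k$ for every $k$, i.e. $x$ is adapted.

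\textbf{Conclusion and obstacle.} Once the identity for $\DD_{2n}(Tx)$ is in place, the first sentence of the lemma follows immediately, and the ``in particular'' clause follows from Assertion 1) of Lemma \ref{AdaptedLemma0}: for $x\in(L_1+L_\infty)(\mathcal{M},\ell_2)$ and $1\pp p\pp\infty$, one has $x\in L_p(\mathcal{M},\ell_2)$ iff $Tx\in L_p(\mathcal{T})$, and when this holds $Tx\in\PP_p(\mathcal{T})$ iff $\DD_{2n}(Tx)=0$ for all $n$ iff $x$ adapted iff $x\in L_p^{\mathrm{ad}}(\mathcal{M},\ell_2)$. I expect the only genuinely delicate point to be the justification of the reduction to eventually-zero sequences for the $p=\infty$ case, where ``convergence'' means weak*-convergence and one must check that the linear-independence argument extracting $\EE_{n+1}(x_k)=\EE_n(x_k)$ from $\DD_{2n}(Tx)=0$ is still valid for general $x\in(L_1+L_\infty)(\mathcal{M},\ell_2)$ — but this is handled cleanly by applying $\mathrm{id}_{\mathcal{M}}\otimes\tau_{\mathcal{N}}(\,\cdot\,(1\otimes\xi_k^*))$ (a weakly continuous slice map) to $\DD_{2n}(Tx)$, which isolates the $k$-th component, so no extra density argument is actually needed there.
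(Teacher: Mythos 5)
Your proposal is correct and follows essentially the same route as the paper: compute $\DD_{2n}(Tx)=\sum_{k=1}^{n}(\EE_{n+1}-\EE_n)(x_k)\otimes\xi_k$, use the linear independence (orthonormality) of the $\xi_k$ to reduce to the componentwise conditions $(\EE_{n+1}-\EE_n)(x_k)=0$, and conclude via the weak convergence $\EE_m(x_k)\to x_k$. One caveat: your parenthetical reduction to eventually-zero sequences via ``both conditions are weakly closed, so the equivalence transfers'' is not a valid argument on its own (two weakly closed sets can agree on a dense subspace without being equal), but you correctly discard it in your final paragraph --- the slice-map (equivalently, the weak continuity of $\DD_{2n}$ applied to the weakly convergent series $\sum_k x_k\otimes\xi_k$, which is what the paper does) gives the identity for general $x$ directly, so no density reduction is needed.
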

\begin{proof}
Assertion 1) of Lemma \ref{AdaptedLemma0} ensures that the serie $\sum_{k\pg1}x_k\otimes\xi_k$ converges weakly to $Tx$ in $(L_1+L_\infty)(\mathcal{T})$. Thus, if $n\pg1$, we have
\begin{align*}
\DD_{2n}(Tx)&=\sum_{k\pg1}\DD_{2n}(x_k\otimes\xi_k)\\
&=\sum_{k\pg1}(\EE_{n+1}\otimes\FF_n-\EE_n\otimes\FF_n)(x_k\otimes\xi_k)\\
&=\sum_{k=1}^{n}(\EE_{n+1}-\EE_n)(x_k)\otimes\xi_k.\\
\end{align*}
We deduce that $\DD_{2n}(Tx)=0$ for every $n\pg1$ if and only if $(\EE_{n+1}-\EE_n)(x_k)=0$ for every $1\pp k\pp n$. The conclusion follows since for every $k\pg1$ we have
\[x_k=\EE_k(x_k)+\sum_{n\pg k}(\EE_{n+1}-\EE_n)(x_k)\]
where the sum converges weakly in $(L_1+L_\infty)(\mathcal{M})$.
\end{proof}

\begin{rem}
The above computation is done \cite{XuBook} in his proof of the noncommutative version of Stein's inequality. It retrospectively justifies the introduction of the filtration $(\mathcal{T}_n)_{n\pg1}$. This type of construction has already appeared in the context of classical martingale inequalities in the work of Stein (see \cite{Stein}).
\end{rem}

\begin{lemm}\label{AdaptedLemma2}
The operator $P:(L_1+L_\infty)(\mathcal{T})\to(L_1+L_\infty)(\mathcal{T})$ commutes with the conditional expectations associated with the filtration $(\mathcal{T}_n)_{n\pg1}$ and, in particular, stabilises $\PP_p(\mathcal{T})$ for every $1\pp p\pp\infty$.
\end{lemm}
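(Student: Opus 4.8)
The plan is to show directly that $P$ commutes with every conditional expectation of the filtration $(\mathcal{T}_n)_{n\pg1}$; the stabilisation of $\PP_p(\mathcal{T})$ is then immediate from the description $\PP_p(\mathcal{T})=\{y\in L_p(\mathcal{T})\ \mid\ \DD_{2n}(y)=0\ \ \forall n\pg1\}$. Denote by $\mathcal{E}_m^{\mathcal{T}}$ the conditional expectation onto $\mathcal{T}_m$. Since $\mathcal{T}_{2n-1}=\mathcal{M}_n\bar{\otimes}\mathcal{N}_n$ and $\mathcal{T}_{2n}=\mathcal{M}_{n+1}\bar{\otimes}\mathcal{N}_n$, one has $\mathcal{E}_{2n-1}^{\mathcal{T}}=\EE_n\otimes\FF_n$ and $\mathcal{E}_{2n}^{\mathcal{T}}=\EE_{n+1}\otimes\FF_n$, where $(\FF_n)_{n\pg1}$ are the conditional expectations of $(\mathcal{N}_n)_{n\pg1}$. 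Both $P$ and these conditional expectations are weakly continuous on $(L_1+L_\infty)(\mathcal{T})$, and finite linear combinations of elementary tensors $x\otimes\xi$ with $x\in(L_1+L_\infty)(\mathcal{M})$ and $\xi\in X$ are weakly dense in $(L_1+L_\infty)(\mathcal{T})$ — this is the density that underlies the unambiguous extension of $P$ in Lemma \ref{AdaptedLemma0}. Hence it suffices to verify
\[P\big((\EE_k\otimes\FF_n)(x\otimes\xi)\big)=(\EE_k\otimes\FF_n)\big(P(x\otimes\xi)\big)\]
for all $k,n\pg1$, all $x\in(L_1+L_\infty)(\mathcal{M})$ and all $\xi\in X$; the corresponding identity on each $L_p(\mathcal{T})$ then follows, since $L_p(\mathcal{T})\subset(L_1+L_\infty)(\mathcal{T})$ and both operators restrict accordingly.

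The computation rests on a single combinatorial observation about the auxiliary data of Lemma \ref{AdaptedLemma0}, to be checked separately in the row/column model (where $\mathcal{N}=\mathcal{B}(\ell_2)$, $X$ is the system of matrix units, $(\xi_n)$ the first row resp.\ first column, and $\mathcal{N}_n$ the $n\times n$ corner, so that $\FF_n(e_{ij})=e_{ij}$ if $i,j\pp n$ and $0$ otherwise) and in the mixed model (where $\mathcal{N}$ is the group von Neumann algebra of $\mathbb{F}_\infty$, $X=\{\lambda_g\}$, $\xi_n=\lambda_{g_n}$, $\mathcal{N}_n=L(\langle g_1,\dots,g_n\rangle)$, so that $\FF_n(\lambda_g)=\lambda_g$ if $g\in\langle g_1,\dots,g_n\rangle$ and $0$ otherwise): in both cases $\FF_n$ maps $X$ into $X\cup\{0\}$, fixes $\xi_k$ for $k\pp n$, annihilates $\xi_k$ for $k>n$, and maps $X\setminus\{\xi_k\ \mid\ k\pg1\}$ into $\big(X\setminus\{\xi_k\ \mid\ k\pg1\}\big)\cup\{0\}$. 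Granting this, recall from Lemma \ref{AdaptedLemma0} that $P(x\otimes\xi)=x\otimes\xi$ when $\xi=\xi_k$ for some $k$, and $P(x\otimes\xi)=0$ otherwise. If $\xi=\xi_k$, then both sides of the displayed identity equal $\EE_k(x)\otimes\xi_k$ when $k\pp n$ and equal $0$ when $k>n$. If $\xi\in X\setminus\{\xi_k\ \mid\ k\pg1\}$, then the right-hand side is $P(0)=0$, while the left-hand side is $P\big(\EE_k(x)\otimes\FF_n(\xi)\big)$ with $\FF_n(\xi)\in\big(X\setminus\{\xi_k\ \mid\ k\pg1\}\big)\cup\{0\}$, which $P$ annihilates. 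In every case the two sides coincide, so $P$ commutes with each $\EE_k\otimes\FF_n$, hence with each $\mathcal{E}_m^{\mathcal{T}}$, on $(L_1+L_\infty)(\mathcal{T})$, and therefore on every $L_p(\mathcal{T})$.

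It remains to deduce the stabilisation of $\PP_p(\mathcal{T})$. Since $\DD_{2n}=\mathcal{E}_{2n}^{\mathcal{T}}-\mathcal{E}_{2n-1}^{\mathcal{T}}$, the operator $P$ also commutes with each $\DD_{2n}$. Thus, if $y\in\PP_p(\mathcal{T})$, then $Py\in L_p(\mathcal{T})$ by assertion 2) of Lemma \ref{AdaptedLemma0}, and $\DD_{2n}(Py)=P\big(\DD_{2n}(y)\big)=0$ for every $n\pg1$, so $Py\in\PP_p(\mathcal{T})$, as required.

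The only genuine work is the combinatorial fact that $\FF_n$ respects the linear base $X$ and interacts correctly with the distinguished sequence $(\xi_n)$, which has to be checked in each of the two concrete models of Lemma \ref{AdaptedLemma0}; everything else is routine bookkeeping with tensor products, weak continuity and density.
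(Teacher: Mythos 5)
Your proof is correct and follows essentially the same route as the paper's: reduce by weak density and weak continuity to elementary tensors $x\otimes\xi$ with $\xi\in X$, where $P$ acts as $I\otimes Q$, and then check that $Q$ commutes with each $\FF_n$ on the linear base $X$. The paper dismisses that last verification as ``straightforward''; you have simply carried it out explicitly in the two concrete models, which is a welcome addition but not a different argument.
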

\begin{proof}
As $\mathcal{A}$ is weak*-dense in $\mathcal{N}$, it suffices to show that $P$ commutes on the algebraic tensor product $(L_1+L_\infty)(\mathcal{M})\otimes\mathcal{A}$ with the conditional expectations associated to the filtration $(\mathcal{T}_n)_{n\pg1}$, namely $\EE_n\otimes\FF_n$ and $\EE_{n+1}\otimes\FF_n$, for $n\pg1$. But on that algebraic tensor product, using assertion 2) of Lemma \ref{AdaptedLemma0}, we can write $P=I\otimes Q$, where $I$ is the identity operator on $(L_1+L_\infty)(\mathcal{M})$ and $Q$ is the linear idempotent operator on $\mathcal{A}$ such that 
\[Q(\xi)=\left\{\begin{array}{cl}
    \xi & {\rm if}\ \xi\in\{\xi_n\ \mid\ n\pg1\}\\
    0 & {\rm otherwise} 
\end{array}\right.\]
for every $\xi\in X$. Thus, it is sufficient to show that $Q$ commutes with $\FF_n$ for $n\pg1$. This is straightforward.
\end{proof}

Now, we are able to complete the proof of the main result.

\begin{proof}[Proof of Theorem \ref{AdaptedTheorem}]
We fix $1\pp p,q\pp\infty$. Let $x\in L_p^{\rm ad}(\mathcal{M},\ell_2)+L_q^{{\rm ad}}(\mathcal{M},\ell_2)$ and $y\in L_p(\mathcal{M},\ell_2)$, $z\in L_q(\mathcal{M},\ell_2)$ be such that
\[x=y+z.\]
Then
\[Tx=Ty+Tz.\]
But from Lemma \ref{AdaptedLemma1} we know that $Tx\in\PP_p(\mathcal{T})+\PP_q(\mathcal{T})$. As $(\PP_p(\mathcal{T}),\PP_q(\mathcal{T}))$ is $K$-complemented in $(L_p(\mathcal{T}),L_q(\mathcal{T}))$ with a universal constant, we deduce that we can write
\begin{equation}\label{AdaptedEquation1}
Tx=y'+z',
\end{equation}
where $y'\in \PP_p(\mathcal{T})$, $z'\in\PP_q(\mathcal{N})$, with 
\begin{equation}\label{AdaptedEquation2}
\|y'\|_{L_p(\mathcal{T})}\pp C\|Ty\|_{L_p(\mathcal{T})},\ \ \ \ \ \ \ \ \ \ \|z'\|_{L_q(\mathcal{T})}\pp C\|Tz\|_{L_q(\mathcal{T})},
\end{equation}
where $C>0$ is a universal constant. As $Tx$ is in the range of $P$, applying $P$ to both sides of \eqref{AdaptedEquation1} yields
\[Tx=Py'+Pz'.\]
Finally, we set
\[y'':=T^{-1}Py'\in L_p(\mathcal{M},\ell_2),\ \ \ \ \ \ \ \ \ \ \ \ \ \ \ \ \ z'':=T^{-1}Pz'\in L_q(\mathcal{M},\ell_2).\]
(recall that $T^{-1}$ is well defined on the range of $P$, which is also the range of $T$). Then we clearly have
\[x=y''+z''.\]
Furthermore, from Lemma \ref{AdaptedLemma2}, we know that $Ty''=Py'\in\PP_p(\mathcal{T})$ and $Tz''=Pz'\in\PP_q(\mathcal{T})$. Thus, from Lemma \ref{AdaptedLemma1}, we deduce that $y''\in L_p^{\rm ad}(\mathcal{M},\ell_2)$ and $z''\in L_q^{\rm ad}(\mathcal{M},\ell_2)$. Moreover, using the bounded properties of the two operators $T$ and $P$, combined with \eqref{AdaptedEquation2}, we obtain the following estimates,
\[\|y''\|_{L_p(\mathcal{M},\ell_2)}\pp\|Ty''\|_{L_p(\mathcal{T})}=\|Py'\|_{L_p(\mathcal{T})}\pp2\|y'\|_{L_p(\mathcal{T})}\pp 2C\|Ty\|_{L_p(\mathcal{T})}\pp4C\|y\|_{L_p(\mathcal{M},\ell_2)},\]
and
\[\|z''\|_{L_q(\mathcal{M},\ell_2)}\pp\|Tz''\|_{L_q(\mathcal{T})}=\|Pz'\|_{L_q(\mathcal{T})}\pp2\|z'\|_{L_q(\mathcal{T})}\pp 2C\|Tz\|_{L_q(\mathcal{T})}\pp4C\|z\|_{L_q(\mathcal{M},\ell_2)}.\]
The proof is then complete.
\end{proof}

\begin{rem}
By using the fact that $(\PP_p(\mathcal{T}),\PP_q(\mathcal{T}))$ is complemented in $(L_p(\mathcal{T}),L_q(\mathcal{T}))$ whenever $1<p,q<\infty$, and by following the same approach one essentially recovers the proof by Xu in \cite{XuBook} of the noncommutative version of Stein's inequality, originally proved in \cite{PisierXuMartingales}, which asserts that if $1<p<\infty$ and $x=(x_n)_{n\pg1}$ belongs to $L_p(\mathcal{M},\ell_2)$, then the sequence $y=(\EE_n(x_n))_{n\pg1}$ also belongs to $L_p(\mathcal{M},\ell_2)$, and 
\begin{equation}\label{Stein}
\|y\|_{L_p(\mathcal{M},\ell_2)}\pp C_p\|x\|_{L_p(\mathcal{M},\ell_2)}
\end{equation}
where $C_p>0$ depends only on $p$. 
\end{rem} 

Now, if $E$ is an fully symmetric intermediate space for $(L_1,L_\infty)$, we consider the following subspace of $E(\mathcal{M},\ell_2)$,
\[E^{{\rm ad}}(\mathcal{M},\ell_2):=\big\{x\in E(\mathcal{M},\ell_2)\ \mid\ x{\rm\ is\ an\ adapted\ sequence}\big\}.\]
Notice that $E^{{\rm ad}}(\mathcal{M},\ell_2)$ is norm-closed in $E(\mathcal{M},\ell_2)$, as a consequence of the fact $E(\mathcal{M},\ell_2)$ is an interpolation space for $(L_1(\mathcal{M},\ell_2),L_\infty(\mathcal{M},\ell_2))$. Then, using Theorem \ref{AdaptedTheorem}, we easily deduce the following result.

\begin{theo}
Let $\Phi$ be a $K$-parameter space such that the exact interpolation space $E:=K_\Phi(L_1,L_\infty)$ satisfies the separability condition \hyperlink{separability}{(S)}. Then
\[K_\Phi(L_1^{\rm ad}(\mathcal{M},\ell_2),L^{\rm ad}_\infty(\mathcal{M},\ell_2))=E^{\rm ad}(\mathcal{M},\ell_2)\]
with equivalent norms and constants independent of $\mathcal{M}$.  
\end{theo}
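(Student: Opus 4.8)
The plan is to realise $K_\Phi(L_1^{\rm ad}(\mathcal{M},\ell_2),L_\infty^{\rm ad}(\mathcal{M},\ell_2))$ as a concrete closed subspace of $E(\mathcal{M},\ell_2)$ and then to recognise that subspace as $E^{\rm ad}(\mathcal{M},\ell_2)$ with the help of the separability condition; the input that does the real work is Theorem \ref{AdaptedTheorem}, everything else being a formal combination with Proposition \ref{Kclosedness}, Theorem \ref{FunctorRC} and a truncation argument. First I would apply Theorem \ref{AdaptedTheorem} with $(p,q)=(1,\infty)$: the subcouple $(L_1^{\rm ad}(\mathcal{M},\ell_2),L_\infty^{\rm ad}(\mathcal{M},\ell_2))$ is $K$-complemented, in particular $K$-closed, in $(L_1(\mathcal{M},\ell_2),L_\infty(\mathcal{M},\ell_2))$ with a universal constant. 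Proposition \ref{Kclosedness} then yields
\[K_\Phi(L_1^{\rm ad}(\mathcal{M},\ell_2),L_\infty^{\rm ad}(\mathcal{M},\ell_2))=\big(L_1^{\rm ad}(\mathcal{M},\ell_2)+L_\infty^{\rm ad}(\mathcal{M},\ell_2)\big)\cap K_\Phi(L_1(\mathcal{M},\ell_2),L_\infty(\mathcal{M},\ell_2)),\]
with equivalent norms (the constant depending only on the universal $K$-closedness constant), the left-hand side being moreover closed in $K_\Phi(L_1(\mathcal{M},\ell_2),L_\infty(\mathcal{M},\ell_2))$.

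Since $K_\Phi$ is an exact interpolation functor and $E=K_\Phi(L_1,L_\infty)$, Theorem \ref{FunctorRC} identifies $K_\Phi(L_1(\mathcal{M},\ell_2),L_\infty(\mathcal{M},\ell_2))=E(\mathcal{M},\ell_2)$ with equivalent norms and constants independent of $\mathcal{M}$. Combining the two identities, $K_\Phi(L_1^{\rm ad}(\mathcal{M},\ell_2),L_\infty^{\rm ad}(\mathcal{M},\ell_2))$ equals the closed subspace $\big(L_1^{\rm ad}(\mathcal{M},\ell_2)+L_\infty^{\rm ad}(\mathcal{M},\ell_2)\big)\cap E(\mathcal{M},\ell_2)$ of $E(\mathcal{M},\ell_2)$, with norm equivalent — uniformly in $\mathcal{M}$ — to the one induced by $E(\mathcal{M},\ell_2)$. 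So it remains only to identify this subspace with $E^{\rm ad}(\mathcal{M},\ell_2)$.

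The inclusion ``$\subseteq$'' is immediate: a sum of an adapted sequence in $L_1(\mathcal{M},\ell_2)$ and an adapted sequence in $L_\infty(\mathcal{M},\ell_2)$ is adapted, so any element of $L_1^{\rm ad}(\mathcal{M},\ell_2)+L_\infty^{\rm ad}(\mathcal{M},\ell_2)$ that also lies in $E(\mathcal{M},\ell_2)$ lies in $E^{\rm ad}(\mathcal{M},\ell_2)$. For ``$\supseteq$'' I would take $x=(x_n)_{n\pg1}\in E^{\rm ad}(\mathcal{M},\ell_2)$ and consider the truncations $x^m:=(1_{n\pp m}x_n)_{n\pg1}$. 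Each $x_n$ lies in $E(\mathcal{M})$, and adaptedness of $x$ gives $x_n=\EE_n(x_n)\in(L_1+L_\infty)(\mathcal{M}_n)=L_1(\mathcal{M}_n)+L_\infty(\mathcal{M}_n)$; choosing for $n\pp m$ a decomposition $x_n=u_n+v_n$ with $u_n\in L_1(\mathcal{M}_n)$, $v_n\in L_\infty(\mathcal{M}_n)$ and collecting terms exhibits $x^m$ as the sum of an eventually-zero adapted sequence of $L_1(\mathcal{M})$ and an eventually-zero adapted sequence of $L_\infty(\mathcal{M})$, hence $x^m\in L_1^{\rm ad}(\mathcal{M},\ell_2)+L_\infty^{\rm ad}(\mathcal{M},\ell_2)$. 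Being moreover an eventually-zero sequence of $E(\mathcal{M})$, $x^m$ belongs to $E(\mathcal{M},\ell_2)$, hence to the closed subspace $K_\Phi(L_1^{\rm ad}(\mathcal{M},\ell_2),L_\infty^{\rm ad}(\mathcal{M},\ell_2))$ of $E(\mathcal{M},\ell_2)$ identified above. The separability condition \hyperlink{separability}{(S)} now ensures $x^m\to x$ in $E(\mathcal{M},\ell_2)$, and closedness forces $x\in K_\Phi(L_1^{\rm ad}(\mathcal{M},\ell_2),L_\infty^{\rm ad}(\mathcal{M},\ell_2))$. This gives the set equality, and the norm equivalence with constants independent of $\mathcal{M}$ is inherited from the equivalences already recorded.

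The only real subtlety is the inclusion ``$\supseteq$'': one must verify that the finitely-supported truncations $x^m$ genuinely decompose into adapted $L_1$- and $L_\infty$-pieces — which is where adaptedness of $x$ together with $(L_1+L_\infty)(\mathcal{M}_n)=L_1(\mathcal{M}_n)+L_\infty(\mathcal{M}_n)$ enter — and that they converge to $x$ inside $E(\mathcal{M},\ell_2)$, which is exactly the role of (S) and the single point at which it is used. Everything else is the bookkeeping of the three black boxes Theorem \ref{AdaptedTheorem}, Proposition \ref{Kclosedness} and Theorem \ref{FunctorRC}.
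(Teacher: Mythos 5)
Your proposal is correct and follows essentially the same route as the paper: both arguments combine Theorem \ref{AdaptedTheorem} (for $K$-closedness of the adapted couple), Proposition \ref{Kclosedness} and Theorem \ref{FunctorRC} to identify $K_\Phi(L_1^{\rm ad}(\mathcal{M},\ell_2),L_\infty^{\rm ad}(\mathcal{M},\ell_2))$ with a closed subspace of $E(\mathcal{M},\ell_2)$, and then use the separability condition (S) together with the density of eventually-zero adapted sequences to conclude. Your truncation argument for the inclusion ``$\supseteq$'' simply spells out the density step that the paper labels ``clear''.
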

\begin{proof}
Using Theorem \ref{FunctorRC}, we clearly have a continuous inclusion
\[K_\Phi(L_1^{\rm ad}(\mathcal{M},\ell_2),L^{\rm ad}_\infty(\mathcal{M},\ell_2))\subset E^{\rm ad}(\mathcal{M},\ell_2)\]
which has a closed range by $K$-closedness. Thus, it suffices to show that it also has dense range. Let $x=(x_n)_{n\pg1}$ be a eventually-zero sequence of $E(\mathcal{M})$ which is adapted. Then, clearly $x\in(L_1^{\rm ad}(\mathcal{M},\ell_2)+L^{\rm ad}_\infty(\mathcal{M},\ell_2))\cap E(\mathcal{M})$. However, again using Theorem \ref{FunctorRC} and Proposition \ref{Kclosedness}, we know that
\[(L_1^{\rm ad}(\mathcal{M},\ell_2)+L^{\rm ad}_\infty(\mathcal{M},\ell_2))\cap E(\mathcal{M})=K_\Phi(L_1^{\rm ad}(\mathcal{M},\ell_2),L^{\rm ad}_\infty(\mathcal{M},\ell_2)).\]
Thus, $x\in K_\Phi(L_1^{\rm ad}(\mathcal{M},\ell_2),L^{\rm ad}_\infty(\mathcal{M},\ell_2))$. This concludes the proof because the space of eventually-zero sequences of $E(\mathcal{M})$ that are adapted is clearly norm-dense in $E^{{\rm ad}}(\mathcal{M},\ell_2)$ by the separability condition.
\end{proof}

In particular, we obtain the following corollary. In the setting of column/row spaces, another proof of this result is contained in \cite{Narcisse2021}[Theorem 3.13].

\begin{coro}
For every $0<\theta<1$, we have
\[(L_1^{\rm ad}(\mathcal{M},\ell_2),L_\infty^{\rm ad}(\mathcal{M},\ell_2))_{\theta,p}=L_p^{\rm ad}(\mathcal{M},\ell_2)\]
with equivalent norms and constants independent of $\mathcal{M}$, where $1/p=1-\theta$.
\end{coro}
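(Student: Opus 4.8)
The plan is to apply the theorem immediately preceding this corollary with the $K$-method parameter $\Phi:=\Phi_{\theta,p}$, where $1/p=1-\theta$, since by the very definition of the real interpolation method the functor $K_{\Phi_{\theta,p}}$ coincides with $(\cdot,\cdot)_{\theta,p}$. First I would verify the hypothesis of that theorem, namely that the exact interpolation space $E:=K_{\Phi_{\theta,p}}(L_1,L_\infty)$ satisfies the separability condition \hyperlink{separability}{(S)}. By definition, $E=(L_1,L_\infty)_{\theta,p}$, and Theorem \ref{InterpLp} (applied to the abelian von Neumann algebra $L_\infty(\R_+^*)$) identifies this with $L_p$, where $1/p=1-\theta$, with equivalent norms. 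Since $0<\theta<1$ forces $1<p<\infty$, the remark recorded after Proposition \ref{Dodds3} --- that $L_p+L_q$ satisfies (S) for all $1\pp p,q<\infty$ --- applies with $q=p$ (using $L_p=L_p+L_p$ up to an equivalent norm), so $E$ satisfies (S).

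The preceding theorem then yields $K_{\Phi_{\theta,p}}(L_1^{\rm ad}(\mathcal{M},\ell_2),L_\infty^{\rm ad}(\mathcal{M},\ell_2))=E^{\rm ad}(\mathcal{M},\ell_2)$ with equivalent norms and constants independent of $\mathcal{M}$. On the left, $K_{\Phi_{\theta,p}}(L_1^{\rm ad}(\mathcal{M},\ell_2),L_\infty^{\rm ad}(\mathcal{M},\ell_2))$ is by definition $(L_1^{\rm ad}(\mathcal{M},\ell_2),L_\infty^{\rm ad}(\mathcal{M},\ell_2))_{\theta,p}$. On the right, since $E=L_p$ we have $E(\mathcal{M},\ell_2)=L_p(\mathcal{M},\ell_2)$ with equivalent norms and universal constants (the identification noted after Proposition \ref{Dodds3}); intersecting with the adapted sequences gives $E^{\rm ad}(\mathcal{M},\ell_2)=L_p^{\rm ad}(\mathcal{M},\ell_2)$. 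Chaining these identifications produces the asserted equality with equivalent norms and constants independent of $\mathcal{M}$.

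I do not expect any genuine obstacle: all the substance has already been carried out in Theorem \ref{AdaptedTheorem} ($K$-complementation of the adapted couple in the ambient couple, transported through the operators $T$ and $P$ of Lemma \ref{AdaptedLemma0}) and in the preceding theorem (the density argument for eventually-zero adapted sequences under (S)). The only point deserving attention is that the separability condition is available, i.e.\ that $p<\infty$; this is automatic because $\theta<1$, whereas the excluded endpoint $\theta=0$ would give $E=L_\infty$, which fails (S).
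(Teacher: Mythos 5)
Your proposal is correct and is precisely the derivation the paper intends: the corollary is stated as an immediate specialization of the preceding theorem to $\Phi=\Phi_{\theta,p}$, using $(L_1,L_\infty)_{\theta,p}=L_p$ and the fact that $L_p$ with $p<\infty$ satisfies the separability condition (S). Your attention to the identifications $E(\mathcal{M},\ell_2)=L_p(\mathcal{M},\ell_2)$ and $E^{\rm ad}(\mathcal{M},\ell_2)=L_p^{\rm ad}(\mathcal{M},\ell_2)$, and to why $\theta<1$ guarantees (S), matches the paper's setup exactly.
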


\subsection{Main results: Noncommutative Hardy spaces}

Let $\mathcal{M}$ be a von Neumann algebra endowed with a n.s.f. trace and a filtration $(\mathcal{M}_n)_{n\pg1}$ with associated conditional expectations $(\EE_n)_{n\pg1}$. Recall that a sequence $x=(x_n)_{n\pg1}$ of $(L_1+L_\infty)(\mathcal{M})$ is said to be a martingale difference if it is adapted and $\EE_{n-1}(x_n)=0$ for every $n\pg2$.

For every $1\pp p\pp\infty$, we consider the \textit{column/row Hardy spaces},
\[H_p^{c}(\mathcal{M}):=\big\{x\in L_p(\mathcal{M},\ell_2^c)\ \mid\ x{\rm\ is\ a\ martingale\ difference\ sequence}\big\},\]
\[H_p^{r}(\mathcal{M}):=\big\{x\in L_p(\mathcal{M},\ell_2^r)\ \mid\ x{\rm\ is\ a\ martingale\ difference\ sequence}\big\},\]
and the \textit{mixed Hardy space},
\[H_p^{rc}(\mathcal{M}):=\big\{x\in L_p(\mathcal{M},\ell_2^{rc})\ \mid\ x{\rm\ is\ a\ martingale\ difference\ sequence}\big\}.\]
It is clear that these row/column/mixed Hardy spaces are closed in the corresponding row/column/mixed spaces. 

\begin{rem}
For mixed spaces, this is not the usual definition. If $1\pp p\pp\infty$, the \textit{usual mixed Hardy space} denoted $\mathcal{H}_p(\mathcal{M})$ is defined as follows,
\[\mathcal{H}_p(\mathcal{M}):=\left\{\begin{array}{cl}
    H_p^r(\mathcal{M})\cap H_p^c(\mathcal{M}) & {\rm if}\ 2\pp p\pp\infty \\
    H_p^r(\mathcal{M})+H_p^c(\mathcal{M}) & {\rm if}\ 1\pp p<2 
\end{array}\right.\]
and is equipped with the corresponding intersection/sum norm. When $2\pp p\pp\infty$, it is clear from the definitions that we have $\mathcal{H}_p(\mathcal{M})=H_p^{rc}(\mathcal{M})$ with the same norms. When $1<p<2$, we have $\mathcal{H}_p(\mathcal{M})=H_p^{rc}(\mathcal{M})$ with equivalent norms, as a consequence of the noncommutative version of Stein's inequality \eqref{Stein}. However, when $p=1$, we only have a contractive inclusion $\mathcal{H}_1(\mathcal{M})\subset H_1^{rc}(\mathcal{M})$.
\end{rem}

The following result is completely new, even in the setting of column/row spaces. 

\begin{theo}\label{MartingaledifferenceTheorem}
Let $1\pp p,q\pp\infty$. Then $(H_p(\mathcal{M}),H_q(\mathcal{M}))$ is $K$-complemented in $(L_p(\mathcal{M},\ell_2),L_q(\mathcal{M},\ell_2))$ with a universal constant.
\end{theo}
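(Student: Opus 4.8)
The strategy is to transcribe the proof of Theorem~\ref{AdaptedTheorem}, replacing the adapted-spaces apparatus of Example~\ref{Example1} by a martingale-difference analogue modelled on Example~\ref{Example3}. Keep the notation of Lemma~\ref{AdaptedLemma0}: $\mathcal{N}$ is the auxiliary von Neumann algebra, $(\xi_n)_{n\pg1}$ the generating sequence, and $T:(L_1+L_\infty)(\mathcal{M},\ell_2)\to(L_1+L_\infty)(\mathcal{T})$, $P:(L_1+L_\infty)(\mathcal{T})\to(L_1+L_\infty)(\mathcal{T})$ the operators provided there, with $\mathcal{T}=\mathcal{M}\bar{\otimes}\mathcal{N}$. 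Endow $\mathcal{N}$ with the filtration $(\mathcal{N}_n)_{n\pg1}$, $\mathcal{N}_n$ generated by $\xi_1,\dots,\xi_n$, whose conditional expectations $(\FF_n)_{n\pg1}$ satisfy $\FF_n(\xi_k)=\xi_k$ for $k\pp n$ and $=0$ otherwise. On $\mathcal{T}$ introduce the two commuting filtrations
\[\mathcal{T}^{(0)}_{2n-1}:=\mathcal{M}_n\bar{\otimes}\mathcal{N}_n,\ \ \mathcal{T}^{(0)}_{2n}:=\mathcal{M}_{n+1}\bar{\otimes}\mathcal{N}_n,\qquad \mathcal{T}^{(1)}_{2n-1}:=\mathcal{M}_n\bar{\otimes}\mathcal{N}_n,\ \ \mathcal{T}^{(1)}_{2n}:=\mathcal{M}_n\bar{\otimes}\mathcal{N}_{n+1},\]
which one checks to fall under Example~\ref{Example2} with $I^{(0)}=I^{(1)}=\{2n\mid n\pg1\}$ (the orthogonality $\DD^{(0)}_{2m}\DD^{(1)}_{2n}=0$ holds because the first tensor factor forces $n\pg m+1$ while the second forces $m\pg n+1$). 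Let $\QQ:D\to D$ be the associated Lebesgue-compatible idempotent; writing $\DD^{\mathcal{M}}_k:=\EE_k-\EE_{k-1}$ one finds $\QQ=\sum_{k\pg1}\DD^{\mathcal{M}}_k\otimes(\FF_k-\FF_{k-1})$, so that for every $1\pp p\pp\infty$,
\[\QQ_p(\mathcal{T})=\big\{y\in L_p(\mathcal{T})\ \mid\ \DD^{(0)}_{2n}(y)=\DD^{(1)}_{2n}(y)=0\ \ \forall n\pg1\big\}.\]

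The first substantive step is to prove that $\QQ$ is a Gundy projection on $\mathcal{T}$ with a universal constant; this is where the argument of Example~\ref{Example3} (Lemmas~\ref{Example 3 - Lemma 1}--\ref{Example 3 - Lemma 3}) is reused. Introduce the diagonal filtration $\mathcal{T}_n:=\mathcal{M}_n\bar{\otimes}\mathcal{N}_n$ with difference projections $(\DD_n)_{n\pg1}$; as in Lemma~\ref{Example 3 - Lemma 2} one obtains $\QQ(y)=\sum_{n\pg1}\mathcal{S}_n\DD_n(y)$ with $\mathcal{S}_n:=(\EE_n-\EE_{n-1})\otimes(\FF_n-\FF_{n-1})$, and then applying Theorem~\ref{ParcetRandria} to a positive $y\in D\cap L_1(\mathcal{T})$ with respect to $(\mathcal{T}_n)_{n\pg1}$ and setting $\alpha'=\QQ\alpha$, $\beta'=\QQ\beta$, $\gamma'=\QQ\gamma$ yields the required decomposition of $\QQ(y)$, exactly as in the proof of Lemma~\ref{Example 3 - Lemma 3}. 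The only genuinely new point is that Gundy's decomposition controls $\DD_{n+1}(\gamma)$ but not $\DD_1(\gamma)$, so the $n=1$ summand $\mathcal{S}_1\DD_1=\EE_1\otimes\FF_1$ must be split off: it is contractive on $L_1(\mathcal{T})$, hence a Gundy projection by itself, it is orthogonal to $\QQ-\mathcal{S}_1\DD_1$, and Proposition~\ref{Gundy - Proposition 1} recombines the two into $\QQ$. Granting this, the scheme of proof of Theorem~\ref{Example 3 - Theorem 1} (Facts~\ref{Example 3 - Fact 1}--\ref{Example 3 - Fact 5} together with Corollary~\ref{CoroWolffK}) applies verbatim and gives that $(\QQ_p(\mathcal{T}),\QQ_q(\mathcal{T}))$ is $K$-complemented in $(L_p(\mathcal{T}),L_q(\mathcal{T}))$ with a universal constant, for all $1\pp p,q\pp\infty$.

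The second step is the Hardy-space analogue of Lemmas~\ref{AdaptedLemma1}--\ref{AdaptedLemma2}. Using that $Tx=\sum_k x_k\otimes\xi_k$ weakly, for $x=(x_n)_{n\pg1}\in(L_1+L_\infty)(\mathcal{M},\ell_2)$ one computes
\[\DD^{(0)}_{2n}(Tx)=\sum_{k=1}^{n}(\EE_{n+1}-\EE_n)(x_k)\otimes\xi_k,\qquad \DD^{(1)}_{2n}(Tx)=\EE_n(x_{n+1})\otimes\xi_{n+1},\]
so that $\DD^{(0)}_{2n}(Tx)=0$ for all $n$ is equivalent to $x$ being adapted, while $\DD^{(1)}_{2n}(Tx)=0$ for all $n$ is equivalent to $\EE_{n-1}(x_n)=0$ for all $n\pg2$; hence, for every $1\pp p\pp\infty$, one has $Tx\in\QQ_p(\mathcal{T})$ if and only if $x\in H_p(\mathcal{M})$. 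Moreover, arguing as in Lemma~\ref{AdaptedLemma2}, on the algebraic tensor product $P=I\otimes Q$ with $Q$ commuting with each $\FF_n$, so $P$ commutes with $\EE_m\otimes\FF_n$ for all $m,n$, hence with the difference projections of both filtrations on $\mathcal{T}$, and therefore stabilises $\QQ_p(\mathcal{T})$ for every $p$.

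With these two steps the conclusion follows by repeating the proof of Theorem~\ref{AdaptedTheorem} word for word: given $x\in H_p(\mathcal{M})+H_q(\mathcal{M})$ and $x=y+z$ with $y\in L_p(\mathcal{M},\ell_2)$, $z\in L_q(\mathcal{M},\ell_2)$, one has $Tx\in\QQ_p(\mathcal{T})+\QQ_q(\mathcal{T})$ and $Tx=Ty+Tz$; by $K$-complementation split $Tx=u+v$ with $u\in\QQ_p(\mathcal{T})$, $v\in\QQ_q(\mathcal{T})$ and $\|u\|_{L_p(\mathcal{T})}\pp C\|Ty\|_{L_p(\mathcal{T})}$, $\|v\|_{L_q(\mathcal{T})}\pp C\|Tz\|_{L_q(\mathcal{T})}$; applying $P$ (recall $Tx\in\mathrm{ran}\,P=\mathrm{ran}\,T$) gives $Tx=Pu+Pv$ with $Pu\in\QQ_p(\mathcal{T})$, $Pv\in\QQ_q(\mathcal{T})$; finally $y'':=T^{-1}Pu$ and $z'':=T^{-1}Pv$ lie in $H_p(\mathcal{M})$ and $H_q(\mathcal{M})$ by the characterisation above, satisfy $x=y''+z''$, and $\|y''\|_{L_p(\mathcal{M},\ell_2)}\pp 4C\|y\|_{L_p(\mathcal{M},\ell_2)}$, $\|z''\|_{L_q(\mathcal{M},\ell_2)}\pp 4C\|z\|_{L_q(\mathcal{M},\ell_2)}$ by the $2$-boundedness of $T$ and $P$. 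The main obstacle, and the only place where real new work is needed, is the first step together with its bookkeeping: arranging the two filtrations on $\mathcal{T}$ so that $\QQ_p(\mathcal{T})$ is \emph{exactly} $T(H_p(\mathcal{M}))$ for every exponent in $[1,\infty]$ — in particular so that the first coordinate $x_1$ is left unconstrained, which is precisely what forces the separate treatment of the $n=1$ term when checking that $\QQ$ is a Gundy projection; everything downstream is a transcription of the adapted-spaces proof.
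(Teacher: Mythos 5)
Your proof is correct and follows the paper's strategy in all essentials: the same embedding $T$ and projection $P$ from Lemma~\ref{AdaptedLemma0}, the same idea of a pair of commuting filtrations on $\mathcal{T}=\mathcal{M}\bar{\otimes}\mathcal{N}$ producing a projection $\QQ$ whose Gundy property is checked by running Theorem~\ref{ParcetRandria} along the diagonal filtration $\mathcal{M}_n\bar{\otimes}\mathcal{N}_n$, and the same final transcription of the proof of Theorem~\ref{AdaptedTheorem}. The one point of divergence is the treatment of the first coordinate. The paper keeps the filtrations of Example~\ref{Example3} verbatim (so $I^{(1)}=\{2n-1\}$ and $\QQ=\sum_{n\pg1}\TT_n$), whence $Tx\in\QQ_p(\mathcal{T})$ characterises \emph{centered} martingale difference sequences ($\EE_1(x_1)=0$); it then restores the first coordinate at the very end via the splitting $x=Ax+Bx$ with $Bx=(\EE_1(x_1),0,0,\dots)$ and the intermediate spaces $G_p(\mathcal{M})$. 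You instead shift the second filtration and take $I^{(0)}=I^{(1)}=\{2n\}$, so that your $\QQ=\sum_{k\pg1}(\EE_k-\EE_{k-1})\otimes(\FF_k-\FF_{k-1})$ is the paper's projection plus the orthogonal summand $\EE_1\otimes\FF_1$, and $Tx\in\QQ_p(\mathcal{T})$ if and only if $x\in H_p(\mathcal{M})$ with no centering; the price is exactly the extra $n=1$ term you flag in the Gundy verification, which Proposition~\ref{Gundy - Proposition 1} absorbs since $\EE_1\otimes\FF_1$ is $L_1$-contractive and orthogonal to the rest (your orthogonality check $\DD^{(0)}_{2m}\DD^{(1)}_{2n}=0$ and the identity $\QQ-\mathcal{S}_1\DD_1=\sum_{n\pg1}\TT_n\DD_{n+1}$ are both correct). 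Both bookkeepings work: yours makes the final step a literal copy of Theorem~\ref{AdaptedTheorem}, the paper's lets it reuse Example~\ref{Example3} without modification. One phrasing nit: $\QQ_p(\mathcal{T})$ is strictly larger than $T(H_p(\mathcal{M}))$; the statement your argument actually uses, and proves, is $T^{-1}(\QQ_p(\mathcal{T}))=H_p(\mathcal{M})$ together with the fact that $P$ maps $\QQ_p(\mathcal{T})$ into $\QQ_p(\mathcal{T})\cap\mathrm{ran}\,T$.
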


Again, we will make use of the notation from Lemma \ref{AdaptedLemma0}. We recall that if $n\pg1$ then $\mathcal{N}_n$ denotes the von Neumann subalgebra of $\mathcal{N}$ generated by $\{\xi_k\ \mid\ 1\pp k\pp n\}$. Moreover, $(\mathcal{N}_n)_{n\pg1}$ is a filtration on $\mathcal{N}$, and if $(\FF_n)_{n\pg1}$ are the associated conditional expectations, for every $n,k\pg1$ we have
\[\FF_n(\xi_k)=\left\{\begin{array}{cl}
    \xi_k & {\rm if}\ k\pp n\\
    0 & {\rm otherwise} 
\end{array}\right..\]
For every $n\pg1$, we set
\[\mathcal{T}_{2n-1}^{(0)}:=\mathcal{M}_n\bar{\otimes}\mathcal{N}_n,\ \ \ \ \ \ \ \ \ \ \ \ \ \mathcal{T}_{2n}^{(0)}:=\mathcal{M}_{n+1}\bar{\otimes}\mathcal{N}_n,\]
and 
\[\mathcal{T}_{2n-1}^{(1)}:=\mathcal{M}_n\bar{\otimes}\mathcal{N}_{n+1},\ \ \ \ \ \ \ \ \ \ \ \ \ \mathcal{T}_{2n}^{(1)}:=\mathcal{M}_{n+1}\bar{\otimes}\mathcal{N}_{n+1}.\]
Then $(\mathcal{T}_n^{(0)})_{n\pg1}$ and $(\mathcal{T}_n^{(1)})_{n\pg1}$ are two filtrations on $\mathcal{T}$. Let $(\EE_n^{(0)})_{n\pg1},(\EE_n^{(1)})_{n\pg1}$ denote their associated conditional expectations and let $(\DD_n^{(0)})_{n\pg1}$, $(\DD_n^{(1)})_{n\pg1}$ denote their associated difference projections. Thus, we have
\[\EE_{2n-1}^{(0)}:=\EE_n\otimes\FF_n,\ \ \ \ \ \ \ \ \ \ \ \ \ \EE_{2n}^{(0)}:=\EE_{n+1}\otimes\FF_n,\]
and 
\[\EE_{2n-1}^{(1)}:=\EE_n\otimes\FF_{n+1},\ \ \ \ \ \ \ \ \ \ \ \ \ \EE_{2n}^{(1)}:=\EE_{n+1}\otimes\FF_{n+1}.\]
for every $n\pg1$. For $1\pp p\pp\infty$, we consider the following norm-closed subspace of $L_p(\mathcal{T})$,
\[\QQ_p(\mathcal{T}):=\big\{y\in L_p(\mathcal{T})\ \mid\ \forall n\pg1,\ \DD_{2n}^{(0)}(y)=\DD_{2n-1}^{(1)}(y)=0\big\}.\]
Then, by applying Theorem \ref{Example 3 - Theorem 1}, we know that if $1\pp p,q\pp\infty$, then $(\QQ_p(\mathcal{T}),\QQ_q(\mathcal{T}))$ is $K$-complemented in $(L_p(\mathcal{T}),L_q(\mathcal{T}))$ with a universal constant. A martingale difference sequence $x=(x_n)_{n\pg1}$ is \textit{centered} if $x_1=0$.

\begin{lemm}\label{CenteredMartingaledifferenceLemma1}
Let $x\in L_1(\mathcal{M},\ell_2)+L_\infty(\mathcal{M},\ell_2)$. Then, $x$ is a centered martingale difference sequence if and only if $\DD_{2n}^{(0)}(y)=\DD_{2n-1}^{(1)}(y)=0$ for every $n\pg1$. In particular, if $1\pp p\pp\infty$, we have $Tx\in\QQ_p(\mathcal{T})$ if and only if $x\in L_p(\mathcal{M},\ell_2)$ and $x$ is a centered martingale difference sequence.
\end{lemm}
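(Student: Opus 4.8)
The plan is to split the statement into two independent termwise computations on the weakly convergent series $Tx=\sum_{k\pg1}x_k\otimes\xi_k$ supplied by assertion 1) of Lemma \ref{AdaptedLemma0} --- one involving the filtration $(\mathcal{T}_n^{(0)})_{n\pg1}$ and one involving $(\mathcal{T}_n^{(1)})_{n\pg1}$ --- and then to recombine them. The recurring mechanism throughout is that $\FF_n$ annihilates $\xi_k$ for $k>n$, so that applying a difference projection of either filtration termwise to $Tx$ leaves only finitely many nonzero terms, with no convergence subtleties.

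First I would observe that the filtration $(\mathcal{T}_n^{(0)})_{n\pg1}$ is \emph{literally} the filtration $(\mathcal{T}_n)_{n\pg1}$ appearing in Lemma \ref{AdaptedLemma1}, so that $\DD_n^{(0)}$ is the operator $\DD_n$ of that lemma. Consequently Lemma \ref{AdaptedLemma1} applies verbatim and already yields the ``adapted part'': $x$ is an adapted sequence if and only if $\DD_{2n}^{(0)}(Tx)=0$ for every $n\pg1$. (A self-contained rederivation is the one-line computation $\DD_{2n}^{(0)}=(\EE_{n+1}-\EE_n)\otimes\FF_n$, whence $\DD_{2n}^{(0)}(Tx)=\sum_{k=1}^{n}(\EE_{n+1}-\EE_n)(x_k)\otimes\xi_k$, which vanishes for all $n$ exactly when $(\EE_{n+1}-\EE_n)(x_k)=0$ for all $k$ and all $n\pg k$; by the weak convergence $\EE_m(x_k)\to x_k$ this is equivalent to $x_k=\EE_k(x_k)$ for every $k$.)

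The genuinely new step is the computation of $\DD_{2n-1}^{(1)}(Tx)$. From the formulas $\EE_{2n-1}^{(1)}=\EE_n\otimes\FF_{n+1}$ and $\EE_{2n}^{(1)}=\EE_{n+1}\otimes\FF_{n+1}$ one reads off $\DD_{2n-1}^{(1)}=\EE_n\otimes(\FF_{n+1}-\FF_n)$ for $n\pg2$, while the boundary term is $\DD_1^{(1)}=\EE_1\otimes\FF_2$. Since the $\DD_n^{(1)}$ are weakly continuous on $(L_1+L_\infty)(\mathcal{T})$, I can act termwise; using $\FF_n(\xi_k)=\xi_k$ for $k\pp n$ and $0$ otherwise, this yields
\[\DD_{2n-1}^{(1)}(Tx)=\EE_n(x_{n+1})\otimes\xi_{n+1}\ \ (n\pg2),\qquad \DD_1^{(1)}(Tx)=\EE_1(x_1)\otimes\xi_1+\EE_1(x_2)\otimes\xi_2.\]
By injectivity of $T$ on eventually-zero sequences, all of these vanish if and only if $\EE_1(x_1)=0$, $\EE_1(x_2)=0$ and $\EE_n(x_{n+1})=0$ for $n\pg2$; that is, if and only if $\EE_1(x_1)=0$ together with $\EE_{m-1}(x_m)=0$ for every $m\pg2$.

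Finally I would assemble the two characterizations. A sequence $x$ is a centered martingale difference sequence precisely when it is adapted, $x_1=0$, and $\EE_{m-1}(x_m)=0$ for $m\pg2$; and once $x$ is adapted one has $x_1=\EE_1(x_1)$, so $x_1=0$ is the same as $\EE_1(x_1)=0$. Combining the two previous paragraphs therefore gives: $x$ is a centered martingale difference sequence if and only if $\DD_{2n}^{(0)}(Tx)=\DD_{2n-1}^{(1)}(Tx)=0$ for all $n\pg1$. The ``in particular'' clause is then immediate from assertion 1) of Lemma \ref{AdaptedLemma0}, which for $1\pp p\pp\infty$ gives $x\in L_p(\mathcal{M},\ell_2)$ iff $Tx\in L_p(\mathcal{T})$, so that $Tx\in\QQ_p(\mathcal{T})$ iff $x\in L_p(\mathcal{M},\ell_2)$ and $x$ is a centered martingale difference sequence. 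I expect no real obstacle; the only points demanding care are keeping the two interleaved indexings of the filtrations on $\mathcal{T}$ straight --- in particular the boundary term $\DD_1^{(1)}=\EE_1\otimes\FF_2$, which is \emph{not} of the form $\EE_1\otimes(\FF_2-\FF_1)$ --- and justifying the termwise action of the weakly continuous projections on the series defining $Tx$.
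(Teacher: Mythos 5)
Your proof is correct and follows essentially the same route as the paper: reuse the computation of $\DD_{2n}^{(0)}(Tx)$ from Lemma \ref{AdaptedLemma1} for adaptedness, then compute $\DD_1^{(1)}(Tx)=\EE_1(x_1)\otimes\xi_1+\EE_1(x_2)\otimes\xi_2$ and $\DD_{2n-1}^{(1)}(Tx)=\EE_n(x_{n+1})\otimes\xi_{n+1}$ for $n\pg2$, and combine. Your explicit handling of the assembly step (that under adaptedness $x_1=0$ is equivalent to $\EE_1(x_1)=0$) is a small point the paper leaves implicit, but the argument is the same.
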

\begin{proof}
We have 
\begin{align*}
\DD_{1}^{(1)}(Tx)&=\sum_{k\pg1}\DD_{1}^{(1)}(x_k\otimes\xi_k)=\sum_{k\pg1}\EE_{1}^{(1)}(x_k\otimes\xi_k)\\
&=\sum_{k\pg1}(\EE_{1}\otimes\FF_2)(x_k\otimes\xi_k)=\EE_1(x_1)\otimes\xi_1+\EE_1(x_2)\otimes\xi_2,\\
\end{align*}
and if $n\pg2$ we have
\begin{align*}
\DD_{2n-1}^{(1)}(Tx)&=\sum_{k\pg1}\DD_{2n-1}^{(1)}(x_k\otimes\xi_k)\\
&=\sum_{k\pg1}(\EE_{2n-1}^{(1)}-\EE_{2(n-1)}^{(1)})(x_k\otimes\xi_k)\\
&=\sum_{k\pg1}(\EE_{n}\otimes\FF_{n+1}-\EE_n\otimes\FF_n)(x_k\otimes\xi_k)\\
&=\sum_{k\pg1}\EE_n(x_k)\otimes(\FF_{n+1}-\FF_n)(\xi_k)\\
&=\EE_n(x_{n+1})\otimes\xi_{n+1}.
\end{align*}
We deduce that $\DD_{2n-1}^{(1)}(Tx)=0$ for every $n\pg1$ if and only if $\EE_1(x_1)=0$ and $\EE_n(x_{n+1})=0$ for every $n\pg1$. In addition, in Lemma \ref{AdaptedLemma1} we have already shown that $x$ is adapted if and only if $\DD_{2n}^{(0)}(y)=0$ for every $n\pg1$. The conclusion follows.
\end{proof}

\begin{lemm}\label{CenteredMartingaledifferenceLemma2}
The operator $P:(L_1+L_\infty)(\mathcal{T})\to(L_1+L_\infty)(\mathcal{T})$ commutes with the conditional expectations associated with the two filtrations $(\mathcal{T}_n^{(0)})_{n\pg1}$ and $(\mathcal{T}_n^{(1)})_{n\pg1}$ and, in particular, stabilises $\QQ_p(\mathcal{T})$ for every $1\pp p\pp\infty$.
\end{lemm}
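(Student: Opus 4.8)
The plan is to run the same argument as in the proof of Lemma \ref{AdaptedLemma2}, adapted to the two filtrations at hand. By the formulas displayed just above the statement, the conditional expectations attached to $(\mathcal{T}_n^{(0)})_{n\pg1}$ and $(\mathcal{T}_n^{(1)})_{n\pg1}$ are $\EE^{(0)}_{2n-1}=\EE_n\otimes\FF_n$, $\EE^{(0)}_{2n}=\EE_{n+1}\otimes\FF_n$, $\EE^{(1)}_{2n-1}=\EE_n\otimes\FF_{n+1}$ and $\EE^{(1)}_{2n}=\EE_{n+1}\otimes\FF_{n+1}$; in every case they are of the tensor form $\EE_j\otimes\FF_k$ with $j,k\in\{n,n+1\}$. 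Since $P$ and all these conditional expectations are weakly continuous on $(L_1+L_\infty)(\mathcal{T})$, and $(L_1+L_\infty)(\mathcal{M})\otimes\mathcal{A}$ is weakly dense there (because $\mathcal{A}$ is weak*-dense in $\mathcal{N}$), it suffices to check the commutation relations on that algebraic tensor product.

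On $(L_1+L_\infty)(\mathcal{M})\otimes\mathcal{A}$, assertion 2) of Lemma \ref{AdaptedLemma0} allows us to write $P=I\otimes Q$, where $I$ is the identity of $(L_1+L_\infty)(\mathcal{M})$ and $Q$ is the linear idempotent of $\mathcal{A}$ that fixes each $\xi_n$ and annihilates every other element of the linear base $X$. Hence $(I\otimes Q)(\EE_j\otimes\FF_k)=\EE_j\otimes(Q\FF_k)$ and $(\EE_j\otimes\FF_k)(I\otimes Q)=\EE_j\otimes(\FF_k Q)$, so the whole matter reduces to the identity $Q\FF_k=\FF_k Q$ on $\mathcal{A}$, for every $k\pg1$. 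This is the only computation to carry out, and it is immediate: both $Q$ and $\FF_k$ act diagonally with respect to the base $X$ — each $\xi\in X$ is either preserved or sent to $0$ by each of them (for $Q$ by definition, and for $\FF_k$ because $\FF_k(\xi)\in\{\xi,0\}$ in both models used in Lemma \ref{AdaptedLemma0}) — so they commute entrywise.

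Once the commutation is established, the stabilisation of $\QQ_p(\mathcal{T})$ follows formally. The difference projections $\DD^{(0)}_{2n}=\EE^{(0)}_{2n}-\EE^{(0)}_{2n-1}$ and $\DD^{(1)}_{2n-1}=\EE^{(1)}_{2n-1}-\EE^{(1)}_{2n-2}$ (with the convention $\EE^{(1)}_0=0$) are differences of conditional expectations with which $P$ commutes, hence $P$ commutes with them; therefore, if $y\in\QQ_p(\mathcal{T})$ then $\DD^{(0)}_{2n}(Py)=P\DD^{(0)}_{2n}(y)=0$ and $\DD^{(1)}_{2n-1}(Py)=P\DD^{(1)}_{2n-1}(y)=0$ for all $n\pg1$, while $Py\in L_p(\mathcal{T})$ by assertion 2) of Lemma \ref{AdaptedLemma0}. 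Thus $Py\in\QQ_p(\mathcal{T})$. I do not expect any genuine obstacle: the only point requiring a (trivial) case inspection is the diagonal action of $\FF_k$ on $X$ in the row/column and free-group models, exactly as in the proof of Lemma \ref{AdaptedLemma2}.
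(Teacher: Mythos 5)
Your proposal is correct and follows essentially the same route as the paper, which itself simply observes that the arguments of Lemma \ref{AdaptedLemma2} carry over verbatim: reduce by weak density to the algebraic tensor product, write $P=I\otimes Q$ there, and note that $Q$ and each $\FF_k$ both act as $0$--$1$ diagonal operators on the base $X$ in either model, hence commute. Your explicit derivation of the stabilisation of $\QQ_p(\mathcal{T})$ from the commutation with the difference projections is also exactly what the paper intends.
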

\begin{proof}
The arguments are the same as in the proof of Lemma \ref{AdaptedLemma2}.
\end{proof}

Now we can complete the proof of the main result.

\begin{proof}[Proof of Theorem \ref{MartingaledifferenceTheorem}]
If $x=(x_n)_{n\pg1}$ is a sequence of $(L_1+L_\infty)(\mathcal{M})$, we will denote $Ax$ (resp. $Bx$) the sequence $(y_n)_{n\pg1}$ such that $y_1=0$ and $y_n=x_n$ for every $n\pg2$ (resp. $y_1=\EE_1(x_1)$ and $y_n=0$ for every $n\pg2$). Notice that if $1\pp p\pp\infty$ and $x\in L_p(\mathcal{M},\ell_2)$, then $Ax\in L_p(\mathcal{M},\ell_2)$ and $Bx\in L_p(\mathcal{M},\ell_2)$, with
\[\|Ax\|_{L_p(\mathcal{M},\ell_2)}\pp\|x\|_{L_p(\mathcal{M},\ell_2)},\ \ \ \ \ \ \ \ \ \ \\\|Bx\|_{L_p(\mathcal{M},\ell_2)}\pp\|x\|_{L_p(\mathcal{M},\ell_2)}.\]
Now, fix $1\pp p,q\pp\infty$. Let $G_p(\mathcal{M})$, $G_q(\mathcal{M})$ denote the norm-closed subspace of respectively $H_p(\mathcal{M})$, $H_q(\mathcal{M})$ formed by centered martingale difference sequences, which also coincides with the image under the operator $A$ of respectively $H_p(\mathcal{M})$, $H_q(\mathcal{M})$. Then, knowing that $(\QQ_p(\mathcal{T}),\QQ_q(\mathcal{T}))$ is $K$-complemented in $(L_p(\mathcal{T}),L_q(\mathcal{T}))$ with a universal constant, and using Lemma \ref{CenteredMartingaledifferenceLemma1} and Lemma \ref{CenteredMartingaledifferenceLemma2}, as in the proof of Theorem \ref{AdaptedTheorem}, one shows that $(G_p(\mathcal{M}),G_q(\mathcal{M}))$ is $K$-complemented in $(L_p(\mathcal{M},\ell_2),L_q(\mathcal{M},\ell_2))$ with a universal constant. From this we will deduce that $(H_p(\mathcal{M}),H_q(\mathcal{M}))$ is $K$-complemented in $(L_p(\mathcal{M},\ell_2),L_q(\mathcal{M},\ell_2))$. Let $x\in H_p(\mathcal{M})+H_q(\mathcal{M})$ and $y\in L_p(\mathcal{M},\ell_2)$, $z\in L_q(\mathcal{M},\ell_2)$ be such that
\[x=y+z.\]
Then
\[Ax=Ay+Az.\]
As $Ax\in G_p(\mathcal{M})+G_q(\mathcal{M})$, by $K$-complementation we deduce that we can write
\[Ax=y'+z',\]
where $y'\in G_p(\mathcal{M})$, $z'\in G_q(\mathcal{M})$, with
\[\|y'\|_{L_p(\mathcal{M},\ell_2)}\pp C\|Ay\|_{L_p(\mathcal{M},\ell_2)},\ \ \ \ \ \ \|z'\|_{L_p(\mathcal{M},\ell_2)}\pp C\|Az\|_{L_p(\mathcal{M},\ell_2)},\]
where $C>0$ is a universal constant. Finally, we set
\[y'':=y'+By\in L_p(\mathcal{M},\ell_2),\ \ \ \ \ \ \ \ \ z'':=z'+Bz\in L_q(\mathcal{M},\ell_2).\]
Then we have clearly
\[x=y''+z''.\]
Moreover, $y''$ and $z''$ are of course martingale difference sequences, and by using the triangle inequality and the previous estimates, we get
\[\|y''\|_{L_p(\mathcal{M},\ell_2)}\pp\|y'\|_{L_p(\mathcal{M},\ell_2)}+\|By\|_{L_p(\mathcal{M},\ell_2)}\pp(C+1)\|y\|_{L_p(\mathcal{M},\ell_2)}\]
and
\[\|z''\|_{L_q(\mathcal{M},\ell_2)}\pp\|z'\|_{L_q(\mathcal{M},\ell_2)}+\|Bz\|_{L_q(\mathcal{M},\ell_2)}\pp(C+1)\|z\|_{L_q(\mathcal{M},\ell_2)}.\]
The proof is complete.
\end{proof}

Now, if $E$ is a fully symmetric intermediate space for $(L_1,L_\infty)$, we consider the following norm-closed subspace of $E(\mathcal{M},\ell_2)$,
\[H_E(\mathcal{M}):=\big\{x\in E(\mathcal{M},\ell_2)\ \mid\ x{\rm\ is\ a\ martingale\ difference\ sequence}\big\}.\]
Then, as in the previous paragraph, using Theorem \ref{MartingaledifferenceTheorem} we easily deduce the following result.

\begin{theo}
Let $\Phi$ be a $K$-parameter space such that the exact interpolation space $E:=K_\Phi(L_1,L_\infty)$ satisfies the separability condition \hyperlink{separability}{(S)}. Then
\[K_\Phi(H_1(\mathcal{M}),H_\infty(\mathcal{M}))=H_E(\mathcal{M})\]
with equivalent norms, and constants independant of $\mathcal{M}$.
\end{theo}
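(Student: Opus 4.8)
The plan is to follow, almost verbatim, the proof of the analogous statement for noncommutative adapted spaces proved above, substituting Theorem~\ref{MartingaledifferenceTheorem} for Theorem~\ref{AdaptedTheorem}. The scheme has three steps: produce a continuous inclusion of one space into the other; observe that it has closed range because of $K$-closedness; and then show that it has dense range, using the separability condition (S) to pin down a convenient dense subset of $H_E(\mathcal{M})$.

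First I would note that, since $K_\Phi$ is an exact interpolation functor and $E=K_\Phi(L_1,L_\infty)$, Theorem~\ref{FunctorRC} gives $K_\Phi(L_1(\mathcal{M},\ell_2),L_\infty(\mathcal{M},\ell_2))=E(\mathcal{M},\ell_2)$ with universal equivalence constants. Because $(H_1(\mathcal{M}),H_\infty(\mathcal{M}))$ is a subcouple of $(L_1(\mathcal{M},\ell_2),L_\infty(\mathcal{M},\ell_2))$ and $H_1(\mathcal{M})+H_\infty(\mathcal{M})$ consists of martingale difference sequences, the functoriality of $K_\Phi$ yields a continuous inclusion $K_\Phi(H_1(\mathcal{M}),H_\infty(\mathcal{M}))\subset H_E(\mathcal{M})$. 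Next, Theorem~\ref{MartingaledifferenceTheorem} says that $(H_1(\mathcal{M}),H_\infty(\mathcal{M}))$ is $K$-complemented, hence $K$-closed, in $(L_1(\mathcal{M},\ell_2),L_\infty(\mathcal{M},\ell_2))$; so Proposition~\ref{Kclosedness} applies and gives both that this inclusion has closed range and the identity
\[K_\Phi(H_1(\mathcal{M}),H_\infty(\mathcal{M}))=\bigl(H_1(\mathcal{M})+H_\infty(\mathcal{M})\bigr)\cap E(\mathcal{M},\ell_2).\]

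It then remains to check that the inclusion has dense range. The dense subset I would use is the set of eventually-zero martingale difference sequences all of whose entries lie in $E(\mathcal{M})$. For $x=(x_n)_{n\pg1}\in H_E(\mathcal{M})$, the truncations $x^m:=(1_{n\pp m}x_n)_{n\pg1}$ are again martingale difference sequences, are eventually zero, have entries in $E(\mathcal{M})$ by (the consequence of) Proposition~\ref{Dodds3}, and converge to $x$ in $E(\mathcal{M},\ell_2)$ by (S); this proves density of that subset in $H_E(\mathcal{M})$. Now, for any such eventually-zero martingale difference $x^m$ one decomposes each entry $x^m_n=a_n+b_n$ with $a_n\in L_1(\mathcal{M})$, $b_n\in L_\infty(\mathcal{M})$, applies the difference projection $\DD_n$ (using $\DD_n(x^m_n)=x^m_n$) to replace $a_n,b_n$ by $\DD_n(a_n),\DD_n(b_n)$, and thereby realises $x^m$ as a sum of an $H_1(\mathcal{M})$-sequence and an $H_\infty(\mathcal{M})$-sequence; hence $x^m\in(H_1(\mathcal{M})+H_\infty(\mathcal{M}))\cap E(\mathcal{M},\ell_2)=K_\Phi(H_1(\mathcal{M}),H_\infty(\mathcal{M}))$ by the identity above. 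Since these $x^m$ are dense in $H_E(\mathcal{M})$ and the inclusion already has closed range, the inclusion is onto, which is the desired equality; all constants involved come from Theorem~\ref{FunctorRC}, Proposition~\ref{Kclosedness} and Theorem~\ref{MartingaledifferenceTheorem}, hence do not depend on $\mathcal{M}$.

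The proof is essentially a transcription of the adapted-space argument, so there is no serious obstacle; the only point requiring a little care is the last bookkeeping step — promoting an arbitrary $L_1+L_\infty$ decomposition of each entry of an eventually-zero martingale difference to one respecting the martingale-difference structure — which relies on the linearity of the $\DD_n$ and on the fact that only finitely many entries are nonzero.
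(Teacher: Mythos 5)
Your proposal is correct and follows essentially the same route as the paper, which proves the adapted-space analogue in detail and then derives this theorem by the identical three-step scheme (continuous inclusion via Theorem~\ref{FunctorRC}, closed range via $K$-closedness and Proposition~\ref{Kclosedness}, dense range via truncations and the separability condition). Your explicit bookkeeping step using the $\DD_n$ to turn an arbitrary entrywise $L_1+L_\infty$ decomposition of an eventually-zero martingale difference sequence into one by martingale difference sequences is exactly the detail the paper leaves as ``clearly'', and it is sound.
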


In particular, we obtain the following corollary. In the row/column setting, it has been obtained in \cite{Narcisse2024}[Theorem 3.5].

\begin{coro}
For every $0<\theta<1$, we have
\[(H_1(\mathcal{M}),H_\infty(\mathcal{M}))_{\theta,p}=H_p(\mathcal{M})\]
with equivalent norms and constants independent of $\mathcal{M}$, where $1/p=1-\theta$.
\end{coro}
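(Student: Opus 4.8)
The plan is to obtain the corollary as the special case of the theorem immediately preceding it, corresponding to the $K$-method parameter that defines the real interpolation functor. First I would fix $0<\theta<1$, let $p$ be determined by $1/p=1-\theta$ (so that $1<p<\infty$), and set $\Phi:=\Phi_{\theta,p}$, so that $K_\Phi=(\cdot,\cdot)_{\theta,p}$ by definition. By Theorem~\ref{InterpLp} the associated exact interpolation space is
\[E:=K_\Phi(L_1,L_\infty)=(L_1,L_\infty)_{\theta,p}=L_p\]
with equivalent norms and constants independent of the underlying von Neumann algebra.

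Next I would check that $E=L_p$ satisfies the separability condition~(S). Since $1\pp p<\infty$, this follows from the remark recorded just before Theorem~\ref{FunctorRC} that $L_p+L_q$ satisfies~(S) for all $1\pp p,q<\infty$, applied with $q=p$ and using $L_p+L_p=L_p$ (one may also see it directly: for $x\in L_p(\mathcal{M},\ell_2)$ the column, row, and mixed square functions of the tails $(1_{n>m}x_n)_{n\pg1}$ tend to $0$ in $L_p$-norm). With this, the preceding theorem applies with the parameter $\Phi$ above and yields
\[(H_1(\mathcal{M}),H_\infty(\mathcal{M}))_{\theta,p}=K_\Phi(H_1(\mathcal{M}),H_\infty(\mathcal{M}))=H_E(\mathcal{M})=H_{L_p}(\mathcal{M})\]
with equivalent norms and constants independent of $\mathcal{M}$. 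It then remains to identify $H_{L_p}(\mathcal{M})$ with $H_p(\mathcal{M})$: since $E(\mathcal{M},\ell_2)=L_p(\mathcal{M},\ell_2)$ with equivalent norms and universal constants (the identification recorded after Proposition~\ref{Dodds3}), the norm-closed subspaces of martingale difference sequences sitting inside these two spaces coincide with equivalent norms, i.e.\ $H_{L_p}(\mathcal{M})=H_p(\mathcal{M})$. As everywhere in the paper, this chain of identities holds simultaneously in the column, row, and mixed settings.

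There is no genuine obstacle here once the substantive inputs are granted---namely the $K$-complementation result Theorem~\ref{MartingaledifferenceTheorem} and the functor-level statement deduced from it in the theorem immediately above; the remaining steps are a routine unwinding of definitions, the only ones worth a word of comment being the verification of~(S) for $L_p$ and the standard identifications $K_{\Phi_{\theta,p}}(L_1,L_\infty)=L_p$ and $L_p(\mathcal{M},\ell_2)=E(\mathcal{M},\ell_2)$, all supplied by the preliminaries. Should one prefer to bypass the intermediate theorem, the same conclusion follows by a direct density argument of exactly the type used for Fact~\ref{Example 1 - Fact 5}: $K$-closedness of $(H_1(\mathcal{M}),H_\infty(\mathcal{M}))$ in $(L_1(\mathcal{M},\ell_2),L_\infty(\mathcal{M},\ell_2))$, which is a consequence of Theorem~\ref{MartingaledifferenceTheorem}, together with Proposition~\ref{Kclosedness} and Theorem~\ref{InterpRC}, gives a closed inclusion of $(H_1(\mathcal{M}),H_\infty(\mathcal{M}))_{\theta,p}$ into $L_p(\mathcal{M},\ell_2)$ whose range contains the $L_p$-dense set of eventually-zero martingale difference sequences of $L_p(\mathcal{M})$, and hence coincides with $H_p(\mathcal{M})$.
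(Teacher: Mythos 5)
Your proposal is correct and follows exactly the route the paper intends: the corollary is obtained as the special case $\Phi=\Phi_{\theta,p}$, $E=L_p$ of the theorem immediately preceding it, with the verification of the separability condition (S) for $L_p$ and the identifications $K_{\Phi_{\theta,p}}(L_1,L_\infty)=L_p$ and $E(\mathcal{M},\ell_2)=L_p(\mathcal{M},\ell_2)$ supplied by the preliminaries. The alternative direct density argument you sketch at the end is also sound but unnecessary given the functor-level theorem.
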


\section*{Concluding remarks}

In this paper, we did not work within the framework of noncommutative Lebesgue spaces with exponents in the range $(0,1)$, which are only quasi-Banach spaces. The method we use in our article is not suited to deal with quasi-Banach spaces, as it relies heavily on duality results that are only valid in the Banach space range. 

In this paper, we did not focus on the class of noncommutative conditioned (Hardy) spaces as defined by Junge and
Xu in \cite{JungeXuBurkholder} in connection with the noncommutative version of the Burkholder-Rosenthal inequalities. However, let us mention briefly that the conditioned version of Theorem \ref{MartingaledifferenceTheorem} in the column or row spaces setting, which is contained in \cite{Narcisse2023}[Theorem 3.1] can also be obtained by following the same reasoning as in the proof of Theorem \ref{AdaptedTheorem}, by replacing the adapted sequences with the matrices whose columns are adapted. 

\section*{Acknowledgements}

I am very grateful to my advisor Éric Ricard for many valuable discussions and his guidance throughout the writing of this article.

The author was supported by ANR-19-CE40-0002.

\bibliographystyle{plain}
\bibliography{bibliography} 

\end{document}